\newtheorem{theorem}{Theorem}[section]
\newtheorem{lemma}[theorem]{Lemma}
\newtheorem{proposition}[theorem]{Proposition}
\newtheorem{corollary}[theorem]{Corollary}
\newtheorem{remark}[theorem]{Remark}
\newtheorem{definition}[theorem]{Definition}
\newcommand*{\defeq}{\mathrel{\vcenter{\baselineskip0.5ex \lineskiplimit0pt
                     \hbox{\scriptsize.}\hbox{\scriptsize.}}}%
                     =}
\newcommand{\alg}[1]{\mathbb{#1}}
\newcommand{\KH}[1]{K^{#1}_H}
\newcommand{\JH}[1]{J^{#1}_H}
\newcommand{\JHP}[1]{J^{#1}_{H+}}
\newcommand{\rep}[1]{\phi^{#1}}
\newcommand{\repH}[1]{\phi^{#1}_H}
\newcommand{\kap}[1]{\kappa^{#1}}
\newcommand{\kapH}[1]{\kappa^{#1}_H}
\newcommand{\prim}[1]{{\rep{#1}}'}
\newcommand{\primH}[1]{{\repH{#1}}'}
\newcommand{\xiG}[1]{\xi^{#1}}
\newcommand{\xiH}[1]{\xi^{#1}_H}
\newcommand{\weil}[1]{\omega^{#1}}
\newcommand{\weilH}[1]{\omega^{#1}_H}
\newcommand{\spec}[1]{\nu^{#1}}
\newcommand{\specH}[1]{\nu^{#1}_H}
\newcommand{\N}[1]{N^{#1}}
\newcommand{\Z}[1]{Z^{#1}}
\newcommand{\W}[1]{W^{#1}}
\newcommand{\WH}[1]{W^{#1}_H}
\newcommand{\Heis}[1]{\mathcal{H}^{#1}}
\newcommand{\HeisH}[1]{\mathcal{H}^{#1}_H}
\newcommand{\Bold}[1]{\contour{black}{#1}}
\DeclareMathOperator{\Ext}{Ext}
\DeclareMathOperator{\Fr}{Fr}
\DeclareMathOperator{\Sp}{Sp}
\DeclareMathOperator{\ad}{ad}
\DeclareMathOperator{\Res}{Res}
\DeclareMathOperator{\Ind}{c-Ind}
\DeclareMathOperator{\Hom}{Hom}
\DeclareMathOperator{\inn}{inn}
\DeclareMathOperator{\red}{\mathrm{red}}
\DeclareMathOperator{\val}{val}
\DeclareMathOperator{\Lie}{Lie}
\DeclareMathOperator{\Gal}{Gal}
\DeclareMathOperator{\res}{\mathfrak{f}} 
\DeclareMathOperator{\resun}{\overline{\mathfrak{f}}} 
\DeclareMathOperator{\iso}{\iota}  
\DeclareMathOperator{\p}{\mathfrak{p}} 
\DeclareMathOperator{\g}{\mathfrak{g}}
\DeclareMathOperator{\h}{\mathfrak{h}}
\newcommand{\z}{\mathfrak{z}^i} 
\newcommand{\zstar}{\mathfrak{z}^{i,*}}
\newcommand{\zstarri}{\mathfrak{z}^{i,*}_{-r_i}} 
\newcommand{\zder}{\mathfrak{z}^i_H} 
\newcommand{\zderstar}{\mathfrak{z}_H^{i,*}} 
\newcommand{\zderstarri}{(\mathfrak{z}_H^{i,*})_{-r_i}} 
\newcommand{\der}{\mathrm{der}}
\newcommand{\un}{\mathrm{un}}
\newcommand{\Gder}{G_{\der}} 
\newcommand{\torus}{\mathfrak{t}} 
\newcommand{\tder}{\mathfrak{t}_H} 
\newcommand{\Bigslant}[3]{#1\backslash #2 / #3}
\newcommand{\quo}[2]{#1/#2}
\def\blfootnote{\gdef\@thefnmark{}\@footnotetext}
\begin{document}

\title{Restricting Supercuspidal Representations via a Restriction of Data}
\author{Ad\`ele Bourgeois}
\date{}

\maketitle

\begin{abstract}

Let $F$ be a non-archimedean local field of residual characteristic $p$. Let $\alg{G}$ be a connected reductive group defined over $F$ which splits over a tamely ramified extension and set $G=\alg{G}(F)$. We assume that $p$ does not divide the order of the Weyl group of $\mathbb{G}$. Given a closed connected $F$-subgroup $\alg{H}$ that contains the derived subgroup of $\alg{G}$, we study the restriction to $H$ of an irreducible supercuspidal representation $\pi=\pi_G(\Psi)$ of $G$, where $\Psi$ is a $G$-datum as per the J.K.Yu Construction. We provide a full description of $\pi|_H$ into irreducible components, with multiplicity, via a restriction of data which constructs $H$-data from $\Psi$. Analogously, we define a restriction of Kim-Yu types to study the restriction of irreducible representations of $G$ which are not supercuspidal.

\end{abstract}

\blfootnote{This research was supported by the National Sciences and Engineering Research Council of Canada through the Alexander Graham Bell Canada Graduate Scholarship, as well as the Ontario Graduate Scholarship.}
 
\tableofcontents

\section{Introduction}

One of the fundamental questions of representation theory is to study how a completely reducible representation decomposes into irreducible subrepresentations (or components), and to what multiplicity these components appear. In particular, given a connected reductive group $\alg{G}$ defined over a non-archimedean local field $F$ of residual characteristic $p$, we are interested in this paper to study the restriction of an irreducible (smooth) representation $\pi$ of $G = \alg{G}(F)$ to a particular type of subgroup $H$. This subgroup $H$ is such that it is the $F$-points of a closed connected $F$-subgroup $\alg{H}$ of $\alg{G}$ which contains the derived subgroup, $\alg{G}_{\der}$, of $\alg{G}$. The restriction $\pi|_H$ is known to be completely reducible and to decompose into finitely many components \cite[Lemma 2.1]{Tadic:1992}.

Similar types of restrictions have been studied in the past. Indeed, one of the motivations for this paper comes from \cite[Conjecture 2.6]{AP:2006}, in which Adler and Prasad conjectured that multiplicity one holds for the restriction of irreducible admissible representations of $G$ to $H$ in the case where $\alg{H}$ is quasi-split and $\alg{G}$ and $\alg{H}$ are not necessarily connected. This conjecture was successfully proven for various pairs ($\alg{G},\alg{H}$), such as $(\mathrm{GL}(n),\mathrm{SL}(n))$ \cite[Theorem 1.3]{AP:2006}, $(\mathrm{GO}(n),\mathrm{O}(n))$ and $(\mathrm{GSp}(2n),\mathrm{Sp}(2n))$ \cite[Theorem 1.4]{AP:2006}, $(\mathrm{GU}(n),\mathrm{U}(n))$ \cite[Theorem 9]{AP:2018} and $(\mathrm{GSO}(n),\mathrm{SO}(n))$ \cite{Gee}. Adler and Prasad actually discovered their conjecture to be false by finding a counterexample of a depth-zero supercuspidal representation of $\mathrm{GU}(n)$ whose restriction to $\mathrm{SU}(n)$ restricts with multiplicity two \cite[Theorem 10]{AP:2018}. They discuss in \cite[Remark 11]{AP:2018} that the failure of the multiplicity one result is likely due to the fact that the components of the restriction are not \emph{regular} in the sense of \cite{Kaletha:2019}. 

In this paper, we pursue this question of restriction and focus particularly on the case where $\pi$ is an irreducible supercuspidal representation. This is because Jacquet's subrepresentation theorem tells us that such representations are the building blocks for the representation theory of $p$-adic groups such as $G$, and also because we know precisely how the irreducible supercuspidal representations are constructed. Notably, an exhaustive construction for the ones of depth zero was described by Moy and Prasad in \cite{MP:1996}, and a construction for irreducible supercuspidal representations of arbitrary depth was later described by Yu \cite{Yu:2001}, referred to as the J.K.~Yu Construction, which builds on the work done by Adler \cite{Adler:1998}. To apply such constructions, we assume that $\alg{G}$ splits over a tamely ramified extension $E$ of $F$. We also assume that $p$ does not divide the order of $W$, the Weyl group of $\alg{G}$, so that all irreducible supercuspidal representations of $G$ are obtained from the J.K.~Yu Construction \cite[Theorem 8.1]{Fintzen:2018}, a refinement of Kim's earlier exhaustion result \cite{Kim:2007}. Note that for all irreducible root systems, one can explicitly calculate $|W|$ as illustrated in \cite[Table 1]{Fintzen:2018}, and see that the imposed condition on $p$ is not extremely restrictive. When $p\nmid |W|$, we have that $p$ is neither a bad nor torsion prime \cite[Lemma 2.2]{Fintzen:2018}. It also implies that $p$ does not divide the order of the fundamental group of $\alg{G}_{\der}$ \cite[Section 3.7.4]{Kaletha:2019}, and therefore hypothesis $C(\alg{G})$ from \cite{HM:2008} (and the weaker hypothesis $C(\alg{G})_w$ from \cite{Murnaghan:2011}) holds \cite[Lemma 3.5.2]{Kaletha:2019}. 

The J.K.~Yu Construction starts from a $G$-datum $\Psi = (\vec{\alg{G}},y,\rho,\vec{\phi})$ and produces an irreducible supercuspidal representation $\pi_G(\Psi)$ of $G$. The components of $\pi_G(\Psi)|_{H}$ are irreducible supercuspidal representations of $H$, which means they are constructed from $H$-data via the exhaustion. As such, our strategy is to describe the components of $\pi_G(\Psi)|_H$ via the data. More specifically, we look at how the $H$-data associated to the components of $\pi_G(\Psi)|_H$ are related to the initial $G$-datum $\Psi$. In \cite{Nevins:2015}, Nevins described the relationship between the $H$-data and $\Psi$ in the very specific case where $H=\Gder$ and $\Psi$ is a \emph{toral datum of length one}. The results that we present in this paper significantly generalize those of \cite{Nevins:2015}. 

The first main result that we present (Theorem \ref{th:GderData}) is showing that there is a very natural restriction process that we can apply to $\Psi$ in order to construct a family of $H$-data, $\Res(\Psi)$. We also verify that this restriction of data is compatible with Hakim and Murnaghan's notion of equivalence of data from \cite{HM:2008}.
We then show that the steps of the J.K.~Yu Construction essentially commute with restriction (Theorem~\ref{th:phiprime}, Proposition~\ref{prop:kappai}), allowing us to explicitly compute $\pi_G(\Psi)|_H$ in Theorem~\ref{th:restrictSupercuspidal}. Indeed, given a set of representatives $C$ of $\Bigslant{H}{G}{K}$, where $K$ is the inducing subgroup from the J.K.~Yu Construction, we show that the supercuspidal representations constructed from $\{{^t\Res(\Psi):t\in C}\}$ exhaust all the components of $\pi_G(\Psi)|_H$. Conversely, one can also extend an $H$-datum $\Psi_H$ to a family of $G$-data (Theorem~\ref{th:backwards}) and use Frobenius Reciprocity to obtain a description of the components of $\Ind_H^G\pi_H(\Psi_H)$ (Theorem~\ref{th:induceSupercuspidal}). 


Because we are able to obtain such a precise description of $\pi_G(\Psi)|_H$, we can then study the multiplicity of each component. It turns out that the multiplicity in $\pi_G(\Psi)|_H$ is solely determined by $\rho$, the depth-zero piece of the datum $\Psi$ (Theorem~\ref{th:multiplicity}). Indeed, $\rho$ is an irreducible representation of $G^0_{[y]}$, where $G^0$ is the first twisted Levi subgroup of the sequence $\vec{\alg{G}}$, which induces to a depth-zero supercuspidal representation of $G^0$. The multiplicity in $\rho|_{H^0_{[y]}}$, where $H^0 = G^0\cap H$, is what determines the multiplicity in $\pi_G(\Psi)|_H$. In particular, $\pi_G(\Psi)|_H$ is multiplicity free if and only if $\rho|_{H^0_{[y]}}$ is multiplicity free.  Furthermore, each component of $\pi_G(\Psi)|_H$ appears with the same multiplicity, and this multiplicity is calculated explicitly in Proposition~\ref{prop:multiplicityrhoH} and is expressed in terms of the dimensions of $\rho$ and one of the components of $\rho|_{H^0_{[y]}}$.

Even though we are able to write a formula for the multiplicity, it can still be difficult to compute in practice. We consider the special case where the representation $\rho$ induces to a depth-zero supercuspidal representation which is \emph{regular} in the sense of \cite{Kaletha:2019}. 
In this case, we have a very specific description for $\rho$ which allows us to study the restriction $\rho|_{H^0_{[y]}}$ in great detail and obtain the conditions for a multiplicity free restriction in Corollary~\ref{cor:rhoMultFree} which confirms Adler and Prasad's observations that some notion of regularity is required. We obtain as a corollary (Corollary~\ref{cor:multiplicityRegular}) a multiplicity free result for the restriction of a regular depth-zero supercuspidal representation, which is also shown by Adler and Mishra in a recent pre-print \cite[Theorem 5.3]{AM:2019} using a different approach. 

Finally, for the irreducible representation $\pi$ of $G$ which is non-supercuspidal, we approach the problem of restricting $\pi$ to $H$ from the perspective of types. With our underlying assumption on $p$, such a representation contains a Kim-Yu type \cite[Theorem 7.12]{Fintzen:2018}, which is a refinement of Kim and Yu's exhaustion result \cite[Theorem 9.1]{KimYu}. Given that the construction of a Kim-Yu type is completely analogous to Yu's construction of supercuspidal representations \cite{KimYu}, we show in Theorems~\ref{th:typeData} and \ref{th:typeRes} that our methods for restricting the $G$-datum $\Psi$ and $\pi_G(\Psi)$ can be applied in this context, resulting in a restriction of types. In turn, this restriction of types provides us with information on $\pi|_H$.

The condition that $p \nmid |W|$ not only provides an exhaustion of the J.K.~Yu Construction and Kim-Yu types, it also allows for some simplifications when proving the main results of this paper. In particular, in the definition of genericity for characters (Definition~\ref{def:genericity}), one can omit what Yu refers to as condition GE2) \cite[Lemma 8.1]{Yu:2001}. Furthermore, the imposed condition on $p$ implies that we were able to use results that require hypothesis $C(\alg{G})$ from \cite{HM:2008} (or the weaker hypothesis $C(\alg{G})_w$ from \cite{Murnaghan:2011}). When there is no underlying assumption on $p$, one needs to take into account this condition GE2) and can follow Kaletha's strategy from \cite[Section 3.5]{Kaletha:2019} of using $z$-extensions to bypass hypotheses $C(\alg{G})$ and $C(\alg{G})_w$ whenever they are necessary. As a consequence, Theorems~\ref{th:GderData}, \ref{th:restrictSupercuspidal}, \ref{th:multiplicity}, \ref{th:typeData} and \ref{th:typeRes} all hold without any underlying assumption on $p$. This means that whenever $\pi$ is a supercuspidal representation of $G$ obtained from the J.K.~Yu Construction, all components of $\pi|_H$ also arise from the J.K.~Yu Construction. One could then wonder whether or not it is possible for some components of $\pi|_H$ to be obtained via the J.K.~Yu Construction when $\pi$ is not, and ask in what sense do the irreducible supercuspidal representations obtained from the J.K.~Yu Construction form a closed class of representations. We do not answer the previous question in this paper, but rather leave it as an open problem.

This paper is divided as follows. In Section~\ref{sec:strucTheory}, we start by establishing the relevant notation with regards to the structure theory of $G$, and show how that structure theory relates to that of $H$ in a very natural way. In Section~\ref{sec:reviewSC}, we review the definition of a $G$-datum as well as the steps of the J.K.~Yu Construction. Section~\ref{sec:restrictingData} is fully dedicated to our restriction process of data, whereas Section~\ref{sec:restrictSC} computes the restriction of $\pi_G(\Psi)$. Results related to the multiplicity in $\pi_G(\Psi)|_H$ are presented in Section~\ref{sec:multiplicity}, and Section~\ref{sec:regularDZ} looks at the example when $\rho$ induces to a regular depth-zero supercuspidal representation. Results in this last section are complemented by the Appendix, which presents properties of Deligne-Lusztig cuspidal representations. Finally, we discuss the restriction of non-supercuspidal representations via a restriction of types in Section~\ref{sec:types}. This paper contains the main results of my doctoral thesis \cite{thesis} and have been tailored to the more expert audience. 

\section{Notation and Structure Theory}\label{sec:strucTheory}

We begin this section by recalling all the relevant structure theory and notation for $G$ that is required for the construction of supercuspidal representations. We then establish important relations between the structure theory of $G$ and that of $H$, which will be crucial for the various types of restrictions we will be considering in this paper.

\subsection{Structure Theory of $G$}

Given the non-archimedean local field $F$, we denote by $\val(\cdot)$ its valuation which maps into $\mathbb{Z}$,  $\mathcal{O}_F$ its ring of integers, $\p_F$ the unique maximal ideal of $\mathcal{O}_F$ and $\res$ its residue field of prime characteristic $p$. Let $F^\un$ be a maximal unramified extension of $F$. It is well know that the residue field of $F^\un$ is an algebraic closure of $\res$, so we denote it by $\resun$.

Unless otherwise specified, $\alg{G}$ always denotes a connected reductive group which is defined over $F$, and we set $G = \alg{G}(F)$ and $\alg{G}_{\der} = [\alg{G},\alg{G}]$. We assume that $\alg{G}$ splits over a tamely ramified extension $E$ of $F$. We also set \Bold{$\g$} to be the Lie algebra of $\alg{G}$ and $\g = \text{\Bold{$\g$}}(F)$. Given a maximal $F$-split torus $\alg{T}$ of $\alg{G}$, we denote the corresponding affine apartment and reduced affine apartment of $G$ by $\mathcal{A}(\alg{G},\alg{T},F)$ and $\mathcal{A}^{\mathrm{red}}(\alg{G},\alg{T},F)$, respectively. We recall that $\mathcal{A}^{\mathrm{red}}(\alg{G},\alg{T},F) = \mathcal{A}(\alg{G}_{\der},\alg{T}\cap\alg{G}_{\der},F)$ and $\mathcal{A}(\alg{G},\alg{T},F) = \mathcal{A}^{\mathrm{red}}(\alg{G},\alg{T},F)\times (X_*(Z(\alg{G}),F)\otimes_\mathbb{Z} \mathbb{R})$, where $Z(\alg{G})$ denotes the center of $\alg{G}$. The Bruhat-Tits building and reduced building of $G$ are denoted by $\mathcal{B}(\alg{G},F)$ and $\mathcal{B}^{\mathrm{red}}(\alg{G},F)$, respectively. Similarly to the apartment, we have that $\mathcal{B}^{\mathrm{red}}(\alg{G},F) = \mathcal{B}(\alg{G}_{\der},F)$ and $\mathcal{B}(\alg{G},F) = \mathcal{B}^{\mathrm{red}}(\alg{G},F)\times (X_*(Z(\alg{G}),F)\otimes_\mathbb{Z}\mathbb{R})$.

For each $x\in\mathcal{B}(\alg{G},F), r > 0$, $G_{x,r}$ denotes the Moy-Prasad filtration subgroup of the parahoric subgroup $G_{x,0}$. We also set $G_{x,r^+} = \underset{t >r}{\bigcup}G_{x,t}$. We use colons to abbreviate quotients, that is $G_{x,r:t} = \quo{G_{x,r}}{G_{x,t}}$ for $t > r$. We have analogous filtrations of $\mathcal{O}_F$-submodules at the level of the Lie algebra.

\begin{lemma}\label{lem:[x]}
Given $x\in\mathcal{B}(\alg{G},F)$, let $[x]\defeq \{x+z : z\in X_*(Z(\alg{G}),F)\otimes_\mathbb{Z}\mathbb{R}\}$. Then for all $y\in [x], r\geq 0$, we have $G_{x,r} = G_{y,r}$.
\end{lemma}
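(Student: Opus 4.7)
The plan is to reduce everything to the defining decomposition $\mathcal{A}(\alg{G},\alg{T},F) = \mathcal{A}^{\mathrm{red}}(\alg{G},\alg{T},F) \times (X_*(Z(\alg{G}),F)\otimes_\mathbb{Z}\mathbb{R})$ and use the fact that the Moy--Prasad filtrations are built from affine root data that are insensitive to translation by the central factor. Concretely, I would first choose a maximal $F$-split torus $\alg{T}$ of $\alg{G}$ such that $x \in \mathcal{A}(\alg{G},\alg{T},F)$; since $[x]$ is obtained from $x$ by translation inside $X_*(Z(\alg{G}),F)\otimes_\mathbb{Z}\mathbb{R}$, which is contained in every apartment corresponding to a maximal $F$-split torus, every $y\in [x]$ lies in the same apartment $\mathcal{A}(\alg{G},\alg{T},F)$.

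Next, I would recall that $G_{x,r}$ is generated by the filtration piece $T(F)_r$ of the torus together with the affine root subgroups $U_{\psi}$ for all affine roots $\psi = \alpha + c$ (where $\alpha$ is a root of $\alg{T}$ in $\alg{G}$ and $c \in \mathbb{R}$) satisfying $\psi(x)\geq r$. The torus part $T(F)_r$ depends only on $r$ and not on the point, so it is the same for $x$ and $y$. For the root parts, the key observation is that any root $\alpha$ of $\alg{T}$ in $\alg{G}$ vanishes on $X_*(Z(\alg{G}),F)\otimes_\mathbb{Z}\mathbb{R}$, since roots are trivial on the connected center. Thus if $y = x + z$ with $z \in X_*(Z(\alg{G}),F)\otimes_\mathbb{Z}\mathbb{R}$, then $\psi(y) = \alpha(x) + \alpha(z) + c = \alpha(x) + c = \psi(x)$ for every affine root $\psi$.

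Consequently, the set of affine roots $\psi$ with $\psi(x)\geq r$ coincides with the set of those with $\psi(y)\geq r$, so the same generating sets produce $G_{x,r}$ and $G_{y,r}$. This gives $G_{x,r}=G_{y,r}$ for all $r\geq 0$.

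I do not anticipate a genuine obstacle here; the lemma is essentially a bookkeeping statement expressing that Moy--Prasad filtrations are functions of the reduced building coordinate only. The only mild subtlety is making sure to invoke the right definition of $G_{x,r}$ (via affine roots on the apartment containing $x$) so that the vanishing of roots on the central cocharacter space can be applied cleanly; this is standard from the Bruhat--Tits/Moy--Prasad setup and requires no separate argument.
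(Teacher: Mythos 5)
Your argument is correct and follows the same route as the paper's one-line proof, which likewise reduces to the observation that Moy--Prasad filtration subgroups are determined by root values at $x$ and that roots vanish on $Z(\alg{G})$. You have simply made explicit the bookkeeping with the torus filtration piece and the affine roots that the paper leaves implicit.
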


\begin{proof}
This follows from the fact that the Moy-Prasad filtration subgroups at $x$ are determined by how roots evaluate on $x$, and that roots evaluate trivially on $Z(\alg{G})$ \cite[Corollary 26.2B]{Humphreys:1975:LAG}.
\end{proof}

Note that the set $[x]$ from the previous lemma is viewed as a point of the reduced building $\mathcal{B}^{\mathrm{red}}(\alg{G},F)$ when viewing $\mathcal{B}^{\mathrm{red}}(\alg{G},F)$ as a quotient of $\mathcal{B}(\alg{G},F)$.


For all $r>0$, the quotient $G_{x,r:r^+}$ is an abelian group and is isomorphic to its Lie algebra analog $\g_{x,r:r^+}$. Adler constructs an isomorphism $e: \g_{x,r:r^+} \rightarrow G_{x,r:r^+}$ in \cite[Section 1.5]{Adler:1998} with a theory of mock exponential maps. This isomorphism is used in the J.K.~Yu Construction.

The quotient $G_{x,0:0^+}$ is also very important, as it results in the $\res$-points of a reductive group. We shall denote this group by $\mathcal{G}_x$ and refer to it as the \emph{reductive quotient of $\alg{G}$ at $x$}. In other words, $\mathcal{G}_x$ is the reductive group such that $\mathcal{G}_x(\res) = G_{x,0:0^+}$.

\subsection{Structure Theory of $H$ in Relation to $G$}\label{sec:HandG}

From this point forward, $\alg{H}$ denotes a closed connected $F$-subgroup of $\alg{G}$ which contains $\alg{G}_{\der}$. Not only is this choice partly motivated by the earlier conjecture \cite[Conjecture 2.6]{AP:2006}, it implies that $\alg{H}$ is a normal subgroup of $\alg{G}$ and that $[\alg{H},\alg{H}] = \alg{G}_{\der}$. It also means that all the important subgroups of $\alg{G}$ and $\alg{H}$ necessary for studying representations are related in the most natural way, via intersection. The same follows for the important subgroups of $G$ and $H$.

\begin{theorem}\label{th:intersectTorus}
Let $\alg{T}$ be a maximal torus of $\alg{G}$. Then $\alg{T}\cap\alg{H}$ is a maximal torus of $\alg{H}$. Furthermore, every maximal torus of $\alg{H}$ is of the form $\alg{T}\cap\alg{H}$ for some unique maximal torus $\alg{T}$ of $\alg{G}$.
\end{theorem}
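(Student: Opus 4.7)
The plan is, given a maximal torus $\alg{T}$ of $\alg{G}$, to exhibit an explicit maximal torus $\alg{T}_H$ of $\alg{H}$ and show that $\alg{T}\cap\alg{H}=\alg{T}_H$. First I would observe that $\alg{H}$ is itself connected reductive with $[\alg{H},\alg{H}]=\alg{G}_{\der}$: since $\alg{G}_{\der}\subseteq\alg{H}$ and $\alg{H}/\alg{G}_{\der}$ is a closed connected subgroup of the cocenter torus $\alg{G}/\alg{G}_{\der}$, the quotient is itself a torus, so $\alg{H}$ is an extension of a torus by the reductive group $\alg{G}_{\der}$. Using the standard decomposition $\alg{G}=Z(\alg{G})^\circ\cdot\alg{G}_{\der}$ and the analogous $\alg{H}=Z(\alg{H})^\circ\cdot\alg{G}_{\der}$, set $\alg{T}_{\der}\defeq\alg{T}\cap\alg{G}_{\der}$, which is a maximal torus of $\alg{G}_{\der}$ satisfying $\alg{T}=Z(\alg{G})^\circ\cdot\alg{T}_{\der}$. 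The natural candidate is then $\alg{T}_H\defeq Z(\alg{H})^\circ\cdot\alg{T}_{\der}$, and because this is the product of the connected center of $\alg{H}$ with a maximal torus of $[\alg{H},\alg{H}]$, it is a maximal torus of $\alg{H}$.

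The inclusion $\alg{T}_H\subseteq\alg{T}\cap\alg{H}$ will reduce to checking $Z(\alg{H})^\circ\subseteq\alg{T}$, and in fact I aim for the stronger $Z(\alg{H})^\circ\subseteq Z(\alg{G})^\circ\subseteq\alg{T}$. The key step is that the centralizer of $\alg{G}_{\der}$ in $\alg{G}$ equals $Z(\alg{G})$: any centralizing element can be written as $zh$ with $z\in Z(\alg{G})$ and $h\in\alg{G}_{\der}$, and the centralizing condition forces $h\in Z(\alg{G}_{\der})\subseteq Z(\alg{G})$. Since $Z(\alg{H})$ certainly centralizes $\alg{G}_{\der}$, this gives $Z(\alg{H})\subseteq Z(\alg{G})$, and taking identity components yields $Z(\alg{H})^\circ\subseteq Z(\alg{G})^\circ$. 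For the reverse inclusion, given $x\in\alg{T}\cap\alg{H}$, I factor $x=zt$ with $z\in Z(\alg{G})^\circ$ and $t\in\alg{T}_{\der}$; since $t\in\alg{G}_{\der}\subseteq\alg{H}$, also $z=xt^{-1}\in Z(\alg{G})^\circ\cap\alg{H}$, which lies in $Z(\alg{H})$ (anything central in $\alg{G}$ that is in $\alg{H}$ is central in $\alg{H}$). Because $\alg{H}$ is reductive, $Z(\alg{H})$ is contained in every maximal torus of $\alg{H}$; in particular $z\in\alg{T}_H$, so $x=zt\in\alg{T}_H$.

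For the second assertion, any maximal torus $\alg{S}$ of $\alg{H}$ is also a torus of $\alg{G}$ and so sits inside some maximal torus $\alg{T}$ of $\alg{G}$; by the first part $\alg{T}\cap\alg{H}$ is a maximal torus of $\alg{H}$ containing $\alg{S}$, whence $\alg{T}\cap\alg{H}=\alg{S}$. For uniqueness, if $\alg{T}_1\cap\alg{H}=\alg{T}_2\cap\alg{H}=\alg{S}$, intersecting with $\alg{G}_{\der}\subseteq\alg{H}$ gives $\alg{T}_i\cap\alg{G}_{\der}=\alg{S}\cap\alg{G}_{\der}$ for both $i$, and then $\alg{T}_i=Z(\alg{G})^\circ\cdot(\alg{S}\cap\alg{G}_{\der})$ is independent of $i$.

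The main subtlety I expect concerns the connectedness and dimension of $\alg{T}\cap\alg{H}$: rather than attack the intersection directly, the plan finesses these issues by producing the explicit maximal torus $\alg{T}_H$ of $\alg{H}$ and identifying it with $\alg{T}\cap\alg{H}$. The delicate structural input driving everything is the fact that $[\alg{H},\alg{H}]=\alg{G}_{\der}$, which is what makes $Z(\alg{H})\subseteq Z(\alg{G})$ hold and, from there, makes maximal tori of $\alg{G}$ and $\alg{H}$ match via intersection.
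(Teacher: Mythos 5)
Your proof is correct, but it takes a genuinely different route from the paper's. The paper identifies $\alg{T}$ with $\alg{D}(n,\alg{G}_m)$ and invokes \cite[Theorem 16.2]{Humphreys:1975:LAG} on the subgroup $\alg{T}\cap\alg{H}$, asserting along the way (without detail) that this intersection is connected; for maximality it appeals to conjugacy of maximal tori in $\alg{G}$ and normality of $\alg{H}$. You instead build an explicit candidate $\alg{T}_H = Z(\alg{H})^\circ\cdot\alg{T}_{\der}$, which is visibly a maximal torus of $\alg{H}$ by the isogeny $Z(\alg{H})^\circ\times\alg{G}_{\der}\to\alg{H}$, and then identify it with $\alg{T}\cap\alg{H}$ by a two-sided inclusion whose crux is the clean structural fact $C_{\alg{G}}(\alg{G}_{\der}) = Z(\alg{G})$, hence $Z(\alg{H})\subseteq Z(\alg{G})$. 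The payoff of your approach is that the connectedness of $\alg{T}\cap\alg{H}$ falls out of the identification rather than having to be justified up front, and the key lemma $Z(\alg{H})\subseteq Z(\alg{G})$ is reusable (the paper in fact needs a closely related identity $Z(\alg{G}^i)=Z(\alg{G})^\circ Z(\alg{H}^i)$ later in Proposition~\ref{prop:D1}). Your uniqueness argument via $\alg{T}_i = Z(\alg{G})^\circ\cdot(\alg{S}\cap\alg{G}_{\der})$ is also more concrete than the paper's appeal to normality plus the central isogeny. One small point worth flagging explicitly in a writeup: you use twice that the (full, possibly disconnected) center of a connected reductive group lies in every maximal torus; this is standard but should be cited, since your argument for $\alg{T}\cap\alg{H}\subseteq\alg{T}_H$ only places $z$ in $Z(\alg{H})$, not in $Z(\alg{H})^\circ$.
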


\begin{proof}
Because $\alg{T}$ is a torus, $\alg{T} \simeq \alg{G}_m^n \simeq \alg{D}(n,\alg{G}_m)$ for some $n > 0$, where $\alg{D}(n,\alg{G}_m)$ denotes the group of invertible $n\times n$ diagonal matrices with entries in $\alg{G}_m$. The properties of $\alg{H}$ then imply that $\alg{T}\cap\alg{H}$ is isomorphic to a closed and connected subgroup of $\alg{D}(n,\alg{G}_m)$, therefore making it a torus \cite[Theorem 16.2]{Humphreys:1975:LAG}. Now, given a maximal torus $\alg{S}$ of $\alg{H}$, the conjugacy of maximal tori of $\alg{G}$ implies that $\alg{S} \subset {^g\alg{T}}$ for some $g\in\alg{G}$. By maximality of $\alg{S}$, and by what precedes, it then follows that $\alg{S} = {^g\alg{T} \cap\alg{H}}$. Normality of $\alg{H}$ then guarantees that $^g\alg{T}\cap\alg{H}$ is a maximal torus of $\alg{H}$ for all $g\in\alg{G}$. Furthermore, one can use the normality of $\alg{H}$ and the central isogeny $\alg{G} = Z(\alg{G})^\circ\alg{G}_{\der}$ to show that $^g\alg{T}$ is the unique maximal torus of $\alg{G}$ that satisfies $\alg{S} = {^g\alg{T}\cap\alg{H}}$.
\end{proof}

Let $\alg{T}$ be a maximal torus of $\alg{G}$ which is $E$-split and set $\alg{T}_\alg{H} = \alg{T}\cap\alg{H}$. Because $\alg{G}$ is reductive, we have a corresponding root system $\Phi = \Phi(\alg{G},\alg{T})$ and a presentation in terms of generators $\alg{G} = \langle \alg{T}, \alg{U}_\alpha : \alpha\in\Phi \rangle$, where $\alg{U}_\alpha, \alpha\in\Phi$, are the associated root subgroups \cite[Theorem 26.3]{Humphreys:1975:LAG}. As $\alg{G}$ is split over $E$, this presentation is preserved at the level of $E$-points. Because $\alg{T}$ normalizes the root subgroups, one can show that $\alg{G}_{\der} = \langle \alg{U}_\alpha: \alpha\in\Phi \rangle$. As a consequence, we have two ways of writing $\alg{G}$ into a product, $\alg{G} = Z(\alg{G})^\circ\alg{G}_{\der}$ and $\alg{G} = \alg{T}\alg{G}_{\der} = \alg{T}\alg{H}$.

One can verify that $\alpha|_{\alg{T}_\alg{H}}\in \Phi(\alg{H},\alg{T}_\alg{H})$ for all $\alpha\in\Phi(\alg{G},\alg{T})$ and that $\Phi(\alg{H},\alg{T}_\alg{H})$ and $\Phi(\alg{G},\alg{T})$ can be identified as root systems. Therefore, the root subgroups $\alg{U}_\alpha$ of $\alg{G}$ with respect to $\alg{T}$ are also root subgroups of $\alg{H}$ with respect to $\alg{T}_\alg{H}$, so that $\alg{H} = \langle \alg{T}_\alg{H}, \alg{U}_\alpha: \alpha\in\Phi \rangle$. It is then easy to see that $\alg{G}$ and $\alg{H}$ have the same Weyl groups, and we establish relationships between the Levi and parabolic subgroups of $\alg{G}$ with those of $\alg{H}$ as per the following theorem.

\begin{theorem}\label{th:intersectLevi}
Let $\alg{M}$ be a Levi subgroup of $\alg{G}$ and $\alg{P}$ be a parabolic subgroup of $\alg{G}$. Then $\alg{M}\cap\alg{H}$ is a Levi subgroup of $\alg{H}$ and $\alg{P}\cap\alg{H}$ is a parabolic subgroup of $\alg{H}$. Furthermore, every Levi and parabolic subgroup of $\alg{H}$ arises uniquely this way.
\end{theorem}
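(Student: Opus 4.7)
The plan is to exploit the root-theoretic description of Levi and parabolic subgroups, together with the identification $\alg{H}=\langle\alg{T}_\alg{H},\alg{U}_\alpha:\alpha\in\Phi\rangle$ established in the paragraph preceding the theorem. First I would pick a maximal torus $\alg{T}$ of $\alg{G}$ contained in $\alg{M}$ (such tori exist because Levi subgroups have the same rank as the ambient group) and set $\alg{T}_\alg{H}=\alg{T}\cap\alg{H}$, which is a maximal torus of $\alg{H}$ by Theorem \ref{th:intersectTorus}. Writing $\alg{M}=\langle\alg{T},\alg{U}_\alpha:\alpha\in\Phi_M\rangle$ for the closed subsystem $\Phi_M=\Phi(\alg{M},\alg{T})\subset\Phi$, each $\alg{U}_\alpha$ lies in $\alg{G}_{\der}\subset\alg{H}$, so the subgroup $\langle\alg{T}_\alg{H},\alg{U}_\alpha:\alpha\in\Phi_M\rangle$ is visibly contained in $\alg{M}\cap\alg{H}$. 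For the reverse containment I would use that $Z(\alg{M})\subset\alg{T}$ (since $\alg{T}$ is a maximal torus of $\alg{M}$) and the central isogeny $Z(\alg{M})^\circ\times\alg{M}_{\der}\to\alg{M}$: any $m\in\alg{M}\cap\alg{H}$ factors as $m=zm'$ with $z\in Z(\alg{M})^\circ\subset\alg{T}$ and $m'\in\alg{M}_{\der}\subset\alg{G}_{\der}\subset\alg{H}$, forcing $z=m(m')^{-1}\in\alg{T}\cap\alg{H}=\alg{T}_\alg{H}$. This identifies $\alg{M}\cap\alg{H}$ with the Levi subgroup of $\alg{H}$ cut out by $\Phi_M$ under $\Phi(\alg{H},\alg{T}_\alg{H})\cong\Phi$.

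For the parabolic case, I would fix a compatible maximal torus $\alg{T}\subset\alg{M}\subset\alg{P}$ and invoke the decomposition $\alg{P}=\alg{M}\cdot\alg{N}$, where the unipotent radical $\alg{N}=\langle\alg{U}_\alpha:\alpha\in\Phi_N\rangle$ is generated by root subgroups for a suitable subset $\Phi_N\subset\Phi$, hence $\alg{N}\subset\alg{G}_{\der}\subset\alg{H}$. Given $p=mn\in\alg{P}\cap\alg{H}$ with $m\in\alg{M}$ and $n\in\alg{N}$, the inclusion $n\in\alg{H}$ yields $m\in\alg{M}\cap\alg{H}$, so $\alg{P}\cap\alg{H}=(\alg{M}\cap\alg{H})\cdot\alg{N}$. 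Combined with the Levi case, this realizes $\alg{P}\cap\alg{H}$ as the parabolic subgroup of $\alg{H}$ having Levi factor $\alg{M}\cap\alg{H}$ and unipotent radical $\alg{N}$.

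For the converse and uniqueness, given a Levi (resp.\ parabolic) subgroup $\alg{L}$ of $\alg{H}$, I would pick a maximal torus $\alg{S}\subset\alg{L}$ and use Theorem \ref{th:intersectTorus} to lift it to the unique maximal torus $\alg{T}$ of $\alg{G}$ with $\alg{S}=\alg{T}\cap\alg{H}$. The Levi (resp.\ parabolic) subsystem of $\Phi(\alg{H},\alg{S})\cong\Phi(\alg{G},\alg{T})$ associated to $\alg{L}$ then determines a unique Levi (resp.\ parabolic) subgroup of $\alg{G}$ containing $\alg{T}$, whose intersection with $\alg{H}$ equals $\alg{L}$ by the forward direction. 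Uniqueness of the lift follows because two Levi (or parabolic) subgroups of $\alg{G}$ with the same intersection with $\alg{H}$ share a common maximal torus (uniquely determined by Theorem \ref{th:intersectTorus}) and the same set of root subgroups, since these already lie in $\alg{H}$.

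The main obstacle I anticipate is the reverse containment $\alg{M}\cap\alg{H}\subset\langle\alg{T}_\alg{H},\alg{U}_\alpha:\alpha\in\Phi_M\rangle$, which requires unpacking how $Z(\alg{M})^\circ$ interacts with $\alg{H}$; once this is settled via the central isogeny of $\alg{M}$, everything else is essentially a transfer of structural facts along the identification $\Phi(\alg{G},\alg{T})\cong\Phi(\alg{H},\alg{T}_\alg{H})$ already recorded in the text preceding the theorem.
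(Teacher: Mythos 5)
Your argument follows the same root-theoretic route as the paper: identify $\alg{M}$ (resp.~$\alg{P}$) as generated by a maximal torus $\alg{T}$ together with root subgroups, observe that all the $\alg{U}_\alpha$ already lie in $\alg{G}_{\der}\subset\alg{H}$, intersect, and then deduce the converse and uniqueness from Theorem~\ref{th:intersectTorus}. The one place you go beyond the paper is that you spell out the reverse containment $\alg{M}\cap\alg{H}\subset\langle\alg{T}_\alg{H},\alg{U}_\alpha:\alpha\in\Phi_M\rangle$ via the central isogeny of $\alg{M}$ and, separately, handle $\alg{P}$ through the Levi decomposition $\alg{P}=\alg{M}\alg{N}$, both of which the paper compresses into ``it is easy to see.'' These are correct and arguably clearer; they do not change the underlying strategy.
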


\begin{proof}
Without loss of generality, one can assume that $\alg{M}$ and $\alg{P}$ are standard Levi and parabolic subgroups of $\alg{G}$, meaning that they are generated by a maximal torus $\alg{T}$ and some of the root subgroups $\alg{U}_\alpha, \alpha\in\Phi = \Phi(\alg{G},\alg{T})$. Because $\alg{T}$ normalizes the root subgroups, it is easy to see that $\alg{M}\cap\alg{H}$ and $\alg{P}\cap\alg{H}$ are generated by the maximal torus $\alg{T}_\alg{H}= \alg{T}\cap\alg{H}$ and root subgroups $\alg{U}_\alpha$, meaning that they are standard Levi and parabolic subgroups of $\alg{H}$. That all Levi and parabolic subgroups of $\alg{H}$ arise uniquely this way follows from the uniqueness of $\alg{T}$ given $\alg{T}_\alg{H}$ given by Theorem~\ref{th:intersectTorus}.
\end{proof}

As a consequence of this last theorem, we obtain that the restriction to $H$ of a supercuspidal representation of $G$ is again supercuspidal, as the unipotent subgroups of their parabolic subgroups coincide.

The argument from Theorem~\ref{th:intersectLevi} can be used for other types of subgroups which are generated by a maximal torus and root subgroups. For instance, given a semisimple element $s\in \alg{H}$, the previous argument can be applied on the description provided for $C_{\alg{G}}(s)^\circ$ in \cite[Theorem 3.5.3]{Carter:1993} to deduce $C_{\alg{G}}(s)^\circ\cap\alg{H} = C_{\alg{H}}(s)^\circ$. We may also apply this type of argument to show that $H_{x,r} = G_{x,r}\cap H$ for all $r\geq 0$.

\begin{remark}\label{rem:WLOGy}
Because $[\alg{H},\alg{H}] = \alg{G}_{\der}$, given a maximal $F$-split torus $\alg{T}$ of $\alg{G}$, we have that $\mathcal{A}^{\mathrm{red}}(\alg{G},\alg{T},F) = \mathcal{A}^{\mathrm{red}}(\alg{H},\alg{T}_\alg{H},F)$, where $\alg{T}_\alg{H} = \alg{T}\cap\alg{H}$, and ${\mathcal{B}^{\mathrm{red}}(\alg{G},F) = \mathcal{B}^{\mathrm{red}}(\alg{H},F)}$. We then have a natural embedding
$\mathcal{A}(\alg{H},\alg{T}_\alg{H},F)\hookrightarrow \mathcal{A}(\alg{G},\alg{T},F)$
via the inclusion ${X_*(Z(\alg{H}),F)\subset X_*(Z(\alg{G}),F)}$. So, given a point $x\in\mathcal{A}(\alg{G},\alg{T},F)$, one can write things like $H_{x,r}$ and $H_{x,r^+}$, $r\geq 0$, without loss of generality by Lemma~\ref{lem:[x]}. 
\end{remark}

As a consequence of having $H_{x,r} = G_{x,r}\cap H$ for all $r\geq 0$, it follows from the second isomorphism theorem that $H_{x,r:r^+} \simeq \quo{H_{x,r}G_{x,r^+}}{G_{x,r^+}}$, where this isomorphism maps $hH_{x,r^+}$ to $hG_{x,r^+}$ for all $h\in H_{x,r}$. As a result, we obtain the following two lemmas.

\begin{lemma}\label{lem:isores}
Let $e_H: \h_{x,r:r^+}\rightarrow H_{x,r:r^+}$ be Adler's isomorphism from \cite[Section 1.5]{Adler:1998}. Then we may view $e_H$ as the restriction to $\h_{x,r}$ of $e$ and we write $e|_{\h_{x,r}} = e_H$.
\end{lemma}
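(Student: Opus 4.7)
The plan is to use the fact that Adler's construction is functorial with respect to the inclusion $\alg{H}\hookrightarrow\alg{G}$, so the verification reduces to checking compatibility of the mock exponential with the affine root and toral decompositions shared by $\alg{G}$ and $\alg{H}$.

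First I would recall that after base change to the tame splitting field $E$, the Moy-Prasad quotient $\g_{x,r:r^+}$ decomposes as a direct sum indexed by the affine roots taking the value $r$ at $x$, together with a toral contribution from $\torus$; the analogous product decomposition exists for $G_{x,r:r^+}$. Adler's isomorphism $e$ is then built summand-by-summand: on each affine root subspace via the canonical identification of the one-parameter Lie algebra with its associated affine root subgroup, and on the toral part via an analogous construction for $\torus_{x,r:r^+}\simeq T_{x,r:r^+}$.

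Next I would invoke the discussion preceding Theorem~\ref{th:intersectLevi}: the root systems $\Phi(\alg{H},\alg{T}_\alg{H})$ and $\Phi(\alg{G},\alg{T})$ are identified via restriction, and the corresponding root subgroups $\alg{U}_\alpha$ coincide. Combined with the already-established equalities $H_{x,r}=G_{x,r}\cap H$ and its Lie algebra analog (together with the resulting embedding $\h_{x,r:r^+}\hookrightarrow \g_{x,r:r^+}$ coming from the second isomorphism theorem), the affine root decomposition of $\h_{x,r:r^+}$ is precisely the sub-sum of that of $\g_{x,r:r^+}$ consisting of the same affine root summands, with the toral contribution from $\torus$ replaced by that from $\tder$. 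Since $e$ is defined on each summand separately and the summand-wise definitions depend only on the affine root subgroup (respectively, the torus), the restriction of $e$ to $\h_{x,r:r^+}$ lands in $H_{x,r:r^+}$ and matches $e_H$ on every affine root piece.

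The main obstacle is verifying compatibility on the toral piece, where Adler's construction depends on auxiliary choices. Here I would exploit the fact that $\alg{T}_\alg{H}=\alg{T}\cap\alg{H}$ is a subtorus of $\alg{T}$ which also splits over the tame extension $E$, so any $E$-splitting of $\alg{T}$ can be arranged to restrict to an $E$-splitting of $\alg{T}_\alg{H}$. With such compatible choices, Adler's toral identification for $\alg{G}$ restricts to the one used in the definition of $e_H$, which together with the previous step yields the desired equality $e|_{\h_{x,r}}=e_H$.
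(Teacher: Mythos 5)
Your proposal is correct and takes essentially the same route as the paper: both arguments follow Adler's construction of $e$ step by step and observe that every step restricts compatibly from $\alg{G}$ to $\alg{H}$, using $H_{x,r}=G_{x,r}\cap H$, the resulting identification $H_{x,r:r^+}\simeq H_{x,r}G_{x,r^+}/G_{x,r^+}$, and the agreement of root data for $\alg{G}$ and $\alg{H}$. Where the paper simply points to Hakim's diagrammatic summary of Adler's steps and says to restrict each one, you unfold what those steps contain (affine root and toral summands over $E$, with compatible splitting choices), but the underlying argument is the same.
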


\begin{proof}
First note that the map $e$ can be viewed as a map $\g_{x,r} \rightarrow G_{x,r:r^+}$ with kernel $\g_{x,r^+}$.

One simply needs to follow the steps of Adler's construction of $e$ in \cite[Section 1.5]{Adler:1998}, which Hakim summarized into a diagram in \cite[Section 3.4]{Hakim:2018}, and take restrictions to the corresponding subgroups of $\mathfrak{h}$ and $H$. This will map $\h_{x,r}$ into $\quo{H_{x,r}G_{x,r^+}}{G_{x,r^+}}$.

Letting $\iso$ be the isomorphism from $H_{x,r:r^+}$ to $\quo{H_{x,r}G_{x,r^+}}{G_{x,r^+}}$, we obtain $e|_{\h_{x,r}} = \iso\circ e_H$, which we write symbolically as $e|_{\h_{x,r}} = e_H$.
\end{proof}

\begin{lemma}\label{lem:HxGx}
Let $\mathcal{H}_x$ denote the reductive quotient of $\alg{H}$ at $x$. Then we may view $\mathcal{H}_x$ as a subgroup of $\mathcal{G}_x$ that contains $[\mathcal{G}_x,\mathcal{G}_x]$.
\end{lemma}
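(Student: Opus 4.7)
The plan is to reduce the lemma to the two ingredients already established: the compatibility $H_{x,r}=G_{x,r}\cap H$ noted right after Theorem~\ref{th:intersectLevi}, and the structural fact from Section~\ref{sec:HandG} that $\alg{G}$ and $\alg{H}$ are generated, over a common maximal torus, by exactly the same root subgroups (so that $\alg{G}_{\der}=\langle\alg{U}_\alpha:\alpha\in\Phi\rangle\subset\alg{H}$). The first ingredient produces the embedding $\mathcal{H}_x\hookrightarrow\mathcal{G}_x$, and the second yields the containment $[\mathcal{G}_x,\mathcal{G}_x]\subset\mathcal{H}_x$.

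First I would construct the embedding. Applied at $r=0$ and $r=0^+$, the identity $H_{x,r}=G_{x,r}\cap H$ gives $H_{x,0^+}=H_{x,0}\cap G_{x,0^+}$, so the second isomorphism theorem identifies $H_{x,0:0^+}$ with the subgroup $H_{x,0}G_{x,0^+}/G_{x,0^+}$ of $G_{x,0:0^+}$. This realizes $\mathcal{H}_x(\res)$ inside $\mathcal{G}_x(\res)$. Repeating the argument after base change to $F^{\un}$ gives the same inclusion over $\resun$, which by the standard criterion (an injective homomorphism between reductive groups over a field is a closed immersion) upgrades to a closed immersion of algebraic groups $\mathcal{H}_x\hookrightarrow\mathcal{G}_x$ over $\res$.

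Next I would establish $[\mathcal{G}_x,\mathcal{G}_x]\subset\mathcal{H}_x$. The quick group-theoretic version is: $[G_{x,0},G_{x,0}]\subset[G,G]=\alg{G}_{\der}(F)\subset H$, and of course $[G_{x,0},G_{x,0}]\subset G_{x,0}$, so $[G_{x,0},G_{x,0}]\subset G_{x,0}\cap H=H_{x,0}$. Passing to the quotient by $G_{x,0^+}$, the commutator subgroup of $\mathcal{G}_x(\res)$ lands inside $\mathcal{H}_x(\res)$, and the same holds over $\resun$. To promote this to an inclusion of algebraic groups, one uses that $\mathcal{G}_x$ is connected reductive, so its (algebraic) derived subgroup is generated as a variety by the images of the root subgroups $\alg{U}_\alpha$ attached to the affine roots vanishing at $x$. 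These root subgroups already lie in $\alg{G}_{\der}\subset\alg{H}$, and by Theorems~\ref{th:intersectTorus} and \ref{th:intersectLevi} their reductions mod $G_{x,0^+}$ are precisely the root subgroups of $\mathcal{H}_x$ relative to $\mathcal{T}_x:=(\alg{T}\cap\alg{H})_x$. Hence $[\mathcal{G}_x,\mathcal{G}_x]$ is generated by subgroups that are already contained in $\mathcal{H}_x$, finishing the argument.

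The main obstacle is bookkeeping rather than depth: the paper's notation conflates $\mathcal{G}_x$ and its $\res$-points, so one must be careful to argue over $\resun$ whenever a scheme-theoretic statement is needed, and to invoke the root-subgroup description of $[\mathcal{G}_x,\mathcal{G}_x]$ rather than relying on the abstract commutator subgroup of $\mathcal{G}_x(\res)$, which need not equal $[\mathcal{G}_x,\mathcal{G}_x](\res)$ when $\res$ is small. Once that care is taken, both statements follow directly from the already-established relationship $H_{x,r}=G_{x,r}\cap H$ and the shared root-subgroup structure of $\alg{G}$ and $\alg{H}$.
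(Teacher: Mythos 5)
Your proposal is correct and follows essentially the same line as the paper: the embedding $\mathcal{H}_x\hookrightarrow\mathcal{G}_x$ comes from $H_{x,0^+}=H_{x,0}\cap G_{x,0^+}$ and the second isomorphism theorem, and the containment $[\mathcal{G}_x,\mathcal{G}_x]\subset\mathcal{H}_x$ comes from $[G_{x,0},G_{x,0}]\subset G_{x,0}\cap\alg{G}_{\der}(F)\subset H_{x,0}$. The only real difference is that the paper sidesteps all of your worries about abstract versus algebraic commutator subgroups simply by identifying the reductive quotients with their $\resun$-points from the outset (over an algebraically closed field these coincide for connected reductive groups), so the extra appeal to root subgroups and the closed-immersion criterion are unnecessary; also note that $[G,G]=\alg{G}_{\der}(F)$ is an overstatement over a general field $F$ (only the inclusion $[G,G]\subset\alg{G}_{\der}(F)$ is needed and true), and ``injective homomorphism of reductive groups is a closed immersion'' requires care in characteristic $p$ (think of Frobenius), though neither imprecision affects the conclusion.
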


\begin{proof}
We identify the reductive groups with their $\resun$-points. By what precedes above, we have that $\mathcal{H}_x(\resun) \simeq \quo{\alg{H}(F^\un)_{x,0}\alg{G}(F^\un)_{x,0^+}}{\alg{G}(F^\un)_{x,0^+}} \subset \mathcal{G}_x(\resun)$. 

Furthermore,  $$[\mathcal{G}_x(\resun),\mathcal{G}_x(\resun)] = \quo{[\alg{G}(F^\un)_{x,0},\alg{G}(F^\un)_{x,0}]\alg{G}(F^\un)_{x,0^+}}{\alg{G}(F^\un)_{x,0^+}}.$$ The subgroup $[\alg{G}(F^\un)_{x,0},\alg{G}(F^\un)_{x,0}]$ is contained in both $\alg{G}(F^\un)_{x,0}$  and  ${\alg{G}_{\der}(F^\un) \subset \alg{H}(F^\un)}$. Therefore, $[\alg{G}(F^\un)_{x,0},\alg{G}(F^\un)_{x,0}]  \subset \alg{H}(F^\un)_{x,0}$, and ${[\mathcal{G}_x(\resun),\mathcal{G}_x(\resun)]\subset \quo{\alg{H}(F^\un)_{x,0}\alg{G}(F^\un)_{x,0^+}}{\alg{G}(F^\un)_{x,0^+}} \simeq \mathcal{H}_x(\resun)}$. 
\end{proof}

\section{Constructing Supercuspidal Representations}\label{sec:reviewSC}

In this section, we provide a summary of the J.K.~Yu Construction, starting from the definition of a $G$-datum $\Psi$, and describing the steps that need to be followed in order to construct the irreducible supercuspidal representation $\pi_G(\Psi)$. We also recall Hakim and Murnaghan's notion of $G$-equivalence for $\Psi$, which determines the equivalence class of $\pi_G(\Psi)$.

\subsection{The Datum}\label{sec:datum}

The datum for constructing supercuspidal representations is composed of five elements, each satisfying interrelated conditions \cite{Yu:2001,HM:2008}. We begin by stating
these conditions as axioms and will expand on some of the more technical axioms
after the statement of the definition.

\begin{definition}\label{def:datum}
A sequence $\Psi = (\vec{\alg{G}},y,\vec{r},\rho,\vec{\phi})$ is a (generic cuspidal) $G$-datum if and only if:

\begin{enumerate}
\item[D1)] $\vec{\alg{G}}$ is a tamely ramified twisted Levi sequence $\vec{\alg{G}}=(\alg{G}^0,\alg{G}^1,\dots ,\alg{G}^d)$ in $\alg{G}$, where $\alg{G}^d = \alg{G}$ and such that $\quo{Z(\alg{G}^0)}{Z(\alg{G})}$ is $F$-anisotropic;

\item[D2)] $y$ is a point in $\mathcal{B}(\alg{G},F)\cap \mathcal{A}(\alg{G},\alg{T},E)$, where $\alg{T}$ is a maximal torus of $\alg{G}^0$ (and maximal in all $\alg{G}^i$), and $E$ is a Galois tamely ramified splitting field of $\alg{T}$ (hence of $\vec{\alg{G}}$);

\item[D3)] $\vec{r} = (r_0,r_1,\dots , r_d)$ is a sequence of real numbers satisfying ${0<r_0<r_1<\cdots <r_{d-1}\leq r_d}$ if $d>0$, $0\leq r_0$ if $d=0$;

\item[D4)] $\rho$ is an irreducible representation of $K^0=G^0_{[y]}$ such that $\rho |_{G^0_{y,0^+}}$ is 1-isotypic and $\Ind\limits_{K^0}^{G^0}\rho$ is irreducible supercuspidal;

\item[D5)] $\vec{\phi} = (\rep{0},\rep{1},\dots ,\rep{d})$ is a sequence of quasicharacters, where $\rep{i}$ is a $G^{i+1}$-generic character of $G^i$ (relative to $y$) of depth $r_i$ for $0\leq i \leq d-1$. If $r_{d-1}<r_d$, we assume $\rep{d}$ is of depth $r_d$, otherwise $\rep{d}=1$.
\end{enumerate}
\end{definition}

Axioms D2 and D4 are related to Moy and Prasad's construction of depth-zero supercuspidal representations \cite[Theorem 6.8]{MP:1996}. The point $y$ is not arbitrary, and is such that $[y]$ is a vertex of $\mathcal{B}^{\red}(G^0,F)$ so that $G^0_{y,0}$ is a maximal parahoric subgroup of $G^0$. Furthermore, $K^0 = N_{G^0}(G^0_{y,0})$ \cite[Lemma 3.3]{Yu:2001}. Because $\rho$ induces to a depth-zero supercuspidal representation, this also means that $\rho|_{G^0_{y,0}}$ contains the pullback of a cuspidal representation of $G^0_{y,0:0^+}$. Having a depth-zero supercuspidal representation embedded in the datum means that the J.K.~Yu Construction builds on Moy and Prasad's construction, and contains it as a special case.

Axioms D3 and D5 are clearly intertwined. In fact, because $\vec{r}$ is implicit in $\vec{\phi}$, one can omit it from the datum and simply use a 4-tuple $\Psi = (\vec{\alg{G}},y,\rho,\vec{\phi})$. The notion of genericity is quite technical, and in order to define it, we introduce the notation from \cite[Section 3.1]{HM:2008}. Let \Bold{$\mathfrak{t}$} $= \Lie(\alg{T})$ and $\mathfrak{t} = $ \Bold{$\mathfrak{t}$}$(F)$. For all $0\leq i\leq d-1$ let \Bold{$\mathfrak{z}$}$^i$ denote the center of  \Bold{$\g$}$^i= \Lie(\alg{G}^i)$, and \Bold{$\mathfrak{z}$}$^{i,*}$ its dual. We set $\z =$ \Bold{$\mathfrak{z}$}$^i(F)$ and ${\zstar=\text{\Bold{$\mathfrak{z}$}}^{i,*}(F)}$. We also have $\z_{r_i} = \z\cap \torus_{r_i}$ and $\z_{r_i^+} = \z\cap \torus_{r_i^+}$ and define  
$$\zstarri= \{X^* \in \zstar: X^*(Y) \in \p_F \text{ for all } Y\in \z_{r_i^+}\}.$$ We also fix a character $\psi$ of $F$ which is nontrivial on $\mathcal{O}_F$ and trivial on $\p_F$. Given $a\in \Phi(\alg{G},\alg{T})$ we let $H_a = d\check{a}(1)$, where $\check{a}$ is the corresponding coroot. Given our underlying assumption on $p$, \cite[Lemma 8.1]{Yu:2001} implies that we can simplify the definition of genericity from \cite[Definition 3.9]{HM:2008} to the following.

\begin{definition}\label{def:genericity}
Set $0\leq i\leq d-1$. A quasicharacter $\chi$ of $G^i$ is $G^{i+1}$-generic of depth $r_i$ (relative to $y$) if and only if $\chi|_{G^i_{y,r_i^+}} = 1$ and there exists an element $X^*\in \zstarri$, satisfying $\val(X^*(H_a)) = -r_i$ for all $a\in\Phi(\alg{G}^{i+1},\alg{T})\setminus\Phi(\alg{G}^i,\alg{T})$, such that $X^*$ realizes the restriction of $\chi$ to $G^i_{y,r_i}$. That is,  $\chi(e(Y+\g^i_{y,r_i^+})) = \psi(X^*(Y))$ for all $Y\in \g^i_{y,r_i}$, where $e: \g^i_{y,r_i:r_i^+} \rightarrow G^i_{y,r_i:r_i^+}$ is Adler's isomorphism from \cite[Section 1.5]{Adler:1998}.
\end{definition}

\begin{remark}
In \cite[Definition 3.9]{HM:2008}, it explicitly mentions $\chi|_{G^i_{y,r_i}}\neq 1$ as a required condition for genericity. However, this is actually a consequence of having $\chi|_{G^i_{y,r_i}}$ realized by an element $X^*\in \zstarri$ as per Definition~\ref{def:genericity}.
\end{remark}

\begin{remark}
Under our assumption on $p$, a character $\chi$ of $G^i$ is $G^{i+1}$-generic relative to $y$ if and only if it is $G^{i+1}$-generic relative to $x$ for all $x\in\mathcal{B}(\alg{G}^i,F)$ \cite[Lemma 4.7]{Murnaghan:2011}.
\end{remark}

One interpretation we can give to the notion of genericity comes from \cite[Remark 6.4]{Murnaghan:2011} and is the following. If $\chi^i$ is a quasicharacter of $G^i$ which is $G^{i+1}$-generic of depth $r_i$, this means that $\chi^i|_{G^i_{y,r_i}}$ is not the restriction to $G^i_{y,r_i}$ of a quasicharacter of a twisted Levi subgroup $\dot{G}$ of $G^{i+1}$ such that $G^i\subset \dot{G}$. So in a sense, genericity means that we cannot obtain the restriction to $G^i_{y,r_i}$ from a bigger twisted Levi subgroup.

\subsection{Constructing $\pi_G(\Psi)$}\label{sec:construction}

From the $G$-datum $\Psi$, Yu constructs an open compact-mod-center subgroup $K^i$ of $G^i$ for all $0\leq i\leq d$. After setting $K^0 = G^0_{[y]}$, one can define $K^{i+1} \defeq K^0G^1_{y,s_0}\cdots G^{i+1}_{y,s_i}$ for all $0\leq i\leq d-1$, where $s_i = r_i/2$. Using $(\rho,\vec{\phi})$, Yu constructs an irreducible representation, $\kappa_G(\Psi)$, of $K^d$ such that $\pi_G(\Psi)\defeq \Ind_{K^d}^G\kappa_G(\Psi)$ is irreducible supercuspidal of depth $r_d$. We summarize the construction of $\kappa_G(\Psi)$ into three steps.  

\textbf{Step 1 (extension):} We view $\rep{i}$ as a character of $K^i$ by restriction. As a first step, Yu extends the character $\rep{i}$ to a (possibly higher-dimensional) representation $\prim{i}$ of $K^{i+1}$ for all $0\leq i\leq d-1$. This extension is nontrivial and requires the theory of Heisenberg groups and Weil representations in most cases. To extend the character $\rep{i}$ to a representation of $K^{i+1}$, Yu introduces subgroups of $\alg{G}(E)$, which are denoted $J^{i+1}(E)$ and $J^{i+1}_+(E)$, and are defined as follows:

\begin{align*}
J^{i+1}(E) = \langle \alg{T}(E)_{r_i}, \alg{U}_a(E)_{y,r_i}, \alg{U}_b(E)_{y,s_i} : a\in \Phi(\alg{G}^i,\alg{T}), b\in \Phi(\alg{G}^{i+1},\alg{T})\setminus \Phi(\alg{G}^i,\alg{T})  \rangle,\\
J^{i+1}_+(E) = \langle \alg{T}(E)_{r_i}, \alg{U}_a(E)_{y,r_i}, \alg{U}_b(E)_{y,s_i^+} : a\in \Phi(\alg{G}^i,\alg{T}), b\in \Phi(\alg{G}^{i+1},\alg{T})\setminus \Phi(\alg{G}^i,\alg{T})  \rangle,
\end{align*}
where the subgroup $\alg{U}_\alpha(E)_{y,r}$ denotes a filtration subgroup of $\alg{U}_\alpha(E)$.

We write $J^{i+1}$ and $J^{i+1}_+$ for $J^{i+1}(E)\cap G^{i+1}$ and $J^{i+1}_+(E)\cap G^{i+1}$, respectively. Yu also used the notation $(\alg{G}^i(E),\alg{G}^{i+1}(E))_{y,(r_i,s_i)}$ for $J^{i+1}(E)$, and $(G^i,G^{i+1})_{y,(r_i,s_i)}$ for $J^{i+1}$. Similarly, ${J^{i+1}_+ = (G^i,G^{i+1})_{y,(r_i,s_i^+)}}$. We have that $K^{i+1} = K^iJ^{i+1} = K^iG^{i+1}_{y,s_i}$ for all $0\leq i\leq d-1$ \cite[Section 3.1]{HM:2008}.

Given the previous group descriptions, in order to extend the character $\rep{i}$ of $K^i$ to $K^{i+1}$, we must find a way to extend it over $J^{i+1}$, or $G^{i+1}_{y,s_i}$. Yu explains how to extend $\rep{i}$ to a character $\widehat{\rep{i}}$ of $K^iG^{i+1}_{y,s_i^+}$ in \cite[Section 4]{Yu:2001}. Indeed, $\widehat{\rep{i}}$ is the (unique) character of $K^iG^{i+1}_{y,s_i^+}$ that agrees with $\rep{i}$ on $K^i$ and is trivial on $(G^i,G^{i+1})_{y,(r_i^+,s_i^+)}$ \cite[Section 3.1]{HM:2008}. This extension is simple because $s_i$ is chosen specifically so that it is the smallest number which makes $G^i_{y,s_i^+:r_i^+}$ and $G^{i+1}_{y,s_i^+:r_i^+}$ abelian, making it easy to extend $\rep{i}|_{G^i_{y,s_i^+}}$ over $G^{i+1}_{y,s_i^+}$.

This extends $\rep{i}$ to a character of $K^iG^{i+1}_{y,s_i^+} = K^iJ^{i+1}_+$, but we need to extend it further to $K^{i+1}=K^iJ^{i+1}$. There are two cases to consider.

\textit{Case 1:} When $J^{i+1}=J^{i+1}_+$, we have obtained the desired extension to $K^{i+1}$, which is defined by $\prim{i}(kj) = \rep{i}(k)\widehat{\rep{i}}(j)$ for all $k\in K^i, j\in J^{i+1}$.

\textit{Case 2:} When $J^{i+1}\neq J^{i+1}_+$, we cannot simply extend the quasicharacter $\rep{i}$ to be a quasicharacter of $K^iG^{i+1}_{y,s_i} = K^iJ^{i+1}$. This is because genericity (further) implies that $\rep{i}|_{G^i_{y,s_i}}$ cannot be extended to a character of $G^{i+1}_{y,s_i}$. We must then construct a higher dimensional representation. Yu does this using the theory of Heisenberg groups and Weil representations. The theory of Heisenberg groups and Weil representations is extensively discussed in \cite{Yu:2001,HM:2008,Nevins:2015}.

To summarize briefly, Yu shows that the quotient $\W{i} = \quo{J^{i+1}}{J^{i+1}_+}$ is a symplectic vector space. Let $\xiG{i} = \widehat{\rep{i}}|_{J^{i+1}_+}$ and set $\N{i} = \ker(\xiG{i})$. The quotient $\Heis{i} = \quo{J^{i+1}}{\N{i}}$ is a Heisenberg $p$-group with center $\Z{i} = \quo{J^{i+1}_+}{\N{i}}$ and such that $\quo{\Heis{i}}{\Z{i}} \simeq \W{i}$. Furthermore, $\Heis{i}\simeq \W{i}\boxtimes\Z{i}$, where $ \W{i}\boxtimes\Z{i}$ denotes couples $(w,z), w\in \W{i},z\in\Z{i}$ under a specific multiplication rule. Since $G^i_{y,r_i}\subset J^{i+1}_+$ and $\rep{i}$ is of depth $r_i$, this means that $\xiG{i}$ is a nontrivial central character. The version of the Stone-von Neumann Theorem as stated in \cite[Theorem 3]{MC:2012} says that, up to isomorphism, there is a unique irreducible representation $\eta_{\xiG{i}}$ of $\Heis{i}$ having $\xiG{i}$ as its central character.

Now, let $\Sp(\W{i})$ denote the symplectic group of $\W{i}$ 
and set $\Sp(\Heis{i})$ to be the set of automorphisms of $\Heis{i}$ that act trivially on $\Z{i}$. The group $\Sp(\W{i})$ can be viewed as a subgroup of $\Sp(\Heis{i})$ as $\Heis{i}\simeq\W{i}\boxtimes\Z{i}$. Given the representation $\eta_{\xiG{i}}$ of $\Heis{i}$ above, the Stone-von Neumann Theorem tells us that $^g\eta_{\xiG{i}} \simeq \eta_{\xiG{i}}$ for all $g\in \Sp(\W{i})$ since $^g\eta_{\xiG{i}}$ and $\eta_{\xiG{i}}$ both have $\xiG{i}$ as their central character. So, for all $g\in\Sp(\W{i})$, there exists an intertwining map $w(g)$ which intertwines $^g\eta_{\xiG{i}}$ and $\eta_{\xiG{i}}$. 

It turns out that the maps $w(g),g\in\Sp(\W{i})$, can be chosen compatibly to make a representation of $\Sp(\W{i})$, which is called the \emph{Weil representation}. This means that the representation $\eta_{\xiG{i}}$ can be naturally extended to a representation $\widehat{\eta_{\xiG{i}}}$ of ${\Sp(\W{i})\ltimes \Heis{i}}$, where $w=\widehat{\eta_{\xiG{i}}}|_{\Sp(\W{i})}$ and $\eta_{\xiG{i}}=\widehat{\eta_{\xiG{i}}} |_{\Heis{i}}$. 

The representation $\widehat{\eta_{\xiG{i}}}$ is called the \emph{Heisenberg-Weil lift} of $\eta_{\xiG{i}}$. When $p\neq 3$ and ${\dim\W{i} >2}$, the Heisenberg-Weil lift is unique. When $p=3$ and $\dim\W{i} = 2$, three lifts are possible, but Hakim and Murnaghan single out one of them as being the optimal choice \cite[Definition 2.17]{HM:2008}. 

Using the action by conjugation of $K^i$ on $J^{i+1}$, one can think of $K^i$ as a subgroup of $\Sp(\W{i})\subset \Sp(\Heis{i})$ up to homomorphism. Hence, composing this homomorphism with the Heisenberg-Weil lift obtained from the central character $\xiG{i}$, we obtain a new representation $\weil{i}$ of $K^i\ltimes \Heis{i}$. 

The representation $\prim{i}$ is then defined as $\prim{i}(kj) = \rep{i}(k)\weil{i}(k,j\N{i})$ for all $k\in K^i, j\in J^{i+1}$. Note that $\prim{i}$ is no longer a character in this case, but rather a representation whose dimension is related to the size of the quotient $\quo{J^{i+1}}{J^{i+1}_+}$.

\textbf{Step 2 (inflation):} For the second step, the representations $\prim{i}, 0\leq i\leq d-2$, are extended further to representations of $K^d$ by a process called inflation. The resulting representations are denoted by $\kap{i}, 0\leq i\leq d-2$. Given that $K^d = K^{i+1}J^{i+2}\dots J^d$, every element of $K^d$ is of the form $kj$ for some $k\in K^{i+1}, j\in J^{i+2}\cdots J^d$. For all $k\in K^{i+1}, j\in J^{i+2}\cdots J^d$, we define $\kap{i}(kj)\defeq\prim{i}(k)$ \cite[Section 3.4]{HM:2008}. We may also use the notation $\inf_{K^{i+1}}^{K^d}\prim{i}$ for $\kap{i}$. Note that the representation $\prim{i}$ can be inflated to any $K^j, j > i +1$.

For consistency of notation, we let $\kap{d-1} = {\rep{d-1}}'$ and $\kap{d} = \rep{d}$. The representation $\rho$ can also be inflated to $K^d$, and we denote its inflation by $\kappa^{-1}$, treating $\rho$ as the element of index $-1$ in the sequence $(\rho,\vec{\phi})$. 

\textbf{Step 3 (multiplication):}
After we have inflated all the extensions to $K^d$, we set ${\kappa_G(\Psi) = \kap{-1}\otimes \kap{0}\otimes \cdots\otimes \kap{d}}$. Figure~\ref{fig:JKYuConstruction} below summarizes the construction of $\kappa_G(\Psi)$. 

\begin{figure}[!htbp]
\center \begin{tikzpicture}
  \matrix (m) [matrix of math nodes,row sep=0.8em,column sep=0.9em,minimum width=0.1em]
  {
     {} & \text{$\left( \rep{0}, K^0\right)$}    & \text{$\cdots$} &\text{$\left(\rep{d-2},K^{d-2}\right)$}  & \text{$\left(\rep{d-1}, K^{d-1}\right)$}  & \text{$\left(\rep{d}, K^d\right)$}\\
        {} & {} & {} & {} & {}  &{} & {} \\
    \text{$\left(\rho, K^0\right)$} &\text{$\left(\prim{0}, K^1\right)$} & \text{$\cdots$} &\text{$\left(\prim{d-2},K^{d-1}\right)$}& \text{$\left(\prim{d-1},K^d\right)$}  &  \text{$\left(\prim{d}, K^d\right)$}\\
      {} & {} & {} & {}  & {}  & {}  & {} & {} \\
     \text{$\left(\kap{-1}, K^d\right)$} & \text{$\left(\kap{0},K^d\right)$}  & \text{$\cdots$} &\text{$\left(\kap{d-2},K^d\right)$}&  \text{$\left(\kap{d-1},K^d\right)$}  &  \text{$\left(\kap{d}, K^d\right)$}\\
    };
  \path[-stealth]
    (m-1-2) edge node [right] {extend} (m-3-2)
    (m-1-4) edge node [right] {extend} (m-3-4)
    (m-1-5) edge node [right] {extend} (m-3-5)
    (m-1-6) edge node [right] {=} (m-3-6)
    (m-3-1) edge node [right] {inflate} (m-5-1)
    (m-3-2) edge node [right] {inflate} (m-5-2)
    (m-3-4) edge node [right] {inflate} (m-5-4)
    (m-3-5) edge node [right] {=} (m-5-5)
    (m-3-6) edge node [right] {=} (m-5-6)
;
\end{tikzpicture}
$\kappa_G(\Psi) = \kap{-1} \otimes \kap{0} \otimes \kap{1} \otimes \kap{2} \cdots \otimes \kap{d-2} \otimes \kap{d-1} \otimes \kap{d}$
\caption[Summary of the J.K.~Yu Construction for $G$.]{Summary for the construction of $\kappa_G(\Psi)$ in the J.K.~Yu Construction.}
\label{fig:JKYuConstruction}
\end{figure}

\begin{remark}
From the $G$-datum $\Psi$, Yu's construction allows us to recursively obtain a supercuspidal representation $\pi^i$ of $G^i$ of depth $r_i$ for each $0\leq i\leq d$.

When $\Psi = (\alg{G},y,\rho,1)$, we see that $\kappa_G(\Psi) = \rho$ and therefore $\pi_G(\Psi) = \Ind_{G_{[y]}}^{G}\rho$ is a depth-zero supercuspidal representation \cite[Theorem 6.8]{MP:1996}.
\end{remark}

\subsection{Equivalence Classes}

It is possible for different $G$-data to produce the same supercuspidal representation, that is $\Psi\neq\dot{\Psi}$ and $\pi_G(\Psi)\simeq\pi_G(\dot{\Psi})$. Hakim and Murnaghan showed in \cite{HM:2008} that the equivalence class of the representation $\pi_G(\Psi)$ depends on what they call the $G$-equivalence class of $\Psi$. 

To define the notion of $G$-equivalence for data, Hakim and Murnaghan introduce three ways of altering a $G$-datum $\Psi = (\vec{\alg{G}},y,\rho,\vec{\phi})$ into a different $G$-datum $\dot{\Psi}$. The first alteration is $G$-conjugation, in which $\dot{\Psi} = {^g\Psi} = ({^g\vec{\alg{G}}},g\cdot y, {^g\rho}, {^g\vec{\phi}})$. The second alteration is called an elementary transformation \cite[Definition 5.2]{HM:2008}, in which $\dot{\Psi} = (\vec{\alg{G}},\dot{y},\dot{\rho},\vec{\phi})$, where $[\dot{y}] = [y]$ and $\dot{\rho}\simeq \rho$. The last alteration introduced is that of refactorization \cite[Definition 4.19]{HM:2008}, in which the sequences $(\rho,\vec{\phi})$ and $(\dot{\rho},\vec{\dot{\phi}})$ are essentially twists of each other by specific characters. 

Now, two $G$-data $\Psi$ and $\dot{\Psi}$ are said to be $G$-equivalent if $\dot{\Psi}$ can be obtained from $\Psi$ by a finite sequence of refactorizations, $G$-conjugations and elementary transformations \cite[Definition 6.1]{HM:2008}. Hakim and Murnaghan show how the $G$-equivalence class of $\Psi$ determines the equivalence class of $\pi_G(\Psi)$ in \cite[Theorems 6.6 and 6.7]{HM:2008}, which we summarize below.

\begin{theorem}[{\cite[Theorems 6.6 and 6.7]{HM:2008}}]\label{th:HM67}
Suppose $\Psi = (\vec{\alg{G}},y,\rho,\vec{\phi})$ and ${\dot{\Psi} = (\vec{\dot{\alg{G}}},\dot{y},\dot{\rho},\vec{\dot{\phi}})}$ are $G$-data. Set $\phi = \prod\limits_{i=0}^d\rep{i} |_{G^0}$, $\dot{\phi} = \prod\limits_{i=0}^d\dot{\rep{i}} |_{\dot{G}^0}$. Then the following are equivalent:
\begin{enumerate}
\item[1)] $\pi_G(\Psi)\simeq \pi_G(\dot{\Psi})$.
\item[2)] $\Psi$ and $\dot{\Psi}$ are $G$-equivalent.
\item[3)]There exists $g\in G$ such that $K^0 = {^{g}\dot{K^0}}$, $\vec{\alg{G}} = {^g\vec{\dot{\alg{G}}}}$ and $\rho\otimes\phi\simeq {^g(\dot{\rho}\otimes\dot{\phi})}$.
\end{enumerate}
\end{theorem}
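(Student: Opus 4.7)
The plan is to prove the cyclic chain $(2) \Rightarrow (1) \Rightarrow (3) \Rightarrow (2)$, with the implication $(1) \Rightarrow (3)$ being the main obstacle.

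For $(2) \Rightarrow (1)$, I will verify that each of the three operations defining $G$-equivalence preserves $\pi_G(\Psi)$ up to isomorphism. $G$-conjugation is immediate since every ingredient of Yu's construction transforms functorially under conjugation. For an elementary transformation, the point is that $K^0 = G^0_{[y]}$ and every higher $K^i$ depends on $y$ only through $[y]$ (by Lemma~\ref{lem:[x]}), so replacing $y$ by $\dot{y} \in [y]$ leaves all groups in the construction unchanged, while replacing $\rho$ by an isomorphic $\dot{\rho}$ yields an isomorphic induced representation. Refactorization is the most delicate: I would track how the Heisenberg-Weil lift interacts with simultaneously twisting $\rho$ and the $\rep{i}$ by compatible characters, using the fact that the central character $\xiG{i}$ of $\Heis{i}$, and hence the Weil representation $\weil{i}$, is preserved under such a twist, so that the final tensor product $\kap{-1}\otimes\kap{0}\otimes\cdots\otimes\kap{d}$ is unchanged up to isomorphism.

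For $(3) \Rightarrow (2)$, after applying $G$-conjugation by $g$ to align the twisted Levi sequences and the depth-zero groups, the problem reduces to the case $\vec{\alg{G}} = \vec{\dot{\alg{G}}}$, $K^0 = \dot{K}^0$, and $\rho \otimes \phi \simeq \dot{\rho} \otimes \dot{\phi}$. I would then decompose the discrepancy between the two factorizations $\prod_i \rep{i}$ and $\prod_i \dot{\rep{i}}$ as a product of characters of the specified depths $r_i$, so that the transition from $\Psi$ to $\dot{\Psi}$ can be realized as a finite sequence of refactorizations (each moving a character of fixed depth between adjacent factors), concluded by an elementary transformation accounting for the residual isomorphism on $\rho$.

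For $(1) \Rightarrow (3)$, the main technical input is Mackey-type intertwining analysis: $\pi_G(\Psi) \simeq \pi_G(\dot{\Psi})$ forces, for some $g \in G$, a nonzero intertwining $\Hom_{K^d \cap \,{}^g\dot{K}^d}(\kappa_G(\Psi),\, {}^g\kappa_G(\dot{\Psi})) \neq 0$. The heart of the argument is to exploit the genericity condition (Definition~\ref{def:genericity}) to peel off the layers of the datum by descending through the depths $r_d > r_{d-1} > \cdots > r_0$: at depth $r_i$, nonvanishing of the intertwining together with the condition $\val(X^*(H_a)) = -r_i$ for the realizing element $X^* \in \zstarri$ on every $a \in \Phi(\alg{G}^{i+1},\alg{T}) \setminus \Phi(\alg{G}^i,\alg{T})$ forces ${}^g\dot{\alg{G}}^i = \alg{G}^i$, since a generic character determines the next term of the twisted Levi sequence via its centralizer structure. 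Iterating matches the full sequences $\vec{\alg{G}}$ and ${}^g\vec{\dot{\alg{G}}}$ and reduces the problem to a depth-zero intertwining, from which $K^0 = {}^g\dot{K}^0$ follows, and Frobenius reciprocity combined with the irreducibility of the compactly induced depth-zero supercuspidal yields $\rho \otimes \phi \simeq {}^g(\dot{\rho} \otimes \dot{\phi})$. The hard part will be organizing this inductive descent so that the element $g$ chosen at successive depths remains compatible, which requires careful bookkeeping of the centralizers and normalizers of the $\alg{G}^i$ inside $\alg{G}^{i+1}$.
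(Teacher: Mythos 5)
The paper does not prove this theorem; it is stated as a direct citation of Hakim and Murnaghan's Theorems 6.6 and 6.7 in \cite{HM:2008}, so there is no in-paper proof against which to compare your attempt. Your sketch does track the general shape of the Hakim--Murnaghan argument: for $(2)\Rightarrow(1)$, the refactorization case is exactly what \cite[Proposition~4.24]{HM:2008} establishes (and is quoted later in this paper's proof of Theorem~\ref{th:kappa}), and your treatment of $G$-conjugation and elementary transformation is fine; for $(3)\Rightarrow(2)$, realizing the discrepancy between $\prod_i\rep{i}$ and $\prod_i\dot{\rep{i}}$ as a sequence of refactorizations followed by an elementary transformation is the right plan.

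The substantial direction is $(1)\Rightarrow(3)$, and here your proposal is incomplete by your own admission. The inductive descent through depths $r_d>\cdots>r_0$, using genericity of each $\rep{i}$ to pin down ${}^g\dot{\alg{G}}^i=\alg{G}^i$, is the correct heuristic, but the compatibility of the intertwiner $g$ across levels is precisely the technical core of the Hakim--Murnaghan argument, resting on Yu's intertwining results and a careful analysis of how the Heisenberg--Weil pieces interact under conjugation; stopping at ``this requires careful bookkeeping'' does not establish the implication. As it stands this is a plausible outline rather than a proof. The present paper sidesteps the issue by citing \cite{HM:2008}, which is the right move unless you intend to reproduce a significant portion of that paper.
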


\section{Restricting Data}\label{sec:restrictingData}

Given a closed connected $F$-subgroup $\alg{H}$ that contains $\alg{G}_{\der}$ and a $G$-datum $\Psi$, we can construct a family of $H$-data using a very natural restriction process, which is provided as the main theorem of this section. We also show that this restriction process is compatible with Hakim and Murnaghan's notion of equivalence of data.

\subsection{Main Theorem}

\begin{theorem}\label{th:GderData}
Let $\Psi = (\vec{\alg{G}},y,\vec{r},\rho,\vec{\phi})$ be a $G$-datum. 
Let $\alg{H}^i = \alg{G}^i\cap\alg{H}$ and ${\repH{i} = \rep{i} |_{H^i}}$ for all $0\leq i\leq d$. Let $\tilde{r}$ denote the depth of $\repH{d}$.
\begin{enumerate}
\item[1)] If $\tilde{r} > r_{d-1}$, set $\vec{\phi_H}=(\repH{0},\dots,\repH{d-1},\repH{d})$ and $\vec{\tilde{r}} = (r_0,\dots,r_{d-1},\tilde{r})$.
\item[2)] If $\tilde{r}\leq r_{d-1}$, set $\vec{\phi_H} = (\repH{0},\cdots,\repH{d-1}\repH{d},1)$ and $\vec{\tilde{r}} = (r_0,\dots, r_{d-1},r_{d-1})$.
\end{enumerate}
Then, for each irreducible subrepresentation $\rho_\ell$ of $\rho_H \defeq \rho |_{K^0_H}$ with $K^0_H = H^0_{[y]}$, ${\Psi_\ell = (\vec{\alg{H}},y,\vec{\tilde{r}},\rho_\ell,\vec{\phi_H})}$ is an $H$-datum, where $\vec{\alg{H}} = (\alg{H}^0,\dots,\alg{H}^d)$.
\end{theorem}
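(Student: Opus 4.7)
The plan is to verify each of the five axioms D1--D5 for the proposed $H$-datum $\Psi_\ell = (\vec{\alg{H}}, y, \vec{\tilde{r}}, \rho_\ell, \vec{\phi_H})$, working through them in order and drawing systematically on the structural results of Section~\ref{sec:HandG} that relate $\alg{H}$ to $\alg{G}$ via intersection. Axioms D1, D2, D3 should reduce to essentially formal checks. For D1, since $\alg{G}^i$ is a tame twisted Levi in $\alg{G}^{i+1}$, meaning a Levi of $\alg{G}^{i+1}(E)$, applying Theorem~\ref{th:intersectLevi} at the level of $E$-points gives that $\alg{H}^i = \alg{G}^i \cap \alg{H}$ is a Levi of $\alg{H}^{i+1}$ over $E$, and tameness is inherited since the same splitting field $E$ works. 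The $F$-anisotropy of $\quo{Z(\alg{H}^0)}{Z(\alg{H})}$ would follow by embedding this quotient into $\quo{Z(\alg{G}^0)}{Z(\alg{G})}$ using the central isogeny $\alg{G} = Z(\alg{G})^\circ \alg{G}_{\der}$ together with $\alg{G}_{\der} \subset \alg{H}$. D2 is then immediate from Theorem~\ref{th:intersectTorus} and Remark~\ref{rem:WLOGy}, and D3 holds by construction of $\vec{\tilde{r}}$ in both cases.

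The main substance lies in D4. By Clifford theory, $\rho_H$ decomposes into a finite direct sum of irreducible representations of $K^0_H$, and each $\rho_\ell$ is such a component by definition. The 1-isotypicity of $\rho_\ell|_{H^0_{y,0^+}}$ is inherited from that of $\rho|_{G^0_{y,0^+}}$ via the identity $H^0_{y,0^+} = G^0_{y,0^+} \cap H^0$. To show that $\Ind_{K^0_H}^{H^0}\rho_\ell$ is irreducible supercuspidal, the Moy-Prasad criterion requires exhibiting a cuspidal representation of $\mathcal{H}^0_y$ whose inflation to $H^0_{y,0}$ sits inside $\rho_\ell|_{H^0_{y,0}}$. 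Here Lemma~\ref{lem:HxGx} is the key tool: $\rho$ contains the inflation of a cuspidal representation of $\mathcal{G}^0_y$, and its restriction along $\mathcal{H}^0_y \hookrightarrow \mathcal{G}^0_y$ is a direct sum of cuspidal representations of $\mathcal{H}^0_y$ since $\mathcal{H}^0_y$ contains $[\mathcal{G}^0_y, \mathcal{G}^0_y]$ (this is a standard fact for restrictions between finite reductive groups sharing a derived subgroup). Matching these cuspidal components to the Clifford components $\rho_\ell$ delivers D4.

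For D5, the triviality condition $\repH{i}|_{H^i_{y, r_i^+}} = 1$ follows from the intersection identity for Moy-Prasad filtrations. The crucial algebraic observation is that $\z_H = \z \cap \h^i \subset \z$, which holds because $[\g^i, \g^i] = [\h^i, \h^i] \subset \h^i$ forces any element central in $\h^i$ to be central in $\g^i$. Consequently, restricting the realizing element $X^* \in \zstarri$ produces an element of $\zderstarri$ that realizes $\repH{i}|_{H^i_{y,r_i}}$, where the compatibility with Adler's isomorphism is exactly Lemma~\ref{lem:isores}. The nondegeneracy condition $\val(X^*|_{\z_H}(H_a)) = -r_i$ for $a \in \Phi(\alg{H}^{i+1}, \alg{T}_\alg{H}) \setminus \Phi(\alg{H}^i, \alg{T}_\alg{H})$ transfers from the analogous condition on $G$ using the identification of root systems and coroots discussed in Section~\ref{sec:HandG}. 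In Case 2, we must further verify that $\repH{d-1}\repH{d}$ remains $\alg{H}^d$-generic of depth $r_{d-1}$: when $\tilde{r} < r_{d-1}$, the character $\repH{d}$ is trivial on $H^{d-1}_{y, r_{d-1}}$ so the same $X^*$ for $\repH{d-1}$ works; when $\tilde{r} = r_{d-1}$, the realizing element for $\repH{d}$ lies in $(\mathfrak{z}^d)^*$ which kills all coroots of $\alg{H}^d$, so nondegeneracy is unaffected. I expect the main obstacle to be D4 --- specifically, the step of precisely matching the Clifford decomposition of $\rho_H$ with the decomposition of the underlying cuspidal representation of $\mathcal{G}^0_y$ restricted to $\mathcal{H}^0_y$, and confirming that the Moy-Prasad exhaustion applies cleanly to each $\rho_\ell$ --- while D5 in Case 2 requires delicate but routine bookkeeping with depths and realizing elements.
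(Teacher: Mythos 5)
Your proposal follows essentially the same route as the paper: verify the five axioms D1--D5 for the constructed $H$-datum, leaning on the structural relations between $\alg{H}$ and $\alg{G}$ from Section~\ref{sec:HandG} (Theorems~\ref{th:intersectTorus}, \ref{th:intersectLevi}, Lemmas~\ref{lem:isores}, \ref{lem:HxGx}). D1--D3 and the bulk of D5 match Propositions~\ref{prop:D1} and~\ref{prop:D5}, and your D4 argument matches Proposition~\ref{prop:D4} in spirit, though the paper makes the final "matching" precise via the Mackey decomposition of $\rho|_{G^0_{y,0}}$ inside $\Res^{K^0}_{G^0_{y,0}}\Ind^{K^0}_{G^0_{y,0}}\sigma$ combined with uniqueness of decomposition into irreducibles --- that Mackey step is what guarantees that \emph{each} component $\rho_\ell$ picks up a cuspidal piece, not merely that some do.

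The one genuine divergence, and the place where your argument has a gap, is Case~2 of D5. The paper disposes of $\tilde{r}\leq r_{d-1}$ in one line by citing \cite[Lemma~4.9(3)]{Murnaghan:2011}, which says precisely that twisting a generic character of depth $r_{d-1}$ by a quasicharacter of depth $\leq r_{d-1}$ preserves genericity. You instead argue directly, splitting into $\tilde r<r_{d-1}$ and $\tilde r=r_{d-1}$. The first sub-case is fine. In the second, you assert that the realizing element of $\repH{d}$ lies in the centre dual and therefore annihilates the coroots $H_a$. But $\repH{d}$ is only a quasicharacter of $H^d$, not a generic character, so Definition~\ref{def:genericity} does not hand you a realizing element in $\zderstar$ for free; one has to establish separately that the restriction to $H^{d-1}_{y,r_{d-1}}$ of a quasicharacter of the whole group $H^d$ is realized by an element of the centre dual (and, with some care, that it does so modulo $\mathcal{O}_F$ on the relevant coroots so that the valuation $-r_{d-1}$ survives the sum). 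That is exactly the content of Murnaghan's lemma, which the paper cites; as written, your step is asserted rather than justified.
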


\begin{figure}[htbp!]
\center
\begin{tikzpicture}
  \matrix (m) [matrix of math nodes,row sep=2em,column sep=0.5em,minimum width=0.1em]
  {
    \text{$(\mathbb{G}^0,$} & \text{$\mathbb{G}^1,$} & \text{$\dots,$} & \text{$\mathbb{G}^d),$} &\text{\hspace{4mm}} &\text{$\rho,$} &\text{\hspace{4mm}}&\text{$(\rep{0},$} &\text{$\dots,$} &\text{$\rep{d-1},$} & \text{$\rep{d})$}\\
     \text{$(\mathbb{H}^0,$}  &\text{$\mathbb{H}^1,$} &\text{$\dots,$} &\text{$\mathbb{H}^d),$} &\text{\hspace{4mm}} &\text{$\rho_\ell$,} &\text{\hspace{4mm}} &\text{$(\repH{0}$,} &\text{$\dots,$} &\text{$\repH{d-1}$,} &\text{$\repH{d})$}\\};
  \path[-stealth]
  (m-1-1) edge node [right] {\small $\cap \alg{H}$} (m-2-1)
  (m-1-2) edge node [right] {\small $\cap \alg{H}$} (m-2-2)
  (m-1-4) edge node [right] {\small $\cap \alg{H}$} (m-2-4)
  (m-1-8) edge node [left] {\small $\Res^{G^0}_{H^0}$} (m-2-8)
  (m-1-10) edge node [left] {\small $\Res_{H^{d-1}}^{G^{d-1}}$} (m-2-10)
    (m-1-11) edge node [right] {\small $\Res^{G^{d}}_{H^{d}}$} (m-2-11)
  (m-2-11) edge[bend left=90] node [below] {$\tilde{r}\leq r_{d-1}$} (m-2-10);
      \draw[dash pattern=on5pt off3pt] (m-1-6) -- (m-2-6) ;
\end{tikzpicture}
\caption[Summary of the construction of $H$-data from a $G$-datum.]{Summary of the construction of $H$-data from a $G$-datum $\Psi$, where $\rho_\ell$ is any irreducible subrepresentation of $\rho_H$.}
\label{fig:constructDatum}
\end{figure}

Theorem~\ref{th:GderData} is a generalization of the results in \cite{Nevins:2015}, in which Nevins treated the case of a \emph{toral datum of length one} and $\alg{H}=\alg{G}_{\der}$. A toral datum of length one is a very simple case for which $\vec{\alg{G}} = (\alg{T},\alg{G})$, where $\alg{T}$ is a maximal torus of $\alg{G}$. In a toral datum of length one, it can be assumed without loss of generality that $\rho = 1$ via refactorization. 
Not only is the datum considered in \cite{Nevins:2015} simpler, but restricting to $\Gder$ also allows for some simplifications. Indeed, every character of $\Gder$ is of depth zero with our underlying assumption on $p$ \cite[Lemma 3.5.1]{Kaletha:2019}. As a consequence, we know that the depth of $\rep{1}|_{\Gder}$ will be smaller than $r$. We state Nevins' result as a corollary of our theorem.

\begin{corollary}[{\cite[Proposition 4.5]{Nevins:2015}}]\label{prop:NevinsDatum}
Let $\Psi = ((\alg{T},\alg{G}),y,r,1,(\rep{0},\rep{1}))$ be a toral datum of length one for $G$, where $r$ denotes the depth of the quasicharacter $\rep{0}$ of $T$ and $\rep{1}$ is a quasicharacter of $G$ which is either trivial, or of depth $\tilde{r} > r$. Let $\alg{T}_{\der} = \alg{T}\cap\alg{G}_{\der}$. Set $\rep{0}_{\der} = \rep{0} |_{T_{\der}}$ and $\rep{1}_{\der} = \rep{1} |_{\Gder}$. Define
$$\Psi_{\der} = ((\alg{T}_{\der},\alg{G}_{\der}),y,1,r,(\rep{0}_{\der}\rep{1}_{\der},1)).$$ Then $\Psi_{\der}$ is a toral datum of length one for $\Gder$.
\end{corollary}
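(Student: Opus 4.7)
The plan is to apply Theorem~\ref{th:GderData} to the special case $\alg{H} = \alg{G}_{\der}$ with a toral datum of length one. First I would observe that $\alg{G}_{\der}$ is a closed connected $F$-subgroup of $\alg{G}$ containing itself, so it satisfies the standing hypotheses on $\alg{H}$. Intersecting the twisted Levi sequence $\vec{\alg{G}} = (\alg{T},\alg{G})$ with $\alg{G}_{\der}$ then yields $\vec{\alg{H}} = (\alg{T}\cap\alg{G}_{\der},\alg{G}\cap\alg{G}_{\der}) = (\alg{T}_{\der},\alg{G}_{\der})$, and restricting the quasicharacters gives $\repH{0} = \rep{0}_{\der}$ and $\repH{1} = \rep{1}_{\der}$.

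The crucial step is to identify which of the two branches of Theorem~\ref{th:GderData} applies. Here $d = 1$, so $r_{d-1} = r_0 = r$. Under the standing assumption $p \nmid |W|$, every character of $\Gder$ has depth zero by \cite[Lemma 3.5.1]{Kaletha:2019}; hence $\tilde{r} = \depth(\repH{1}) = 0 \leq r = r_{d-1}$, placing us squarely in case 2. Theorem~\ref{th:GderData} then produces $\vec{\phi_H} = (\repH{0}\repH{1},1) = (\rep{0}_{\der}\rep{1}_{\der},1)$ together with $\vec{\tilde{r}} = (r,r)$.

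Since $\rho = 1$, the restriction $\rho_H = \rho|_{K^0_H}$ is again the trivial character, so it is already irreducible and the unique subrepresentation is $\rho_\ell = 1$. Applying Theorem~\ref{th:GderData} then gives that $\Psi_{\der}$ is a $\Gder$-datum. Because its leading twisted Levi is a maximal torus $\alg{T}_{\der}$ of $\Gder$, its depth-zero piece is trivial, and the sequence has length one, $\Psi_{\der}$ is in fact a toral datum of length one, as claimed.

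The corollary is essentially a direct specialization of Theorem~\ref{th:GderData}, so there is no serious obstacle; the only point requiring genuine care is the verification that case 2 applies, and this is guaranteed by the depth-zero property of characters of $\Gder$ under the tameness hypothesis on $p$. I would emphasize this point in the write-up, since it is precisely the feature that makes Nevins' setting strictly simpler than the general situation treated by Theorem~\ref{th:GderData}.
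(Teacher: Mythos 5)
Your proposal is correct and takes essentially the same route as the paper: the corollary is obtained by specializing Theorem~\ref{th:GderData} to $\alg{H} = \alg{G}_{\der}$, and the decisive observation, that $p \nmid |W|$ forces every character of $G_{\der}$ to have depth zero so that case~2 of the theorem applies, is exactly the point the paper makes in the paragraph preceding the corollary. The remaining verifications ($\rho_H = 1$ is irreducible, the restricted twisted Levi sequence is $(\alg{T}_{\der},\alg{G}_{\der})$, etc.) are routine and you handle them as the paper intends.
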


Notice that in the case of the toral datum of length one, only one $\Gder$-datum was obtained by this restriction process. In our more general case, we see that multiple data may be produced, depending on how $\rho_H$ decomposes into irreducible subrepresentations.  

In order to prove Theorem~\ref{th:GderData}, we must show that ${\Psi_\ell = (\vec{\alg{H}},y,\vec{\tilde{r}},\rho_\ell,\vec{\phi_H})}$ satisfies the five axioms for an $H$-datum (Definition~\ref{def:datum}). Without loss of generality, one can assume that the point $y$ is in $\mathcal{B}^\mathrm{red}(\alg{G},F)\cap \mathcal{A}^\mathrm{red}(\alg{G},\alg{T},E)$ by Lemma~\ref{lem:[x]} and Remark~\ref{rem:WLOGy} so that Axiom D2 is automatic, and Axiom D3 is clearly satisfied. We split the proof of the remaining axioms into three propositions.

\begin{proposition}[Axiom D1]\label{prop:D1}
Let $\vec{\alg{H}} = (\alg{H}^0,\dots,\alg{H}^d)$, where $\alg{H}^i = \alg{G}^i\cap\alg{H}$ for all $0\leq i\leq d$. Then $\vec{\alg{H}}$ is a tamely ramified twisted Levi sequence in $\alg{H}$, where $\alg{H}^d=\alg{H}$ and such that $\quo{Z(\alg{H}^0)}{Z(\alg{H})}$ is $F$-anisotropic.
\end{proposition}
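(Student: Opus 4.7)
The plan is to verify the three conditions of Axiom~D1 for $\vec{\alg{H}}$: that $\alg{H}^d = \alg{H}$, that $\vec{\alg{H}}$ is a tamely ramified twisted Levi sequence in $\alg{H}$, and that $\quo{Z(\alg{H}^0)}{Z(\alg{H})}$ is $F$-anisotropic. The first equality is immediate from $\alg{G}^d = \alg{G}$ combined with $\alg{H} \subset \alg{G}$.

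For the twisted Levi structure, I would exploit the Galois tame splitting extension $E$ furnished by Axiom~D2 of $\Psi$, over which the common maximal torus $\alg{T}$ of every $\alg{G}^i$ splits and, by transitivity of the Levi property, each $\alg{G}^i$ becomes a Levi subgroup of $\alg{G}$. Theorem~\ref{th:intersectTorus} then gives that $\alg{T}_\alg{H} = \alg{T} \cap \alg{H}$ is a maximal torus of $\alg{H}$, and it splits over $E$ as a closed subtorus of $\alg{T}_E$. Applying Theorem~\ref{th:intersectLevi} over $E$ to the pair $(\alg{G},\alg{H})$---whose hypotheses are preserved by base change since $\alg{G}_\der \subset \alg{H}$---each $\alg{H}^i = \alg{G}^i \cap \alg{H}$ is a Levi subgroup of $\alg{H}$ over $E$, and in particular is connected. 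The standard fact that a Levi subgroup nested inside another Levi subgroup is itself a Levi of the larger one then upgrades each inclusion $\alg{H}^i \subset \alg{H}^{i+1}$ to a Levi inclusion over the tame extension $E$, which is precisely the tamely ramified twisted Levi sequence condition.

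For the anisotropy, the core step is the identification $Z(\alg{H}^0) = Z(\alg{G}^0) \cap \alg{H}$ and similarly $Z(\alg{H}) = Z(\alg{G}) \cap \alg{H}$. Both rest on the inclusions $(\alg{G}^0)_\der \subset \alg{H}^0$ and $\alg{G}_\der \subset \alg{H}$: any element of $\alg{H}^0$ centralizing $\alg{H}^0$ in particular centralizes $(\alg{G}^0)_\der$, and by the classical identity $Z_\alg{M}(\alg{M}_\der) = Z(\alg{M})$ for a connected reductive group $\alg{M}$ it must lie in $Z(\alg{G}^0)$; the reverse inclusion is immediate. Using also the general inclusion $Z(\alg{G}) \subset Z(\alg{G}^0)$ (since $\alg{G}^0$ is a twisted Levi of $\alg{G}$), one obtains an injection of $F$-groups
\[
\quo{Z(\alg{H}^0)}{Z(\alg{H})} \;\hookrightarrow\; \quo{Z(\alg{G}^0)}{Z(\alg{G})}.
\]
Because a closed $F$-subgroup of an $F$-anisotropic (quotient-)torus is again $F$-anisotropic, the hypothesized anisotropy of the target transfers to the source.

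The main obstacle is the center identification. While Theorem~\ref{th:intersectLevi} handles the structural piece quickly, the passage from an $F$-anisotropic condition on $\quo{Z(\alg{G}^0)}{Z(\alg{G})}$ to one on $\quo{Z(\alg{H}^0)}{Z(\alg{H})}$ requires controlling exactly how the centers of $\alg{H}^0$ and $\alg{H}$ sit inside those of $\alg{G}^0$ and $\alg{G}$, and this control ultimately depends crucially on the standing hypothesis that $\alg{H}$ contains $\alg{G}_\der$.
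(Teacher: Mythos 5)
Your proof is correct, and the structural part (twisted Levi via Theorem~\ref{th:intersectLevi} over $E$) matches the paper's argument. The interesting divergence is in the anisotropy step. The paper proves the \emph{isomorphism} $\quo{Z(\alg{H}^0)}{Z(\alg{H})} \simeq \quo{Z(\alg{G}^0)}{Z(\alg{G})}$ directly, by first establishing the decompositions $\alg{G}^i = Z(\alg{G})^\circ\alg{H}^i$ (from the central isogeny $\alg{G} = Z(\alg{G})^\circ\alg{G}_\der$ intersected with $\alg{G}^i$) and then $Z(\alg{G}^i) = Z(\alg{G})^\circ Z(\alg{H}^i)$, after which the isomorphism of quotients is a routine second-isomorphism-theorem computation. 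You instead establish the cleaner identification $Z(\alg{H}^0) = Z(\alg{G}^0)\cap\alg{H}$ (via the standard fact $Z_{\alg{M}}(\alg{M}_\der) = Z(\alg{M})$), from which only an \emph{injection} $\quo{Z(\alg{H}^0)}{Z(\alg{H})} \hookrightarrow \quo{Z(\alg{G}^0)}{Z(\alg{G})}$ falls out, and you close the argument with the fact that a closed $F$-subgroup of an $F$-anisotropic diagonalizable group is $F$-anisotropic. Your route is a bit lighter since you don't need surjectivity, and the centralizer identity $Z(\alg{H}^0) = Z(\alg{G}^0)\cap\alg{H}$ is perhaps a more reusable intermediate statement; the paper's approach, on the other hand, records a slightly stronger fact (the full isomorphism of the center quotients at every index $i$) which is conceptually tidy but not strictly needed here. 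Both hinge essentially on $\alg{G}_\der \subset \alg{H}$, as you observe. One small remark: your aside about ``a Levi nested inside a Levi is a Levi'' is not needed, since the definition of a twisted Levi sequence only asks that each $\alg{H}^i(E)$ be a Levi subgroup of $\alg{H}(E)$, which Theorem~\ref{th:intersectLevi} already gives; the nested-Levi observation is harmless but superfluous.
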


\begin{proof}
One easily sees that $\vec{\alg{H}}$ is a tamely ramified twisted Levi sequence of $\alg{H}$. This is because $\alg{H}^i(E)= \alg{G}^i(E)\cap\alg{H}(E)$ is a Levi subgroup of $\alg{H}$ by Theorem~\ref{th:intersectLevi}. 

Furthermore, we claim that $\quo{Z(\alg{G}^i)}{Z(\alg{G})} \simeq \quo{Z(\alg{H}^i)}{Z(\alg{H})}$ for all $0\leq i\leq d$ so that $\quo{Z(\alg{H}^0)}{Z(\alg{H})}$ is $F$-anisotropic. Indeed, from the central isogeny $\alg{G}=Z(\alg{G})^\circ\alg{G}_{\der}$, it follows that $\alg{G}=Z(\alg{G})^\circ\alg{H}$. Because $Z(\alg{G})\subset \alg{T}\subset \alg{G}^i$, we obtain that $\alg{G}^i = Z(\alg{G})^\circ\alg{H}^i$ for all $0\leq i\leq d$ when intersecting the last equality with $\alg{G}^i$. 

Now, with what precedes, we show that $Z(\alg{G}^i) = Z(\alg{G})^\circ Z(\alg{H}^i)$ for all $0\leq i\leq d$. Given $x\in Z(\alg{G}^i)$, we have that $x=zy$ for some $z\in Z(\alg{G})^\circ, y\in \alg{H}^i$. Then for all $z'\in Z(\alg{G})^\circ, y'\in \alg{H}^i$, we have $x(z'y') = (z'y')x$ which implies that $yy'=y'y$ as $z$ and $z'$ are central. Therefore, $y\in Z(\alg{H}^i)$ and $Z(\alg{G}^i) \subset Z(\alg{G})^\circ Z(\alg{H}^i)$. For the converse, it is clear that $Z(\alg{G})^\circ \subset Z(\alg{G}^i)$ as $Z(\alg{G})^\circ\subset \alg{G}^i$. Furthermore, $Z(\alg{H}^i) \subset Z(\alg{G}^i)$ as a consequence of Theorem~\ref{th:intersectLevi}. Indeed, this theorem provides us with descriptions of $\alg{G}^i$ and $\alg{H}^i$ in terms of generators that only differ by their tori $\alg{T}$ and $\alg{T}_\alg{H}$, and $\alg{T}$, which contains $Z(\alg{H}^i)$, is abelian. Thus, $Z(\alg{G}^i) = Z(\alg{G})^\circ Z(\alg{H}^i)$.

Using the previous equality, we have $\quo{Z(\alg{G}^i)}{Z(\alg{G})} = \quo{Z(\alg{G})^\circ Z(\alg{H}^i)}{Z(\alg{G})^\circ Z(\alg{H})}$ for all $0\leq i\leq d$.
One can then easily check that $\quo{Z(\alg{G})^\circ Z(\alg{H}^i)}{Z(\alg{G})^\circ Z(\alg{H})} \simeq \quo{Z(\alg{H}^i)}{Z(\alg{H})} $ by considering that map that sends $zZ(\alg{G})^\circ Z(\alg{H})$ to $zZ(\alg{H})$ for all $z\in Z(\alg{H}^i)$. In particular, this isomorphism holds for $i=0$ and therefore $\quo{Z(\alg{H}^0)}{Z(\alg{H})}$ is $F$-anisotropic.
\end{proof}

Before proceeding to the proposition that proves Axiom D4, let us first make a few observations in the form of lemmas.

\begin{lemma}
For all $0\leq i\leq d$, $\alg{H}^i$ is a closed connected $F$-subgroup of $\alg{G}^i$ that contains $[\alg{G}^i,\alg{G}^i]$ and therefore $\alg{H}^i$ is normal in $\alg{G}^i$ and $[\alg{H}^i,\alg{H}^i] = [\alg{G}^i,\alg{G}^i]$.
\end{lemma}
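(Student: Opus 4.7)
The statement packages four claims about $\alg{H}^i = \alg{G}^i \cap \alg{H}$: it is (a) a closed connected $F$-subgroup of $\alg{G}^i$, (b) contains $[\alg{G}^i,\alg{G}^i]$, (c) is normal in $\alg{G}^i$, and (d) has the same derived group as $\alg{G}^i$. My plan is to handle these in the order (a), (b), then (c) and (d) as formal consequences.

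For (a), $\alg{H}^i$ is a closed $F$-subgroup as the intersection of two such. Connectedness is the only nontrivial part, and I would simply invoke Proposition~\ref{prop:D1}, which has already established that $\vec{\alg{H}}$ is a (tamely ramified) twisted Levi sequence in $\alg{H}$, so that $\alg{H}^i$ is a Levi subgroup of $\alg{H}$ (over $E$, hence over $F$); Levi subgroups are by definition connected. Alternatively, one can argue directly along the lines of Theorem~\ref{th:intersectLevi}: fix a maximal torus $\alg{T}$ of $\alg{G}^0$ and write $\alg{G}^i = \langle \alg{T}, \alg{U}_\alpha : \alpha \in \Phi(\alg{G}^i,\alg{T}) \rangle$; since each such $\alg{U}_\alpha$ lies in $\alg{G}_{\der} \subset \alg{H}$, we have $\alg{H}^i = \langle \alg{T}_\alg{H}, \alg{U}_\alpha : \alpha \in \Phi(\alg{G}^i,\alg{T}) \rangle$, which is generated by connected subgroups and is therefore connected.

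For (b), the inclusion $[\alg{G}^i,\alg{G}^i] \subset \alg{H}^i$ is a one-line verification: $[\alg{G}^i,\alg{G}^i] \subset [\alg{G},\alg{G}] = \alg{G}_{\der} \subset \alg{H}$ and of course $[\alg{G}^i,\alg{G}^i] \subset \alg{G}^i$, so $[\alg{G}^i,\alg{G}^i] \subset \alg{G}^i \cap \alg{H} = \alg{H}^i$. Normality (c) then follows purely group-theoretically from (b): for any $g \in \alg{G}^i$ and $h \in \alg{H}^i$, the commutator $[g,h] \in [\alg{G}^i,\alg{G}^i] \subset \alg{H}^i$, so $ghg^{-1} = [g,h]\,h \in \alg{H}^i$.

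Finally, for (d), one inclusion $[\alg{H}^i,\alg{H}^i] \subset [\alg{G}^i,\alg{G}^i]$ is immediate from $\alg{H}^i \subset \alg{G}^i$. The reverse inclusion is the main (mild) obstacle, but it follows from the fact that $[\alg{G}^i,\alg{G}^i]$ is a connected semisimple group and therefore perfect, so $[\alg{G}^i,\alg{G}^i] = [[\alg{G}^i,\alg{G}^i],[\alg{G}^i,\alg{G}^i]]$. Combined with (b), this gives $[\alg{G}^i,\alg{G}^i] = [[\alg{G}^i,\alg{G}^i],[\alg{G}^i,\alg{G}^i]] \subset [\alg{H}^i,\alg{H}^i]$, completing the proof.
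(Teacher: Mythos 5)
Your proof is correct and follows essentially the same route as the paper: the paper also cites the twisted-Levi-sequence fact from Proposition~\ref{prop:D1} for closedness and connectedness, and uses the identical chain $[\alg{G}^i,\alg{G}^i]\subset\alg{G}^i\cap\alg{G}_{\der}\subset\alg{H}^i$ for the containment. The paper stops there and treats normality and $[\alg{H}^i,\alg{H}^i]=[\alg{G}^i,\alg{G}^i]$ as automatic (they are asserted earlier in Section~\ref{sec:HandG} for any such pair), whereas you spell them out explicitly via the commutator identity and the perfectness of the connected semisimple group $[\alg{G}^i,\alg{G}^i]$ — both arguments are correct and make the "therefore" transparent.
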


\begin{proof}
For all $0\leq i \leq d$, $\alg{H}^i$ is a Levi subgroup, which is in particular a reductive subgroup so that it is closed and connected. Furthermore, $[\alg{G}^i,\alg{G}^i]\subset \alg{G}^i\cap\alg{G}_{\der} \subset \alg{G}^i\cap\alg{H} = \alg{H}^i$.
\end{proof}

\begin{lemma}\label{lem:[y]}
The normalizer of $H^0_{y,0}$ in $H^0$ is equal to $H^0_{[y]}$.
\end{lemma}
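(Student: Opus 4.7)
The plan is to reduce this lemma to its analog for $G^0$, which is \cite[Lemma 3.3]{Yu:2001} applied at the vertex $[y]$ (whose vertex status is built into axiom D2 of the $G$-datum $\Psi$). That result gives $N_{G^0}(G^0_{y,0}) = G^0_{[y]}$ under the hypotheses that $[y]$ is a vertex of $\mathcal{B}^{\mathrm{red}}(\alg{G}^0, F)$ and that $Z(\alg{G}^0)/Z(\alg{G})$ is $F$-anisotropic. The strategy is to verify that both hypotheses carry over verbatim with $\alg{G}^0$ replaced by $\alg{H}^0$ and $\alg{G}$ replaced by $\alg{H}$, at which point the same argument applies directly to $H^0$.

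For the anisotropy hypothesis, Proposition \ref{prop:D1} already shows that $Z(\alg{H}^0)/Z(\alg{H})$ is $F$-anisotropic, so nothing new is needed. For the vertex hypothesis, the equality $[\alg{H}^0, \alg{H}^0] = [\alg{G}^0, \alg{G}^0]$ established in Section \ref{sec:HandG} gives
\[
\mathcal{B}^{\mathrm{red}}(\alg{H}^0, F) = \mathcal{B}([\alg{H}^0, \alg{H}^0], F) = \mathcal{B}([\alg{G}^0, \alg{G}^0], F) = \mathcal{B}^{\mathrm{red}}(\alg{G}^0, F),
\]
so $[y]$, being a vertex of the right-hand side by axiom D2, is likewise a vertex of $\mathcal{B}^{\mathrm{red}}(\alg{H}^0, F)$. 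Applying the $H^0$-version of \cite[Lemma 3.3]{Yu:2001} then yields $N_{H^0}(H^0_{y,0}) = H^0_{[y]}$.

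If a more direct argument is preferred, one could argue in two inclusions: the inclusion $H^0_{[y]} \subseteq N_{H^0}(H^0_{y,0})$ follows from the standard equivariance $hH^0_{y,0}h^{-1} = H^0_{h \cdot y, 0}$ combined with Lemma \ref{lem:[x]}, while the reverse inclusion uses the bijective correspondence between parahoric subgroups of $H^0$ and facets of $\mathcal{B}^{\mathrm{red}}(\alg{H}^0, F)$ to conclude that $H^0_{h \cdot y, 0} = H^0_{y, 0}$ forces $[h \cdot y] = [y]$. In either approach, the only delicate point is confirming that $[y]$ retains its vertex status when passing from $\mathcal{B}^{\mathrm{red}}(\alg{G}^0, F)$ to $\mathcal{B}^{\mathrm{red}}(\alg{H}^0, F)$; once the equality of reduced buildings above is recorded, this is immediate and the rest of the proof is essentially a citation.
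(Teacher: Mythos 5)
Your proof is correct and follows essentially the same route as the paper: identify $\mathcal{B}^{\mathrm{red}}(\alg{H}^0,F)$ with $\mathcal{B}^{\mathrm{red}}(\alg{G}^0,F)$ via $[\alg{H}^0,\alg{H}^0]=[\alg{G}^0,\alg{G}^0]$, conclude that $[y]$ is a vertex, and invoke \cite[Lemma 3.3]{Yu:2001}. Your explicit check of the anisotropy hypothesis via Proposition~\ref{prop:D1} is slightly more careful than the paper, which leaves it implicit, but the argument is the same.
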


\begin{proof}
Because $[\alg{H}^0,\alg{H}^0]=[\alg{G}^0,\alg{G}^0]$, $\mathcal{B}^{\mathrm{red}}(\alg{H}^0,F) =\mathcal{B}^{\mathrm{red}}(\alg{G}^0,F)$ so that $[y]$ is a vertex of $\mathcal{B}^\mathrm{red}(\alg{H}^0,F)$. It follows that $H^0_{y,0}$ is a maximal parahoric subgroup of $H^0$ and ${\KH{0} = H^0_{[y]} = N_{H^0}(H^0_{y,0})}$ \cite[Lemma 3.3]{Yu:2001}.
\end{proof}

\begin{lemma}\label{lem:rhofinitedim}
Let $\rho_H = \rho|_{H^0_{[y]}}$. We have a decomposition into irreducible subrepresentations $\rho_H = \underset{\ell\in L}{\oplus}{\rho_\ell}$ for some finite set $L$.
\end{lemma}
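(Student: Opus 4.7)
The plan is to argue that $\rho$ is finite-dimensional and unitarizable, so that its restriction to $K^0_H$ is a finite-dimensional unitary representation, hence semisimple with only finitely many irreducible components.

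First, I would verify finite-dimensionality of $\rho$. By Axiom D4 the restriction $\rho|_{G^0_{y,0^+}}$ is 1-isotypic, and Schur's lemma applied to the irreducible smooth representation $\rho$ produces a central character $\omega_\rho$ on $Z(G^0)\subset K^0$. Because $[y]$ is a vertex of the reduced building $\mathcal{B}^{\mathrm{red}}(G^0,F)$, Bruhat--Tits theory gives that $K^0/(Z(G^0)\cdot G^0_{y,0^+})$ is finite: the component group $K^0/(Z(G^0)^\circ\cdot G^0_{y,0})$ above the vertex $[y]$ is finite, and so is the reductive quotient $G^0_{y,0}/G^0_{y,0^+}=\mathcal{G}_y(\mathfrak{f})$. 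With $\omega_\rho$ fixed, $\rho$ is thus determined by an irreducible representation of a finite group and is finite-dimensional.

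Second, since $K^0$ is compact modulo $Z(G^0)$ and $\rho$ carries the central character $\omega_\rho$, after twisting $\rho$ by a character so that $\omega_\rho$ is unitary (which does not affect the semisimplicity question on $K^0_H$), one can average a positive-definite Hermitian form on the finite-dimensional space $V_\rho$ over the compact quotient $K^0/Z(G^0)$ to obtain a $K^0$-invariant inner product. This makes $\rho$ unitary, and restriction to the closed subgroup $K^0_H\subset K^0$ preserves the inner product, so $\rho|_{K^0_H}$ is unitary and finite-dimensional. Unitarity delivers semisimplicity, and finite-dimensionality forces the decomposition $\rho|_{K^0_H}=\bigoplus_{\ell\in L}\rho_\ell$ into irreducible subrepresentations to have $|L|<\infty$.

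The only step requiring care is the finiteness of $K^0/(Z(G^0)\cdot G^0_{y,0^+})$, which relies on standard Bruhat--Tits component-group results at a vertex of the reduced building combined with the finiteness of $\mathcal{G}_y(\mathfrak{f})$. Once that is in hand, the remainder of the argument is a routine unitarity-plus-finite-dimensionality manipulation.
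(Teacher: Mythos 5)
Your proposal is correct, but it takes a genuinely different route from the paper's. For finite-dimensionality, the paper simply cites a remark of Murnaghan, whereas you give a self-contained argument: $\rho$ is trivial on $G^0_{y,0^+}$ and has a central character on $Z(G^0)$, and $K^0/(Z(G^0)G^0_{y,0^+})$ is finite (being a discrete quotient of the compact group $K^0/Z(G^0)$ times a factor of $\mathcal{G}_y(\res)$), so after twisting away the central character $\rho$ factors through a finite group. For complete reducibility, the paper observes that $\rho|_{H^0_{y,0}}$ is completely reducible because $H^0_{y,0}$ is compact and $\rho$ is smooth, that $Z^0_H$ acts by a character, and that $Z^0_H H^0_{y,0}$ has finite index in $\KH{0}$, so Bushnell--Henniart's Lemma 2.7 (complete reducibility propagates from a finite-index subgroup) finishes. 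You instead make $\rho$ unitary by twisting the central character to be unitary and averaging a Hermitian form over the compact quotient $K^0/Z(G^0)$, and then use that a finite-dimensional unitary representation is semisimple. Both arguments are sound; yours is more elementary and does not need the BH lemma, but requires the slightly delicate steps of extending $|\omega_\rho|^{-1}$ from $Z(G^0)$ to a character of $K^0$ (which works because $\mathbb{R}_{>0}$ is divisible and $|\omega_\rho|$ is trivial on the compact $Z(G^0)\cap[K^0,K^0]$) and observing that the twist is invertible on $\KH{0}$-stable subspaces so it does not change the semisimplicity or the index set $L$. The paper's route avoids the twist entirely by localizing to the honestly compact subgroup $H^0_{y,0}$ first and then bootstrapping.
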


\begin{proof}
We have that $\rho_H$ is a finite-dimensional representation \cite[Remark 7.2]{Murnaghan:2011}.  To complete the proof, we must show that $\rho_H$ is completely reducible. As $H^0_{y,0}$ is compact, $\rho|_{H^0_{y,0}}$ is completely reducible. Since $\rho$ is irreducible, the center of $G^0$ acts by a character. In particular, the center $Z^0_H$ of $H^0$ acts by a character so that any $H^0_{y,0}$-invariant subspace is also $Z^0_HH^0_{y,0}$-invariant. Therefore, $\rho|_{Z^0_HH^0_{y,0}}$ is completely reducible. Because $\KH{0}$ is compact-mod-center in $H^0$, it easily follows that $Z^0_H\tilde{K}$ is of finite index in $\KH{0}$ for any open subgroup $\tilde{K}\subset \KH{0}$. In particular, $Z^0_HH^0_{y,0}$ is of finite index in $\KH{0}$, so $\rho_H = \rho|_{\KH{0}}$ is completely reducible by \cite[Lemma 2.7]{BH:2006}.
\end{proof}

\begin{proposition}[Axioms D4]\label{prop:D4}
For all $\ell\in L$, $\rho_\ell|_{H^0_{y,0^+}}$ is $1$-isotypic and $\pi^{-1}_\ell \defeq \Ind_{\KH{0}}^{H^0}\rho_\ell$ is an irreducible supercuspidal representation of depth zero.
\end{proposition}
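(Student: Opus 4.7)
The plan is to verify the two assertions separately, using the Moy--Prasad characterization of depth-zero supercuspidal representations together with the structural relations between $\alg{G}$ and $\alg{H}$ established in Section~\ref{sec:HandG}.

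For the first assertion, recall from the discussion following Theorem~\ref{th:intersectLevi} that $H^0_{y,r} = G^0_{y,r}\cap H^0$ for all $r\geq 0$, and hence $H^0_{y,0^+} \subset G^0_{y,0^+}$. Axiom D4 for the $G$-datum $\Psi$ asserts that $\rho|_{G^0_{y,0^+}}$ is $1$-isotypic, so $\rho|_{H^0_{y,0^+}}$ acts trivially, and therefore so does its subrepresentation $\rho_\ell|_{H^0_{y,0^+}}$.

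For the second assertion, I would invoke \cite[Theorem 6.8]{MP:1996}: an irreducible representation of $\KH{0}$ whose restriction to $H^0_{y,0}$ contains the inflation of an irreducible cuspidal representation of $\mathcal{H}_y$ compactly induces to an irreducible depth-zero supercuspidal representation of $H^0$. Lemma~\ref{lem:[y]} guarantees that $\KH{0} = H^0_{[y]}$ is the normalizer in $H^0$ of the maximal parahoric subgroup $H^0_{y,0}$, so the criterion applies. By the first assertion, $\rho_\ell|_{H^0_{y,0}}$ factors through $\mathcal{H}_y(\res) = H^0_{y,0:0^+}$ and corresponds to a representation $\overline{\rho_\ell}$ of $\mathcal{H}_y$, so it suffices to show that $\overline{\rho_\ell}$ contains an irreducible cuspidal representation of $\mathcal{H}_y$.

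To that end, I would compare cuspidal representations of $\mathcal{G}_y$ and of $\mathcal{H}_y$. Since $\Ind_{K^0}^{G^0}\rho$ is an irreducible depth-zero supercuspidal representation of $G^0$, the converse direction of \cite[Theorem 6.8]{MP:1996} yields that $\rho|_{G^0_{y,0}}$ factors through $\mathcal{G}_y(\res)$ and the resulting representation of $\mathcal{G}_y$ is a direct sum of irreducible cuspidal representations. The key observation is that, by Lemma~\ref{lem:HxGx}, $\mathcal{H}_y$ is a connected reductive subgroup of $\mathcal{G}_y$ containing $[\mathcal{G}_y,\mathcal{G}_y]$, so the finite-field analogue of Theorem~\ref{th:intersectLevi} applies: every proper parabolic subgroup of $\mathcal{H}_y$ is of the form $\mathcal{P}\cap \mathcal{H}_y$ for some proper parabolic $\mathcal{P}$ of $\mathcal{G}_y$, and their unipotent radicals coincide. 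Since cuspidality is characterized by the vanishing of the Jacquet modules (coinvariants under the unipotent radicals) along all proper parabolics, the restriction of a cuspidal representation of $\mathcal{G}_y$ to $\mathcal{H}_y$ decomposes as a direct sum of cuspidal representations of $\mathcal{H}_y$. Therefore $\overline{\rho_\ell}$ is a nonzero subrepresentation of a direct sum of irreducible cuspidal representations of $\mathcal{H}_y$, and hence contains one, completing the proof.

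The main obstacle is the preservation of cuspidality under restriction from $\mathcal{G}_y$ to $\mathcal{H}_y$; once this compatibility of parabolics and their unipotent radicals is in hand, everything else is a packaging of the Moy--Prasad criterion with the structural lemmas of Section~\ref{sec:HandG}.
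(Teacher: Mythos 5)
Your proof is correct and follows essentially the same route as the paper: reduce via the Moy--Prasad criterion to showing $\rho_\ell|_{H^0_{y,0}}$ contains the pullback of a cuspidal representation of $\mathcal{H}^0_y(\res)$, and deduce this from the fact that restriction of a cuspidal from $\mathcal{G}^0_y$ to $\mathcal{H}^0_y$ remains a sum of cuspidals (compatibility of parabolic subgroups and their unipotent radicals, as in the finite-field analogue of Theorem~\ref{th:intersectLevi} via Lemma~\ref{lem:HxGx}). The one place you compress is the claim that $\rho|_{G^0_{y,0}}$ \emph{is} a direct sum of irreducible cuspidals rather than merely \emph{containing} one; this needs the observation that its irreducible constituents are $K^0$-conjugates of each other and that conjugation by $K^0 = N_{G^0}(G^0_{y,0})$ preserves cuspidality --- the paper makes this explicit via Frobenius reciprocity and a Mackey decomposition over $\quo{K^0}{G^0_{y,0}}$, but your version is equivalent.
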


\begin{proof}
By \cite[Theorem 6.8]{MP:1996}, it suffices to show that $\rho_\ell|_{H^0_{y,0^+}}$ contains the pullback of a cuspidal representation of $H^0_{y,0:0^+}$.

Since $\Ind_{K^0}^{G^0}\rho$ is an irreducible supercuspidal representation of depth zero, we know from \cite[Theorem 6.8]{MP:1996} that $\rho|_{G^0_{y,0}}$ contains the pullback of a cuspidal representation of $G^0_{y,0:0^+}$, say $\sigma$. By Frobenius Reciprocity, it follows that $\Hom_{K^0}(\Ind_{G^0_{y,0}}^{K^0}\sigma,\rho) \neq \{0\}$, or equivalently, we write $\rho \subset \Ind_{G^0_{y,0}}^{K^0}\sigma$ to mean that $\rho$ is a subrepresentation of $\Ind_{G^0_{y,0}}^{K^0}\sigma$. It follows that $\rho|_{G^0_{y,0}} \subset \Res_{G^0_{y,0}}^{K^0}\Ind_{G^0_{y,0}}^{K^0}\sigma$. But $G^0_{y,0}$ is a normal subgroup of $K^0$, so the Mackey Decomposition gives $\Res_{G^0_{y,0}}^{K^0}\Ind_{G^0_{y,0}}^{K^0}\sigma = \underset{c\in C}{\oplus}{^c\sigma}$, where $C$ is a set of coset representatives of $\quo{K^0}{G^0_{y,0}}$. We conclude that $\rho|_{G^0_{y,0}}\subset \underset{c\in C}{\oplus} {^c\sigma}$, which implies that $\rho_\ell|_{H^0_{y,0}} \subset \underset{c\in C}{\oplus} ^c(\sigma|_{H^0_{y,0}})$.

Next, we claim that $\sigma|_{H^0_{y,0}}$ decomposes as a sum of cuspidal representations of $H^0_{y,0:0^+}$. Indeed, because $[\mathcal{G}^0_y,\mathcal{G}^0_y] \subset \mathcal{H}^0_y$ (Lemma~\ref{lem:HxGx}), it follows from Theorem~\ref{th:intersectLevi} that $\sigma|_{\mathcal{H}^0_y(\res)}$ decomposes as a direct sum of cuspidal representations of $\mathcal{H}^0_y(\res)$, say $\underset{j\in J}{\oplus} \sigma_j$ for some finite set $J$.

Therefore, $\rho_\ell|_{H^0_{y,0}} \subset \underset{c\in C}{\oplus} ^c(\underset{j\in J}{\oplus}\sigma_j)$. Since $\rho_\ell|_{H^0_{y,0}}$ is completely reducible, being a finite-dimensional representation of a compact group, we conclude from the uniqueness (up to isomorphism) of the decomposition into irreducible representations that $\rho_\ell$ must contain the pullback to $H^0_{y,0}$ of one of the cuspidal representations $^c\sigma_j$.
\end{proof}

Finally, in order to prove Axiom D5, we must establish notation to describe the genericity condition. Analogously to the notation from Section~\ref{sec:datum}, let \Bold{$\torus$}$_{\alg{H}} = \Lie(\alg{T}_{\alg{H}})$, $\tder =$ \Bold{$\torus$}$_{\alg{H}}(F)$ and $T_H = \alg{T}_{\alg{H}}(F)$. For ${0\leq i\leq d-1}$, let \Bold{$\mathfrak{z}$}$^i_{\alg{H}}$ denote the center of \Bold{$\h$}$^i = \Lie(\alg{H}^i)$, and \Bold{$\mathfrak{z}$}$^{i,*}_{\alg{H}}$ its dual. We set $\zder =$\Bold{$\mathfrak{z}$}$^i_{\alg{H}}(F)$ and $\zderstar=$\Bold{$\mathfrak{z}$}$^{i,*}_{\alg{H}}(F)$. We also have $(\zder)_{r_i} = \zder\cap (\tder)_{r_i}$ and ${(\zder)_{r_i^+} = \zder\cap (\tder)_{r_i^+}}$ and define
$$\zderstarri = \{X^* \in \zderstar: X^*(Y)\in \p_F \text{ for all } Y\in (\zder)_{r_i^+}\}.$$

We note that given an element from $\zstar$, one can view it as an element of ${\g}^{i,*}$ by extending it trivially on ${\g^i}' $, where ${\g^i}' = [$\Bold{$\g^i$},\Bold{$\g^i$}$](F) = [$\Bold{$\h^i$},\Bold{$\h^i$}$](F)$. This follows from the fact that ${\g}^i = \z\oplus{\g^i}' $ \cite[Proposition 3.1]{Adler:2000}.

\begin{proposition}[Axiom D5]\label{prop:D5}
The sequence $\vec{\phi_H}$ of quasicharacters of $\vec{\alg{H}}$ is such that the $i$th quasicharacter of the sequence is $H^{i+1}$-generic of depth $r_i$ for all $0\leq i \leq d-1$.
\end{proposition}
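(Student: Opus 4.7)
The plan is to verify Definition~\ref{def:genericity} directly for each of the $d$ characters in $\vec{\phi_H}$, leveraging the known $G^{i+1}$-genericity of each $\phi^i$. Two preliminary facts will be used throughout. First, the identifications $\Phi(\alg{G}^j,\alg{T}) = \Phi(\alg{H}^j,\alg{T}_\alg{H})$ from Section~\ref{sec:HandG} preserve coroots: the image of each $\check{a}$ lies in $\alg{T}\cap\alg{G}_{\der}\subset\alg{T}_\alg{H}$, so the vector $H_a\in\tder$ is common to both root data. Second, $[\alg{H},\alg{H}]=\alg{G}_{\der}$ gives $(\mathfrak{h}^j)' = (\mathfrak{g}^j)'$, so the containment of direct sum decompositions $\mathfrak{h}^j = \mathfrak{z}^j_H \oplus (\mathfrak{h}^j)' \subset \mathfrak{z}^j \oplus (\mathfrak{g}^j)' = \mathfrak{g}^j$ forces the $\mathfrak{z}^j$-component of any $Y\in\mathfrak{h}^j$ to coincide with its $\mathfrak{z}^j_H$-component.

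For $0\leq i\leq d-2$, and also for $i=d-1$ in case~1 of the theorem, the relevant character is $\phi^i_H=\phi^i|_{H^i}$ of prescribed depth $r_i$. Let $X^*\in\mathfrak{z}^{i,*}_{-r_i}$ be the element realizing $\phi^i$ furnished by the $G^{i+1}$-genericity of $\phi^i$, and propose $X^*_H\defeq X^*|_{\mathfrak{z}^i_H}$ as the realizing element for $\phi^i_H$. One has $X^*_H\in(\mathfrak{z}^{i,*}_H)_{-r_i}$ immediately from $(\mathfrak{z}^i_H)_{r_i^+}\subset\mathfrak{z}^i_{r_i^+}$. For $Y\in\mathfrak{h}^i_{y,r_i}$, Lemma~\ref{lem:isores} combined with the second preliminary observation yields $\phi^i_H(e_H(Y+\mathfrak{h}^i_{y,r_i^+})) = \phi^i(e(Y+\mathfrak{g}^i_{y,r_i^+})) = \psi(X^*(Y)) = \psi(X^*_H(Y))$, while the same decomposition gives $X^*_H(H_a)=X^*(H_a)$ for any $a$ in the relevant root set, which has valuation $-r_i$ by hypothesis.

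The main obstacle arises in case~2 at $i=d-1$, where the character is $\phi^{d-1}_H\phi^d_H$ of prescribed depth $r_{d-1}$ and, when $\tilde{r}=r_{d-1}$, the factor $\phi^d_H$ contributes nontrivially. The depth upper bound reduces to inclusions of filtration subgroups using $\tilde{r}\leq r_{d-1}$. Let $Y^*_H\in\mathfrak{h}^{d,*}$ realize $\phi^d_H$ on $H_{y,\tilde{r}}$; the critical input here is that $\phi^d_H$ is a character of $H^d$ and therefore factors through the abelianization $H^d/[H^d,H^d]$, which at the Lie algebra level forces $Y^*_H$ to vanish on $(\mathfrak{h}^d)'$, i.e.\ $Y^*_H\in(\mathfrak{z}^d_H)^{*}$ (trivially extended). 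A parallel calculation to the previous case then shows that $X^*_H+Y^*_H|_{\mathfrak{z}^{d-1}_H}\in(\mathfrak{z}^{d-1,*}_H)_{-r_{d-1}}$ realizes the product on $H^{d-1}_{y,r_{d-1}}$.

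The decisive step, which I expect to be the main difficulty, is verifying $\val((X^*_H+Y^*_H|_{\mathfrak{z}^{d-1}_H})(H_a)) = -r_{d-1}$ for $a\in\Phi(\alg{H}^d,\alg{T}_\alg{H})\setminus\Phi(\alg{H}^{d-1},\alg{T}_\alg{H})$, ruling out any cancellation with $Y^*_H(H_a)$. The key observation is that for any root $a$ of $\alg{G}$, the coroot $H_a$ lies in $[\mathfrak{g},\mathfrak{g}]=(\mathfrak{g}^d)'=(\mathfrak{h}^d)'$, since root spaces and their brackets live in the derived subalgebra; as $Y^*_H$ vanishes on $(\mathfrak{h}^d)'$, we get $Y^*_H(H_a)=0$, and the sum's valuation at $H_a$ equals $\val(X^*(H_a))=-r_{d-1}$. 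The remark following Definition~\ref{def:genericity} then upgrades the depth upper bound to equality, completing the verification of axiom D5.
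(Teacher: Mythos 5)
For $0\leq i\leq d-2$ and for $i=d-1$ in case~1, your argument is essentially the paper's: both take the element $X^*\in\zstarri$ realizing $\rep{i}$ and propose its restriction to the $H$-Lie algebra as the realizing element $X^*_H$ for $\repH{i}$, verifying the valuation and realization conditions via the identification of root systems and Lemma~\ref{lem:isores}. Your ``preliminary fact 2'' about compatible decompositions $\mathfrak{h}^j = \mathfrak{z}^j_H\oplus(\mathfrak{h}^j)'\subset\mathfrak{z}^j\oplus(\mathfrak{g}^j)'=\mathfrak{g}^j$ is correct and clarifies why the restriction of $X^*$ to $\mathfrak{h}^j$ is indeed the trivial extension of $X^*|_{\mathfrak{z}^j_H}$, though the paper does not spell this out.

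The genuine divergence is case~2 at $i=d-1$, which the paper disposes of in one line by citing \cite[Lemma 4.9(3)]{Murnaghan:2011}. You instead verify genericity of $\repH{d-1}\repH{d}$ directly from Definition~\ref{def:genericity}. Your key observation --- that the realizing element $Y^*_H$ of the character $\repH{d}$ vanishes on $(\mathfrak{h}^d)'$, while each coroot $H_a$ lies in $(\mathfrak{h}^d)'$, so $Y^*_H(H_a)=0$ and no cancellation with $X^*_H(H_a)$ can occur --- is correct and is precisely the mechanism behind Murnaghan's lemma. This is a more elementary and transparent route, at the cost of being longer. However, one step is passed over too quickly: when you say ``a parallel calculation to the previous case'' shows that $X^*_H+Y^*_H|_{\mathfrak{z}^{d-1}_H}$ realizes the product, you need to compare the coset $e_{H^{d-1}}(Y+\mathfrak{h}^{d-1}_{y,r_{d-1}^+})$ in $H^{d-1}_{y,r_{d-1}:r_{d-1}^+}$ with $e_{H^d}(Y+\mathfrak{h}^d_{y,\tilde{r}^+})$ in $H^d_{y,\tilde{r}:\tilde{r}^+}$ (the latter is where $Y^*_H$ realizes $\repH{d}$). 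This requires compatibility of Adler's mock exponential along the twisted Levi inclusion $\alg{H}^{d-1}\subset\alg{H}^d$, which is a different compatibility than Lemma~\ref{lem:isores} (that lemma is for subgroups containing the derived group). The compatibility does hold --- it is built into Adler's construction and Yu's hypotheses --- but it should be named explicitly rather than absorbed into the phrase ``a parallel calculation,'' since the two situations invoke structurally different lemmas.
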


\begin{proof}
We start by showing that the characters $\repH{i}$ are $H^{i+1}$-generic of depth $r_i$ for all ${0\leq i\leq d-1}$.

It is clear that $\repH{i}|_{H^i_{y,r_i^+}} = 1$ as $\rep{i}|_{G^i_{y,r_i^+}}=1$.

Now, let $X^*$ be an element of $\zstarri$ satisfying $\val(X^*(H_a)) = -r_i$ for all ${a\in\Phi(\alg{G}^{i+1},\alg{T})\setminus\Phi(\alg{G}^i,\alg{T})}$ that realizes $\rep{i}|_{G^i_{y,r_i}}$ as per Definition~\ref{def:genericity}. We claim that $X^*|_{\h^i}$ is an element of $\zderstarri$, satisfying $\val(X^*|_{\h^i}(H_a)) = -r_i$ for all $a\in\Phi(\alg{H}^{i+1},\alg{T}_{\alg{H}})\setminus \Phi(\alg{H}^i,\alg{T}_{\alg{H}})$, such that $X^*|_{\h^i}$ realizes $\repH{i}|_{H^i_{y,r_i}}$. Indeed, $X^*|_{\h^i}$ is clearly an element of $\zderstar$ viewed as an element of $\h^{i,*}$. Since $X^*\in \zstarri$, we have that $X^*(Y)\in\p_F$ for all $Y\in \z_{r_i^+}$. In particular, ${X^*(Y)\in\p_F}$ for all $Y\in(\zder)_{r_i^+}$. Hence, $X^*|_{\h^i} \in \zderstarri$. Recall from Section~\ref{sec:HandG} that we may identify the root systems $\Phi(\alg{G}^i,\alg{T})$ and $\Phi(\alg{H}^i,\alg{T}_\alg{H})$. Since $\val(X^*(H_a)) = -r_i$ for all $a\in \Phi(\alg{G}^{i+1},\alg{T})\setminus \Phi(\alg{G}^i,\alg{T})$, we conclude that $\val(X^*|_{\h^i}(H_a)) = -r_i$ for all $a\in\Phi(\alg{H}^{i+1},\alg{T}_{\alg{H}})\setminus \Phi(\alg{H}^i,\alg{T}_{\alg{H}})$.

Now, we show that $X^*|_{\h^i}$ realizes $\repH{i}$, that is for all $Y\in \h^i_{y,r_i}$, ${\repH{i}(e_H(Y+\h^i_{y,r_i^+})) = \psi(X^*|_{\h^i}(Y))}$, where $e_H$ is Adler's isomorphism between $\h^i_{y,r_i:r_i^+}$ and $H^i_{y,r_i:r_i^+}$ from \cite[Section 1.5]{Adler:1998}. Take $Y\in \h^i_{y,r_i}$. Since $\phi^i$ is realized by $X^*$, we have $\psi(X^*|_{\h^i}(Y))= \rep{i}(e(Y+\g^i_{y,r_i^+}))$ as per Definition~\ref{def:genericity}. By Lemma~\ref{lem:isores}, it follows that $e(Y+\g^i_{y,r_i^+}) = e_H(Y + \h^i_{y,r_i^+}),$ and therefore $\psi(X^*|_{\h^i}(Y)) = \repH{i}( e_H(Y+\h^i_{y,r_i^+}))$.
Hence, $\repH{i}$ is $H^{i+1}$-generic of depth $r_i$ for all $0\leq i\leq d-1$.

Finally, our definition of $\vec{\phi_H}$ depends on the depth $\tilde{r}$ of $\repH{d}$, so there are two cases to consider.
\begin{enumerate}
\item[1)] If $\tilde{r} > r_{d-1}$, we have that $\vec{\phi_H}=(\repH{0},\dots,\repH{d-1},\repH{d})$, and the conclusion follows by what precedes.
\item[2)] If $\tilde{r}\leq r_{d-1}$, we have that  $\vec{\phi_H}=(\repH{0},\dots,\repH{d-1}\repH{d},1)$, so we must check that $\repH{d-1}\repH{d}$ is $H^d$-generic of depth $r_{d-1}$. Because $\tilde{r}\leq r_{d-1}$, this follows from \cite[Lemma 4.9(3)]{Murnaghan:2011}.
\end{enumerate}
\end{proof}

This last proposition completes the proof of Theorem~\ref{th:GderData}. We give a name to the family of $H$-data produced from this theorem in the following definition.

\begin{definition}
Let $\Psi = (\vec{\alg{G}},y,\vec{r}, \rho, \vec{\phi})$ be a $G$-datum, and assume $\rho_H$ decomposes into irreducible representations as $\rho_H = \underset{\ell\in L}{\oplus}\rho_\ell$ for some finite set $L$. For all $\ell\in L$, let ${\Psi_\ell = (\vec{\alg{H}}, y, \vec{r}, \rho_\ell, \vec{\phi}_H)}$ as per Theorem~\ref{th:GderData}. The set $\Res(\Psi) = \{\Psi_\ell: \ell\in L\}$ is called the restriction of $\Psi$. 
\end{definition}

It turns out that one can also define a very natural extension process, using induction, which allows to construct $G$-data from an $H$-datum as per the following theorem.

\begin{theorem}\label{th:backwards}
Let $\Psi_H = (\vec{\alg{H}},y,\vec{r},\rho',\vec{\phi_H})$ be a \emph{normalized} $H$-datum in the sense of \cite[Definition 3.7.1]{Kaletha:2019}, where ${\vec{\alg{H}} = (\alg{H}^0,\dots,\alg{H}^d)}$. Then there exists a (possibly multiple) $G$-datum $\Psi$ such that ${\Psi_H \in\Res(\Psi)}$. 
\end{theorem}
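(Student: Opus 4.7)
The plan is to invert the restriction construction of Theorem~\ref{th:GderData}, ingredient by ingredient, choosing extensions freely in the directions that die when passing to $\alg{H}$. For the twisted Levi sequence, Theorem~\ref{th:intersectLevi} gives a bijection between Levi subgroups of $\alg{H}$ and those of $\alg{G}$ via intersection, so I define $\alg{G}^i$ as the unique Levi of $\alg{G}$ with $\alg{G}^i \cap \alg{H} = \alg{H}^i$; in particular $\alg{G}^d = \alg{G}$ is forced by $\alg{H}^d = \alg{H}$. Running Proposition~\ref{prop:D1} backward, using $Z(\alg{G}^i) = Z(\alg{G})^\circ Z(\alg{H}^i)$, shows that $\vec{\alg{G}}$ is a tamely ramified twisted Levi sequence with $Z(\alg{G}^0)/Z(\alg{G})$ $F$-anisotropic. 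The point $y$ and depth sequence $\vec{r}$ are carried over unchanged; the normalization hypothesis on $\Psi_H$ ensures the depths on the $G^i$-side match $r_i$ exactly, keeping us in case~1 of Theorem~\ref{th:GderData}.

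For the characters, I extend each $\repH{i}$ to a $G^{i+1}$-generic quasicharacter $\rep{i}$ of $G^i$ of depth $r_i$. Since $\alg{H}^i \supset [\alg{G}^i,\alg{G}^i]$, the quotient $G^i/H^i$ is abelian, so characters of $H^i$ extend (non-uniquely) to $G^i$ by Pontryagin duality. To preserve genericity, I invert the argument of Proposition~\ref{prop:D5}: the realizing element $X^*_H \in \zderstarri$ lifts to some $X^* \in \zstarri$ via the surjection $\zstar \twoheadrightarrow \zderstar$ (dual to the inclusion $\zder \hookrightarrow \z$, which follows from $Z(\alg{H}^i) \subset Z(\alg{G}^i)$), and the identification $\Phi(\alg{G}^{i+1},\alg{T}) = \Phi(\alg{H}^{i+1},\alg{T}_\alg{H})$ transfers the valuation condition verbatim. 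I then choose $\rep{i}$ to extend $\repH{i}$, to be realized by this $X^*$ on $G^i_{y,r_i}$, and to be trivial on $G^i_{y,r_i^+}$; any further extension over the abelian quotient is acceptable.

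For the depth-zero piece, Frobenius reciprocity identifies $\rho' \hookrightarrow \rho|_{\KH{0}}$ with $\rho \hookrightarrow \Ind_{\KH{0}}^{K^0}\rho'$, where $K^0 = G^0_{[y]}$. I take $\rho$ to be an irreducible component of $\Ind_{\KH{0}}^{K^0}\rho'$ whose restriction to $G^0_{y,0}$ contains a cuspidal representation of $\mathcal{G}^0_y$ sitting above the cuspidal piece of $\rho'|_{H^0_{y,0}}$; the existence of such a lift, via the inclusion $\mathcal{H}^0_y \supset [\mathcal{G}^0_y,\mathcal{G}^0_y]$ from Lemma~\ref{lem:HxGx}, is guaranteed by Deligne-Lusztig theory as collected in the Appendix. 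That $\rho|_{G^0_{y,0^+}}$ is $1$-isotypic can then be checked by a Mackey argument together with $1$-isotypy of $\rho'$ on $H^0_{y,0^+} = G^0_{y,0^+}\cap H^0$ and normality of $G^0_{y,0^+}$ in $K^0$; consequently $\Ind_{K^0}^{G^0}\rho$ is irreducible supercuspidal of depth zero by \cite[Theorem 6.8]{MP:1996}. This depth-zero step is the principal obstacle and accounts for the parenthetical "(possibly multiple)" in the statement: neither $\rho$ nor the extensions $\rep{i}$ are unique. Once $\Psi = (\vec{\alg{G}},y,\vec{r},\rho,\vec{\phi})$ is assembled, applying $\Res$ and comparing component-by-component with the assignments above recovers $\Psi_H$ as one of the data in $\Res(\Psi)$.
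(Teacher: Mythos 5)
Your overall plan matches the paper's sketch proof: recover the unique twisted Levi sequence $\vec{\alg{G}}$ from $\vec{\alg{H}}$ via Theorem~\ref{th:intersectLevi}, carry over $y$ and $\vec r$, extend the characters $\repH{i}$ to $G^i$ using the normalization hypothesis, and lift $\rho'$ to an irreducible subrepresentation $\rho$ of $\Ind_{\KH{0}}^{K^0}\rho'$. The substantive divergences are in the two extension steps, where the paper's route is leaner. For the characters, the paper considers $\Ind_{H^i}^{G^i}\repH{i}$ and notes that its irreducible subrepresentations are automatically characters of $G^i$ (since $\repH{i}$ is trivial on $[G^i,G^i]$), each of depth $\geq r_i$; restricting to $G^i_{y,r_i^+}$ shows at least one has depth exactly $r_i$, and genericity is then \emph{automatic} because the valuation condition on $H_a$ only involves the derived algebra (so any realizing element of $\rep{i}|_{G^i_{y,r_i}}$ inherits it from $X^*_H$). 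You instead construct an explicit lift $X^*$ of $X^*_H$ and impose three simultaneous constraints on $\rep{i}$ (extend $\repH{i}$, be trivial on $G^i_{y,r_i^+}$, be realized by $X^*$); this works, but the compatibility of those constraints isn't verified and is in fact exactly what the paper's induction argument establishes — so this is extra machinery. For $\rho$, the paper shows that \emph{any} irreducible subrepresentation of $\Ind_{\KH{0}}^{K^0}\rho'$ satisfies axiom D4 (essentially because the finite-group induction $\Ind_{\mathcal{H}^0_y(\res)}^{\mathcal{G}^0_y(\res)}$ preserves cuspidality, via the shared unipotent radicals from Theorem~\ref{th:intersectLevi}); you instead select a specific component whose restriction contains a cuspidal lift of the one for $\rho'$ and cite the Appendix for this, but the Appendix only computes \emph{restriction} of Deligne--Lusztig characters from $\mathcal{G}_y$ to $\mathcal{H}_y$, not lifting in the other direction, so that citation does not support the claim. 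Neither issue is fatal — the desired objects exist — but the paper's abstract formulations (take any subrep, observe genericity is automatic) avoid both the unverified compatibility and the misplaced appeal.
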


\begin{remark}
There are some $H$-data that do not extend. Indeed, the reason we require $\Psi_H$ to be normalized in the previous theorem is so that $\repH{i}|_{[G^i,G^i]} = 1$ for all $0\leq i\leq d$, which is a necessary condition for extending $\repH{i}$ to a character of $G^i.$ Requiring that we have a normalized $H$-datum is also not very restrictive, as every $H$-datum is $H$-equivalent to a normalized $H$-datum \cite[Lemma 3.7.2]{Kaletha:2019}.
\end{remark}

\begin{proof}[Sketch of the proof of Theorem~\ref{th:backwards}]
We provide the main ideas of the proof for Theorem~\ref{th:backwards} here in the form of a sketch. 

By Theorem~\ref{th:intersectLevi}, there exists a unique twisted Levi sequence $\vec{\alg{G}} = (\alg{G}^0,\dots,\alg{G}^d)$ of $\alg{G}$ such that $\alg{G}^i\cap\alg{H}=\alg{H}^i$ for all $0\leq i\leq d$. Furthermore, $\quo{Z(\alg{G}^0)}{Z(\alg{G})}$ is $F$-anisotropic by what precedes in the proof of Proposition~\ref{prop:D1}.

Using arguments similar to Proposition~\ref{prop:D4}, one can show that any irreducible subrepresentation of $\Ind_{\KH{0}}^{K^0}\rho'$ is $1$-isotypic and compactly induces to a depth-zero supercuspidal representation of $G^0$. Such a representation $\rho$ contains $\rho'$ upon restriction to $\KH{0}$ by construction.

For the extension of the character sequence, the fact that $\repH{i}|_{[G^i,G^i]}=1$ ensures that every irreducible subrepresentation of $\Ind_{H^i}^{G^i}\repH{i}$ is a character of $G^i$ that restricts to $\repH{i}$. Furthermore, any such character is of depth at least $r_i$, as $\repH{i}|_{H^i_{y,r_i}}\neq 1$. One then restricts $\Ind_{H^i}^{G^i}\repH{i}$ to $G^i_{y,r_i^+}$ and, using the fact that $\repH{i}|_{H^i_{y,r_i^+}}=1$, concludes that at least one of the characters in the decomposition has depth $r_i$. Call such a character $\rep{i}$. Similarly to Proposition~\ref{prop:D5}, we find that $\rep{i}$ is automatically $G^{i+1}$-generic of depth $r_i$. 

Letting $\Psi = (\vec{\alg{G}},y,\rho,\vec{\phi})$, where $\vec{\phi} = (\rep{0},\dots,\rep{d})$, we have that $\Psi_H \in \Res(\Psi)$.
\end{proof}

We also give a name to the family of $G$-data produced from the normalized $H$-datum $\Psi_H$ as per the previous theorem.

\begin{definition}
Let $\Psi_H = (\vec{\alg{H}},y,\vec{r},\rho',\vec{\phi_H})$ be a normalized $H$-datum. Let $\mathcal{E}_{\rho'}$ denote the set of irreducible subrepresentations of $\Ind_{\KH{0}}^{K^0}\rho'$ and $\mathcal{E}_i$ denote the set of irreducible subrepresentation of $\Ind_{H^i}^{G^i}\repH{i}$ with depth $r_i, 0\leq i\leq d$. 
The set ${\Ext(\Psi_H) = \{(\vec{G},y,\vec{r},\rho,\vec{\phi}): \rho\in\mathcal{E}_{\rho'}, \rep{i}\in\mathcal{E}_i, 0\leq i\leq d\}}$ is called the extension of $\Psi_H$.
\end{definition}

\subsection{Equivalence of Constructed Data}

We take some time in this section to verify that our restriction and extension of data are compatible with Hakim and Murnaghan's equivalence of data.

\begin{definition}
Given two $G$-data $\Psi$ and $\dot{\Psi}$ of the same length, we will say that ${\Res(\Psi) \simeq \Res(\dot{\Psi})}$ if every element of $\Res(\Psi)$ is $H$-equivalent to an element of $\Res(\dot{\Psi})$ and vice versa.

Analogously, given two normalized $H$-data $\Psi_H$ and $\dot{\Psi}_H$  of the same length, we will say that $\Ext(\Psi_H)\simeq \Ext(\dot{\Psi}_H)$ if every element of $\Ext(\Psi_H)$ is $G$-equivalent to an element of $\Ext(\dot{\Psi}_H)$ and vice versa.
\end{definition}

A natural question that one asks is whether or not the restriction and extension of data described in the previous section are robust. That is, if $\Psi$ and $\dot{\Psi}$ are $G$-equivalent $G$-data, do we have $\Res(\Psi) \simeq \Res(\dot{\Psi})$? Similarly, if $\Psi_H$ and $\dot{\Psi}_H$ are $H$-equivalent normalized $H$-data, do we have $\Ext(\Psi_H) \simeq \Ext(\dot{\Psi}_H)$? 

Given that $H$ is normal in $G$, one can talk about the $G$-conjugation of $H$-data. From the definition, one sees that $\Res({^g\Psi}) = {^g\Res(\Psi)}$ for all $g\in G$. However, when $g \notin H$, we cannot expect to have $\Res(\Psi) \simeq \Res({^g\Psi})$ since equivalence between restrictions is defined at the level of $H$-equivalence (and therefore $H$-conjugation). Similarly, given $g\in G$, we cannot expect $^g\Psi_H$ and $\Psi_H$ to be $H$-equivalent if $g\notin H$, but $\Ext(\Psi_H)$ is clearly stable under $G$-conjugation. That being said, $G$-equivalent $G$-data may not always produce $H$-equivalent restrictions, and $H$-data can produce $G$-equivalent extensions even when they are not $H$-equivalent because of this permitted outer conjugation. As expected, elementary transformations and refactorizations preserve restrictions and extensions. This discussion is summarized by the following two propositions.

\begin{proposition}\label{prop:Hequiv}
Let $\Psi = (\vec{\alg{G}},y,\vec{r},\rho,\vec{\phi})$ and $\dot{\Psi} = (\vec{\dot{\alg{G}}},\dot{y},\vec{r},\dot{\rho},\vec{\dot{\phi}})$ be two $G$-data. Assume one of the three following conditions hold
\begin{enumerate}
\item[1)] $\dot{\Psi}$ is an $H$-conjugate of $\Psi$,
\item[2)] $\dot{\Psi}$ is an elementary transformation of $\Psi$,
\item[3)] $\dot{\Psi}$ is a refactorization of $\Psi$,
\end{enumerate} 
then $\Res(\Psi) \simeq \Res(\dot{\Psi})$.
\end{proposition}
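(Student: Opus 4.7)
The plan is to verify the conclusion separately in each of the three cases, exhibiting in each case a natural bijection $\Psi_\ell \leftrightarrow \dot{\Psi}_\ell$ between elements of $\Res(\Psi)$ and $\Res(\dot{\Psi})$ and showing that matched pairs are related by an $H$-operation of the same (or a weaker) type.

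Case (1), $H$-conjugation by $h \in H$: intersecting a subgroup of $\alg{G}^i$ with $\alg{H}$, restricting a quasicharacter to $H^i$, restricting $\rho$ to $K^0_H$, and conjugating by $h$ pairwise commute, because $h \in H$ normalizes $\alg{H}$ and every $\alg{H}^i = \alg{G}^i \cap \alg{H}$. Thus $\Res({^h\Psi}) = \{{^h\Psi_\ell} : \ell \in L\}$, and each ${^h\Psi_\ell}$ is $H$-equivalent to $\Psi_\ell$ by $H$-conjugation.

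Case (2), elementary transformation: here $\dot{y} \in [y]$, $\dot{\rho} \simeq \rho$, and $\vec{\alg{G}}, \vec{\phi}$ are unchanged. Applying Lemma~\ref{lem:[x]} both in $\alg{G}^i$ and in $\alg{H}^i$ (legitimate by Remark~\ref{rem:WLOGy}) gives $G^i_{y,r} = G^i_{\dot{y},r}$ and $H^i_{y,r} = H^i_{\dot{y},r}$ for every $r \geq 0$, so $K^0_H$, the various groups $J^{i+1}_H, J^{i+1}_{H+}$, and the domain of each $\repH{i}$ coincide for $\Psi$ and $\dot{\Psi}$. Restricting the isomorphism $\dot{\rho} \simeq \rho$ to $K^0_H$ yields $\dot{\rho}_H \simeq \rho_H$, so decompositions into irreducibles match: after fixing an intertwiner, write $\dot{\rho}_H = \oplus_{\ell \in L} \dot{\rho}_\ell$ with $\dot{\rho}_\ell \simeq \rho_\ell$. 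Then $\dot{\Psi}_\ell = (\vec{\alg{H}}, \dot{y}, \vec{\tilde{r}}, \dot{\rho}_\ell, \vec{\phi_H})$ is literally an elementary transformation of $\Psi_\ell$.

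Case (3), refactorization, is the delicate step. By \cite[Definition 4.19]{HM:2008}, a refactorization replaces $(\rho, \vec{\phi})$ by $(\dot{\rho}, \vec{\dot{\phi}})$ so that each $\dot{\rep{i}}$ is a twist of $\rep{i}$ by a quasicharacter $\eta^i$ of $G^i$ whose restriction to $G^i_{y,r_i}$ is trivial (preserving depths, genericity, and the value of $\prod_i \rep{i}|_{G^0} \otimes \rho$ on the relevant piece). The restrictions $\eta^i_H \defeq \eta^i|_{H^i}$ are quasicharacters of $H^i$ satisfying the analogous triviality condition on $H^i_{y,r_i} = G^i_{y,r_i}\cap H$, so the sequence $\vec{\phi_H}$ for $\Psi$ and the sequence $\vec{\dot{\phi}_H}$ for $\dot{\Psi}$ differ precisely by the twisting pattern of a refactorization of $H$-data. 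On the depth-zero piece, the refactorization relation restricts to $\dot{\rho}_H \simeq \rho_H \otimes \chi|_{K^0_H}$, where $\chi$ is the appropriate product of the $\eta^i|_{G^0}$, so components decompose as $\dot{\rho}_\ell \simeq \rho_\ell \otimes \chi|_{K^0_H}$, providing the matching and completing the description of $(\Psi_\ell, \dot{\Psi}_\ell)$ as a refactorization pair of $H$-data. The subtlety is the truncation dichotomy in the definition of $\Res(\Psi)$ based on whether the depth $\tilde{r}$ of $\repH{d}$ is $>r_{d-1}$ or $\leq r_{d-1}$; however, because the refactorization twist $\eta^d$ is trivial on $G^d_{y,r_d}$, its restriction $\eta^d_H$ is trivial on $H^d_{y,r_d}$, and therefore $\dot{\repH{d}}$ has the same depth $\tilde{r}$ as $\repH{d}$, so both data fall in the same case of Theorem~\ref{th:GderData}.

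The main obstacle will be case (3): carefully tracing the quasicharacters $\eta^i$ of the refactorization through the intersection with $\alg{H}$ and through the truncation rule defining $\vec{\phi_H}$, while ensuring that the resulting twisting of $(\rho_\ell, \vec{\phi_H})$ into $(\dot{\rho}_\ell, \vec{\dot{\phi}_H})$ fits into the formal definition of refactorization for $H$-data rather than requiring a combination with an auxiliary elementary transformation.
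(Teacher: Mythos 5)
Your proof takes the same route as the paper (which disposes of this with a one-line instruction to restrict the defining relations of elementary transformation and refactorization to the $H$-subgroups), but you spell out the details, which is valuable. Cases (1) and (2) are correct and complete as written. However, there is a small but genuine imprecision in your handling of the truncation dichotomy in case (3).

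You assert that the refactorization twist $\eta^d = \dot{\rep{d}}\,(\rep{d})^{-1}$ is trivial on $G^d_{y,r_d}$, and conclude from this that $\dot{\repH{d}}$ and $\repH{d}$ have the same depth $\tilde{r}$. But triviality on $G^d_{y,r_d}$ (hence on $H^d_{y,r_d}$) does \emph{not} pin down the depth of $\dot{\repH{d}}$: if $\tilde{r} \leq r_d$ then $H^d_{y,\tilde{r}} \supseteq H^d_{y,r_d}$, so agreement of $\repH{d}$ and $\dot{\repH{d}}$ only on $H^d_{y,r_d}$ says nothing about their values on the larger group $H^d_{y,\tilde{r}}$, which is where the depth is read off. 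What actually saves you is that Hakim--Murnaghan's condition F2, specialized to $i = d$, forces $\rep{d}$ and $\dot{\rep{d}}$ to agree on the \emph{larger} group $G^d_{y,r_{d-1}^+}$, i.e.\ $\eta^d|_{G^d_{y,r_{d-1}^+}} = 1$. Restricting to $H$ gives $\dot{\repH{d}} = \repH{d}$ on $H^d_{y,r_{d-1}^+}$. Now both cases of the dichotomy follow: if $\tilde{r} > r_{d-1}$, then $H^d_{y,\tilde{r}} \subseteq H^d_{y,r_{d-1}^+}$ and the two characters agree on $H^d_{y,\tilde{r}}$ and $H^d_{y,\tilde{r}^+}$, so $\dot{\repH{d}}$ also has depth $\tilde{r}$; if $\tilde{r} \leq r_{d-1}$, then $\repH{d}$ is trivial on $H^d_{y,r_{d-1}^+}$, hence so is $\dot{\repH{d}}$, and both data fall in case~2) of Theorem~\ref{th:GderData}. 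Replace your ``trivial on $G^d_{y,r_d}$'' with ``trivial on $G^d_{y,r_{d-1}^+}$'' (citing F2) and the argument is airtight.
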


\begin{proof}
1) is immediate. For 2) and 3), one simply needs to take the definitions of elementary transformation \cite[Definition 5.2]{HM:2008} and refactorization \cite[Definition 4.19]{HM:2008} and restrict all relations to the corresponding $H$-subgroups.
\end{proof}

\begin{proposition}\label{prop:GequivConst}
Let $\Psi_H=(\vec{\alg{H}},y,\vec{r},\rho',\vec{\phi_H})$ and $\dot{\Psi}_H = (\vec{\dot{\alg{H}}},\dot{y},\vec{r},\dot{\rho'},\vec{\dot{\phi_H}})$ be two normalized $H$-data. Assume one of the three following conditions hold
\begin{enumerate}
\item[1)] $\dot{\Psi}_H$ is a $G$-conjugate of $\Psi_H$,
\item[2)] $\dot{\Psi}_H$ is an elementary transformation of $\Psi_H$,
\item[3)] $\dot{\Psi}_H$ is a refactorization of $\Psi_H$,
\end{enumerate}
then $\Ext(\Psi_H)\simeq \Ext(\dot{\Psi}_H)$.
\end{proposition}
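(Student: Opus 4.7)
The plan is to prove this by case analysis paralleling the structure of Proposition~\ref{prop:Hequiv}, but lifting each basic move from the $H$-level to the $G$-level. Specifically, for each of the three cases, I will show that every $\dot{\Psi} \in \Ext(\dot{\Psi}_H)$ is $G$-equivalent to some $\Psi \in \Ext(\Psi_H)$ via the analogous basic move at the $G$-level, and then appeal to symmetry (each of the three relations is symmetric, after possibly inverting the twisting characters) for the converse direction.

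Case (1) should be essentially formal: the extension process is manifestly $G$-equivariant, since if $\dot{\Psi}_H = {^g\Psi_H}$ for $g\in G$, then by Theorem~\ref{th:intersectLevi} the unique twisted Levi sequence in $\alg{G}$ whose intersection with $\alg{H}$ is $^g\vec{\alg{H}}$ is precisely $^g\vec{\alg{G}}$, and naturality of induction gives $\Ind_{{^g}\KH{0}}^{{^g}K^0}{^g\rho'}\simeq {^g\bigl(\Ind_{\KH{0}}^{K^0}\rho'\bigr)}$; the analogous equivariance holds for the decomposition of each $\Ind_{H^i}^{G^i}\repH{i}$. Hence $\Ext(\dot{\Psi}_H) = {^g\Ext(\Psi_H)}$, and every element of one set is a $G$-conjugate of an element of the other. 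For case (2), an elementary transformation replaces $y$ by $\dot{y}\in[y]$ and $\rho'$ by $\dot{\rho}'\simeq\rho'$, leaving $\vec{\phi_H}$ fixed. By Lemma~\ref{lem:[x]} and Lemma~\ref{lem:[y]}, all the parahoric and compact-mod-center subgroups, as well as the Moy--Prasad filtrations entering the extension construction, agree at $y$ and $\dot{y}$; thus $\Ind_{\KH{0}}^{K^0}\dot{\rho}' \simeq \Ind_{\KH{0}}^{K^0}\rho'$ and the character-extension sets $\mathcal{E}_i$ are unchanged. Every $\dot{\Psi}\in\Ext(\dot{\Psi}_H)$ therefore differs from some $\Psi\in\Ext(\Psi_H)$ by (at most) a $G$-elementary transformation at $\dot{y}\in[y]$.

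Case (3) is the main obstacle. A refactorization of $\Psi_H$ in the sense of \cite[Definition 4.19]{HM:2008} is a twist of $(\rho',\vec{\phi_H})$ by a compatible family $\{\eta^i_H\}$ of characters of the $H^i$ that satisfy specific depth conditions with respect to the Moy--Prasad filtrations at $y$. To lift this to a refactorization at the $G$-level, the plan is to extend each $\eta^i_H$ to a character $\eta^i$ of $G^i$ in such a way that the required depth conditions are preserved. The extension is possible because, being characters, the $\eta^i_H$ are trivial on $[H^i,H^i]=[G^i,G^i]$, and the central isogeny $\alg{G}^i = Z(\alg{G}^i)^\circ\alg{H}^i$ (established in the proof of Proposition~\ref{prop:D1}) together with the argument sketched for Theorem~\ref{th:backwards} produces at least one such $\eta^i$ by selecting an irreducible subrepresentation of $\Ind_{H^i}^{G^i}\eta^i_H$. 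Once the lifts are in hand, given any $\dot{\Psi}\in\Ext(\dot{\Psi}_H)$ we refactorize it using the family $\{(\eta^i)^{-1}\}$ to obtain a $G$-datum $\Psi$ whose character components restrict to $\vec{\phi_H}$ and whose depth-zero piece restricts (after suitable choice of the lifted depth-zero character) to $\rho'$, placing $\Psi \in \Ext(\Psi_H)$.

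The hardest part will be verifying that the depth conditions on the $\eta^i_H$ transfer correctly to their chosen extensions $\eta^i$; the plan is to run the genericity/depth argument of Proposition~\ref{prop:D5} in reverse, exploiting the identification of the root systems $\Phi(\alg{G}^i,\alg{T}) = \Phi(\alg{H}^i,\alg{T}_\alg{H})$ and the fact (also used in the sketch of Theorem~\ref{th:backwards}) that among the characters of $G^i$ lying over $\eta^i_H$ at least one has the correct depth, precisely because the decomposition of $\Ind_{H^i}^{G^i}\eta^i_H$ restricted to $G^i_{y,r_i^+}$ forces some component to match the depth of $\eta^i_H$. Once that technical point is secured, both directions of $\Ext(\Psi_H)\simeq\Ext(\dot{\Psi}_H)$ follow by applying the three cases and noting that refactorization is an equivalence relation generated by twisting, so inverting the family $\{\eta^i\}$ gives the converse inclusion.
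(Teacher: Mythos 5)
Your treatment of cases (1) and (2) matches the paper's. For case (1) your $G$-equivariance observation that $\Ext({}^g\Psi_H) = {}^g\Ext(\Psi_H)$ is exactly the intended argument, and for case (2) the reduction to comparing $\Ind_{\KH{0}}^{K^0}\rho'$ with $\Ind_{\KH{0}}^{K^0}\dot{\rho}'$ via Lemmas~\ref{lem:[x]} and \ref{lem:[y]} is precisely what the paper does.

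For case (3) you go in the opposite direction from the paper (starting from an arbitrary $\dot{\Psi}\in\Ext(\dot{\Psi}_H)$ rather than fixing a $\Psi\in\Ext(\Psi_H)$), which is legitimate, but your framing introduces explicit twisting characters $\eta^i_H$ and misplaces the real difficulty. You identify the hard point as choosing lifts $\eta^i$ of $\eta^i_H$ to $G^i$ of the ``correct depth.'' However, in \cite[Definition 4.19]{HM:2008} a refactorization is characterized by the product conditions (condition F1): in twist language, one needs $\prod_{j=i}^d\eta^j|_{G^i_{y,r_{i-1}^+}} = 1$ for each $i$. The $H$-level refactorization only guarantees this product vanishes on $H^i_{y,r_{i-1}^+}$, whereas $G^i_{y,r_{i-1}^+} = Z^i_{r_{i-1}^+}H^i_{y,r_{i-1}^+}$ by \cite[Lemma B.7.2]{DeBackerReeder:2009}, so the missing ingredient is that the lifted products must also be trivial on the central filtration pieces $Z^i_{r_{i-1}^+}$. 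Choosing each $\eta^i$ independently as an arbitrary component of $\Ind_{H^i}^{G^i}\eta^i_H$ (even with individually matched depths) does not ensure this; the lifts have to be chosen in a coordinated, e.g.\ recursive, fashion from $i=d$ down to $i=0$. The paper avoids this pitfall by not passing through twisting characters at all: it fixes one $G$-extension $\vec{\phi}$ and then constructs $\vec{\dot{\phi}}$ (and $\dot{\rho}$) directly so that the F1/F2 relations hold at the level of $G$, which makes the compatibility constraint explicit rather than hidden inside an auxiliary family $\{\eta^i\}$. Your proposal would close if you replace the ``correct depth'' goal with the F1 product constraint on $Z^i_{r_{i-1}^+}$ and build the lifts recursively against it.
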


\begin{proof}
1) is immediate. For 2), it suffices to take the definition of elementary transformation \cite[Definition 5.2]{HM:2008} and to compare $\Ind_{\KH{0}}^{K^0}\rho'$ and $\Ind_{\KH{0}}^{K^0}\dot{\rho'}$. For 3), one fixes an extension $\vec{\phi}$ of $\vec{\phi_H}$ and an element $\rho$ of $\Ind_{\KH{0}}^{K^0}\rho'$ and uses the definition of refactorization \cite[Definition 4.19]{HM:2008} to construct an extension $\vec{\dot{\phi}}$ of $\vec{\dot{\phi_H}}$ and to choose an element $\dot{\rho}\subset\Ind_{\KH{0}}^{K^0}\dot{\rho'}$ that satisfies the wanted relations.
\end{proof}

\section{Restricting the Supercuspidal Representation}\label{sec:restrictSC}

Given the $H$-datum $\Psi_\ell = (\vec{\alg{H}},y,\vec{\tilde{r}}, \rho_\ell,\vec{\phi_H})$ provided by Theorem~\ref{th:GderData}, we can construct open and compact-mod-center subgroups $\KH{i}$ of $H^i$ \`a la J.K.~Yu. Indeed, we define $\KH{0} = H^0_{[y]}$ and $\KH{i+1} = \KH{0}H^1_{y,s_0}\cdots H^{i+1}_{y,s_i}$ for all $0\leq i\leq d-1$. We note the following relationship between $K^i$ and $\KH{i}$.

\begin{proposition}\label{prop:Kintersect}
For all $-1\leq i\leq d-1$, $\KH{i+1} = K^{i+1}\cap H$.
\end{proposition}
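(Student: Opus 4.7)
My plan is to prove the equality by induction on $i$, with the base case $i=-1$ being the assertion $\KH{0} = K^0 \cap H$. For the base case, I would use the identifications $K^0 = G^0_{[y]}$ and $\KH{0} = H^0_{[y]}$. Since $[\alg{H}^0,\alg{H}^0] = [\alg{G}^0,\alg{G}^0]$, the reduced buildings $\mathcal{B}^{\red}(\alg{H}^0,F)$ and $\mathcal{B}^{\red}(\alg{G}^0,F)$ coincide, and the $H^0$-action on this common reduced building is the restriction of the $G^0$-action. Combining this with the fact that $G^0 \cap H = H^0$ (since $H$ is normal in $G$ and $\alg{H}^0 = \alg{G}^0 \cap \alg{H}$), this gives $K^0 \cap H = G^0_{[y]} \cap H^0 = H^0_{[y]} = \KH{0}$.

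For the inductive step, I would use the recursive factorizations $K^{i+1} = K^i \cdot G^{i+1}_{y,s_i}$ (from the text) and $\KH{i+1} = \KH{i} \cdot H^{i+1}_{y,s_i}$ (which is immediate from the definition of $\KH{i+1}$). The inclusion $\KH{i+1} \subseteq K^{i+1} \cap H$ is then immediate, since by induction $\KH{i} \subseteq K^i$, and $H^{i+1}_{y,s_i} = G^{i+1}_{y,s_i} \cap H \subseteq G^{i+1}_{y,s_i}$ from Section~\ref{sec:HandG}, while also $\KH{i+1} \subseteq H$.

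The substantive direction is $K^{i+1} \cap H \subseteq \KH{i+1}$. Given $g \in K^{i+1} \cap H$, I would first write $g = ku$ with $k \in K^i$ and $u \in G^{i+1}_{y,s_i}$. The key step is to further decompose $u = tv$ with $t \in \alg{T}(F)_{s_i}$ (where $\alg{T}$ is the chosen maximal torus contained in $\alg{G}^0$) and $v \in H^{i+1}_{y,s_i}$. This rests on the fact that $G^{i+1}_{y,s_i}$ is generated by $\alg{T}(F)_{s_i}$ together with the root subgroup filtrations $\alg{U}_a(F)_{y,s_i}$ for $a \in \Phi(\alg{G}^{i+1},\alg{T})$; since each $\alg{U}_a$ lies in $[\alg{G}^{i+1},\alg{G}^{i+1}] \subseteq \alg{H}^{i+1}$ and $\alg{T}$ normalizes each root subgroup (preserving its filtration), the subgroup generated by the $\alg{U}_a(F)_{y,s_i}$ is normalized by $\alg{T}(F)_{s_i}$ and lies inside $H^{i+1}_{y,s_i}$. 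Since $\alg{T} \subseteq \alg{G}^0$, we have $\alg{T}(F)_{s_i} \subseteq G^0_{y,s_i} \subseteq G^0_{y,0} \subseteq K^0 \subseteq K^i$, so $kt \in K^i$. Then $g = (kt)v$ with $v \in H^{i+1}_{y,s_i} \subseteq H$, so $kt \in K^i \cap H = \KH{i}$ by the inductive hypothesis, yielding $g \in \KH{i}\cdot H^{i+1}_{y,s_i} = \KH{i+1}$.

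The main obstacle I anticipate is justifying the decomposition $u=tv$ at the level of $F$-points: in the split case over $E$ this is a standard Bruhat/Iwahori-type fact, but the descent to $F$ under the tame Galois action deserves care. A potentially cleaner alternative would be to pass through the Lie algebra via Adler's isomorphism $e\colon \mathfrak{g}^{i+1}_{y,s_i:s_i^+} \simeq G^{i+1}_{y,s_i:s_i^+}$ and the direct sum decomposition $\mathfrak{g}^{i+1} = \mathfrak{z}^{i+1} \oplus [\mathfrak{g}^{i+1},\mathfrak{g}^{i+1}]$, where $[\mathfrak{g}^{i+1},\mathfrak{g}^{i+1}] \subseteq \mathfrak{h}^{i+1}$, and then proceed by induction on the filtration depth between $s_i$ and $\infty$, reducing the decomposition modulo successive quotients $\mathfrak{g}^{i+1}_{y,r:r^+}$.
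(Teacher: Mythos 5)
Your proof is correct and your base case is identical to the paper's, but your inductive structure differs from what the paper does. Where you run a level-by-level induction on $i$, passing the torus-part $t$ upward one step at a time through $K^i\cap H = \KH{i}$, the paper avoids induction entirely: it rewrites the whole inducing group as $K^{i+1} = K^0 H^1_{y,s_0}\cdots H^{i+1}_{y,s_i}$, pushing all non-$H$ contributions down into the single factor $K^0$ at once, so that only the base case $K^0\cap H = \KH{0}$ is needed. Both routes work; yours is a little more iterative, the paper's a little more direct.

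The substantive issue you flagged — the decomposition $u = tv$ over $F$-points — is real, and you are right to worry that it is not an immediate consequence of the $E$-split presentation; Galois-invariants of a product do not factor automatically. The paper handles precisely this via the cited result $Z^{j}_{r}\overline{G}^{j}_{y,r} = G^{j}_{y,r}$ for $r>0$ (DeBacker--Reeder, Lemma~B.7.2), where $Z^{j}$ is the $F$-points of the connected center and $\overline{G}^{j}=[\alg{G}^{j},\alg{G}^{j}](F)\subset H^{j}$. Your alternative Lie-algebra sketch via $\mathfrak{g}^{i+1}=\mathfrak{z}^{i+1}\oplus[\mathfrak{g}^{i+1},\mathfrak{g}^{i+1}]$ and Adler's isomorphism is essentially a re-derivation of the same fact and would indeed close the gap, but it is cleaner simply to invoke the DeBacker--Reeder lemma directly, replacing your $t\in T(F)_{s_i}$ by $t\in Z^{i+1}_{s_i}\subset Z(\alg{G}^{0})(F)\subset K^{0}\subset K^{i}$, which is exactly what the paper does.
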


\begin{proof}
For $i=-1$, we have $H^0_{[y]} = \{g\in G^0\cap H: g\cdot [y] = [y] \} = G^0_{[y]}\cap H.$

Now, let $0\leq i\leq d-1$. We start by showing that $K^{i+1} = K^0H^1_{y,s_0}\cdots H^{i+1}_{y,s_i}.$
Since $H^{i+1}_{y,s_i}\subset G^{i+1}_{y,s_i}$ for all $0\leq i\leq d-1$, we have $K^0H^1_{y,s_0}\cdots H^{i+1}_{y,s_i}\subset K^{i+1}$. For the converse, let $Z^i = Z(\alg{G}^i)^\circ (F)$ denote the $F$-points of the identity component of the center of $\alg{G}^i$ and let $\overline{G}^i = [\alg{G}^i,\alg{G}^i](F)\subset H^i$. Then $Z^i_{r}\overline{G}^i_{y,r} = G^i_{y,r}$ for all $r >0$ \cite[Lemma B.7.2]{DeBackerReeder:2009}. Applying this result, we obtain
$$
K^{i+1} = K^0G^1_{y,s_0}\cdots G^{i+1}_{y,s_i} = K^0Z^1_{s_0}\overline{G}^1_{y,s_0} \cdots Z^{i+1}_{s_i}\overline{G}^{i+1}_{y,s_i}.$$
We note that $Z^{i+1}_{s_i}\subset Z(\alg{G}^{i+1})(F)\subset Z(\alg{G}^0)(F)$ for all $0\leq i\leq d-1$. Furthermore,  ${Z(\alg{G}^0)(F) \subset K^0}$ as $K^0 = N_{G^0}(G^0_{y,0})$. It follows that
$$
K^{i+1}= K^0\overline{G}^1_{y,s_0}\cdots \overline{G}^{i+1}_{y,s_i} \subset K^0H^1_{y,s_0}\cdots H^{i+1}_{y,s_i},$$ and therefore $K^{i+1} = K^0H^1_{y,s_0}\cdots H^{i+1}_{y,s_i}.$

Now, we clearly have $\KH{i+1} \subset K^{i+1}\cap H$. Conversely, given an element $k'$ of $K^{i+1}\cap H$, it is in particular an element of $K^{i+1}$ and is thus in the form $k'=kh$ for some $k\in K^0, h\in H^1_{y,s_0}\cdots H^{i+1}_{y,s_i} \subset H$ by what precedes. It follows that $k=k'h^{-1}\in K^0\cap H = \KH{0}$. Hence we conclude that $k' \in \KH{i+1}$.
\end{proof}

Analogously to Figure \ref{fig:JKYuConstruction}, we denote by $\primH{i}$ the extension of $\repH{i}$ to $\KH{i+1}$ for all ${0\leq i\leq d-1}$, and by $\kapH{i}$ the inflation of $\primH{i}$ to $\KH{d}$ for all $0\leq i\leq d-2$. We denote the inflation to $\KH{d}$ of $\rho_\ell$ by $\kap{-1}_\ell$. For consistency of notaton, $\kapH{d-1} = \primH{d-1}$ and $\kapH{d} = \repH{d}$. Thus, in the case where the depth $\tilde{r}$ of $\repH{d}$ is greater than $ r_{d-1}$, $\vec{\phi_H} = (\repH{0},\dots,\repH{d-1},\repH{d})$ and $\kappa_H(\Psi_\ell) = \kappa^{-1}_\ell \otimes \kapH{0}\otimes\cdots\otimes\kapH{d-1}\otimes\kapH{d}$. 

In the case where $\tilde{r}\leq r_{d-1}$, then $(\repH{d-1}\repH{d},1)$ are the last two characters of $\vec{\phi_H}$, and therefore $\kappa_H(\Psi_\ell) =\kap{-1}_\ell \otimes \kapH{0} \otimes \cdots \otimes \kapH{d-2} \otimes \kapH{d-1,d} \otimes 1$, where $\kapH{d-1,d} = (\repH{d-1}\repH{d})'$ is the extension of $\repH{d-1}\repH{d}$ to $\KH{d}$.

Since the characters $\repH{i}, 0\leq i\leq d-1$, are simply restricted from the characters $\rep{i}, {0\leq i\leq d-1}$, it is natural to expect that we have $\primH{i} = \prim{i}|_{\KH{i+1}}$ and $\kapH{i} = \kap{i}|_{\KH{d}}$ for all $0\leq i\leq d-1$, which we will prove in Section~\ref{sec:restrictExtInf} with Theorem~\ref{th:phiprime} and Proposition~\ref{prop:kappai}, respectively. This means that the restriction to $H$ of $\pi_G(\Psi)$ commutes in a sense with the steps of the J.K.~Yu Construction, as illustrated in Figure \ref{fig:restrictJKYu}.

As a consequence, in Section~\ref{sec:restrictKapPi} we are able to obtain a formula for $\kappa_G(\Psi)|_{\KH{d}}$ in Theorem~\ref{th:kappa}, which allows us to compute the restriction $\pi_G(\Psi)|_H$ in Theorem~\ref{th:restrictSupercuspidal}.

 \begin{figure}[htbp!]
\center \begin{tikzpicture}
  \matrix (m) [matrix of math nodes,row sep=0.1em,column sep=0.9em,minimum width=0.1em]
  {
    {} &{} &{} & \text{$\left(\rep{i}, K^i\right)$} &{} &\text{$\left(\repH{i}, \KH{i}\right)$} \\
    {} &{} &{} &{} &\text{$\circlearrowleft$} &{} \\
     \text{$\left(\rho, K^0\right)$} &{} &\text{$\left( \underset{\ell\in L}{\oplus}\rho_\ell, \KH{0} \right)$} &\text{$\left(\prim{i}, K^{i+1}\right)$} &{} &\text{$\left(\primH{i}, \KH{i+1}\right)$} \\
      {} &\text{$\circlearrowleft$} &{} &{} &\text{$\circlearrowleft$} &{} \\
     \text{$\left(\kap{-1}, K^d\right)$} &{} &\text{$\left( \underset{\ell\in L}{\oplus}\kap{-1}_\ell, \KH{d}\right)$} & \text{$\left(\kap{i}, K^d\right)$} &{} &\text{$\left( \kapH{i}, \KH{d}\right)$}\\
    };
  \path[-stealth]
     (m-3-1) edge node [above] {restrict} (m-3-3)
   (m-3-1) edge node [left] {inflate} (m-5-1)
    (m-5-1) edge node [below] {restrict} (m-5-3)    
    (m-3-3) edge node [right] {inflate} (m-5-3)
    (m-1-4) edge node [above] {restrict} (m-1-6)
   (m-1-4) edge node [left] {extend} (m-3-4)
    (m-3-4) edge node [below] {restrict} (m-3-6)    
    (m-1-6) edge node [right] {extend} (m-3-6)
    (m-3-4) edge node [left] {inflate} (m-5-4)
    (m-3-6) edge node [right] {inflate} (m-5-6)    
    (m-5-4) edge node [below] {restrict} (m-5-6)
;
\end{tikzpicture}
\caption[Commutativity of restriction with the J.K.~Yu Construction.]{Commutativity of the restriction with extension and inflation for $\rep{i}$, $0\leq i\leq d-1$, and $\rho$.}
\label{fig:restrictJKYu}
\end{figure}

\subsection{Restricting the Extensions $\prim{i}$ and Inflations $\kap{i}$}\label{sec:restrictExtInf}

To talk about the extensions $\primH{i}$, we define the subgroups $\JH{i+1}$ and $\JHP{i+1}$ of $H^{i+1}$ analogously to Section~\ref{sec:construction}.

\begin{align*}
\JH{i+1}(E) = \langle \alg{T}_{\alg{H}}(E)_{r_i}, \alg{U}_a(E)_{y,r_i}, \alg{U}_b(E)_{y,s_i} : a\in\Phi(\alg{G}^i,\alg{T}), b\in \Phi(\alg{G}^{i+1},\alg{T}) \setminus \Phi(\alg{G}^i,\alg{T}) \rangle, \nonumber \\
\JHP{i+1}(E) = \langle \alg{T}_{\alg{H}}(E)_{r_i}, \alg{U}_a(E)_{y,r_i}, \alg{U}_b(E)_{y,s_i^+} : a\in\Phi(\alg{G}^i,\alg{T}), b\in \Phi(\alg{G}^{i+1},\alg{T}) \setminus \Phi(\alg{G}^i,\alg{T}) \rangle.
\end{align*}

We write $\JH{i+1}$ and $\JHP{i+1}$ for $\JH{i+1}(E)\cap H^{i+1}$ and $\JHP{i+1}(E)\cap H^{i+1}$, respectively. We have that $\JH{i+1} = (H^i,H^{i+1})_{y,(r_i,s_i)}, \JHP{i+1} = (H^i,H^{i+1})_{y,(r_i,s_i^+)}$ and $\KH{i+1} = \KH{i}\JH{i+1}$ for all $0\leq i\leq d-1$.

\begin{proposition}
For all $0\leq i\leq d-1$, $J^{i+1} = \JH{i+1}J^{i+1}_+$.
\end{proposition}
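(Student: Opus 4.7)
The plan is to verify both inclusions separately, with the nontrivial direction handled via a generator-level check at $E$-points followed by Galois descent to $G^{i+1}$-points. For the easy inclusion $\JH{i+1}J^{i+1}_+\subseteq J^{i+1}$, I will note from the generating sets that $\JH{i+1}\subseteq J^{i+1}$ (the sole difference being $\alg{T}_{\alg{H}}(E)_{r_i}\subseteq \alg{T}(E)_{r_i}$) and $J^{i+1}_+\subseteq J^{i+1}$ (since $\alg{U}_b(E)_{y,s_i^+}\subseteq \alg{U}_b(E)_{y,s_i}$). Combined with the standard fact, used throughout Yu's construction and implicit in the fact that $\W{i} = J^{i+1}/J^{i+1}_+$ is a symplectic quotient, that $J^{i+1}_+$ is normal in $J^{i+1}$, this makes the product $\JH{i+1}J^{i+1}_+$ a subgroup of $J^{i+1}$.

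For the reverse inclusion, I will first establish the analogous $E$-level equality $J^{i+1}(E) = \JH{i+1}(E)J^{i+1}_+(E)$. By the same normality, the right-hand side is a subgroup of $J^{i+1}(E)$, so it suffices to show every listed generator of $J^{i+1}(E)$ lies in $\JH{i+1}(E)J^{i+1}_+(E)$. The root generators $\alg{U}_a(E)_{y,r_i}$ for $a\in \Phi(\alg{G}^i,\alg{T})$ and $\alg{U}_b(E)_{y,s_i}$ for $b\in \Phi(\alg{G}^{i+1},\alg{T})\setminus\Phi(\alg{G}^i,\alg{T})$ appear verbatim in the definition of $\JH{i+1}(E)$. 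For the remaining torus generator $\alg{T}(E)_{r_i}$, I will invoke the central isogeny $\alg{T} = Z(\alg{G})^\circ\cdot\alg{T}_{\alg{H}}$ (derived in the proof of Proposition~\ref{prop:D1} from $\alg{G} = Z(\alg{G})^\circ\alg{H}$) to decompose $\alg{T}(E)_{r_i} = Z(\alg{G})^\circ(E)_{r_i}\cdot \alg{T}_{\alg{H}}(E)_{r_i}$, with the central factor lying in $\alg{T}(E)_{r_i}\subseteq J^{i+1}_+(E)$ and the $\alg{H}$-factor in $\JH{i+1}(E)$.

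To descend from $E$ to $G^{i+1}$-points, given $g\in J^{i+1}$ with an $E$-level factorization $g = hj$ where $h\in \JH{i+1}(E)$ and $j\in J^{i+1}_+(E)$, applying $\sigma\in\Gal(E/F)$ yields $\sigma(h)^{-1}h = \sigma(j)j^{-1}\in \JH{i+1}(E)\cap J^{i+1}_+(E)$. By uniqueness in the concave-function/Iwahori decomposition, this intersection equals $\JHP{i+1}(E)$, so the class of $h$ in the abelian quotient $\JH{i+1}(E)/\JHP{i+1}(E)$ is $\Gal(E/F)$-fixed, and Galois descent for this Moy-Prasad filtration quotient provides an $F$-rational lift $h'\in \JH{i+1}$. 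Then $j' = (h')^{-1}g$ lies in $J^{i+1}_+(E)\cap G^{i+1} = J^{i+1}_+$, giving the desired factorization $g = h'j'$. The main obstacle is this final $F$-rationality step; the $E$-level equality reduces to a routine generator check once normality of $J^{i+1}_+$ and the central torus decomposition are in place.
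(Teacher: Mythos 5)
Your proof is correct and follows essentially the same route as the paper: establish the factorization at the level of $E$-points, then descend to $F$-points by observing that the image of the $\JH{i+1}(E)$-factor in $\JH{i+1}(E)/\JHP{i+1}(E)$ is $\Gal(E/F)$-fixed and lifting it via the surjection $\JH{i+1}\twoheadrightarrow\bigl(\JH{i+1}(E)/\JHP{i+1}(E)\bigr)^{\Gal(E/F)}$ (which the paper pins down as \cite[Corollary 2.3]{Yu:2001}). One small simplification: the central-isogeny decomposition $\alg{T}(E)_{r_i}=Z(\alg{G})^\circ(E)_{r_i}\cdot\alg{T}_\alg{H}(E)_{r_i}$ is unnecessary, since $\alg{T}(E)_{r_i}$ already sits wholesale inside $J^{i+1}_+(E)$ by definition, which is exactly how the paper absorbs the torus factor.
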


\begin{proof}
First, recall that $\alg{T}(E)_{r_i}$ normalizes the filtrations of the root subgroups that partly generate $J^{i+1}(E)$. So, given $g\in J^{i+1}(E)$, we can write $g = ut$ for some $t\in \alg{T}(E)_{r_i} \subset J^{i+1}_+(E)$ and $$u\in \langle \alg{U}_a(E)_{y,r_i},\alg{U}_b(E)_{y,s_i}: a\in \Phi(\alg{G}^i,\alg{T}), b\in\Phi(\alg{G}^{i+1},\alg{T})\setminus \Phi(\alg{G}^i,\alg{T}) \rangle \subset \JH{i+1}(E).$$ Therefore, we have $J^{i+1}(E) = \JH{i+1}(E)J^{i+1}_+(E)$.

We proceed similarly to the proof of \cite[Lemma 2.10]{Yu:2001} to show that the equality also holds at the level of $F$-points. Take ${g\in J^{i+1} = J^{i+1}(E)\cap G^{i+1}}$. Then $g$ is fixed by $\Gal(E/F)$. By what precedes, $g= ut$ for some $u \in \JH{i+1}(E)$, $t \in J^{i+1}_+(E)$. It follows that the image of $g$ in ${\quo{J^{i+1}(E)}{J^{i+1}_+(E)} = \quo{\JH{i+1}(E)J^{i+1}_+(E)}{J^{i+1}_+(E)}}$, given by $uJ^{i+1}_+(E)$, is fixed by $\Gal(E/F)$. But the second isomorphism theorem tells us ${\quo{\JH{i+1}(E)J^{i+1}_+(E)}{J^{i+1}_+(E)} \simeq \quo{\JH{i+1}(E)}{\JH{i+1}(E)\cap J^{i+1}_+(E)}}$, where ${\JH{i+1}(E)\cap J^{i+1}_+(E) = \JHP{i+1}(E)}$. Hence, $u \JHP{i+1}(E)$ is fixed by $\Gal(E/F)$.

Next, \cite[Corollary 2.3]{Yu:2001} states that the natural homomorphism
$$\JH{i+1} \rightarrow \left( \quo{\JH{i+1}(E)}{\JHP{i+1}(E)} \right)^{\Gal(E/F)}$$ is a surjection. 
This implies that there exists $g_0 \in \JH{i+1}$ such that $g_0\JHP{i+1}(E) = u\JHP{i+1}(E)$, that is $g_0^{-1}u \in \JHP{i+1}(E)$. Hence, $u \in \JH{i+1}\JHP{i+1}(E)$. We deduce that $g = g'\overline{g}$ for some ${g' \in \JH{i+1}}, {\overline{g}\in J^{i+1}_+(E)}$. Furthermore, we have that $\overline{g} = g'^{-1}g \in G^{i+1}$, and therefore $\overline{g}\in J^{i+1}_+$. The result follows.
\end{proof}

\begin{corollary}\label{cor:beta}
Let $\beta : \quo{\JH{i+1}}{\JHP{i+1}} \rightarrow \quo{J^{i+1}}{J^{i+1}_+}$ be defined by $\beta(j\JHP{i+1}) = jJ^{i+1}_+$ for all $j\in \JH{i+1}$. Then $\beta$ is an isomorphism.
\end{corollary}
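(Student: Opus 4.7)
The plan is to recognize this corollary as a direct application of the second isomorphism theorem, given the equality $J^{i+1} = \JH{i+1}J^{i+1}_+$ that was just established.

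First, I would verify that $\beta$ is well-defined and a group homomorphism. Well-definedness requires $\JHP{i+1} \subseteq J^{i+1}_+$, which is immediate from the definitions. The map $\beta$ is the natural homomorphism induced on quotients by the inclusion $\JH{i+1} \hookrightarrow J^{i+1}$, so it is automatically a homomorphism (this also uses that $J^{i+1}_+$ is normal in $J^{i+1}$, which is known from the Moy-Prasad theory and is implicit in the fact that $J^{i+1}/J^{i+1}_+$ was already treated as a group, indeed a symplectic vector space, earlier in the paper).

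Next, I would establish surjectivity: by the preceding proposition, every element of $J^{i+1}$ is of the form $jg$ with $j \in \JH{i+1}$ and $g \in J^{i+1}_+$, so every coset $xJ^{i+1}_+$ can be written as $jJ^{i+1}_+ = \beta(j\JHP{i+1})$.

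For injectivity, suppose $\beta(j\JHP{i+1})$ is trivial, so $j \in J^{i+1}_+$; combined with $j \in \JH{i+1}$, this gives $j \in \JH{i+1} \cap J^{i+1}_+$. The key intermediate observation is that $\JH{i+1} \cap J^{i+1}_+ = \JHP{i+1}$. This was already verified at the $E$-level in the preceding proof, where it was recorded that $\JH{i+1}(E) \cap J^{i+1}_+(E) = \JHP{i+1}(E)$; intersecting both sides with $H^{i+1}$ yields the corresponding equality at the $F$-level, since $\JH{i+1} \subseteq H^{i+1}$. Hence $j \in \JHP{i+1}$ and $j\JHP{i+1}$ is the identity coset, completing the proof. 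I do not anticipate any substantive obstacle here; the result is essentially a restatement of the previous proposition in the language of quotients.
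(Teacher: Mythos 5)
Your proof is correct and takes the same approach the paper intends: the corollary is left without explicit proof because it is precisely the second isomorphism theorem applied to the decomposition $J^{i+1} = \JH{i+1}J^{i+1}_+$, with the kernel identification $\JH{i+1}\cap J^{i+1}_+ = \JHP{i+1}$ descending from the $E$-level equality recorded in the proposition's proof by intersecting with $H^{i+1}$. You have spelled out exactly the surjectivity and injectivity steps that justify this, and the descent argument for the intersection is valid.
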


\begin{remark}\label{rem:JJ+}
As a consequence of the last corollary, we have that $J^{i+1} = J^{i+1}_+$ if and only if $\JH{i+1} = \JHP{i+1}$ for all $0\leq i\leq d-1$. This is an important result as it implies that the theory of Weil representations is always applied simultaneously in the construction of $\prim{i}$ and $\primH{i}$. This is crucial for us to be able to relate those two extensions. Indeed, if it was possible for $\JH{i+1} = \JHP{i+1}$, but $J^{i+1}\neq J^{i+1}_+$, then talking about restricting $\prim{i}$, a representation of dimension greater than 1, would not result in the character $\primH{i}$.
\end{remark}

\begin{theorem}\label{th:phiprime}
For all $0\leq i\leq d-1$, $\primH{i} = \prim{i}|_{\KH{i+1}}$.
\end{theorem}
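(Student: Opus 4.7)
The plan is to split the argument according to the two cases in Yu's construction of $\prim{i}$, which by Remark~\ref{rem:JJ+} align with the two cases on the $H$-side: either $J^{i+1}=J^{i+1}_+$ and $\JH{i+1}=\JHP{i+1}$ hold simultaneously, or neither equality holds. In both cases the strategy is the same: match the ingredients going into $\prim{i}$ (the character $\widehat{\rep{i}}$, the central character $\xiG{i}$, the Heisenberg-Weil lift) with the corresponding ingredients going into $\primH{i}$, using uniqueness and the embedding $\KH{i+1}=K^{i+1}\cap H$ from Proposition~\ref{prop:Kintersect}.

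In the easy case, both $\prim{i}$ and $\primH{i}$ are honest characters, so the theorem reduces to showing $\widehat{\repH{i}} = \widehat{\rep{i}}|_{\KH{i}\JHP{i+1}}$. This follows from the uniqueness property characterizing $\widehat{\repH{i}}$: the restriction of $\widehat{\rep{i}}$ agrees with $\repH{i}$ on $\KH{i}\subset K^i$ and is trivial on $(H^i,H^{i+1})_{y,(r_i^+,s_i^+)}$, which is a subgroup of $(G^i,G^{i+1})_{y,(r_i^+,s_i^+)}$ on which $\widehat{\rep{i}}$ vanishes by construction.

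For the Heisenberg-Weil case I would proceed in three steps. First, an identical uniqueness argument yields $\xiH{i} = \xiG{i}|_{\JHP{i+1}}$, whence $\NH{i} = \N{i}\cap\JH{i+1}$, producing a natural embedding $\HeisH{i}\hookrightarrow\Heis{i}$ that, combined with $\beta$ from Corollary~\ref{cor:beta}, identifies $\WH{i}$ with $\W{i}$ compatibly with the symplectic structures. Second, since $\dim\eta_{\xiH{i}} = |\WH{i}|^{1/2} = |\W{i}|^{1/2} = \dim\eta_{\xiG{i}}$, a dimension count forces $\eta_{\xiG{i}}|_{\HeisH{i}}$ to be irreducible with central character $\xiH{i}$, hence isomorphic to $\eta_{\xiH{i}}$ by Stone-von Neumann. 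Third, check that the conjugation action of $\KH{i}\subset K^i$ on $\JH{i+1}\subset J^{i+1}$ is compatible with the identification $\WH{i}\simeq\W{i}$, so that by uniqueness of the Heisenberg-Weil lift $\weilH{i} = \weil{i}|_{\KH{i}\ltimes\HeisH{i}}$. Then $\primH{i}(kj) = \repH{i}(k)\weilH{i}(k,j\NH{i}) = \rep{i}(k)\weil{i}(k,j\N{i}) = \prim{i}(kj)$ for all $k\in\KH{i}$, $j\in\JH{i+1}$.

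The main obstacle I anticipate is the exceptional case $p=3$ with $\dim\W{i}=2$, where three distinct Heisenberg-Weil lifts exist and the appeal to uniqueness breaks down. To close the third step there, one must verify directly that the Hakim-Murnaghan canonical choice \cite[Definition 2.17]{HM:2008} is preserved under restriction, by inspecting their explicit construction and confirming that restricting the distinguished $G$-lift to $\Sp(\WH{i})\ltimes\HeisH{i}$ still satisfies their distinguished criterion on the $H$-side. Once this compatibility is in place, the three-step argument closes uniformly in $i$ and in the Heisenberg-Weil parameters.
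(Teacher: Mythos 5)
Your Case 1 is identical to the paper's: uniqueness of the character extension $\widehat{\repH{i}}$ on $\KH{i}\JHP{i+1}$ immediately gives $\widehat{\rep{i}}|_{\KH{i}\JHP{i+1}} = \widehat{\repH{i}}$, and the claim follows.

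In Case 2, your strategy diverges from the paper's, and the divergence is where the gap lies. You correctly set up the isomorphism $\HeisH{i}\simeq\Heis{i}$ (via $\NH{i}=\N{i}\cap\JH{i+1}$) compatible with $\beta:\WH{i}\xrightarrow{\sim}\W{i}$, and you correctly observe that the conjugation actions of $\KH{i}$ and $K^i$ must be compared. But you then close the argument with "uniqueness of the Heisenberg-Weil lift," which — as you yourself flag — fails precisely when $p=3$ and $\dim\W{i}=2$. You defer that case to an unfinished inspection of the Hakim--Murnaghan construction, so the proof as written does not close. The paper takes a different route that avoids this: after establishing the two commutative diagrams of Figure~\ref{fig:commutativeDiagram} (relating the special isomorphisms $\spec{i},\specH{i}$ and the induced maps $(f^i)_{\spec{i}},(f^i_H)_{\specH{i}}$ into $\Sp(\W{i})$ and $\Sp(\WH{i})$), it invokes Nevins' Proposition~3.2, which is exactly a compatibility theorem for Heisenberg--Weil lifts under this kind of diagram data and handles all cases uniformly, including the $p=3$, $\dim\W{i}=2$ one. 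Your dimension count for $\eta_{\xiG{i}}$ is correct but not really the crux — since $\alpha$ is already an isomorphism of Heisenberg $p$-groups, Stone--von Neumann gives the Heisenberg reps match for free; what must be shown is that the chosen intertwiners (the Weil piece) match, and that is precisely what the citation to Nevins supplies and your argument leaves open.
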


\begin{proof}
For a fixed $0\leq i\leq d-1$, we have two cases to consider.

\textit{Case 1 ($J^{i+1} = J^{i+1}_+$):} In this case, we have $\JH{i+1} = \JHP{i+1}$ by Remark~\ref{rem:JJ+}. It follows that $\primH{i}(kj) = \repH{i}(k)\widehat{\repH{i}}(j)$ for all $k\in \KH{i}, j\in \JH{i+1}$, where $\widehat{\repH{i}}$ is the (unique) quasicharacter of $\KH{i}H^{i+1}_{y,s_i^+}$ that agrees with $\repH{i}$ on $\KH{i}$ and is trivial on $(H^i,H^{i+1})_{y,(r_i^+,s_i^+)}$.

Clearly, we have that $\widehat{\rep{i}} |_{\KH{i}H^{i+1}_{y,s_i +}}$ agrees with $\repH{i}$ on $\KH{i}$, as it agrees with $\rep{i}$ on $K^i$, and is trivial on $(H^i,H^{i+1})_{y,(r_i^+,s_i^+)}$, as it is trivial on $(G^i,G^{i+1})_{y,(r_i^+,s_i^+)}$. Therefore, by uniqueness, we have $\widehat{\rep{i}} |_{\KH{i}H^{i+1}_{y,s_i +}} = \widehat{\repH{i}}$.

So, for all $k\in \KH{i}, j\in \JH{i+1}$
$$\prim{i}|_{\KH{i+1}}(kj) = \rep{i}(k)\widehat{\rep{i}}(j) =\repH{i}(k)\widehat{\repH{i}}(j) = \primH{i}(kj).$$

\textit{Case 2 ($J^{i+1}\neq J^{i+1}_+$):} In this case, we have that $\prim{i}$ is constructed using the Heisenberg-Weil lift $\weil{i}$, which is a representation of $K^i\ltimes \Heis{i}$, where $\Heis{i} = \quo{J^{i+1}}{\ker(\xiG{i})}$ and $\xiG{i} = \widehat{\rep{i}}|_{J^{i+1}_+}$. Since $\JH{i+1} \neq \JHP{i+1}$, we require a Heisenberg-Weil lift to extend $\widehat{\repH{i}}$, which we denote by $\weilH{i}$. Note that $\weilH{i}$ is a representation of $\KH{i} \ltimes \HeisH{i}$, where ${\HeisH{i} = \quo{\JH{i+1}}{\ker(\xiH{i})}}$ and ${\xiH{i}= \widehat{\repH{i}}|_{\JHP{i+1}} = \widehat{\rep{i}}|_{\JHP{i+1}}}$. We recall that in this case, for all $k\in \KH{i}, j\in \JH{i+1}$, $\prim{i}|_{\KH{i+1}}(kj) = \widehat{\rep{i}}(k)\weil{i}(k,j\ker(\xiG{i}))$ and $\primH{i}(kj) = \widehat{\repH{i}}(k)\weilH{i}(k,j\ker(\xiH{i}))$. By what precedes, $\widehat{\rep{i}}(k) = \widehat{\repH{i}}(k)$ for all $k\in \KH{i}$. Therefore, to show that $\prim{i}|_{\KH{i+1}} = \primH{i}$, we must show that $\weil{i}(k,j\ker(\xiG{i})) = \weilH{i}(k,j\ker(\xiH{i}))$ for all $k\in\KH{i}, j\in \JH{i+1}$.

Let $f^i: K^i\rightarrow\Sp(\Heis{i})$ and $f^i_H: \KH{i}\rightarrow \Sp(\HeisH{i})$ be the homomorphisms coming from the actions by conjugation of $K^i$ on $J^{i+1}$ and of $\KH{i}$ on $\JH{i+1}$, respectively. Recall the symplectic vector space $W^i = \quo{J^{i+1}}{J^{i+1}_+}$ and define analogously $\WH{i} = \quo{\JH{i+1}}{\JHP{i+1}}$. Let $\spec{i}$ and $\specH{i}$ be the special isomorphisms arising from the split polarizations of $\Heis{i}$ and $\HeisH{i}$ which map into ${\W{i}}^\# = \W{i}\boxtimes \mathbb{F}_p$ and ${\WH{i}}^\# = \WH{i}\boxtimes \mathbb{F}_p$, respectively, as per \cite[Lemma 2.35]{HM:2008}. One can verify that the homomorphism $\alpha: \HeisH{i}\rightarrow \Heis{i}$ which maps $j\ker(\xiH{i})$ to $j\ker(\xiG{i})$ for all $j\in\JH{i+1}$ is an isomorphism that induces the symplectic isomorphism $\beta : \WH{i} \rightarrow \W{i}$ from Corollary~\ref{cor:beta}. Let $\delta: \KH{i} \rightarrow K^i$ be the inclusion map. Finally, let $(f^i)_{\spec{i}}$ be the map that sends $k\in K^i$ to $\spec{i}\circ f^i(k) \circ {\spec{i}}^{-1}$ and define $(f^i_H)_{\specH{i}}$ similarly. It is then straightfoward to verify that the two diagrams in Figure~\ref{fig:commutativeDiagram} commute.

\begin{figure}[!htbp]
\center
\begin{tikzpicture}
  \matrix (m) [matrix of math nodes,row sep=3em,column sep=4em,minimum width=2em]
  {
     \text{$\HeisH{i}$} & \text{${\WH{i}}^\#$} & &\text{$\KH{i}$} &\text{$Sp(\WH{i})$} \\
     \text{$\Heis{i}$} &\text{${\W{i}}^\#$}  & &\text{$K^i$} &\text{$Sp(\W{i})$}\\};
  \path[-stealth]
    (m-1-1) edge node [left] {$\alpha$} (m-2-1)
            edge node [above] {$\specH{i}$} (m-1-2)
    (m-2-1) edge node [above] {$\spec{i}$} (m-2-2)
    (m-1-2) edge node [right] {$\beta\times 1$}(m-2-2)
    (m-1-4) edge node [left] {$\delta$} (m-2-4)
    			edge node [above] {$(f^i_H)_{\specH{i}}$} (m-1-5)
    	(m-2-4) edge node [above] {$(f^i)_{\spec{i}}$} (m-2-5)
    	(m-1-5) edge node [right] {$\inn(\beta)$} (m-2-5);
\end{tikzpicture}
\caption{Commutative diagrams for the restriction of the Heisenberg-Weil lift.}
\label{fig:commutativeDiagram}
\end{figure}

Applying \cite[Proposition 3.2]{Nevins:2015}, we have that $\weilH{i}(k,j\ker(\xiH{i})) = \weil{i}(\delta(k),\alpha(j\ker(\xiH{i})))$ for all $k\in\KH{i},j\in\JH{i+1}$ and the result follows.

\end{proof}

\begin{proposition}\label{prop:kappai}
For all $0\leq i\leq d-1, \kapH{i} = \kap{i}|_{\KH{d}}$. Furthermore, $\kap{-1}|_{\KH{d}} = \underset{\ell \in L}{\oplus} \kap{-1}_\ell.$
\end{proposition}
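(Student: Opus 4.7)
The plan is to show that restriction to $\KH{d}$ commutes with the inflation step of the J.K.~Yu construction, which is considerably easier than the extension step that was already handled in Theorem~\ref{th:phiprime}. The key observation is that $\KH{d}$ sits inside $K^d$ compatibly with the factorizations used to define inflation. First I would iterate the identity $\KH{j+1} = \KH{j}\JH{j+1}$ to obtain $\KH{d} = \KH{i+1}\JH{i+2}\cdots\JH{d}$. Since $\KH{i+1}\subset K^{i+1}$ by Proposition~\ref{prop:Kintersect} and $\JH{l}\subset J^l$ by construction, every $k'\in\KH{d}$ admits a decomposition $k' = kj$ with $k\in\KH{i+1}$ and $j\in\JH{i+2}\cdots\JH{d}$, which is simultaneously a valid decomposition in $K^{i+1}J^{i+2}\cdots J^d$.

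Given such an element, I would compute both sides of the desired equality directly from the definitions. By well-definedness of $\kap{i}$ on $K^d$, we have $\kap{i}|_{\KH{d}}(k') = \prim{i}(k)$, and by definition of $\kapH{i}$, we have $\kapH{i}(k') = \primH{i}(k)$. Theorem~\ref{th:phiprime} then yields $\primH{i}(k) = \prim{i}|_{\KH{i+1}}(k) = \prim{i}(k)$, so the two agree on $k'$.

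For the claim about $\kap{-1}$, I would first observe that $\kap{-1}$, as the inflation of $\rho$ from $K^0$ to $K^d$, factors through the natural projection $K^d = K^0J^1\cdots J^d \to K^0$ sending $kj \mapsto k$. This projection restricts to the analogous projection $\KH{d} = \KH{0}\JH{1}\cdots\JH{d} \to \KH{0}$, so that $\kap{-1}|_{\KH{d}}$ is precisely the inflation of $\rho|_{\KH{0}} = \rho_H$ to $\KH{d}$. Because inflation is a pullback and therefore commutes with direct sums, the decomposition $\rho_H = \bigoplus_{\ell\in L}\rho_\ell$ from Lemma~\ref{lem:rhofinitedim} gives $\kap{-1}|_{\KH{d}} = \bigoplus_{\ell\in L}\kap{-1}_\ell$.

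The one technical point requiring care is well-definedness of the inflation formula itself: if $k' = kj = k_1 j_1$ for two decompositions in $K^{i+1}\times (J^{i+2}\cdots J^d)$, one needs $\prim{i}(k) = \prim{i}(k_1)$. This is built into Yu's construction in \cite{Yu:2001}—it is precisely what makes $\kap{i}$ a representation of $K^d$—and the present argument transports that fact to $H$ via Proposition~\ref{prop:Kintersect} together with the product decomposition of $\KH{d}$, so no new obstacle arises.
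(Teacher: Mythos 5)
Your proof is correct and follows essentially the same route as the paper's: both deduce $\kapH{i} = \kap{i}|_{\KH{d}}$ from Theorem~\ref{th:phiprime} together with the definition of inflation, and both obtain the $\kap{-1}$ decomposition from the observation that inflation merely projects onto the $K^0$ (resp.\ $\KH{0}$) factor. The paper states the first part in one line ("it easily follows from the definition of inflation"), whereas you carefully verify the compatible product decompositions $\KH{d} = \KH{i+1}\JH{i+2}\cdots\JH{d}\subset K^{i+1}J^{i+2}\cdots J^d$; this is exactly the content being elided in the paper, so your write-up is a faithful (and more explicit) version of the same argument.
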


\begin{proof}
Because $\primH{i} = \prim{i}|_{\KH{i+1}}$ for all $0\leq i\leq d-1$ (Theorem~\ref{th:phiprime}), it easily follows from the definition of inflation that $\kapH{i} =  \kap{i}|_{\KH{d}}.$

For all $k\in\KH{0}, j\in J^1_H\cdots J^d_H$, we have $\kap{-1}(kj) = \rho(k) = \underset{\ell \in L}{\oplus}\rho_\ell(k) = \underset{\ell\in L}{\oplus}\kappa^{-1}_\ell(kj)$, which proves the second statement.
\end{proof}

\subsection{Restricting $\kappa_G(\Psi)$ and $\pi_G(\Psi)$}\label{sec:restrictKapPi}

Now that we have calculated the restrictions of $\prim{i}$ and $\kap{i}$ for all $0\leq i\leq d$, we can proceed to computing the restriction of $\kappa_G(\Psi)$.

\begin{theorem}\label{th:kappa}
Given the $G$-datum $\Psi$, $\kappa_G(\Psi)|_{\KH{d}} \simeq \underset{\ell\in L}{\oplus}\kappa_H(\Psi_\ell)$.
\end{theorem}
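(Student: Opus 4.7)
The plan is to distribute the restriction $\cdot|_{\KH{d}}$ over the tensor product defining $\kappa_G(\Psi)$, apply Proposition~\ref{prop:kappai} factor by factor, and pull the direct sum arising from $\kap{-1}|_{\KH{d}} = \bigoplus_{\ell\in L}\kap{-1}_\ell$ outside. Explicitly, I would write
\[
\kappa_G(\Psi)|_{\KH{d}} = \bigl(\kap{-1}|_{\KH{d}}\bigr)\otimes\bigl(\kap{0}|_{\KH{d}}\bigr)\otimes\cdots\otimes\bigl(\kap{d}|_{\KH{d}}\bigr),
\]
replace $\kap{i}|_{\KH{d}}$ by $\kapH{i}$ for $0\leq i\leq d-1$ and $\kap{-1}|_{\KH{d}}$ by $\bigoplus_{\ell} \kap{-1}_\ell$ via Proposition~\ref{prop:kappai}, and then handle the last tensor factor according to the two cases appearing in Theorem~\ref{th:GderData}. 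In both cases, the identity $\big(\bigoplus_\ell V_\ell\big)\otimes W \cong \bigoplus_\ell \big(V_\ell\otimes W\big)$ then delivers $\bigoplus_{\ell}\kappa_H(\Psi_\ell)$.

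In Case 1 ($\tilde{r} > r_{d-1}$), one has $\kapH{d} = \repH{d}$ by definition, and $\kap{d}|_{\KH{d}} = \rep{d}|_{\KH{d}} = \repH{d}$, so the last factor already matches and the result follows immediately. Case 2 ($\tilde{r}\leq r_{d-1}$) is the subtle one. Here the last two tensor factors of $\kappa_H(\Psi_\ell)$ are $\kapH{d-1,d} = (\repH{d-1}\repH{d})'$ and $1$, while on the $G$-side the corresponding restricted factors collapse to $\prim{d-1}|_{\KH{d}}\otimes\rep{d}|_{\KH{d}} = \primH{d-1}\otimes\repH{d}$ by Theorem~\ref{th:phiprime}. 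The key sublemma needed is therefore
\[
\primH{d-1}\otimes\repH{d} \;\simeq\; \kapH{d-1,d},
\]
i.e., twisting the $H$-side extension $\primH{d-1}$ by the character $\repH{d}$ of depth $\tilde r \leq r_{d-1}$ produces the canonical J.K.~Yu extension of the product character $\repH{d-1}\repH{d}$.

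To prove this sublemma, both sides agree with $\repH{d-1}\repH{d}$ on $\KH{d-1}$, so the real content is agreement on $\JH{d}$. On $\JHP{d}$ this comes down to checking $\widehat{\repH{d-1}}\cdot\repH{d}|_{\JHP{d}} = \widehat{\repH{d-1}\repH{d}}|_{\JHP{d}}$, which I would deduce from the uniqueness characterization of $\widehat{\,\cdot\,}$: both agree with $\repH{d-1}\repH{d}$ on $\KH{d-1}$, and both are trivial on $(H^{d-1},H^d)_{y,(r_{d-1}^+,s_{d-1}^+)}$, where triviality of $\repH{d}$ on the relevant filtration piece follows from $\tilde r \leq r_{d-1}$. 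When $\JH{d}\neq \JHP{d}$, a second step is required: the Heisenberg-Weil lift used in the construction of $\kapH{d-1,d}$ depends only on the central character $\widehat{\repH{d-1}\repH{d}}|_{\JHP{d}}$ of $\HeisH{d-1}$, which by the previous paragraph equals the one used for $\primH{d-1}$; moreover $\repH{d}$ is trivial modulo $\JHP{d}$, so it acts as the identity on the symplectic quotient $\WH{d-1}$ and does not disturb the Weil representation.

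The main obstacle is precisely this Case 2 sublemma, and within it, the bookkeeping around the Heisenberg-Weil lift: one must verify that the central character on $\HeisH{d-1}$ is unchanged when one passes from $\repH{d-1}$ to $\repH{d-1}\repH{d}$, so that the same Weil representation can be used on both sides of the claimed isomorphism. Once this is in place, distributing the direct sum over the tensor product yields the theorem.
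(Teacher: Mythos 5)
Your overall skeleton---distribute restriction over the tensor product, use Proposition~\ref{prop:kappai}, handle Case~1 directly, isolate the Case~2 sublemma $\primH{d-1}\otimes\repH{d}\simeq\kapH{d-1,d}$---matches the paper's up to that sublemma, but the paper proves the sublemma differently: it invokes Hakim--Murnaghan's \cite[Proposition 4.24]{HM:2008} on refactorizations (observing that only condition F1 is needed and that the last entries need not have maximal depth), rather than arguing directly from the uniqueness characterization of $\widehat{\,\cdot\,}$ and the construction of the Heisenberg--Weil lift. Your route is more hands-on and, if carried out, would give a more explicit understanding of why the isomorphism holds; the paper's route trades that for a shorter argument that outsources the bookkeeping.

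However, your direct argument has a gap at the crucial triviality step. You assert that $\repH{d}$ is trivial on $(H^{d-1},H^d)_{y,(r_{d-1}^+,s_{d-1}^+)}$ (and, implicitly, on $\JHP{d}$ and on $\JH{d}$ modulo $\JHP{d}$) ``because $\tilde r\leq r_{d-1}$.'' But $\tilde r\leq r_{d-1}$ only gives triviality of $\repH{d}$ on $H^d_{y,r_{d-1}^+}$, and the group $(H^{d-1},H^d)_{y,(r_{d-1}^+,s_{d-1}^+)}$ is \emph{not} contained in $H^d_{y,r_{d-1}^+}$: it is generated in part by root subgroups $\alg{U}_b(E)_{y,s_{d-1}^+}$ at level $s_{d-1}^+=(r_{d-1}/2)^+<r_{d-1}^+$ for roots $b\in\Phi(\alg{H}^d,\alg{T}_\alg{H})\setminus\Phi(\alg{H}^{d-1},\alg{T}_\alg{H})$. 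If $s_{d-1}<\tilde r\leq r_{d-1}$, the depth bound alone says nothing about $\repH{d}$ on these deeper pieces, so the uniqueness characterization of $\widehat{\repH{d-1}\repH{d}}$ is not verified by your argument, and likewise the claim that the central character of $\HeisH{d-1}$ is unchanged and that $\repH{d}$ factors trivially through $\WH{d-1}$ is not established. To fill the gap you would need a separate argument for the root-group factors at level $s_{d-1}^+$---for instance, that $\repH{d}=\rep{d}|_H$ is trivial on $G_{\der}(F)_{y,0^+}$ because under the paper's standing hypothesis $p\nmid|W|$ every character of $G_{\der}(F)$ has depth zero (see the discussion following Corollary~\ref{prop:NevinsDatum} and \cite[Lemma 3.5.1]{Kaletha:2019}), combined with a decomposition of $(H^{d-1},H^d)_{y,(r_{d-1}^+,s_{d-1}^+)}$ into a torus part contained in $H_{y,r_{d-1}^+}$ and a derived-group part. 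The paper's appeal to \cite[Proposition 4.24]{HM:2008} bypasses this delicate filtration bookkeeping.
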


\begin{proof}
We have that
$\kappa_G(\Psi)|_{\KH{d}} = \kap{-1}|_{\KH{d}} \otimes \underset{0 \leq i \leq d} {\bigotimes}\kap{i} |_{\KH{d}}.$
 From Proposition~\ref{prop:kappai}, we can rewrite the right-hand side as
\begin{align*}
\kappa_G(\Psi)|_{\KH{d}} 
 = \left(\underset{\ell\in L}{\oplus} \kap{-1}_\ell \right) \otimes \underset{0 \leq i \leq d} {\bigotimes}\kapH{i}
 = \underset{\ell\in L}{\oplus}\left( \kap{-1}_\ell\otimes \underset{0 \leq i \leq d}{\bigotimes}\kapH{i} \right). 
 \end{align*}

In the case where the depth $\tilde{r}$ of $\repH{d}$ is greater than $r_{d-1}$, then ${\kappa_H(\Psi_\ell)=\kap{-1}_\ell\otimes \underset{0 \leq i \leq d}{\bigotimes}\kapH{i}}$ and we obtain our result. When $\tilde{r}\leq r_{d-1}$, we must find a way to compare $\kapH{d-1}\otimes\kapH{d}$ with $\kapH{d-1,d}\otimes 1$, where $\kapH{d-1,d} = (\repH{d-1}\repH{d})'$. 

In \cite[Proposition 4.24]{HM:2008}, Hakim and Murnaghan show that if $\dot{\Psi}_H$ is a refactorization of $\Psi_H$, then $\kappa_H({\Psi}_H)\simeq \kappa_H(\dot{\Psi}_H)$. When looking at the details of their proof, we only need $\vec{\phi}_H$ and $\vec{\dot{\phi}}_H$ to satisfy condition F1) from the definition of refactorization \cite[Definition 4.19]{HM:2008} for this result to hold, and we do not even require the last quasicharacters of the sequences to be of largest depth. This means that we can set $\vec{\phi}_H = (\repH{0},\dots,\repH{d-1},\repH{d})$ and $\vec{\dot{\phi}}_H = (\repH{0},\dots,\repH{d-1}\repH{d},1)$ and obtain $\kapH{d-1}\otimes\kap{d}\simeq\kapH{d-1,d}\otimes 1$. Thus, 
$$\kap{-1}_\ell\otimes \underset{0 \leq i \leq d}{\bigotimes}\kapH{i} \simeq \kap{-1}_\ell \otimes \kapH{0}\otimes\cdots\otimes \kapH{d-2}\otimes\kapH{d-1,d}\otimes 1 = \kappa_H(\Psi_\ell),$$ and the result follows.

\end{proof}

We are now ready to calculate the restriction of $\pi_G(\Psi)$ to $H$.

\begin{theorem}\label{th:restrictSupercuspidal}
Given the datum $\Psi$, $\pi_G(\Psi)|_H \simeq\underset{t\in C}{\oplus} \left( \underset{\ell\in L}{\oplus} \pi_H({^t\Psi_\ell})\right)$, where $C$ is a set of coset representatives of $\Bigslant{H}{G}{K^d}$. 
\end{theorem}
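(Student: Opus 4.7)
The plan is to apply the Mackey decomposition to $\pi_G(\Psi)|_H = \bigl(\Ind_{K^d}^G \kappa_G(\Psi)\bigr)|_H$ and then feed in the description of $\kappa_G(\Psi)|_{\KH{d}}$ supplied by Theorem~\ref{th:kappa}. Since $K^d$ is open in $G$, the Mackey formula for compact induction gives
\[
\pi_G(\Psi)|_H \;\simeq\; \bigoplus_{t\in C} \Ind_{H\cap {}^tK^d}^{H}\Bigl({}^t\kappa_G(\Psi)\big|_{H\cap {}^tK^d}\Bigr),
\]
where $C$ is a set of representatives of $\Bigslant{H}{G}{K^d}$. The main task is then to identify each summand with some $\pi_H({}^t\Psi_\ell)$.

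The first simplification comes from the normality of $\alg{H}$ in $\alg{G}$ (Section~\ref{sec:HandG}), which gives $H\cap {}^tK^d = {}^t(H\cap K^d)$ for every $t\in G$. Combined with Proposition~\ref{prop:Kintersect}, which asserts $H\cap K^d = \KH{d}$, this yields $H\cap {}^tK^d = {}^t\KH{d}$. Thus each Mackey summand takes the form
\[
\Ind_{{}^t\KH{d}}^H \bigl({}^t\kappa_G(\Psi)\big|_{{}^t\KH{d}}\bigr)
\;\simeq\; \Ind_{{}^t\KH{d}}^H {}^t\bigl(\kappa_G(\Psi)|_{\KH{d}}\bigr).
\]

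Now I invoke Theorem~\ref{th:kappa}, which gives $\kappa_G(\Psi)|_{\KH{d}}\simeq \bigoplus_{\ell\in L}\kappa_H(\Psi_\ell)$. Conjugating by $t$ commutes with the direct sum, so ${}^t\bigl(\kappa_G(\Psi)|_{\KH{d}}\bigr) \simeq \bigoplus_{\ell\in L} {}^t\kappa_H(\Psi_\ell)$. Next I need the naturality of Yu's construction under conjugation: applying the steps of the J.K.~Yu Construction to the $H$-datum ${}^t\Psi_\ell = ({}^t\vec{\alg{H}},t\cdot y,\vec{\tilde r},{}^t\rho_\ell,{}^t\vec{\phi_H})$ yields the conjugated inducing subgroup ${}^t\KH{d}$ and the representation ${}^t\kappa_H(\Psi_\ell)$; this is a direct consequence of the equivariance of each step of the construction under $G$-conjugation. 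Hence ${}^t\kappa_H(\Psi_\ell) = \kappa_H({}^t\Psi_\ell)$ and by definition $\Ind_{{}^t\KH{d}}^H \kappa_H({}^t\Psi_\ell) = \pi_H({}^t\Psi_\ell)$. Pulling everything together,
\[
\pi_G(\Psi)|_H \;\simeq\; \bigoplus_{t\in C}\Ind_{{}^t\KH{d}}^H\bigoplus_{\ell\in L}\kappa_H({}^t\Psi_\ell) \;\simeq\; \bigoplus_{t\in C}\bigoplus_{\ell\in L}\pi_H({}^t\Psi_\ell),
\]
which is the desired decomposition.

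The routine parts are the Mackey decomposition and the bookkeeping with the set $C$. The main potential obstacle is the equivariance claim ${}^t\kappa_H(\Psi_\ell) = \kappa_H({}^t\Psi_\ell)$: one needs to check that each step of the J.K.~Yu Construction (the extensions $\primH{i}$, the Heisenberg--Weil lifts $\weilH{i}$, and the inflations $\kapH{i}$) transforms correctly under conjugation by an arbitrary element of $G$ (not merely of $H$). This follows because conjugation by $t$ maps the symplectic data $(\JH{i+1},\JHP{i+1},\xiH{i},\HeisH{i},\WH{i})$ attached to $\Psi_\ell$ isomorphically onto the corresponding data for ${}^t\Psi_\ell$, and the Heisenberg--Weil lift is characterized (in our setting of $p\neq 3$ or the Hakim--Murnaghan canonical choice) uniquely up to isomorphism by this data. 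Once this compatibility is stated, the proof is a clean concatenation of Mackey, Proposition~\ref{prop:Kintersect}, and Theorem~\ref{th:kappa}.
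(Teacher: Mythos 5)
Your proof is correct and uses the same toolkit as the paper: Mackey decomposition, Theorem~\ref{th:kappa}, and equivariance of Yu's construction under conjugation. The one organizational difference is where the equivariance is invoked: the paper first rewrites ${}^t\kappa_G(\Psi) = \kappa_G({}^t\Psi)$ (quoting \cite[Section 5.1.1]{HM:2008}, a fact about a $G$-datum conjugated by $t\in G$, hence directly citable) and then applies Theorem~\ref{th:kappa} to the conjugated $G$-datum ${}^t\Psi$, using the easy observation $({}^t\Psi)_\ell = {}^t\Psi_\ell$; whereas you apply Theorem~\ref{th:kappa} to $\Psi$ itself and then need the claim ${}^t\kappa_H(\Psi_\ell) = \kappa_H({}^t\Psi_\ell)$ for $t$ possibly in $G\setminus H$, which is conjugation of an $H$-datum by an \emph{outer} automorphism of $H$ and so does not follow verbatim from the HM08 citation — it needs exactly the extra argument you sketch at the end (that the symplectic/Heisenberg data transform isomorphically and the Heisenberg--Weil lift is canonical). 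Your justification for that step is sound, but the paper's ordering sidesteps it; otherwise the two proofs coincide.
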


\begin{proof} 
From the Mackey Decomposition, we have that
$$\Res_H^G \pi_G(\Psi) = \Res_H^G\Ind_{K^d}^G\kappa_G(\Psi)
= \underset{t\in C}{\oplus} \Ind_{^t(K^d\cap H)}^{H}\Res_{^t(K^d \cap H)}^{^t(K^d)}{^t\kappa_G(\Psi)}.$$ We have that $^t\kappa_G(\Psi)= \kappa_G({^t\Psi})$ \cite[Section 5.1.1]{HM:2008}, which implies that 
$$\Res_H^G \pi_G(\Psi) =  \underset{t\in C}{\oplus} \Ind_{^t\KH{d}}^H\Res_{^t\KH{d}}^{^tK^d}\kappa_G(^t\Psi).$$
Applying Theorem~\ref{th:kappa} to the datum $^t\Psi$, we have that ${\Res_{^t\KH{d}}^{^tK^d}\kappa_G(^t\Psi) \simeq \underset{\ell\in L}{\oplus}\kappa_H(({^t\Psi})_\ell)}$. From the definition of conjugate $G$-datum, it is clear that $({^t\Psi})_\ell = {^t\Psi_\ell}$. Finally, given that induction commutes with direct sum, we obtain
$$\Res_H^G \pi_G(\Psi)  = \underset{t\in C}{\oplus}\left( \underset{\ell\in L}{\oplus} \Ind_{^t\KH{d}}^H\kappa_H({^t\Psi}_\ell) \right) = \underset{t\in C}{\oplus}\left( \underset{\ell\in L}{\oplus} \pi_H({^t\Psi_\ell}) \right).$$
\end{proof}

Using Frobenius Reciprocity and the exhaustion of the J.K.~Yu Construction, one can obtain an analogous result concerning the induction to $G$ of a supercuspidal representation of $H$.

\begin{theorem}\label{th:induceSupercuspidal}
Let $\Psi_H$ be a normalized $H$-datum. Every irreducible supercuspidal subrepresentation of $\Ind_H^G\pi_H(\Psi_H)$ is of the form $\pi_G(\Psi)$, where $\Psi\in {^{t^{-1}}\Ext(\Psi_H})$ for some $t\in C$, $C$ a set of coset representatives of $\Bigslant{H}{G}{K^d}$.
\end{theorem}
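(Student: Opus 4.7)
The plan is to combine Frobenius reciprocity with the exhaustion of the J.K.~Yu Construction and the already-established description of $\pi_G(\Psi)|_H$. I would start by fixing an irreducible supercuspidal subrepresentation $\pi$ of $\Ind_H^G \pi_H(\Psi_H)$. Under the standing assumption $p \nmid |W|$, \cite[Theorem 8.1]{Fintzen:2018} guarantees that $\pi \simeq \pi_G(\Psi)$ for some $G$-datum $\Psi$, so the content of the theorem is to identify, up to $G$-equivalence, where $\Psi$ sits relative to $\Psi_H$.

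By Frobenius reciprocity,
$$\Hom_H(\pi_G(\Psi)|_H, \pi_H(\Psi_H)) \simeq \Hom_G(\pi_G(\Psi), \Ind_H^G\pi_H(\Psi_H)) \neq 0,$$
so $\pi_H(\Psi_H)$ embeds as a subrepresentation of $\pi_G(\Psi)|_H$. Theorem~\ref{th:restrictSupercuspidal} then produces $t \in C$ and $\ell \in L$ such that $\pi_H(\Psi_H) \simeq \pi_H({^t\Psi_\ell})$. Applying Theorem~\ref{th:HM67} on the $H$-side (which is legitimate since $\alg{H}$ is connected reductive and $^t\Psi_\ell$ is again an $H$-datum because $H \trianglelefteq G$), I conclude that $\Psi_H$ is $H$-equivalent to ${^t\Psi_\ell}$, equivalently that ${^{t^{-1}}\Psi_H}$ is $H$-equivalent to $\Psi_\ell$.

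It remains to convert this identification into a statement about $\Ext$. Unwinding the definitions of $\Res$ and $\Ext$ via Frobenius reciprocity on each building block, the membership $\Psi_\ell \in \Res(\Psi)$ directly yields $\Psi \in \Ext(\Psi_\ell)$: the containment $\rho_\ell \subset \rho|_{\KH{0}}$ forces $\rho$ to be a subrepresentation of $\Ind_{\KH{0}}^{K^0}\rho_\ell$, and each $\rep{i}$ is a depth-$r_i$ subrepresentation of $\Ind_{H^i}^{G^i}\repH{i}$ since $\repH{i} = \rep{i}|_{H^i}$. Combining this with Proposition~\ref{prop:GequivConst}, which shows that $\Ext$ is robust under $H$-equivalence, and with the observation that conjugation by $t^{-1}$ visibly intertwines the inducing constructions and therefore sends $\Ext(\Psi_H)$ to $\Ext({^{t^{-1}}\Psi_H})$, one obtains that $\Psi$ is $G$-equivalent to an element of ${^{t^{-1}}\Ext(\Psi_H)}$, as required. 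The main obstacle I anticipate is the bookkeeping in this last step, namely carefully verifying that $\Ext$ commutes with outer $G$-conjugation so that the equality $\Ext({^{t^{-1}}\Psi_H}) = {^{t^{-1}}\Ext(\Psi_H)}$ holds on the nose; the rest reduces to invoking results already established in the paper.
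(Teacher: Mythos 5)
Your outline — Frobenius reciprocity, exhaustion, Theorem~\ref{th:restrictSupercuspidal}, Theorem~\ref{th:HM67}, Proposition~\ref{prop:GequivConst} — is exactly the route the paper points to (it offers no written proof), and the chain of reductions down to an $H$-equivalence ${^{t^{-1}}\Psi_H}\sim\Psi_\ell$ is fine.

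The step you pass over quickly, that ``$\Psi_\ell\in\Res(\Psi)$ directly yields $\Psi\in\Ext(\Psi_\ell)$,'' is where the real content lies, and as stated it has a gap. First, $\Ext$ is only defined for \emph{normalized} $H$-data, so you need to note that $\Psi_\ell$ is normalized (it is, because each $\repH{i}=\rep{i}|_{H^i}$ kills $[G^i,G^i]=[H^i,H^i]$, but you do not say so). Second, and more seriously, $\Psi_\ell$ carries the sequence $\vec{\tilde{r}}$ from Theorem~\ref{th:GderData}, not $\vec{r}$: whenever the depth $\tilde r$ of $\rep{d}|_{H^d}$ is strictly smaller than $r_d$ — for instance when $H=\Gder$ and $\rep{d}\neq 1$, in which case $\tilde r=0$ — the last entry of $\vec{\tilde{r}}$ becomes $\tilde r$ or $r_{d-1}$, while $\rep{d}$ has depth $r_d$, so $\rep{d}$ does not lie in $\mathcal{E}_d$ as defined, and $\Psi$ is not literally an element of $\Ext(\Psi_\ell)$. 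Theorem~\ref{th:backwards} gives only an existence statement (\emph{some} $\Psi$ with $\Psi_H\in\Res(\Psi)$ exists); it does not assert that every such $\Psi$ is in $\Ext(\Psi_H)$, which is the converse you are implicitly invoking. Closing this requires reconciling the depth and character sequences of $\Psi$ with those allowed in $\Ext(\Psi_\ell)$ — in particular, dealing with the refactorization and the twist by $\rep{d}$ in the case $\tilde r\le r_{d-1}$ — and this is the point you should have singled out as the main obstacle, rather than the equality $\Ext({^{t^{-1}}\Psi_H})={^{t^{-1}}\Ext(\Psi_H)}$, which is routine since all defining ingredients transport cleanly under conjugation.
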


\section{Multiplicity in the Restriction}\label{sec:multiplicity}

Given that we have a precise description of $\pi_G(\Psi)|_H$ provided by Theorem~\ref{th:restrictSupercuspidal}, we determine the multiplicity of each component of this restriction in this section. We start by establishing that the source of the multiplicity relies on $\rho_H=\rho|_{H^0_{[y]}}$, and then compute an explicit formula for the multiplicity in the restriction. 

\subsection{Source of Multiplicity}

\begin{theorem}\label{th:multiplicity}
Let $C$ be a set of coset representatives of $\Bigslant{H}{G}{K^d}$ and let $\ell\in L$. Then for all $t\in C$, the multiplicity of the component $\pi_H(^t\Psi_\ell)$ in $\pi_G(\Psi)|_H$ is equal to the multiplicity of $\rho_\ell$ in $\rho_H$. In particular, $\pi_G(\Psi)|_H$ is multiplicity free if and only if $\rho_H$ is multiplicity free.
\end{theorem}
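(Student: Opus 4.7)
The plan is to use the explicit decomposition from Theorem~\ref{th:restrictSupercuspidal},
$$\pi_G(\Psi)|_H \simeq \underset{t\in C}{\bigoplus}\underset{m\in L}{\bigoplus}\pi_H({}^t\Psi_m),$$
and to determine the multiplicity of a fixed component $\pi_H({}^t\Psi_\ell)$ by counting the pairs $(s,m)\in C\times L$ with $\pi_H({}^s\Psi_m)\simeq \pi_H({}^t\Psi_\ell)$. By Theorem~\ref{th:HM67}, this reduces to counting $H$-equivalences ${}^s\Psi_m \sim_H {}^t\Psi_\ell$. The target count is $n_\ell\defeq \#\{m\in L:\rho_m\simeq\rho_\ell\}$, the multiplicity of $\rho_\ell$ in $\rho_H$.

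The first step is to show that within a fixed double coset (i.e., for $s=t$) the contribution is exactly $n_\ell$. After translating by $t^{-1}$, it suffices to count $m\in L$ with $\Psi_m\sim_H \Psi_\ell$. Since $\Psi_m$ and $\Psi_\ell$ share $\vec{\alg{H}}$, $y$, $\vec{\tilde{r}}$ and $\vec{\phi_H}$, Theorem~\ref{th:HM67}(3) furnishes some $h\in H$ that normalizes $\KH{0}=H^0_{[y]}$ and $\vec{\alg{H}}$. By Lemma~\ref{lem:[y]} the normalizer of $\KH{0}$ in $H^0$ is $\KH{0}$ itself, so $h\in \KH{0}$. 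The equivalence condition then reads ${}^h(\rho_m\otimes\phi_H)\simeq \rho_\ell\otimes\phi_H$, where $\phi_H=\prod_{i=0}^d\repH{i}|_{H^0}$. Being a character of $H^0$, $\phi_H$ is invariant under inner automorphisms; and since $\rho_m$ is a representation of $\KH{0}$ itself, conjugation by $h\in\KH{0}$ acts trivially on the isomorphism class of $\rho_m$ as well. The condition collapses to $\rho_m\simeq \rho_\ell$, so the single-coset contribution is precisely $n_\ell$.

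The second step is to show that distinct double cosets contribute nothing new. Given $s,t\in C$ and an $H$-equivalence ${}^s\Psi_m\sim_H {}^t\Psi_\ell$ witnessed by $h\in H$, Theorem~\ref{th:HM67}(3) says the element $g\defeq t^{-1}hs\in G$ normalizes $\KH{0}$ and $\vec{\alg{H}}$, hence normalizes $\alg{H}^0$ and, via the central isogeny $\alg{G}=Z(\alg{G})^\circ\alg{H}$, also every $\alg{G}^i$. The condition ${}^g\KH{0}=\KH{0}$ combined with $\KH{0}=H^0_{[y]}$ forces $g\cdot[y]=[y]$ in $\mathcal{B}^{\red}(G^0,F)$. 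Pairing this with the relation ${}^g(\rho_m\otimes\phi_H)\simeq\rho_\ell\otimes\phi_H$ and with the rigidity coming from the genericity of each $\rep{i}$ (Definition~\ref{def:genericity}), I expect to conclude that $g\in K^0\subset K^d$. This yields $hs\in tK^d$ and hence $HsK^d=HtK^d$, forcing $s=t$ in $C$ and contradicting the assumption that $s$ and $t$ were distinct double coset representatives.

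The main obstacle is the final implication in step two: showing that an element $g\in N_G(\vec{\alg{G}})$ which fixes $[y]$ and realizes an intertwining of $\rho_m\otimes\phi_H$ with $\rho_\ell\otimes\phi_H$ must actually lie in $K^d$, rather than in some nontrivial coset of the relative Weyl group $N_G(\vec{\alg{G}})/G^0$. The genericity of $\vec{\phi}$, which pins down the realizing functionals $X^*\in\zstarri$ up to a very restricted action, should be what rules out such outer contributions. Once both steps are in place, the multiplicity equals $n_\ell$, and the final ``if and only if'' is then immediate: by Clifford theory applied to the normal subgroup $\KH{0}\triangleleft K^0$ all $n_\ell$ are in fact equal, so $\pi_G(\Psi)|_H$ is multiplicity free exactly when $\rho_H$ is.
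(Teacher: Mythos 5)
Your two-step plan mirrors the paper's own decomposition of the argument into Lemmas~\ref{lem:notconjugate} and~\ref{lem:rhoiso}, but you explicitly leave the crucial step open, and that step is where essentially all of the work lies. In Step~2 you say ``I expect to conclude that $g\in K^0$'' from genericity; this is not a small detail but the heart of the proof, and the paper devotes most of the proof of Lemma~\ref{lem:notconjugate} to it. The actual chain is: because each $\rho_j|_{H^0_{y,0^+}}$ is $1$-isotypic, the intertwining of $\rho_m\otimes\phi_H$ with $\rho_\ell\otimes\phi_H$ forces $\phi_H|_{H^0_{y,0^+}}={}^g\phi_H|_{H^0_{y,0^+}}$; one then uses the point-independence of depth to upgrade this to $H^0_{0^+}$, invokes \cite[Proposition~5.4]{Murnaghan:2011} to get equalities $\prod_{j\geq i}\repH{j}|_{H^i_{r_{i-1}^+}}=\prod_{j\geq i}{}^g\repH{j}|_{H^i_{r_{i-1}^+}}$, promotes these to $G^i$-statements via the decomposition $G^i_{r^+}=Z^i_{r^+}\overline{G}^i_{r^+}$ from \cite[Lemma~B.7.2]{DeBackerReeder:2009}, and finally applies the argument of \cite[Lemma~5.6]{Murnaghan:2011} to deduce $g\in G^0$. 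Only after that does $g\cdot[y]=[y]$ place $g$ in $K^0\subset K^d$. None of this is present in your proposal.

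The same gap silently undermines Step~1: you apply Lemma~\ref{lem:[y]} to conclude $h\in\KH{0}$, but Lemma~\ref{lem:[y]} only normalizes within $H^0$, and the condition $\vec{\alg{H}}={}^h\vec{\alg{H}}$ alone does not place $h$ in $H^0$ (twisted Levi subgroups can have nontrivial normalizers). The paper's Lemma~\ref{lem:rhoiso} handles this by running the same genericity argument from Lemma~\ref{lem:notconjugate} to first get $h\in K^0\cap H=\KH{0}$. A minor additional remark: you don't need Clifford theory to get the ``if and only if''; it follows directly once each component has multiplicity $n_\ell$, combined with the exhaustiveness of the decomposition in Theorem~\ref{th:restrictSupercuspidal}. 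The equal-multiplicity statement is a separate refinement (Proposition~\ref{prop:multiplicityrhoH}).
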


The proof of this theorem will be divided into two lemmas (Lemmas~\ref{lem:notconjugate} and \ref{lem:rhoiso}).

\begin{lemma}\label{lem:notconjugate}
Let $C$ be a set of representatives of $\Bigslant{H}{G}{K^d}$, and let $s,t\in C$. Assume that $s\neq t$. Then for all $\ell,k\in L$, $\pi_H({^s\Psi_\ell}) \not\simeq \pi_H({^t\Psi_k})$.
\end{lemma}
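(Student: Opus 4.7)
My plan is by contradiction: suppose that for some $s\neq t$ in $C$ and some $\ell,k\in L$ we have $\pi_H({^s\Psi_\ell})\simeq\pi_H({^t\Psi_k})$. The goal is to extract from this isomorphism an element $\alpha=(ht)^{-1}s\in K^d$ with $h\in H$; this will force $s\in HtK^d$, contradicting the assumption that $s$ and $t$ represent distinct $\Bigslant{H}{G}{K^d}$ cosets.

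Since both $\pi_H({^s\Psi_\ell})$ and $\pi_H({^t\Psi_k})$ arise from the J.K.~Yu construction applied to $H$-data, Theorem~\ref{th:HM67}(3), read in $H$, furnishes an element $h\in H$ such that (i) $s\KH{0}s^{-1}=(ht)\KH{0}(ht)^{-1}$, (ii) ${^s\vec{\alg{H}}}={^{ht}\vec{\alg{H}}}$, and (iii) ${^s(\rho_\ell\otimes\phi_H)}\simeq{^{ht}(\rho_k\otimes\phi_H)}$, where $\phi_H=\prod_{i=0}^{d}\repH{i}|_{H^0}$. Setting $\alpha=(ht)^{-1}s\in G$, I would first translate (i) and (ii) into structural conditions on $\alpha$: condition (ii) gives $\alpha\in N_G(\vec{\alg{H}})$, which by the uniqueness clauses of Theorems~\ref{th:intersectTorus} and \ref{th:intersectLevi} promotes to $\alpha\in N_G(\vec{\alg{G}})$; condition (i) gives $\alpha\in N_G(\KH{0})$, which, since $\mathcal{B}^{\red}(\alg{G}^0,F)=\mathcal{B}^{\red}(\alg{H}^0,F)$ and $\alpha$ therefore fixes $[y]$ in the common reduced building, upgrades to $\alpha\in N_G(K^0)$.

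The remaining step, which I expect to be the main obstacle, is to strengthen $\alpha\in N_G(K^0)\cap N_G(\vec{\alg{G}})$ to $\alpha\in K^d$. My strategy is to combine condition (iii) with the $G^{i+1}$-genericity of each $\rep{i}$ from Definition~\ref{def:genericity}: each $\rep{i}|_{G^i_{y,r_i}}$ is realized by an element of $\zstarri$ whose valuation is $-r_i$ on the coroots in $\Phi(\alg{G}^{i+1},\alg{T})\setminus\Phi(\alg{G}^i,\alg{T})$, so genericity pins down $\alg{G}^i$ as a minimal twisted Levi supporting that character. An element $\alpha\in N_G(\vec{\alg{G}})$ that matches $\rho\otimes\phi$ with ${^\alpha(\rho\otimes\phi)}$ in the sense of (iii) cannot then reshuffle the strata attached to the $\rep{i}$, and together with the normalization of $K^0$ already established, this will confine $\alpha$ to the product $K^0J^1\cdots J^d=K^d$. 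Once $\alpha\in K^d$ is in hand, $s=(ht)\alpha\in H\cdot t\cdot K^d$, giving the desired contradiction.
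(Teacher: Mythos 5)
Your high-level framework matches the paper's: apply Theorem~\ref{th:HM67} to the isomorphism of $H$-representations, obtain $h\in H$ with the three conditions, set $\alpha$ to the resulting conjugating element, and show $\alpha\in K^d$ so that $s$ and $t$ lie in the same $\Bigslant{H}{G}{K^d}$ double coset. Your observations that condition (ii) plus the uniqueness in Theorems~\ref{th:intersectTorus}/\ref{th:intersectLevi} give $\alpha\in N_G(\vec{\alg{G}})$, and that condition (i) forces $\alpha\cdot[y]=[y]$ in the common reduced building, are both correct and consistent with what the paper uses.

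The genuine gap is the step you yourself flag as the main obstacle: you never actually carry out the argument that condition (iii) forces $\alpha$ into $K^d$. Waving at genericity ``pinning down strata'' and ``confining $\alpha$ to $K^0J^1\cdots J^d$'' is not a proof, and the target is also slightly off: the paper does not show $\alpha\in K^0J^1\cdots J^d$ by ruling out reshuffling; it proves the stronger and simpler fact $\alpha\in G^0$, after which $\alpha\cdot[y]=[y]$ immediately gives $\alpha\in G^0_{[y]}=K^0\subset K^d$. Getting $\alpha\in G^0$ requires a concrete chain of reductions: first restrict condition (iii) to $H^0_{y,0^+}$, where the $1$-isotypic property of $\rho_\ell$ and $\rho_k$ strips off the depth-zero part and leaves an equality of the characters $\phi_H$ and ${}^\alpha\phi_H$ there; then pass to the whole of $H^0_{0^+}$ via depth invariance; then invoke \cite[Proposition 5.4]{Murnaghan:2011} to obtain, for each $i$, equality of tail products $\prod_{j\geq i}\repH{j}$ and their $\alpha$-conjugates on $H^i_{r_{i-1}^+}$; lift this from $H^i$ to $G^i$ using the decomposition $G^i_{r^+}=Z^i_{r^+}H^i_{r^+}$ from \cite[Lemma~B.7.2]{DeBackerReeder:2009}; and finally apply an argument in the spirit of \cite[Lemma 5.6]{Murnaghan:2011} to conclude $\alpha\in G^0$. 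Without some version of this chain, the proposal has stated the shape of the proof but not supplied its engine.
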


\begin{proof}
We show the contrapositive statement. Assume that $\pi_H({^s\Psi_\ell}) \simeq \pi_H({^t\Psi_k})$. From Theorem~\ref{th:HM67}, there exists $h\in H$ such that $\KH{0} = {^{s^{-1}ht}{\KH{0}}}$, $\vec{\alg{H}} = {^{s^{-1}ht}\vec{\alg{H}}}$ and ${\rho_\ell\otimes \phi_H \simeq {^{s^{-1}ht}(\rho_k\otimes \phi_H)}}.$ 

Since $\rho_j|_{H^0_{y,0^+}}$ is $1$-isotypic for $j=\ell,k$, it follows that 
$\phi_H|_{H^0_{y,0^+}} = ({^{s^{-1}ht}\phi_H})|_{H^0_{y,0^+}}.$ Given that the depth of a character does not depend on the point of the building that describes the Moy-Prasad filtration subgroups \cite[Definition 2.46]{HM:2008}, we have that
$\phi_H|_{H^0_{0^+}} = ({^{s^{-1}ht}\phi_H})|_{H^0_{0^+}}$, where $H^0_{0^+} = \underset{x\in \mathcal{B}(\alg{H}^0,F)}{\bigcup}H^0_{x,0^+}$.
By \cite[Proposition 5.4]{Murnaghan:2011}, we have $\prod\limits_{j=i}^d\repH{j}|_{H^i_{r_{i-1}^+}} = \prod\limits_{j=i}^d{^{s^{-1}ht}\repH{j}}|_{H^i_{r_{i-1}^+}}$ for $0\leq i\leq d$. Letting $Z^i = Z(\alg{G}^i)^\circ(F)$ and $\overline{G}^i = [\alg{G}^i,\alg{G}^i](F)\subset H^i$, we have that $G^i_{r^+} = Z^i_{r^+}\overline{G}^i_{r^+}$ for all $r\geq 0$ \cite[Lemma B.7.2]{DeBackerReeder:2009} and therefore $G^i_{r^+} = Z^i_{r^+}H^i_{r^+}$. As a consequence, we have that $G^i_{r_{i-1}^+} = {^{s^{-1}ht}G^i_{r_{i-1}^+}}$ and 
$\prod\limits_{j=i}^d\rep{j}|_{G^i_{r_{i-1}^+}} = \prod\limits_{j=i}^d{^{s^{-1}ht}\rep{j}}|_{G^i_{r_{i-1}^+}} \text{ for } 0\leq i\leq d.$  Using an argument similar to \cite[Lemma 5.6]{Murnaghan:2011}, we conclude that $s^{-1}ht \in G^0$.

Furthermore, having $^{s^{-1}ht}\KH{0} = \KH{0}$ implies $s^{-1}ht\cdot [y] = [y]$ \cite[Proof of Theorem 6.7]{HM:2008}, and therefore ${s^{-1}ht\in G^0_{[y]}} = K^0 \subset K^d$. 
Hence, $t\in HsK^d$, meaning that $s$ and $t$ belong to the same double coset of $\Bigslant{H}{G}{K^d}$. Because $s$ and $t$ are coset representatives, we conclude that they must be equal.
\end{proof}

The previous lemma implies that the multiplicity in the restriction $\pi_G(\Psi)|_H$ must come from within $\underset{\ell\in L}{\oplus}\pi_H(\Psi_\ell)$. The next step is then to determine the condition under which $\pi_H(\Psi_\ell) \simeq \pi_H(\Psi_k)$, $\ell,k\in L$, which is provided by the following lemma.

\begin{lemma}\label{lem:rhoiso}
Let $\ell, k\in L$. Then $\pi_H(\Psi_\ell) \simeq \pi_H(\Psi_k)$ if and only if $\rho_\ell \simeq \rho_k$.
\end{lemma}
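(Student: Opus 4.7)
The two data $\Psi_\ell$ and $\Psi_k$ differ only in their depth-zero pieces $\rho_\ell$ and $\rho_k$; the twisted Levi sequence $\vec{\alg{H}}$, the point $y$, the depths $\vec{\tilde{r}}$ and the character sequence $\vec{\phi_H}$ are common to both. This observation is the main leverage we will exploit, together with Hakim–Murnaghan's criterion for equivalence (Theorem~\ref{th:HM67}).

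For the easy direction, assume $\rho_\ell\simeq\rho_k$. Taking $h=1$ in part~(3) of Theorem~\ref{th:HM67} we trivially have $\KH{0} = \KH{0}$, $\vec{\alg{H}} = \vec{\alg{H}}$, and $\rho_\ell\otimes\phi_H \simeq \rho_k\otimes\phi_H$, whence $\pi_H(\Psi_\ell)\simeq\pi_H(\Psi_k)$. (Alternatively, $\Psi_k$ is an elementary transformation of $\Psi_\ell$ in the sense of \cite[Definition 5.2]{HM:2008}, and then $H$-equivalence implies isomorphism of the constructed representations.)

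For the nontrivial direction, assume $\pi_H(\Psi_\ell)\simeq\pi_H(\Psi_k)$. By Theorem~\ref{th:HM67}, there exists $h\in H$ such that $\KH{0} = {^h\KH{0}}$, $\vec{\alg{H}} = {^h\vec{\alg{H}}}$, and $\rho_\ell\otimes\phi_H \simeq {^h(\rho_k\otimes\phi_H)} = {^h\rho_k}\otimes {^h\phi_H}$. The first step is to pin down the conjugating element $h$. I will run the argument from Lemma~\ref{lem:notconjugate} inside $H$ rather than across double cosets: the condition $\rho_j|_{H^0_{y,0^+}}$ is $1$-isotypic forces $\phi_H$ and $^h\phi_H$ to agree on $H^0_{0^+}$, and then \cite[Proposition 5.4]{Murnaghan:2011} combined with the product decomposition $G^i_{r^+}=Z^i_{r^+}H^i_{r^+}$ gives $h\in H^0$; the relation $^h\KH{0}=\KH{0}$ then forces $h\cdot[y]=[y]$, so in fact $h\in H^0_{[y]} = \KH{0}$.

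Once $h\in\KH{0}\subset H^0$, the character $\phi_H$ of $H^0$ is fixed by $h$-conjugation because a quasicharacter factors through the abelianization; thus $^h\phi_H=\phi_H$ and the isomorphism collapses to $\rho_\ell \simeq {^h\rho_k}$ as representations of $\KH{0}$. Finally, since $h\in\KH{0}$, the operator $\rho_k(h)$ itself intertwines $\rho_k$ with $^h\rho_k$, giving $\rho_k\simeq {^h\rho_k}\simeq \rho_\ell$, as required.

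The main obstacle I anticipate is the first step of the nontrivial direction: namely, showing that the abstract conjugating element $h\in H$ produced by Theorem~\ref{th:HM67} can be replaced by an element of $\KH{0}$. This requires carefully transposing the central-character and depth-comparison argument of Lemma~\ref{lem:notconjugate} from the $(G,H,K^d)$ setting to the intrinsic $H$-setting. Once $h\in\KH{0}$ is secured, the remainder is bookkeeping about inner conjugation fixing characters and intertwining the representation with its conjugate.
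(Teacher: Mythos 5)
Your proposal is correct and follows essentially the same route as the paper: the forward direction via an elementary transformation (or the $h=1$ instance of Theorem~\ref{th:HM67}(3)), and the converse by pinning down the conjugator $h$ from Theorem~\ref{th:HM67} to lie in $\KH{0}$ using the central-character and character-depth argument of Lemma~\ref{lem:notconjugate}, then observing that $^h\phi_H=\phi_H$ for $h\in H^0$ and that inner conjugation by $h\in\KH{0}$ leaves the isomorphism class of $\rho_k$ unchanged. One small imprecision worth noting: the Murnaghan-style argument in Lemma~\ref{lem:notconjugate} is run after lifting from $H^i_{r^+}$ to $G^i_{r^+}$ via the product decomposition and concludes $h\in G^0$; the stronger statement $h\in H^0$ then follows by intersecting with $H$ (where $h$ already lives), rather than coming out of the Murnaghan argument directly.
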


\begin{proof}
Assume that $\rho_\ell \simeq \rho_k$. Then $\Psi_k$ is an elementary transformation \cite[Definition 5.2]{HM:2008} of $\Psi_\ell$ and $\pi_H(\Psi_\ell)\simeq \pi_H(\Psi_k)$ by Theorem~\ref{th:HM67}. 

Conversely, if $\pi_H(\Psi_\ell)\simeq \pi_H(\Psi_k)$, we know from Theorem~\ref{th:HM67} that there exists $h\in H$ such that
$\vec{\alg{H}} = {^h\vec{\alg{H}}}, \KH{0} = {^h\KH{0}}$, and $\rho_\ell \otimes \phi_H \simeq {^h(\rho_k \otimes \phi_H)}$, where ${\phi_H = \prod\limits_{i=0}^d\repH{i}|_{H^0}}$. 
Proceeding similarly to the proof of Lemma~\ref{lem:notconjugate}, we have that $h\in K^0\cap H = \KH{0}$. Because $h\in H^0$, we have $\phi_H = {^h\phi_H}$. We then conclude that $\rho_\ell \simeq {^h\rho_k}$, which implies that $\rho_\ell \simeq \rho_k$ as $h\in \KH{0}$.
\end{proof}

Combining Lemmas~\ref{lem:notconjugate} and \ref{lem:rhoiso} together gives us the proof of Theorem~\ref{th:multiplicity}.

\begin{proof}[Proof of Theorem~\ref{th:multiplicity}]
Let $s\in C$ and $j\in L$. By Lemma~\ref{lem:rhoiso}, the multiplicity of $\rho_j$ in $\rho_H$ corresponds to the multiplicity of $\pi_H({^s\Psi_j})$ in $\underset{\ell\in L}{\oplus}\pi_H({^s\Psi_\ell})$. Since $\pi_H({^s\Psi_j}) \not\simeq \pi_H({^t\Psi_k})$ when $s\neq t$ regardless of $k\in L$ (Lemma~\ref{lem:notconjugate}), this multiplicity corresponds to the multiplicity of $\pi_H({^s\Psi_j})$ in  $\underset{t\in C}{\oplus} \left( \underset{\ell\in L}{\oplus} \pi_H({^t\Psi_\ell})\right)$.
\end{proof}

Theorems \ref{th:restrictSupercuspidal} and \ref{th:multiplicity} generalize \cite[Theorem 5.2]{Nevins:2015} in which $H = \Gder$ and $\Psi$ is a toral datum of length one. In this case $\rho$ is assumed trivial and $\Res(\Psi)$ consists of only one element (Corollary \ref{prop:NevinsDatum}), meaning that the restriction $\pi_G(\Psi)|_H$ is automatically multiplicity free. 


We end this section by mentioning that Theorem~\ref{th:multiplicity} applies to both positive-depth and depth-zero supercuspidal representations as they are all produced from the J.K.~Yu Construction. This allows us to restate our result as follows.

\begin{corollary}\label{cor:multiplicityDepthZero}
Let $\ell\in L$. Then the multiplicity of $\pi_H(\Psi_\ell)$ in $\pi_G(\Psi)|_H$ is equal to the multiplicity of $\pi^{-1}_{\ell}$ in $\pi^{-1}|_{H^0}$, where $\pi^{-1} \defeq \Ind_{K^0}^{G^0}\rho$ and $\pi^{-1}_\ell \defeq \Ind_{\KH{0}}^{H^0}\rho_\ell$. In particular, $\pi_G(\Psi)|_H$ is multiplicity free if and only if $\pi^{-1}|_{H^0}$ is multiplicity free. 
\end{corollary}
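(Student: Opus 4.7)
The plan is to deduce this corollary by applying Theorem \ref{th:multiplicity} in two different settings---once to the original $G$-datum $\Psi$, and once to the depth-zero $G^0$-datum $\Psi^0 = (\alg{G}^0,y,\rho,1)$---and observing that both applications produce the same intermediate quantity, namely the multiplicity of $\rho_\ell$ in $\rho_H$.

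First I would observe that $\pi^{-1} = \Ind_{K^0}^{G^0}\rho$ is precisely the irreducible supercuspidal representation $\pi_{G^0}(\Psi^0)$ produced by the J.K.~Yu Construction from the depth-zero $G^0$-datum $\Psi^0 = (\alg{G}^0,y,\rho,1)$, as recorded in the remark at the end of Section \ref{sec:construction}. Unwinding the restriction process of Theorem \ref{th:GderData} in the case $d=0$ (where there is no non-trivial generic character to restrict), $\Res(\Psi^0)$ consists of the data $\Psi^0_\ell = (\alg{H}^0,y,\rho_\ell,1)$ for $\ell \in L$, and each of these produces the depth-zero supercuspidal $\pi_{H^0}(\Psi^0_\ell) = \Ind_{\KH{0}}^{H^0}\rho_\ell = \pi^{-1}_\ell$.

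Next I would apply Theorem \ref{th:multiplicity} directly to the pair $(\alg{G}^0,\alg{H}^0)$ and the datum $\Psi^0$. The hypotheses are satisfied: Section \ref{sec:HandG} established that $\alg{H}^0 = \alg{G}^0\cap\alg{H}$ is a closed connected $F$-subgroup of $\alg{G}^0$ containing $[\alg{G}^0,\alg{G}^0]$, so the whole framework developed in the paper applies to this pair. The theorem then yields that the multiplicity of $\pi^{-1}_\ell$ in $\pi^{-1}|_{H^0}$ is equal to the multiplicity of $\rho_\ell$ in $\rho_H$. Combining this with Theorem \ref{th:multiplicity} applied to the original datum $\Psi$---which gives that the multiplicity of $\pi_H(\Psi_\ell)$ in $\pi_G(\Psi)|_H$ is also the multiplicity of $\rho_\ell$ in $\rho_H$---yields the desired equality. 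The multiplicity-free statement then follows at once from the ``in particular'' clauses of Theorem \ref{th:multiplicity} instantiated in each of the two contexts.

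The main ``obstacle'' is conceptual rather than technical: one must recognize that Theorem \ref{th:multiplicity} applies verbatim to any supercuspidal representation arising from the J.K.~Yu Construction, including a depth-zero one regarded as coming from the one-term datum $(\alg{G}^0,y,\rho,1)$, and that the pair $(\alg{G}^0,\alg{H}^0)$ satisfies the standing hypotheses on $(\alg{G},\alg{H})$. Once these observations are made, the corollary is a direct consequence of comparing the two instantiations of the theorem, with no further computation required.
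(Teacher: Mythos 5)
Your proof is correct and follows essentially the same approach as the paper: both apply Theorem~\ref{th:multiplicity} twice, once to the full datum $\Psi$ and once to the depth-zero datum $\Psi^0 = (\alg{G}^0,y,\rho,1)$, and observe that each application produces the multiplicity of $\rho_\ell$ in $\rho_H$ as the common intermediate quantity. Your write-up is somewhat more explicit than the paper's in verifying that the pair $(\alg{G}^0,\alg{H}^0)$ satisfies the standing hypotheses and in unwinding the restriction $\Res(\Psi^0)$, but the argument is the same.
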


\begin{proof}
We know from Theorem~\ref{th:multiplicity} that the multiplicity of $\pi_H(\Psi_\ell)$ in $\pi_G(\Psi)|_H$ is equal to the multiplicity of $\rho_\ell$ in $\rho_H$. On the other hand, if we consider the datum $\Psi^0=(G^0,y,\rho,1)$, we have $\pi_{G^0}(\Psi^0) = \pi^{-1}$. Then, Theorem~\ref{th:multiplicity} tells us that the multiplicity of $\pi_{H^0}(\Psi^0_\ell)=\pi^{-1}_\ell$ in $\pi_{G^0}(\Psi^0)|_{H^0}$ is equal to the multiplicity of $\rho_\ell$ in $\rho_H$. The result thus follows.
\end{proof}

The statement of Corollary~\ref{cor:multiplicityDepthZero} makes it clear that the multiplicity upon restriction of a positive-depth supercuspidal representation really depends on the depth-zero supercuspidal representation of the (reduced) datum. Therefore, the question of determining multiplicities in the restriction of an irreducible supercuspidal representation reduces to an equivalent question for irreducible depth-zero supercuspidal representations.

\subsection{Computing the Multiplicity}

Theorem~\ref{th:multiplicity} implies that we must study the decomposition of $\rho_H = \rho|_{\KH{0}}$ in order to compute the multiplicities in $\pi_G(\Psi)|_H$. Because $\KH{0}$ is a normal subgroup of $K^0$, the strategy is to apply Clifford theory to obtain a description for $\rho_H$. 

\begin{definition}
Let $\pi$ be an irreducible representation of a group $G'\subset G$. The set ${I_G(\pi) = \{g\in G : {^g\pi}\simeq \pi\}}$ is called the normalizer of $\pi$ in $G$, or the inertial subgroup of $\pi$ (in $G$).
\end{definition}

\begin{proposition}\label{prop:multiplicityrhoH}
Let $\rho'$ be an irreducible subrepresentation of $\rho|_{\KH{0}}$, $I = I_{K^0}(\rho')$ and denote the dimensions of $\rho$ and $\rho'$ by $\dim(\rho)$ and $\dim(\rho')$, respectively. Then $\rho|_{\KH{0}} = m\left(\underset{c\in C}{\oplus} {^c\rho'} \right)$, where $$m = \frac{\dim(\rho)}{[K^0:I]\dim(\rho')}$$ and $C$ is a set of coset representatives of $\quo{K^0}{I}$ Furthermore, every component of $\rho|_{\KH{0}}$ has multiplicity $m$. 
\end{proposition}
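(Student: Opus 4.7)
The plan is to invoke Clifford theory for the normal subgroup $\KH{0}$ of $K^0$, which applies because conjugation by $G$ preserves $H$, hence conjugation by $K^0 = G^0_{[y]}$ preserves $\KH{0} = H^0_{[y]} = K^0\cap H$. Since $\rho$ has a central character and $K^0$ is compact-mod-center, $\rho$ is finite-dimensional; combined with Lemma~\ref{lem:rhofinitedim} we may decompose $\rho|_{\KH{0}}$ into a finite direct sum of $\KH{0}$-isotypic components $V_\sigma$, indexed by the isomorphism classes $\sigma$ of irreducibles appearing in the restriction.

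First I would show that $K^0$ permutes these isotypic components transitively. The key observation is that for $g\in K^0$, the action $v\mapsto \rho(g)v$ maps $V_\sigma$ onto $V_{{^g\sigma}}$ (because for $h\in\KH{0}$ one has $\rho(h)\rho(g)v = \rho(g)\rho(g^{-1}hg)v$, and $g^{-1}hg\in\KH{0}$ by normality). Consequently the sum of the $V_\sigma$'s lying in any single $K^0$-orbit is a $K^0$-invariant subspace of $\rho$; irreducibility of $\rho$ forces there to be exactly one orbit. Taking $\rho'$ as a distinguished component, the stabilizer of its class under $K^0$-conjugation is by definition $I$, and the orbit is therefore parameterized by coset representatives $C$ of $K^0/I$. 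Note $\KH{0}\subseteq I$ automatically, so $[K^0:I]$ is finite and the expression makes sense.

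Next I would observe that, again because $\rho(g)$ intertwines $V_{\rho'}$ with $V_{{^c\rho'}}$ for $c\in C$, the two isotypic components have the same dimension, so each ${^c\rho'}$ appears in $\rho|_{\KH{0}}$ with the \emph{same} multiplicity $m$. Writing
\[
\rho|_{\KH{0}} \;\simeq\; m\bigoplus_{c\in C}{}^{c}\rho',
\]
and counting dimensions (using $\dim({^c\rho'}) = \dim(\rho')$) gives $\dim(\rho) = m\,[K^0:I]\,\dim(\rho')$, whence
\[
m = \frac{\dim(\rho)}{[K^0:I]\,\dim(\rho')}.
\]
Uniformity of the multiplicity across all irreducible components follows at once from the orbit argument.

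The main obstacle is essentially bookkeeping: checking that the standard finite-group Clifford argument transfers cleanly to the compact-mod-center setting. This is handled by noting that both $K^0$ and $\KH{0}$ contain $Z(G^0)\cap K^0$ on which $\rho$ acts by its central character, so the conjugation action effectively factors through a compact quotient and the finite-dimensional Clifford statements apply verbatim. Nothing else in the statement requires more than the orbit/dimension count just outlined.
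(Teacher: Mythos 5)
Your proof is correct and its main line of argument is a genuinely different route from the paper's. You prove Clifford's theorem from scratch: decompose $\rho|_{\KH{0}}$ into finitely many isotypic components, show $\rho(g)$ permutes them, invoke irreducibility of $\rho$ to get a single $K^0$-orbit, identify the stabilizer with $I$, and count dimensions. The paper instead cites Clifford theory as a black box: it extends $\rho'$ to $\overline{\rho'}$ on $Z^0\KH{0}$ (with $Z^0 = Z(G^0)$), applies the standard Clifford result to write $\rho = \Ind_I^{K^0}\Pi$ for an irreducible $\Pi\subset\Ind_{Z^0\KH{0}}^{I}\overline{\rho'}$, reads off the multiplicity from $\Pi|_{Z^0\KH{0}}$ being $\overline{\rho'}$-isotypic, and then applies Mackey decomposition to $\Res_{\KH{0}}^{K^0}\Ind_I^{K^0}\Pi$ to assemble the final formula. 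Your version is more elementary and self-contained; the paper's version packages the same content via the inertia-group induction $\rho = \Ind_I^{K^0}\Pi$, which is slightly more structural but otherwise buys the same result.

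Two small points in your write-up deserve attention, though neither damages the argument. First, the sentence ``$\KH{0}\subseteq I$ automatically, so $[K^0:I]$ is finite'' is not a valid inference by itself, since $[K^0:\KH{0}]$ need not be finite (e.g.\ $G^0 = \GL_n$, $H^0 = \mathrm{SL}_n$); the finiteness of $[K^0:I]$ actually comes from the orbit-stabilizer count together with the finiteness of the number of isotypic components, which you do have in hand. The paper secures finiteness differently, via $Z^0\KH{0}\subseteq I$ and the fact that $Z^0\KH{0}$ has finite index in $K^0$. Second, the closing paragraph claims that ``both $K^0$ and $\KH{0}$ contain $Z(G^0)\cap K^0$''; that is false for $\KH{0}$ in general (again $\GL_n/\mathrm{SL}_n$), and the paper's device of enlarging $\KH{0}$ to $Z^0\KH{0}$ is precisely what avoids this. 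Fortunately your main argument never relies on that false containment --- it only needs complete reducibility of $\rho|_{\KH{0}}$ and finite-dimensionality of $\rho$, both of which are available from Lemma~\ref{lem:rhofinitedim} --- so the closing paragraph is best rewritten or dropped.
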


\begin{proof}
Let $Z^0$ denote the center of $G^0$. We first extend the representation $\rho'$ to a representation $\overline{\rho'}$ of $Z^0\KH{0}$ acting on the same vector space. This can be done easily, as $Z^0$ acts via a character. This action of $Z^0$ implies that $^g\rho' \simeq \rho'$ if and only if $^g\overline{\rho'}\simeq\overline{\rho'}$ so that $I = I_{K^0}(\overline{\rho'})$.

Because $Z^0\KH{0}$ is a normal subgroup of finite index in $K^0$ and that $\rho$ is finite-dimensional, one can easily adapt the results from Clifford Theory for finite groups. By \cite[Lemma 2.1(2)]{Clifford}, we have that $\rho = \Ind_I^{K^0}\Pi$ for some irreducible subrepresentation ${\Pi\subset \Ind_{Z^0\KH{0}}^{I}\overline{\rho'}}$. Using the Mackey Decomposition on this last inclusion, one sees that $\Pi|_{Z^0\KH{0}}$ is $\overline{\rho'}$-isotypic (and therefore $\Pi|_{\KH{0}}$ is $\rho'$-isotypic) of a certain multiplicity, say $m$. We have that ${\dim(\Pi) = m\dim(\rho')}$ and $\dim(\rho) = [K^0:I]\dim(\Pi)$, and therefore $$m = \frac{\dim(\rho)}{[K^0:I]\dim(\rho')}.$$

One then applies the Mackey Decomposition on $\rho$ to obtain
$$\rho|_{\KH{0}} = \Res_{\KH{0}}^{K^0}\Ind_{I}^{K^0}\Pi = \underset{c\in C}{\oplus} \Ind^{\KH{0}}_{\KH{0}\cap {^c I}}\Res^{^c I}_{\KH{0}\cap {^c I}}{^c\Pi},$$ where $C$ is a set of coset representatives of $\Bigslant{\KH{0}}{K^0}{I}$. Because $\KH{0}$ is a normal subgroup of $K^0$ and $\KH{0} \subset I$, $\Bigslant{\KH{0}}{K^0}{I} = \quo{K^0}{I}$ and
$$\rho|_{\KH{0}} = \underset{c\in C}{\oplus} \Res_{\KH{0}}^{^c{I}}{^c\Pi} =  m \left(\underset{c\in C}{\oplus} {^c{\rho'}} \right).$$
\end{proof}

As a corollary of Proposition~\ref{prop:multiplicityrhoH} and Theorem~\ref{th:multiplicity}, we can refine our description of $\pi_G(\Psi)|_H$ from Theorem~\ref{th:restrictSupercuspidal}.

\begin{corollary}\label{cor:refineRes}
Let $\Psi = (\vec{\alg{G}},y,\rho,\vec{\phi})$ be a $G$-datum, $\rho'$ be a component of $\rho_H$ and $I = I_{K^0}(\rho')$. Let $D$ be a set of coset representatives of $\quo{K^0}{I}$, and for all $d\in D$ let $\Psi_d$ be the $H$-datum associated to $^d\rho'$. Then
$$\pi_G(\Psi)|_H = m \left( \underset{t\in C, d\in D}{\bigoplus} \pi_H({^t\Psi_d}) \right),$$ where $$m = \frac{\dim(\rho)}{[K^0:I]\dim(\rho')},$$ and $C$ is a set of coset representatives of $\Bigslant{H}{G}{K^d}$. Furthermore, every component of $\underset{t\in C, d\in D}{\bigoplus} \pi_H({^t\Psi_d})$ is distinct, and every component of $\pi_G(\Psi)|_H$ has multiplicity $m$.
\end{corollary}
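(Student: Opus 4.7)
The plan is to assemble Corollary~\ref{cor:refineRes} as a bookkeeping consequence of the three earlier results: Theorem~\ref{th:restrictSupercuspidal}, Theorem~\ref{th:multiplicity}, and Proposition~\ref{prop:multiplicityrhoH}. First I would reorganize the decomposition $\rho_H = \bigoplus_{\ell\in L}\rho_\ell$ underlying Theorem~\ref{th:restrictSupercuspidal} using Proposition~\ref{prop:multiplicityrhoH}: the latter says that, up to isomorphism, the irreducible constituents of $\rho_H$ are exactly the $K^0$-conjugates $\{{^d\rho'} : d\in D\}$, each occurring with the same multiplicity
\[
m = \frac{\dim(\rho)}{[K^0:I]\dim(\rho')}.
\]
Thus, choosing a representative $\Psi_d$ of the $H$-datum built from $^d\rho'$ for each $d\in D$, Theorem~\ref{th:GderData} together with Proposition~\ref{prop:Hequiv} identifies the inner sum in Theorem~\ref{th:restrictSupercuspidal} with $m\bigoplus_{d\in D}\pi_H({^t\Psi_d})$; here I would invoke Lemma~\ref{lem:rhoiso} to convert the isomorphisms $\rho_\ell \simeq {^d\rho'}$ into isomorphisms of the corresponding $\pi_H(\Psi_\ell)$.

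Next I would pull the common multiplicity $m$ outside the double direct sum. Since conjugation by a fixed $t\in C$ is an operation that commutes with direct sum and preserves isomorphism classes (the factor $m$ is intrinsic to the datum), applying Theorem~\ref{th:restrictSupercuspidal} yields
\[
\pi_G(\Psi)\big|_H \simeq m\bigoplus_{t\in C,\, d\in D}\pi_H({^t\Psi_d}),
\]
which is the desired formula.

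Finally, to verify the distinctness statement, I would combine Lemma~\ref{lem:notconjugate} with Lemma~\ref{lem:rhoiso}. For $s\neq t$ in $C$, Lemma~\ref{lem:notconjugate} directly gives $\pi_H({^s\Psi_d}) \not\simeq \pi_H({^t\Psi_{d'}})$ for any $d,d'\in D$. For a fixed $t\in C$ and $d\neq d'$ in $D$, Lemma~\ref{lem:rhoiso} reduces the question to whether $^d\rho' \simeq {^{d'}\rho'}$ as representations of $\KH{0}$; but by construction $D$ is a set of coset representatives for $K^0/I$ with $I = I_{K^0}(\rho')$, so the $K^0$-conjugates of $\rho'$ indexed by $D$ are pairwise non-isomorphic. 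The multiplicity statement then follows immediately, since every component of the double sum is distinct and each appears exactly $m$ times. The only real subtlety is to be careful that the indexing set $L$ from Theorem~\ref{th:restrictSupercuspidal} is being counted with multiplicity, while $D$ parametrizes isomorphism classes; once this bookkeeping is settled, the rest is immediate.
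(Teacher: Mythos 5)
Your proposal is correct and carries out exactly what the paper intends: the paper states the corollary without proof, asserting only that it follows from Proposition~\ref{prop:multiplicityrhoH} and Theorem~\ref{th:multiplicity} (and implicitly Theorem~\ref{th:restrictSupercuspidal}), and your argument assembles those pieces in the natural way. The one mild inefficiency is the appeal to Proposition~\ref{prop:Hequiv}: once you have identified each $\rho_\ell$ up to isomorphism with some $^d\rho'$, Lemma~\ref{lem:rhoiso} alone gives $\pi_H(\Psi_\ell)\simeq\pi_H(\Psi_d)$, so no additional equivalence-of-data machinery is needed. You could also shorten the distinctness argument by citing Theorem~\ref{th:multiplicity} directly instead of re-running its proof via Lemmas~\ref{lem:notconjugate} and~\ref{lem:rhoiso}, but unpacking it as you do is equally valid and makes the bookkeeping transparent, particularly the step observing that $d\neq d'$ in $D$ forces $d^{-1}d'\notin I=I_{K^0}(\rho')$, hence $^d\rho'\not\simeq{^{d'}\rho'}$.
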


\section{Computing Multiplicity for Regular Depth-Zero Supercuspidal Representations}\label{sec:regularDZ}
By what precedes in Corollary~\ref{cor:refineRes}, when one can single out a component of $\rho_H$ and compute its normalizer in $K^0$, the multiplicity $m$ of each component of $\pi_G(\Psi)|_H$ can be explicitly calculated. 
The goal of this section is to provide an example of a class of representations $\rho$ for which it is possible to explicitly isolate components $\rho'$ in $\rho_H$ and derive conditions for a multiplicity free restriction.

Because $\alg{H}^0$ is such that $[\alg{G}^0,\alg{G}^0] \subset \alg{H}^0$, we can alleviate notation and assume that $\rho$ is an irreducible representation of $G_{[y]}$ that induces irreducibly to a depth-zero supercuspidal representation of $G$ that we wish to restrict to $H_{[y]}$ (that is replace $\alg{G}^0$ by $\alg{G}$ and $\alg{H}^0$ by $\alg{H}$). 
What we know about $\rho$ is that $\rho|_{G_{y,0}}$ contains the pullback of a cuspidal representation $\sigma$ of $G_{y,0:0^+}$ \cite[Theorem 6.8]{MP:1996}, where  $G_{y,0:0^+} = \mathcal{G}_y(\res)$.
However, we need more information to compute its dimension and isolate one of the components of $\rho_H = \rho|_{H_{[y]}}$ to compute $m$.

For this reason, we restrict our attention to when $\rho$ induces to a \emph{regular} depth-zero supercuspidal representation in the sense of \cite[Definition 3.4.19]{Kaletha:2019}, meaning that $\rho|_{G_{y,0}}$ contains a Deligne-Lusztig cuspidal representation $\pm R_{\mathcal{S},\overline{\theta}}$ satisfying an additional regularity condition. In the Appendix, we provide some calculations related to Deligne-Lusztig virtual characters which complement the results of this section. The regular depth-zero supercuspidal representations are a very important class. Indeed, Kaletha points out that most depth-zero supercuspidal representations are regular (paragraph following \cite[Definition 3.7.3]{Kaletha:2019}).

The regular depth-zero supercuspidal representations of $G$ are constructed from pairs $(\alg{S},\theta)$ \cite[Proposition 3.4.27]{Kaletha:2019}, where $\alg{S}$ is a maximally unramified elliptic maximal $F$-torus of $\alg{G}$ in the sense of \cite[Definition 3.4.2]{Kaletha:2019} and $\theta$ is a regular depth-zero character of $S$, meaning that $I_{N_\alg{G}(\alg{S})(F)}(\theta|_{S_0}) = S$ \cite[Definition 3.4.16]{Kaletha:2019}. Kaletha's construction is summarized as follows and illustrated in Figure~\ref{fig:SummaryKaletha}.

Given a maximally unramified elliptic maximal $F$-torus $\alg{S}$ of $\alg{G}$, one can associate a vertex $[y]$ of $\mathcal{B}^\mathrm{red}(\alg{G},F)$ \cite[Lemma 3.4.3]{Kaletha:2019}, which is the unique $\Gal(F^\un/F)$-fixed point of $\mathcal{A}^{\mathrm{red}}(\alg{G},\alg{S},F^\un)$.  Then, by \cite[Lemma 3.4.4]{Kaletha:2019}, there exists an elliptic maximal $\res$-torus $\mathcal{S}$ of $\mathcal{G}_y$ such that for every unramified extension $F'$ of $F$, the image of $S(F')_0$ in $G(F')_{y,0:0^+}$ is equal to $\mathcal{S}(\res')$. Furthermore, every elliptic maximal $\res$-torus of $\mathcal{G}_y$ arises this way. By definition of regularity, the character $\theta|_{S_0}$ factors through to a character $\overline{\theta}$ of $\mathcal{S}(\res)$ such that its stabilizer in $N_\alg{G}(\alg{S})(F)/S$ is trivial. This character $\overline{\theta}$, also referred to as regular, is then in general position \cite[Fact 3.4.18]{Kaletha:2019}, meaning that $\pm R_{\mathcal{S},\overline{\theta}}$ is a Deligne-Lusztig cuspidal representation of $\mathcal{G}_y(\res) = G_{y,0:0^+}$ \cite[Theorem 8.3]{DL:1976}. Note that the sign $\pm$ refers to $(-1)^{r_{\res}(\mathcal{G}_y)-r_{\res}(\mathcal{S})}$, where $r_{\res}(\mathcal{G}_y)$ and $r_{\res}(\mathcal{S})$ denote the $\res$-split ranks of $\mathcal{G}_y$ and $\mathcal{S}$, respectively.

Let $\upkappa_{(\mathcal{S},\overline{\theta})}$ denote the pullback of $\pm R_{\mathcal{S},\overline{\theta}}$ to $G_{y,0}$. One can extend $\upkappa_{(\mathcal{S},\overline{\theta})}$ to an irreducible representation (acting on the same vector space) $\upkappa_{(\mathcal{S},\theta)}$ of $SG_{y,0}$, which is a normal subgroup of $G_{[y]}$. An explicit extension is provided in \cite[Section 3.4.4]{Kaletha:2019}, but for our purposes it suffices for us to know such an extension exists. The regularity of $\theta$ implies that $I_{G_{[y]}}(\upkappa_{(\alg{S},\theta)}) = SG_{y,0}$ \cite[Lemma 3.4.20]{Kaletha:2019} so that $\rho\defeq \Ind_{SG_{y,0}}^{G_{[y]}}\upkappa_{(\alg{S},\theta)}$ is irreducible, and therefore ${\pi_{(\alg{S},\theta)}\defeq \Ind_{G_{[y]}}^G\rho}$ is supercuspidal of depth zero as per \cite[Theorem 6.8]{MP:1996} and is regular by construction.

Because this construction exhausts all regular depth-zero supercuspidal representations \cite[Proposition 3.4.27]{Kaletha:2019}, when $\rho$ induces to such a representation, we can assume without loss of generality that $\rho = \Ind_{SG_{y,0}}^{G_{[y]}}\upkappa_{(\alg{S},\theta)}$ for some maximally unramified elliptic maximal $F$-torus $\alg{S}$ of $\alg{G}$ and regular depth-zero character $\theta$ of $S$.

\begin{figure}[!htbp]
\begin{tikzpicture}
  \matrix (m) [matrix of math nodes,row sep=3em,column sep=0.75em,minimum width=2em]
  {
     {} &{} &{} &{} &{} &\text{$\left( \rho, G_{[y]}\right)$} &{}   \\
     \text{$\left( \pm R_{\mathcal{S},\overline{\theta}}, \mathcal{G}_y(\res) \right)$} 
     &{} &\text{$\left( \upkappa_{(\mathcal{S},\overline{\theta})}, G_{y,0} \right)$} 
     &{} &\text{$\left( \upkappa_{(\alg{S},\theta)}, SG_{y,0} \right)$} 
     &{} &\text{$\left( \pi_{(\alg{S},\theta)}, G \right)$} \\};
  \path[-stealth]
    (m-2-1) edge node [above] {pullback} (m-2-3)
    (m-2-3)  edge node [above] {extend} (m-2-5)
    (m-2-5) edge node [above] {induce} (m-2-7)
    (m-2-5) edge node [left] {induce} (m-1-6)
    (m-1-6) edge node [right] {induce} (m-2-7);

\end{tikzpicture}

\caption[Summary of Kaletha's construction.]{Summary of Kaletha's construction for regular depth-zero supercuspidal representations of $G$.}
\label{fig:SummaryKaletha}
\end{figure}


Now, we proceed in steps to restrict $\rho$ to $H_{[y]}$.
We let $\mathcal{H}_y$ denote the reductive quotient of $\alg{H}$ at $y$. Recall that Lemma~\ref{lem:HxGx} allows us to view $\mathcal{H}_y$ as a subgroup of $\mathcal{G}_y$ that contains $[\mathcal{G}_y,\mathcal{G}_y]$.

\begin{lemma}\label{lem:equalityTori}
Let $\alg{S}$ be a maximally unramified elliptic maximal $F$-torus of $\alg{G}$ with associated vertex $[y]$ in $\mathcal{B}^\mathrm{red}(\alg{G},F)$, and let $\mathcal{S}$ be its corresponding elliptic maximal $\res$-torus of $\mathcal{G}_y$. Set $\alg{S}_\alg{H} = \alg{S}\cap\alg{H}$. Then $\alg{S}_{\alg{H}}$ is a maximally unramified elliptic maximal $F$-torus of $\alg{H}$ with associated vertex $[y]$ in $\mathcal{B}^{\mathrm{red}}(\alg{H},F)$. Furthermore, letting $\mathcal{S}_\mathcal{H}$ denote its corresponding elliptic maximal $\res$-torus of $\mathcal{H}_y$, we have that $\mathcal{S}_\mathcal{H} = \mathcal{S}\cap \mathcal{H}_y$.
\end{lemma}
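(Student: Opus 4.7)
The plan is to verify in turn each of the five assertions: that $\alg{S}_\alg{H}$ is a maximal $F$-torus of $\alg{H}$, that it is maximally unramified, that it is elliptic, that its associated vertex is $[y]$, and finally that $\mathcal{S}_\mathcal{H} = \mathcal{S}\cap\mathcal{H}_y$. The first is immediate from Theorem~\ref{th:intersectTorus}. For maximally unramified, since $\alg{S}$ splits over $F^\un$, so does the subtorus $\alg{S}_\alg{H}$; and a maximal $F$-torus of $\alg{H}$ that splits over $F^\un$ automatically satisfies the definition of maximally unramified in the sense of \cite[Definition 3.4.2]{Kaletha:2019}.

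For ellipticity, I would show that the maximal $F$-split subtorus of $\alg{S}_\alg{H}$ equals the maximal $F$-split subtorus $\alg{A}_\alg{H}$ of $Z(\alg{H})^\circ$. Denoting the analogous torus for $\alg{G}$ by $\alg{A}_\alg{G}$, any $F$-split subtorus of $\alg{S}_\alg{H}$ is an $F$-split subtorus of $\alg{S}$, hence is contained in $\alg{A}_\alg{G}$ by ellipticity of $\alg{S}$ in $\alg{G}$. So the maximal $F$-split subtorus of $\alg{S}_\alg{H}$ is $(\alg{A}_\alg{G}\cap\alg{H})^\circ$. One then uses the elementary observation $Z(\alg{G})\cap\alg{H}\subset Z(\alg{H})$ (any element of $Z(\alg{G})$ centralizes all of $\alg{G}$, in particular $\alg{H}$) to conclude $(\alg{A}_\alg{G}\cap\alg{H})^\circ\subset\alg{A}_\alg{H}$; the reverse inclusion follows from $\alg{A}_\alg{H}\subset Z(\alg{H})^\circ\subset\alg{S}_\alg{H}$ together with $\alg{A}_\alg{H}$ being $F$-split. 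For the associated vertex, recall that $[y]$ is characterized as the unique $\Gal(F^\un/F)$-fixed point of $\mathcal{A}^{\mathrm{red}}(\alg{G},\alg{S},F^\un)$ by \cite[Lemma 3.4.3]{Kaletha:2019}. Since Remark~\ref{rem:WLOGy} identifies $\mathcal{A}^{\mathrm{red}}(\alg{H},\alg{S}_\alg{H},F^\un)$ with $\mathcal{A}^{\mathrm{red}}(\alg{G},\alg{S},F^\un)$, the vertex associated to $\alg{S}_\alg{H}$ in $\mathcal{B}^{\mathrm{red}}(\alg{H},F)$ is again $[y]$.

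I expect the main obstacle to be the final equality $\mathcal{S}_\mathcal{H} = \mathcal{S}\cap\mathcal{H}_y$. My approach is: by Theorem~\ref{th:intersectTorus} applied to the reductive $\resun$-groups $\mathcal{H}_y\subset\mathcal{G}_y$ of Lemma~\ref{lem:HxGx}, the intersection $\mathcal{S}\cap\mathcal{H}_y$ is a maximal $\resun$-torus of $\mathcal{H}_y$. On the other hand, by \cite[Lemma 3.4.4]{Kaletha:2019} applied to $(\alg{H},\alg{S}_\alg{H})$, $\mathcal{S}_\mathcal{H}(\resun)$ is the image of $S_\alg{H}(F^\un)_0$ in $H(F^\un)_{y,0:0^+}\simeq \mathcal{H}_y(\resun)$, and a brief verification (using that $\alg{S}_\alg{H} = \alg{S}\cap\alg{H}$ and the compatibility of the connected parahoric of a torus with subtori) shows $S_\alg{H}(F^\un)_0 = S(F^\un)_0\cap H(F^\un)$. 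This image then sits inside both $\mathcal{S}(\resun)$ and $\mathcal{H}_y(\resun)$, giving the inclusion $\mathcal{S}_\mathcal{H}\subset \mathcal{S}\cap\mathcal{H}_y$. Since $\mathcal{S}_\mathcal{H}$ is itself a maximal $\resun$-torus of $\mathcal{H}_y$ (being the elliptic maximal $\res$-torus arising from $\alg{S}_\alg{H}$ via \cite[Lemma 3.4.4]{Kaletha:2019}), the two maximal tori of $\mathcal{H}_y$, having the same dimension, must coincide.
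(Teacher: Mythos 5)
Your proposal follows essentially the same route as the paper's proof: Theorem~\ref{th:intersectTorus} for maximality, the identification of the reduced apartments over $F^\un$ for the associated vertex, and then establishing the inclusion $\mathcal{S}_\mathcal{H}(\resun)\subset(\mathcal{S}\cap\mathcal{H}_y)(\resun)$ at the level of images of $\alg{S}_\alg{H}(F^\un)_0$ followed by an appeal to maximality of tori in $\mathcal{H}_y$. Your ellipticity argument is spelled out in more detail than the paper's terse remark and is sound. For the final equality, note that the paper gets by with the inclusion $\alg{S}_\alg{H}(F^\un)_0\subset\alg{S}(F^\un)_0\cap\alg{H}(F^\un)_{y,0}$; the sharper equality $S_\alg{H}(F^\un)_0 = S(F^\un)_0\cap H(F^\un)$ that you invoke is not needed and would require its own justification (the behaviour of connected parahorics of tori under passage to subtori is not entirely formal at depth zero), so you are better off keeping only the inclusion.

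There is, however, a genuine gap in the "maximally unramified" step. You assert that $\alg{S}$ splits over $F^\un$, but that is not a consequence of $\alg{S}$ being maximally unramified. In \cite[Definition 3.4.2]{Kaletha:2019} the condition is only that $\alg{S}_{F^\un}$ contains a maximal $F^\un$-split torus of $\alg{G}_{F^\un}$, equivalently that $\alg{S}_{F^\un}$ lies in a Borel of the quasi-split group $\alg{G}_{F^\un}$, or that $\alg{S}$ is the centralizer of its maximal unramified subtorus. The paper only assumes $\alg{G}$ splits over a tamely ramified extension $E$, which can be ramified, so $\alg{G}_{F^\un}$ can fail to be split (e.g.\ a tamely ramified unitary group), and then no maximal torus of $\alg{G}$ is $F^\un$-split. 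The step "$\alg{S}$ splits over $F^\un$, hence so does $\alg{S}_\alg{H}$" therefore fails. The fix is to argue with the actual characterization: the surjection $X^*(\alg{S})\twoheadrightarrow X^*(\alg{S}_\alg{H})$ is Galois-equivariant and, by the identification of $\Phi(\alg{G},\alg{S})$ with $\Phi(\alg{H},\alg{S}_\alg{H})$ from Section~\ref{sec:HandG}, sends a system of positive roots preserved by inertia to one preserved by inertia; equivalently, a maximal $F^\un$-split torus of $\alg{G}_{F^\un}$ inside $\alg{S}_{F^\un}$ meets $\alg{H}_{F^\un}$ in a maximal $F^\un$-split torus contained in $(\alg{S}_\alg{H})_{F^\un}$.
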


\begin{proof}
That $\alg{S}_\alg{H}$ is a maximally unramified elliptic maximal $F$-torus is a consequence of Theorem~\ref{th:intersectTorus}. Furthermore, as $\alg{G}$ and $\alg{H}$ have the same derived subgroup, ${\mathcal{B}^{\mathrm{red}}(\alg{G},F) = \mathcal{B}^{\mathrm{red}}(\alg{H},F)}$ and $\mathcal{A}^{\mathrm{red}}(\alg{G},\alg{S},F^{\un}) = \mathcal{A}^{\mathrm{red}}(\alg{H},\alg{S}_\alg{H},F^{\un})$. It follows that $[y]$ is the unique $\Gal(F^{\un}/F)$-fixed point of $ \mathcal{A}^{\mathrm{red}}(\alg{H},\alg{S}_\alg{H},F^{\un})$ and is therefore the vertex associated to $\alg{S}_\alg{H}$.

 By definition, we have that $\mathcal{S}(\resun) = \quo{\left(\alg{S}(F^\un)_0\alg{G}(F^\un)_{y,0^+} \right)}{\alg{G}(F^\un)_{y,0^+}}$ and ${\mathcal{S}_\mathcal{H}(\resun) = \quo{\left(\alg{S}_\alg{H}(F^\un)_0\alg{H}(F^\un)_{y,0^+} \right)}{\alg{H}(F^\un)_{y,0^+}}}$. Let $\iso: \mathcal{H}_y(\resun)\rightarrow \quo{\alg{H}(F^\un)_{y,0}\alg{G}(F^\un)_{y,0^+}}{\alg{G}(F^\un)_{y,0^+}}$ be the isomorphism given by the second isomorphism theorem. When we write $(\mathcal{S}\cap\mathcal{H}_y)(\resun)$, we actually mean
 $$\iso^{-1}\left( \mathcal{S}(\resun) \cap \quo{\alg{H}(F^\un)_{y,0}\alg{G}(F^\un)_{y,0^+}}{\alg{G}(F^\un)_{y,0^+}} \right),$$ which is an elliptic maximal $\res$-torus of $\mathcal{H}_y(\resun)$. Since $\alg{S}_\alg{H}(F^\un)_0$ is contained in both $\alg{S}(F^\un)_0$ and $\alg{H}(F^\un)_{y,0}$,
$\mathcal{S}_\mathcal{H}(\resun) \subset (\mathcal{S}\cap\mathcal{H}_y)(\resun)$. The maximality of both tori implies that the inclusion is an equality.
\end{proof}

\begin{proposition}\label{prop:kappas}
Let $\alg{S}$ be a maximally unramified elliptic maximal $F$-torus of $\alg{G}$ associated to the vertex $[y]$ and let $\mathcal{S}$ be its corresponding elliptic maximal $\res$-torus of $\mathcal{G}_y$. Denote by $\mathcal{S}_{\mathcal{H}}$ the elliptic maximal $\res$-torus of $\mathcal{H}_y$ corresponding to the maximally unramified elliptic maximal $F$-torus $\alg{S}_\alg{H} = \alg{S}\cap\alg{H}$ of $\alg{H}$. Let $\overline{\theta}$ be a regular character of $\mathcal{S}(\res)$ and assume that $\overline{\theta}_{\mathcal{H}} = \overline{\theta}|_{\mathcal{S}_\mathcal{H} (\res)}$ is also regular. Then $$\upkappa_{\left(\mathcal{S},\overline{\theta}\right)}|_{H_{y,0}} = \upkappa_{\left(\mathcal{S}_\mathcal{H},\overline{\theta}_{\mathcal{H}}\right)},$$
where $\upkappa_{\left(\mathcal{S},\overline{\theta}\right)}$ and $\upkappa_{\left(\mathcal{S}_\mathcal{H},\overline{\theta}_{\mathcal{H}}\right)}$ denote the pullbacks of $\pm R_{\mathcal{S},\overline{\theta}}$ and $\pm R_{\mathcal{S}_\mathcal{H},\overline{\theta}_{\mathcal{H}}}$ to $G_{y,0}$ and $H_{y,0}$, respectively. That is, we have the diagram illustrated in Figure~\ref{fig:diagramCuspidalRestrict}.
\end{proposition}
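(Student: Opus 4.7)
The plan is to reduce the claim to an identity of Deligne--Lusztig characters on the reductive quotients, and then prove that identity using the fact that $\mathcal{H}_y$ contains $[\mathcal{G}_y,\mathcal{G}_y]$ (Lemma~\ref{lem:HxGx}). Since $\upkappa_{(\mathcal{S},\overline{\theta})}$ and $\upkappa_{(\mathcal{S}_\mathcal{H},\overline{\theta}_\mathcal{H})}$ are by construction the inflations of $\pm R_{\mathcal{S},\overline{\theta}}$ and $\pm R_{\mathcal{S}_\mathcal{H},\overline{\theta}_\mathcal{H}}$ from $\mathcal{G}_y(\res)=G_{y,0:0^+}$ and $\mathcal{H}_y(\res)=H_{y,0:0^+}$, and since $H_{y,0^+}=G_{y,0^+}\cap H$ (Section~\ref{sec:HandG}), restricting the inflation from $\mathcal{G}_y(\res)$ to $H_{y,0}$ coincides with inflating the restriction along the natural inclusion $\mathcal{H}_y(\res)\hookrightarrow \mathcal{G}_y(\res)$. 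Thus the assertion is equivalent to
\[
\bigl(\pm R_{\mathcal{S},\overline{\theta}}\bigr)\big|_{\mathcal{H}_y(\res)} \;=\; \pm R_{\mathcal{S}_\mathcal{H},\overline{\theta}_\mathcal{H}}.
\]

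For the signs, note that since $\mathcal{H}_y\supseteq [\mathcal{G}_y,\mathcal{G}_y]$ and $\mathcal{S}$ is a maximal torus of $\mathcal{G}_y$, one has $\mathcal{G}_y=\mathcal{S}\mathcal{H}_y$, and the second isomorphism theorem gives $\mathcal{G}_y/\mathcal{H}_y\simeq \mathcal{S}/(\mathcal{S}\cap\mathcal{H}_y)=\mathcal{S}/\mathcal{S}_\mathcal{H}$ as algebraic $\res$-tori. Consequently $r_\res(\mathcal{G}_y)-r_\res(\mathcal{H}_y)=r_\res(\mathcal{S})-r_\res(\mathcal{S}_\mathcal{H})$, so the two signs $(-1)^{r_\res(\mathcal{G}_y)-r_\res(\mathcal{S})}$ and $(-1)^{r_\res(\mathcal{H}_y)-r_\res(\mathcal{S}_\mathcal{H})}$ agree. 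For the remaining virtual character identity, the key observation is that because $\mathcal{H}_y\supseteq [\mathcal{G}_y,\mathcal{G}_y]$, for any Borel $\mathcal{B}\supseteq \mathcal{S}$ of $\mathcal{G}_y$ over $\resun$, the intersection $\mathcal{B}_\mathcal{H}=\mathcal{B}\cap \mathcal{H}_y$ is a Borel of $\mathcal{H}_y$ containing $\mathcal{S}_\mathcal{H}$ (Theorem~\ref{th:intersectLevi}), and the natural morphism $\mathcal{H}_y/\mathcal{B}_\mathcal{H}\to \mathcal{G}_y/\mathcal{B}$ is an isomorphism of flag varieties. Therefore the Deligne--Lusztig varieties $X^{\mathcal{G}_y}_{\mathcal{S}}$ and $X^{\mathcal{H}_y}_{\mathcal{S}_\mathcal{H}}$ are identified as $\resun$-varieties, with compatible actions of $\mathcal{G}_y(\res)\times \mathcal{S}(\res)$ and $\mathcal{H}_y(\res)\times \mathcal{S}_\mathcal{H}(\res)$ on $\ell$-adic cohomology. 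Extracting the $\overline{\theta}$-isotypic part for the $\mathcal{S}(\res)$-action and then restricting the $\mathcal{G}_y(\res)$-action to $\mathcal{H}_y(\res)$ yields the $\overline{\theta}_\mathcal{H}$-isotypic part for the $\mathcal{S}_\mathcal{H}(\res)$-action, which is $R_{\mathcal{S}_\mathcal{H},\overline{\theta}_\mathcal{H}}$. Equivalently, the identity can be verified via the Deligne--Lusztig character formula together with the equality of Green functions for $\mathcal{G}_y$ and $\mathcal{H}_y$ (which depend only on the common derived subgroup); this is the kind of computation carried out in the Appendix.

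The main obstacle will be justifying that extracting the $\overline{\theta}$-isotypic component for $\mathcal{S}(\res)$ and then restricting from $\mathcal{G}_y(\res)$ to $\mathcal{H}_y(\res)$ produces exactly the $\overline{\theta}_\mathcal{H}$-isotypic component for $\mathcal{S}_\mathcal{H}(\res)$, since a priori other characters of $\mathcal{S}(\res)$ restricting to $\overline{\theta}_\mathcal{H}$ could also contribute. Under the embedding $\mathcal{S}(\res)/\mathcal{S}_\mathcal{H}(\res)\hookrightarrow \mathcal{G}_y(\res)/\mathcal{H}_y(\res)$ induced above, the ``extra'' $\mathcal{S}(\res)/\mathcal{S}_\mathcal{H}(\res)$-action on the Deligne--Lusztig cohomology should coincide with the central action of $\mathcal{G}_y(\res)/\mathcal{H}_y(\res)$, which becomes trivial upon restriction to $\mathcal{H}_y(\res)$, so the isotypic decompositions align. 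Establishing this compatibility (equivalently, the coincidence of Green functions of $\mathcal{G}_y$ and $\mathcal{H}_y$) is the core technical step, and regularity of both $\overline{\theta}$ and $\overline{\theta}_\mathcal{H}$ is what makes the isotypic extractions behave cleanly.
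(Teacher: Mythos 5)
Your overall structure matches the paper's: reduce the claim to the Deligne--Lusztig identity $(\pm R_{\mathcal{S},\overline{\theta}})|_{\mathcal{H}_y(\res)} = \pm R_{\mathcal{S}_\mathcal{H},\overline{\theta}_\mathcal{H}}$, match the signs via a split-rank argument, and invoke the Appendix machinery. The paper's actual proof does exactly this, citing Lemma~\ref{lem:HxGx} to place $[\mathcal{G}_y,\mathcal{G}_y]\subset\mathcal{H}_y$, Proposition~\ref{lem:equalityTori} to identify $\mathcal{S}_\mathcal{H}$ with $\mathcal{S}\cap\mathcal{H}_y$, a Borel--Tits argument for the signs, and Theorem~\ref{th:restrictRTtheta} (the Green-function restriction identity) for the core equality, followed by Theorem~\ref{th:iso} to transport back to $H_{y,0}$. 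Your parenthetical remark that ``the identity can be verified via the Deligne--Lusztig character formula together with the equality of Green functions\ldots; this is the kind of computation carried out in the Appendix'' is therefore exactly the paper's route.

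Where your primary argument diverges is the attempt to establish the identity geometrically by ``identifying'' the Deligne--Lusztig varieties. This has two problems. First, the $\mathcal{S}^{\Fr}$-torsor $\tilde{X}_{\mathcal{G}_y} = L_{\mathcal{G}_y}^{-1}(\mathcal{U})$ lives inside $\mathcal{G}_y$, while $\tilde{X}_{\mathcal{H}_y} = L_{\mathcal{H}_y}^{-1}(\mathcal{U})$ lives inside $\mathcal{H}_y$; the latter is a proper closed subvariety of the former (they are torsors for different tori), so they are not literally the same variety even though the flag varieties $\mathcal{H}_y/\mathcal{B}_\mathcal{H}\cong\mathcal{G}_y/\mathcal{B}$ coincide. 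Second, as you yourself flag, extracting the $\overline{\theta}$-isotypic part for $\mathcal{S}(\res)$ and restricting from $\mathcal{G}_y(\res)$ to $\mathcal{H}_y(\res)$ is not obviously the $\overline{\theta}_\mathcal{H}$-isotypic part for $\mathcal{S}_\mathcal{H}(\res)$, because multiple characters of $\mathcal{S}(\res)$ restrict to $\overline{\theta}_\mathcal{H}$. Your proposed fix — that the residual $\mathcal{S}(\res)/\mathcal{S}_\mathcal{H}(\res)$-action coincides with a central $\mathcal{G}_y(\res)/\mathcal{H}_y(\res)$-action that dies upon restriction — is the right intuition but is asserted, not proved; that is precisely what the Green-function computation in the proof of Theorem~\ref{th:restrictRTtheta} makes rigorous, via the Lang--Steinberg factorization $\mathcal{G}_y(\res)=\mathcal{H}_y(\res)\mathcal{S}(\res)$, the normalization of the centralizer counts, and the reduction of Green functions to the common adjoint group. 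So the route you sketch is sound, but the load-bearing step needs to be the Green-function argument rather than the geometric identification; the regularity hypotheses on $\overline{\theta}$ and $\overline{\theta}_\mathcal{H}$ are not actually needed for the virtual-character identity itself (Theorem~\ref{th:restrictRTtheta} makes no such assumption) — they are needed later to ensure cuspidality and irreducibility on the $H$-side.
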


\begin{figure}[!htbp]
\center
\begin{tikzpicture}
  \matrix (m) [matrix of math nodes,row sep=3em,column sep=4em,minimum width=2em]
  {
     \text{$\left( \pm R_{\mathcal{S},\overline{\theta}}, \mathcal{G}_y(\res) \right)$} 
     &\text{$\left( \pm R_{\mathcal{S}_\mathcal{H},\overline{\theta}_{\mathcal{H}}}, \mathcal{H}_y(\res) \right)$}  \\
     \text{$\left( \upkappa_{(\mathcal{S},\overline{\theta})}, G_{y,0} \right)$} 
     &\text{$\left( \upkappa_{(\mathcal{S}_\mathcal{H},\overline{\theta}_{\mathcal{H}})}, H_{y,0} \right)$} \\};
  \path[-stealth]
    (m-1-1)  edge node [right] {pullback} (m-2-1)
    (m-2-1) edge node [above] {restrict} (m-2-2)
    (m-1-2) edge node [right] {pullback} (m-2-2);

\end{tikzpicture}
\caption[Diagram summarizing the restriction of $\upkappa_{(\mathcal{S},\overline{\theta})}$ to $H_{y,0}$.]{Diagram summarizing the restriction of $\upkappa_{(\mathcal{S},\overline{\theta})}$ to $H_{y,0}$ as per Proposition~\ref{prop:kappas}.}
\label{fig:diagramCuspidalRestrict}
\end{figure}

\begin{proof}
For all $h\in H_{y,0}$, we have 
$$\upkappa_{(\mathcal{S}, \overline{\theta})}(h) = \pm R_{\mathcal{S}, \overline{\theta}}(hG_{y,0^+}) = \pm R_{\mathcal{S}, \overline{\theta}}|_{{\mathcal{H}}_y(\res)}(hG_{y,0^+}).$$ Since $[\mathcal{G}_y,\mathcal{G}_y] \subset {\mathcal{H}}_y$ (Lemma \ref{lem:HxGx}), it follows from Theorem \ref{th:restrictRTtheta} that $R_{\mathcal{S},\overline{\theta}}|_{{\mathcal{H}}_y(\res)} = R_{\mathcal{S}\cap {\mathcal{H}}_y, \overline{\theta}_{{\mathcal{H}}_y}}$, where $\overline{\theta}_{{\mathcal{H}}_y}$ denotes the restriction of $\overline{\theta}$ to $\left(\mathcal{S}\cap {{\mathcal{H}}_y} \right)(\res)$. Furthermore, one can show using \cite[Proposition 4.27]{BorelTits:1965} that $r_{\res}(\mathcal{G}_y) - r_{\res}(\mathcal{S}) = r_{\res}(\mathcal{H}_y) = r_{\res}(\mathcal{S}\cap\mathcal{H}_y)$ so that ${\pm R_{\mathcal{S},\overline{\theta}}|_{{\mathcal{H}}_y(\res)} = \pm R_{\mathcal{S}\cap {\mathcal{H}}_y, \overline{\theta}_{{\mathcal{H}}_y}}}$. Using Proposition~\ref{lem:equalityTori} and Theorem~\ref{th:iso}, we conclude that
$$\upkappa_{\left(\mathcal{S},\overline{\theta}\right)}(h) = \pm R_{\mathcal{S}_\mathcal{H},\overline{\theta}_{\mathcal{H}}}(hH_{y,0^+}) = \upkappa_{\left( \mathcal{S}_\mathcal{H},\overline{\theta}_{\mathcal{H}} \right)}(h). $$
\end{proof}

\begin{remark}\label{rem:regularRestriction}
There is no guarantee for $\overline{\theta}_{\mathcal{H}}$ to be regular when $\overline{\theta}$ is regular. However, under the hypothesis that $\overline{\theta}_{\mathcal{H}}$ is also regular, we know that $\pm R_{\mathcal{S}_\mathcal{H},\overline{\theta}_{\mathcal{H}}}$ is cuspidal and that the normalizer in $H_{[y]}$ of an extension to $S_HH_{y,0}$ of $\upkappa_{\left( \mathcal{S}_\mathcal{H},\overline{\theta}_{\mathcal{H}} \right)}$ is equal to $S_HH_{y,0}$.
\end{remark}

We note the following relationship between the groups $SG_{y,0}$ and $S_HH_{y,0}$.

\begin{lemma}\label{lem:SGSH}
Let $\alg{S}$ be an elliptic maximally unramified $F$-torus of $\alg{G}$ with associated vertex $[y]$ and let $\alg{H}$ be a closed connected $F$-subgroup of $\alg{G}$ that contains $\alg{G}_{\der}$. Then ${SG_{y,0}\cap H = S_HH_{y,0}}$, where $S_H = S\cap H$.
\end{lemma}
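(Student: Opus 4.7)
The inclusion $S_H H_{y,0} \subset SG_{y,0} \cap H$ is immediate from $S_H = S \cap H \subset S$ and $H_{y,0} = G_{y,0} \cap H \subset G_{y,0}$. The reverse inclusion will follow once the factorization
\begin{equation*}
G_{y,0} = S_0 H_{y,0}, \qquad \text{where } S_0 \defeq S \cap G_{y,0},
\end{equation*}
is established. Indeed, granted this, any $g \in SG_{y,0} \cap H$ writes as $g = sk$ with $s \in S$ and $k \in G_{y,0}$, and decomposing $k = s_0 h$ with $s_0 \in S_0$, $h \in H_{y,0}$ gives $g = (s s_0) h$; since $s s_0 \in S$ while $g, h \in H$, necessarily $s s_0 \in S \cap H = S_H$, placing $g$ in $S_H H_{y,0}$.

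The inclusion $S_0 H_{y,0} \subset G_{y,0}$ is obvious, so the content is the reverse, which I will establish separately at positive depth and at depth zero. At positive depth $r > 0$, the decomposition $G_{y,r} = Z_r (\Gder)_{y,r}$ of \cite[Lemma B.7.2]{DeBackerReeder:2009} (already invoked in the proof of Proposition~\ref{prop:Kintersect}), with $Z \defeq Z(\alg{G})^\circ(F) \subset S$ and $\Gder \subset H$ (since $\alg{G}_{\der} \subset \alg{H}$), forces $Z_r \subset S_r$ and $(\Gder)_{y,r} \subset H_{y,r}$, hence $G_{y,r} \subset S_r H_{y,r}$. Since $G_{y,0^+} = \bigcup_{r > 0} G_{y,r}$ by definition, this yields $G_{y,0^+} \subset S_{0^+} H_{y,0^+} \subset S_0 H_{y,0}$.

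At depth zero, Lemma~\ref{lem:HxGx} gives $[\mathcal{G}_y, \mathcal{G}_y] \subset \mathcal{H}_y$; combined with the fact that $\mathcal{S}$ is a maximal $\res$-torus of $\mathcal{G}_y$ (so contains $Z(\mathcal{G}_y)^\circ$), the central isogeny $\mathcal{G}_y = Z(\mathcal{G}_y)^\circ \cdot [\mathcal{G}_y, \mathcal{G}_y]$ yields $\mathcal{G}_y = \mathcal{S} \cdot \mathcal{H}_y$ as algebraic groups over $\res$. By Lemma~\ref{lem:equalityTori}, $\mathcal{S}_\mathcal{H} = \mathcal{S} \cap \mathcal{H}_y$ is a connected $\res$-torus, so Lang's theorem yields $H^1(\res, \mathcal{S}_\mathcal{H}) = 0$ and consequently $\mathcal{G}_y(\res) = \mathcal{S}(\res) \cdot \mathcal{H}_y(\res)$. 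Lifting along the surjection $G_{y,0} \twoheadrightarrow \mathcal{G}_y(\res)$ produces $G_{y,0} = S_0 H_{y,0} G_{y,0^+}$. Since $S \subset G_{[y]}$ (by ellipticity of $\alg{S}$) normalizes both $G_{y,0}$ and $H$, and hence $H_{y,0}$, the product $S_0 H_{y,0}$ is a subgroup; combining the two stages gives $G_{y,0} = S_0 H_{y,0} \cdot G_{y,0^+} \subset S_0 H_{y,0}$, as required.

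I do not anticipate a serious obstacle: once Lang's theorem at the reductive quotient and \cite[Lemma B.7.2]{DeBackerReeder:2009} at positive depth are brought to bear, everything else reduces to routine verification of subgroup inclusions and normalization properties. The essential structural input is $[\mathcal{G}_y, \mathcal{G}_y] \subset \mathcal{H}_y$ from Lemma~\ref{lem:HxGx}, which is precisely what allows the two depth-regimes to match up cleanly.
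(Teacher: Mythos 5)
Your proposal is correct and uses the same two key inputs as the paper's proof: Lang--Steinberg at depth zero (giving $\mathcal{G}_y(\res) = \mathcal{S}(\res)\mathcal{H}_y(\res)$, hence $G_{y,0} = S_0 H_{y,0} G_{y,0^+}$) and \cite[Lemma B.7.2]{DeBackerReeder:2009} at positive depth. The only difference is organizational: you package the positive-depth step as the cleaner subgroup identity $G_{y,0} = S_0 H_{y,0}$ (after checking $S_0 H_{y,0}$ is a group), whereas the paper keeps the $G_{y,0^+}$ factor and absorbs it element-by-element via $g' = z\overline{g}$ at the end; both are essentially the same route.
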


\begin{proof}
It is clear that $S_HH_{y,0} \subset SG_{y,0}\cap H$.

For the converse, take $h\in SG_{y,0}\cap H$. Then $h=sg$ for some $s\in S, g\in G_{y,0}$. We claim that $G_{y,0} = S_0H_{y,0}G_{y,0^+}$. Indeed, let $\mathcal{S}$ be the elliptic maximal $\res$-torus of $\mathcal{G}_y$ associated to $\alg{S}$. Because ${\mathcal{H}}_y$ contains $[\mathcal{G}_y,\mathcal{G}_y]$ (Lemma~\ref{lem:HxGx}), we have $\mathcal{G}_y = \mathcal{S} {\mathcal{H}}_y$. As a consequence of the Lang-Steinberg Theorem, it follows that 
$\mathcal{G}_y(\res) = \mathcal{S}(\res){\mathcal{H}}_y(\res),$ or equivalently $$\quo{G_{y,0}}{G_{y,0^+}} = \quo{S_0G_{y,0^+}}{G_{y,0^+}} \cdot \quo{H_{y,0}G_{y,0^+}}{G_{y,0^+}}.$$ Therefore, $G_{y,0} = S_0H_{y,0}G_{y,0^+}$ as claimed.

This means that we can rewrite $g = s'h'g'$ for some ${s'\in S_0}$, ${h'\in H_{y,0}}$, ${g'\in G_{y,0^+}}$ so that $h = ss'h'g'$. Furthermore, \cite[Lemma B.7.2]{DeBackerReeder:2009} allows us to write ${G_{y,0^+} = Z\overline{G}_{y,0^+}}$ where $Z=Z(\alg{G})^\circ(F)\subset S$ and $\overline{G} = \alg{G}_{\der}(F)\subset H$. This means that we can write $g' = z\overline{g}$ for some $z\in Z\subset S, \overline{g}\in\overline{G}_{y,0^+}\subset H_{y,0^+}$, meaning that
$$h = ss'h'z\overline{g} = ss'zh'\overline{g}.$$ But $ss'z = h(h'\overline{g})^{-1}\in S\cap H = S_H$ and $h'\overline{g}\in H_{y,0}$. Thus $h\in S_HH_{y,0}$ which concludes the proof.
\end{proof}

What we have pointed out in Remark~\ref{rem:regularRestriction}  motivates us to impose the additional condition that $\overline{\theta}_{\mathcal{H}}$ be regular in the following theorem. Note that this character $\overline{\theta}_{\mathcal{H}}$ canbe obtained by factoring the character $\theta|_{S_H}$ of $S_H$.

\begin{proposition}\label{prop:component}
Let $\alg{S}$ be an elliptic maximally unramified $F$-torus of $\alg{G}$ with associated vertex $[y]$. Let $\theta$ be a regular depth-zero character of $S$ and set ${\rho = \Ind_{SG_{y,0}}^{G_{[y]}}\upkappa_{\left(\alg{S},\theta \right)}}$. Let $\alg{H}$ be a closed connected $F$-subgroup of $\alg{G}$ that contains $\alg{G}_{\der}$ and set $\alg{S}_\alg{H} = \alg{S}\cap\alg{H}$. Assume that $\theta|_{S_H}$ is also regular. Then ${\rho' = \Ind^{H_{[y]}}_{S_HH_{y,0}}\left( \upkappa_{(\alg{S},\theta)}|_{S_HH_{y,0}} \right)}$ is a component of $\rho_H = \rho|_{H_{[y]}}$. Furthermore, the multiplicity of $\rho'$ in $\rho_H$ is equal to $m=[I_{G_{[y]}}(\rho'):H_{[y]}SG_{y,0}]$.
\end{proposition}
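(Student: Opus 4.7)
The plan is to prove the proposition by first identifying $\rho'$ as a subrepresentation of $\rho_H$ via the Mackey Decomposition, then establishing its irreducibility using the regularity hypothesis, and finally computing the multiplicity via Proposition~\ref{prop:multiplicityrhoH}.

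First I would apply the Mackey Decomposition to
\[ \rho_H = \Res_{H_{[y]}}^{G_{[y]}}\Ind_{SG_{y,0}}^{G_{[y]}}\upkappa_{(\alg{S},\theta)} = \bigoplus_{d\in D} \Ind_{H_{[y]}\cap {^d(SG_{y,0})}}^{H_{[y]}} {^d\upkappa_{(\alg{S},\theta)}}\big|_{H_{[y]}\cap {^d(SG_{y,0})}}, \]
where $D$ is a set of representatives of $\Bigslant{H_{[y]}}{G_{[y]}}{SG_{y,0}}$. The trivial double coset combined with Lemma~\ref{lem:SGSH} (noting $SG_{y,0}\subset G_{[y]}$ so that $H_{[y]}\cap SG_{y,0} = H\cap SG_{y,0} = S_HH_{y,0}$) produces precisely $\rho'$ as a direct summand of $\rho_H$. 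To see that $\rho'$ is irreducible, I would use Proposition~\ref{prop:kappas} to identify $\tau \defeq \upkappa_{(\alg{S},\theta)}|_{S_HH_{y,0}}$ as an extension to $S_HH_{y,0}$ of the irreducible cuspidal pullback $\upkappa_{(\mathcal{S}_\mathcal{H},\overline{\theta}_\mathcal{H})}$. Then Remark~\ref{rem:regularRestriction} (applied with the regularity of $\theta|_{S_H}$, equivalently of $\overline{\theta}_\mathcal{H}$) gives $I_{H_{[y]}}(\tau) = S_HH_{y,0}$, and a Mackey-style computation of $\End_{H_{[y]}}(\rho')$ via Frobenius Reciprocity shows that every non-trivial double coset contributes zero: since $H_{y,0}$ is normal in $H_{[y]}$ it lies in $S_HH_{y,0}\cap {^d(S_HH_{y,0})}$, and the Hom space over $H_{y,0}$ reduces to $\Hom_{H_{y,0}}(\upkappa_{(\mathcal{S}_\mathcal{H},\overline{\theta}_\mathcal{H})}, {^d\upkappa_{(\mathcal{S}_\mathcal{H},\overline{\theta}_\mathcal{H})}})$, which vanishes for $d\notin S_HH_{y,0}$ by the Deligne-Lusztig theory recalled in the Appendix.

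Second, I would compute the multiplicity by applying Proposition~\ref{prop:multiplicityrhoH} with $K^0 = G_{[y]}$, $\KH{0} = H_{[y]}$ and $I = I_{G_{[y]}}(\rho')$, which yields $m = \dim(\rho)/([G_{[y]}:I]\dim(\rho'))$. Using $\dim(\rho) = [G_{[y]}:SG_{y,0}]\dim(\upkappa_{(\alg{S},\theta)})$ and $\dim(\rho') = [H_{[y]}:S_HH_{y,0}]\dim(\upkappa_{(\alg{S},\theta)})$ (since $\tau$ acts on the same vector space as $\upkappa_{(\alg{S},\theta)}$), together with $[H_{[y]}:S_HH_{y,0}] = [H_{[y]}SG_{y,0}:SG_{y,0}]$ (from the second isomorphism theorem and Lemma~\ref{lem:SGSH}), I obtain
\[ m = \frac{[G_{[y]}:SG_{y,0}]}{[G_{[y]}:I][H_{[y]}SG_{y,0}:SG_{y,0}]} = \frac{[G_{[y]}:H_{[y]}SG_{y,0}]}{[G_{[y]}:I]} = [I:H_{[y]}SG_{y,0}]. \]
This last equality requires $H_{[y]}SG_{y,0}\subset I$: clearly $H_{[y]}\subset I$, and for $g\in SG_{y,0}$ the normality of $H$ in $G$ makes $S_HH_{y,0} = SG_{y,0}\cap H$ normal in $SG_{y,0}$, so $^g\tau$ is again a representation of $S_HH_{y,0}$ and the intertwiner $\upkappa_{(\alg{S},\theta)}(g)$ establishes $^g\tau \cong \tau$, whence $^g\rho' \cong \rho'$.

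The main obstacle I anticipate is the irreducibility argument in Step~1, since Remark~\ref{rem:regularRestriction} directly addresses only the stabilizer of the extension $\tau$, not that of its restriction $\upkappa_{(\mathcal{S}_\mathcal{H},\overline{\theta}_\mathcal{H})}$ in $H_{[y]}$. Bridging this gap requires showing that conjugation by any $h\in H_{[y]}\setminus S_HH_{y,0}$ produces a Deligne-Lusztig cuspidal representation non-isomorphic to $\pm R_{\mathcal{S}_\mathcal{H},\overline{\theta}_\mathcal{H}}$; this is the content of the Appendix together with Kaletha's argument from \cite[Lemma 3.4.20]{Kaletha:2019} adapted to $\alg{H}$.
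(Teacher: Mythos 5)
Your proof follows essentially the same route as the paper's: Mackey decomposition on the trivial double coset together with Lemma~\ref{lem:SGSH} to exhibit $\rho'$ as a summand, Proposition~\ref{prop:kappas} and Remark~\ref{rem:regularRestriction} to get $I_{H_{[y]}}(\tau) = S_HH_{y,0}$ and hence irreducibility, and Proposition~\ref{prop:multiplicityrhoH} plus the dimension count and the second isomorphism theorem to extract $m = [I_{G_{[y]}}(\rho'):H_{[y]}SG_{y,0}]$. One small point: the obstacle you flag at the end is not actually a gap. For irreducibility of $\Ind_{S_HH_{y,0}}^{H_{[y]}}\tau$ via Clifford theory (with $S_HH_{y,0}$ normal and of finite index in $H_{[y]}$), what one needs is precisely $I_{H_{[y]}}(\tau) = S_HH_{y,0}$, i.e.\ the inertia group of the extension $\tau$ itself, which is exactly what Remark~\ref{rem:regularRestriction} asserts; there is no need to pass to the stabilizer of the restriction to $H_{y,0}$ or to run the $\End$ computation through Deligne--Lusztig theory. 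Your explicit verification that $SG_{y,0}\subset I_{G_{[y]}}(\rho')$ (via the intertwiner $\upkappa_{(\alg{S},\theta)}(g)$) is a nice addition that the paper leaves implicit.
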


\begin{proof}
The Mackey Decomposition allows us to compute the restriction of $\rho$ to $H_{[y]}$ as follows:
\begin{align*}
\rho|_{H_{[y]}} &= \Res^{G_{[y]}}_{H_{[y]}}\Ind_{SG_{y,0}}^{G_{[y]}}\upkappa_{(\alg{S},\theta)} \\
&= \underset{c\in C}{\oplus} \Ind_{H_{[y]}\cap {^c(SG_{y,0})}}^{H_{[y]}}\Res^{^c(SG_{y,0})}_{H_{[y]}\cap {^c(SG_{y,0})}}{^c\upkappa_{(\alg{S},\theta)}},
\end{align*}
where $C$ is a set of coset representatives of $\Bigslant{H_{[y]}}{G_{[y]}}{SG_{y,0}}$. In particular, because $SG_{y,0}\cap H_{[y]} = SG_{y,0}\cap H = S_HH_{y,0}$ (Lemma~\ref{lem:SGSH}), we see that $\Ind_{S_HH_{y,0}}^{H_{[y]}}\left(\upkappa_{(\alg{S},\theta)}|_{S_HH_{y,0}}\right)$ is a subrepresentation of $\rho_H$. By Proposition~\ref{prop:kappas}, we see that $\upkappa_{(\alg{S},\theta)}|_{S_HH_{y,0}}$ is an extension to $S_HH_{y,0}$ of $\upkappa_{(\mathcal{S}_\mathcal{H},\overline{\theta}_{\mathcal{H}})}$. By hypothesis, we have assumed that $\theta|_{S_H}$ (and therefore $\overline{\theta}_{\mathcal{H}}$) is regular, meaning that the normalizer in $H_{[y]}$ of $\upkappa_{(\alg{S},\theta)}|_{S_HH_{y,0}}$ is equal to $S_HH_{y,0}$ as discussed in Remark~\ref{rem:regularRestriction}. Therefore, $\rho' = \Ind_{S_HH_{y,0}}^{H_{[y]}}\left(\upkappa_{(\alg{S},\theta)}|_{S_HH_{y,0}} \right)$ is irreducible, meaning that $\rho'$ is a component of $\rho_H$.

Now, for the multiplicity, we know from Proposition~\ref{prop:multiplicityrhoH} that the multiplicity of $\rho'$ in $\rho_H$ is given by
$$m = \frac{\dim(\rho)}{[G_{[y]}:I_{G_{[y]}}(\rho')]\dim(\rho')},$$ where $I_{G_{[y]}}(\rho')$ is the normalizer in $G_{[y]}$ of $\rho'$. We observe that ${\dim(\rho) = [G_{[y]}:SG_{y,0}]\dim(\upkappa_{(\alg{S},\theta)})}$ and $\dim(\rho') = [H_{[y]}:S_HH_{y,0}]\dim(\upkappa_{(\alg{S},\theta)})$, which implies that
$$m = \frac{[G_{[y]}:SG_{y,0}]}{[G_{[y]}:I_{G_{[y]}}(\rho')][H_{[y]}:S_HH_{y,0}]}.$$ But $[H_{[y]}:S_HH_{y,0}] = [H_{[y]}SG_{y,0}:SG_{y,0}]$ as $S_HH_{y,0} = SG_{y,0}\cap H_{[y]}$ (Lemma~\ref{lem:SGSH}), implying that $\quo{H_{[y]}}{S_HH_{y,0}}\simeq\quo{H_{[y]}SG_{y,0}}{SG_{y,0}}$ by the second isomorphism theorem. Therefore,
$$m=\frac{[G_{[y]}:SG_{y,0}]}{[G_{[y]}:I_{G_{[y]}}(\rho')][H_{[y]}SG_{y,0}:SG_{y,0}]},$$ which simplifies to $m= [I_{G_{[y]}}(\rho'):H_{[y]}SG_{y,0}]$ as $H_{[y]}SG_{y,0}\subset I_{G_{[y]}}(\rho') \subset G_{[y]}$.
\end{proof}

Without the hypothesis that $\theta|_{S_H}$ is also regular in the previous proposition, one would need to decompose $\Ind_{S_HH_{y,0}}^{H_{[y]}}\left( \upkappa_{(\alg{S},\theta)}|_{S_HH_{y,0}} \right)$ in order to find a component $\rho'$ of $\rho_H$. Furthermore, this decomposition could potentially lead to additional multiplicity in $\rho_H$.

If $\rho$ satisfies the hypotheses of Proposition~\ref{prop:component} and that $I_{G_{[y]}}(\rho') = H_{[y]}SG_{y,0}$, then the restriction $\rho_H$ will be multiplicity free. For this to happen, we need to impose a slightly stronger condition on the character $\theta$. Following the language of \cite{AM:2019}, we have the following definition.\footnote{The version of the pre-print \cite{AM:2019} cited here contains a typo in its Definition 5.2, which was confirmed by one of the authors through a private correspondence.}

\begin{definition}
Let $\theta'$ be a character of $S_H$. We say that $\theta'$ is \emph{regular in $G$} if $I_{N_{\alg{G}}(\alg{S})(F)}(\theta'|_{(S_H)_0}) = S$.
\end{definition}

It is easy to verify that if $\theta'$ is a character of $S_H$ which is regular in $G$, then it is a regular in the sense of \cite[Definition 3.4.16]{Kaletha:2019}. 

\begin{corollary}\label{cor:rhoMultFree}
Let $\alg{S},\alg{H},\alg{S}_\alg{H}, \theta, \rho$ and $\rho'$ be as in Proposition~\ref{prop:component}. Assume furthermore that $\theta|_{S_H}$ is regular in $G$. Then the restriction $\rho_H =\rho|_{H_{[y]}}$ is multiplicity free.
\end{corollary}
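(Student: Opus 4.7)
By Proposition~\ref{prop:multiplicityrhoH} and Proposition~\ref{prop:component}, every component of $\rho_H$ appears with the same multiplicity $m = [I_{G_{[y]}}(\rho'):H_{[y]}SG_{y,0}]$, so the plan is to show $m=1$, i.e., that $I_{G_{[y]}}(\rho') = H_{[y]}SG_{y,0}$. The containment $H_{[y]}SG_{y,0} \subseteq I_{G_{[y]}}(\rho')$ holds unconditionally: $H_{[y]}$ trivially normalizes $\rho'$, and any $g \in SG_{y,0}$ normalizes $S_H H_{y,0} = SG_{y,0}\cap H$ (Lemma~\ref{lem:SGSH}, using normality of $\alg{H}$ in $\alg{G}$) and fixes $\upkappa_{(\alg{S},\theta)}$ up to equivalence by \cite[Lemma 3.4.20]{Kaletha:2019}, so ${}^g\rho' \simeq \rho'$. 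The task therefore reduces to the reverse containment.

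For $I_{G_{[y]}}(\rho') \subseteq H_{[y]}SG_{y,0}$, I would take $g \in I_{G_{[y]}}(\rho')$ and argue in two steps. First, the equivalence ${}^g\rho' \simeq \rho'$ should force the pair $({}^g\alg{S}_\alg{H}, {}^g(\theta|_{S_H}))$ to be $H_{[y]}$-conjugate to $(\alg{S}_\alg{H}, \theta|_{S_H})$, producing $h \in H_{[y]}$ with $h^{-1}g \in N_\alg{G}(\alg{S}_\alg{H})(F)$ and $(h^{-1}g)\cdot(\theta|_{S_H}) = \theta|_{S_H}$. The idea is to use Deligne--Lusztig theory: both $\rho'|_{H_{y,0}}$ and ${}^g\rho'|_{H_{y,0}}$ contain the pullbacks of the irreducible cuspidal Deligne--Lusztig representations $\pm R_{\mathcal{S}_\mathcal{H},\overline{\theta}_\mathcal{H}}$ and $\pm R_{{}^g\mathcal{S}_\mathcal{H},{}^g\overline{\theta}_\mathcal{H}}$, and their isomorphism forces the underlying $\res$-rational pairs to be $\mathcal{H}_y(\res)$-conjugate. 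This conjugacy is then promoted to an $F$-rational statement via Kaletha's bijection between $H_{y,0}$-conjugacy classes of maximally unramified elliptic maximal $F$-tori of $\alg{H}$ through $[y]$ and $\mathcal{H}_y(\res)$-conjugacy classes of elliptic maximal $\res$-tori of $\mathcal{H}_y$, and then upgraded from $\overline{\theta}_\mathcal{H}$ to the full $\theta|_{S_H}$ by inspecting how the inducing representation $\upkappa_{(\alg{S},\theta)}|_{S_H H_{y,0}}$ extends the pullback.

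Once such an $h$ is in hand, the conclusion is immediate: by the uniqueness part of Theorem~\ref{th:intersectTorus} we have $N_\alg{G}(\alg{S}_\alg{H})(F) = N_\alg{G}(\alg{S})(F)$, so $h^{-1}g$ lies in the latter and fixes $\theta|_{(S_H)_0}$; the hypothesis that $\theta|_{S_H}$ is regular in $G$ then forces $h^{-1}g \in S$, whence $g \in H_{[y]}S \subseteq H_{[y]}SG_{y,0}$. The main obstacle is the middle step -- rigorously translating the $H_{[y]}$-representation-theoretic equivalence ${}^g\rho' \simeq \rho'$ into an $F$-rational conjugacy of the pairs $(\alg{S}_\alg{H}, \theta|_{S_H})$ while keeping control of the full character rather than just its depth-zero part. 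It is precisely in this final extraction that the strength of ``regular in $G$'' (governing the larger group $N_\alg{G}(\alg{S})(F)$) over ordinary regularity in $H$ becomes indispensable; had we only assumed regularity of $\theta|_{S_H}$ in $H$, the element $h^{-1}g$ could lie in $N_\alg{G}(\alg{S})(F)\setminus N_\alg{H}(\alg{S}_\alg{H})(F)$ and contribute extra multiplicity.
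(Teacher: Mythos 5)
Your overall skeleton matches the paper's: reduce to showing $I_{G_{[y]}}(\rho') \subseteq H_{[y]}SG_{y,0}$, produce an $h \in H_{[y]}$ with $h^{-1}g$ normalizing the torus pair, and then invoke regularity of $\theta|_{S_H}$ in $G$ to force $h^{-1}g \in S$. Your endgame and the observation $N_\alg{G}(\alg{S}_\alg{H})(F) = N_\alg{G}(\alg{S})(F)$ are exactly what the paper uses.

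Where you diverge is precisely the step you flag as ``the main obstacle.'' You propose to extract the $H_{[y]}$-conjugacy of pairs by descending to the Deligne--Lusztig cuspidal constituents over $\mathcal{H}_y(\res)$, invoking the $\res$-rational uniqueness of the cuspidal pair, lifting via Kaletha's torus-conjugacy bijection, and then re-assembling the full character $\theta|_{S_H}$ from its depth-zero reduction. The paper short-circuits all of this. Since Proposition~\ref{prop:component} gives $\rho' = \Ind_{S_HH_{y,0}}^{H_{[y]}}\bigl(\upkappa_{(\alg{S},\theta)}|_{S_HH_{y,0}}\bigr)$ \emph{explicitly}, the equivalence ${}^g\rho' \simeq \rho'$ can be attacked directly by the Mackey Decomposition on this induced form: irreducibility of $\rho'$ forces ${}^g\bigl(\upkappa_{(\alg{S},\theta)}|_{S_HH_{y,0}}\bigr) \simeq {}^h\bigl(\upkappa_{(\alg{S},\theta)}|_{S_HH_{y,0}}\bigr)$ for some $h \in H_{[y]}$. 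The paper then applies the argument of \cite[Lemma 3.4.20]{Kaletha:2019} (run for $\alg{H}$ and the torus $\alg{S}_\alg{H}$) to this intertwining of the \emph{inducing representation on $S_HH_{y,0}$} to produce $g' \in H_{y,0}$ with $g'h^{-1}g \in N_\alg{G}(\alg{S}_\alg{H})(F)$ stabilizing $\theta|_{(S_H)_0}$ in one step --- no separate passage through the finite-field picture, no need to ``upgrade'' from $\overline{\theta}_\mathcal{H}$ to $\theta|_{S_H}$, since the Mackey intertwining already lives at the level of $\upkappa_{(\alg{S},\theta)}|_{S_HH_{y,0}}$. Your route would likely work but costs you several extra reductions and the careful bookkeeping of the bijection between $F$-rational and $\res$-rational torus classes; the paper's route is shorter because it never leaves the $p$-adic group. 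The one piece you omit that you should be aware of: Mackey here is what converts the abstract isomorphism ${}^g\rho' \simeq \rho'$ into a concrete conjugacy statement about the inducing data, and that is the tool that dissolves the obstacle you identified.
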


\begin{proof}
By the previous proposition, it suffices to show that $I_{G_{[y]}}(\rho') \subset H_{[y]}SG_{y,0}$ to obtain equality between the two sets.

Let $g\in I_{G_{[y]}}(\rho')$. Then $^g\Ind_{S_HH_{y,0}}^{H_{[y]}}\left( \upkappa_{(\alg{S},\theta)}|_{S_HH_{y,0}} \right)\simeq \Ind_{S_HH_{y,0}}^{H_{[y]}}\left( \upkappa_{(\alg{S},\theta)}|_{S_HH_{y,0}} \right)$. From the Mackey Decomposition, it follows that ${^g\left(\upkappa_{(\alg{S},\theta)}|_{S_HH_{y,0}} \right) \simeq {^h\left( \upkappa_{(\alg{S},\theta)}|_{S_HH_{y,0}}\right)}}$ for some $h\in H_{[y]}$, or equivalently $^{h^{-1}g}\left(\upkappa_{(\alg{S},\theta)}|_{S_HH_{y,0}}\right) \simeq \left(\upkappa_{(\alg{S},\theta)}|_{S_HH_{y,0}}\right)$. Using Kaletha's argument in \cite[Lemma 3.4.20]{Kaletha:2019}, there exists $g'\in H_{y,0}$ such that $g'h^{-1}g\in N_{\alg{G}}(\alg{S}_{\alg{H}})(F) = N_{\alg{G}}(\alg{S})(F)$ and ${^{g'h^{-1}g}\left(\theta|_{(S_H)_0}\right) = \theta|_{(S_H)_0}}$. Equivalently, $g'h^{-1}g\in I_{N_{\alg{G}}(\alg{S})(F)}\left( \theta|_{(S_H)_0} \right)$. Therefore, $g'h^{-1}g\in S$ and $g\in H_{[y]}SG_{y,0}$.
\end{proof}

Using Theorem~\ref{th:multiplicity}, we restate the previous result in terms of the regular supercuspidal representation. The statement we provide below was also proved by Adler and Mishra in a recent preprint \cite[Theorem 5.3]{AM:2019}. While their proof is very concise, our method provides a more explicit description of the restriction.

\begin{corollary}\label{cor:multiplicityRegular}
Let $\pi_{(\alg{S},\theta)}$ be a regular depth-zero supercuspidal representation of $G$, where $\alg{S}$ is an elliptic maximally unramified $F$-torus and $\theta$ is a regular depth-zero character of $S$. Let $\alg{H}$ be a closed connected $F$-subgroup of $\alg{G}$ that contains $\alg{G}_{\der}$ and set $\alg{S}_\alg{H} = \alg{S}\cap\alg{H}$. Assume furthermore that $\theta|_{S_H}$ is regular in $G$. Then $\pi_{(\alg{S},\theta)}|_H$ is multiplicity free.
\end{corollary}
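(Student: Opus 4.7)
The plan is to derive this corollary as a direct combination of Theorem~\ref{th:multiplicity} and Corollary~\ref{cor:rhoMultFree}. The key observation is that any depth-zero supercuspidal representation arises from a $G$-datum in the J.K.~Yu Construction: specifically, by the remark at the end of Section~\ref{sec:construction}, one takes $\Psi = (\alg{G}, y, \rho, 1)$ with $\vec{\alg{G}} = (\alg{G})$ (a twisted Levi sequence of length zero), and obtains $\pi_G(\Psi) = \Ind_{G_{[y]}}^G \rho = \pi_{(\alg{S},\theta)}$ where $\rho = \Ind_{SG_{y,0}}^{G_{[y]}}\upkappa_{(\alg{S},\theta)}$ is the irreducible representation from Kaletha's construction.

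With this setup, the associated $H$-datum structure collapses: since $\alg{G}^0 = \alg{G}$, we have $\alg{H}^0 = \alg{G}\cap\alg{H} = \alg{H}$ and consequently $\KH{0} = H^0_{[y]} = H_{[y]}$. Applying Theorem~\ref{th:multiplicity} to $\Psi$ then gives us that $\pi_G(\Psi)|_H$ is multiplicity free if and only if $\rho_H = \rho|_{H_{[y]}}$ is multiplicity free.

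The final step is an invocation of Corollary~\ref{cor:rhoMultFree}. The hypotheses of that corollary match precisely those given here: $\alg{S}$ is a maximally unramified elliptic maximal $F$-torus of $\alg{G}$ with associated vertex $[y]$, $\theta$ is a regular depth-zero character of $S$, and $\theta|_{S_H}$ is regular in $G$. The only subtlety to verify (which the text indicates is routine) is that regularity of $\theta|_{S_H}$ in $G$ implies its regularity in the sense of \cite[Definition 3.4.16]{Kaletha:2019}, so that the construction of the component $\rho'$ in Proposition~\ref{prop:component} goes through. Once that is in hand, Corollary~\ref{cor:rhoMultFree} directly yields that $\rho_H$ is multiplicity free, and the result follows.

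In short, there is no genuine obstacle here: all the work has been carried out in the preceding results, and the proof amounts to assembling them in the depth-zero setting where $\vec{\alg{G}} = (\alg{G})$ and $\rho$ itself is the full representation to be restricted.
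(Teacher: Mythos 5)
Your proof is correct and follows essentially the same route as the paper, which simply notes that Corollary~\ref{cor:multiplicityRegular} is a restatement of Corollary~\ref{cor:rhoMultFree} in light of Theorem~\ref{th:multiplicity}. Your explicit identification of the depth-zero $G$-datum $\Psi = (\alg{G},y,\rho,1)$ and the resulting collapse $\KH{0}=H_{[y]}$ is exactly the setup the paper implicitly relies on.
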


Finally, using Corollary~\ref{cor:multiplicityDepthZero}, this allows us to write a statement for a multiplicity free restriction of an arbitrary irreducible supercuspidal representation. 

\begin{corollary}\label{cor:multiplicityFreeRes}
Let $\Psi = (\vec{\alg{G}},y,\vec{r},\rho,\vec{\phi})$ be a $G$-datum. Assume that ${\pi^{-1} = \Ind_{G^0_{[y]}}^{G^0}\rho}$ is a regular depth-zero supercuspidal representation that satisfies the hypotheses of Corollary~\ref{cor:multiplicityRegular} when setting $\alg{G}=\alg{G}^0$ and $\alg{H}=\alg{H}^0$. Then $\pi_G(\Psi)|_H$ is multiplicity free.
\end{corollary}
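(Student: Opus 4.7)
The plan is to combine the two corollaries established immediately before: Corollary~\ref{cor:multiplicityDepthZero}, which reduces the question of multiplicity in $\pi_G(\Psi)|_H$ to the analogous question at depth zero, namely whether $\pi^{-1}|_{H^0}$ is multiplicity free; and Corollary~\ref{cor:multiplicityRegular}, which provides a sufficient condition for a regular depth-zero supercuspidal representation of a group to restrict without multiplicity to a subgroup satisfying the ambient hypotheses.

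First I would observe that the pair $(\alg{G}^0,\alg{H}^0)$ satisfies all the structural requirements needed to apply Corollary~\ref{cor:multiplicityRegular}. Namely, $\alg{H}^0 = \alg{H}\cap\alg{G}^0$ is a closed connected $F$-subgroup of $\alg{G}^0$ which contains $[\alg{G}^0,\alg{G}^0]$ (this was noted in Section~\ref{sec:restrictingData} when verifying Axiom~D1), so $\alg{H}^0$ plays the role of $\alg{H}$ for $\alg{G}^0$ in the way required. By the standing assumption in the statement, $\pi^{-1} = \Ind_{G^0_{[y]}}^{G^0}\rho$ is a regular depth-zero supercuspidal representation of $G^0$, so by the exhaustion of Kaletha's construction \cite[Proposition 3.4.27]{Kaletha:2019} we may write $\pi^{-1} = \pi_{(\alg{S},\theta)}$ for some maximally unramified elliptic maximal $F$-torus $\alg{S}$ of $\alg{G}^0$ and some regular depth-zero character $\theta$ of $S$. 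The hypothesis moreover guarantees that $\theta|_{S_{H^0}}$ is regular in $G^0$, where $\alg{S}_{\alg{H}^0} = \alg{S}\cap\alg{H}^0$.

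Applying Corollary~\ref{cor:multiplicityRegular} to the pair $(\alg{G}^0,\alg{H}^0)$ and the representation $\pi^{-1} = \pi_{(\alg{S},\theta)}$ then yields that $\pi^{-1}|_{H^0}$ is multiplicity free. Finally, invoking Corollary~\ref{cor:multiplicityDepthZero}, which asserts the equivalence ``$\pi_G(\Psi)|_H$ is multiplicity free if and only if $\pi^{-1}|_{H^0}$ is multiplicity free'', we conclude that $\pi_G(\Psi)|_H$ is itself multiplicity free, completing the proof.

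There is really no hard step here: the corollary is essentially a packaged two-step application of results already proven. The only point that might warrant a sentence of justification is the translation from the datum $\Psi$ to the regular pair $(\alg{S},\theta)$ via Kaletha's exhaustion, in order to legitimately cite Corollary~\ref{cor:multiplicityRegular}; everything else is a direct appeal.
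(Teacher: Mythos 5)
Your proof is correct and is exactly how the paper obtains this corollary: the author states it as a direct combination of Corollary~\ref{cor:multiplicityDepthZero} (reducing multiplicity in $\pi_G(\Psi)|_H$ to multiplicity in $\pi^{-1}|_{H^0}$) with Corollary~\ref{cor:multiplicityRegular} applied to the pair $(\alg{G}^0,\alg{H}^0)$, with no additional argument. Your extra care in citing the lemma after Proposition~\ref{prop:D1} (that $\alg{H}^0$ is a closed connected $F$-subgroup of $\alg{G}^0$ containing $[\alg{G}^0,\alg{G}^0]$) and Kaletha's exhaustion to realize $\pi^{-1}$ as $\pi_{(\alg{S},\theta)}$ simply makes explicit what the paper leaves implicit.
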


\section{A Restriction of Types for Non-Supercuspidal Representations}\label{sec:types}

So far, we have focused on restricting the irreducible supercuspidal representations $G$, but this is only a fraction of the irreducible representations of $G$. The task of studying the irreducible representations of $G$ which are not supercuspidal is a whole different problem. Unlike the supercuspidal representations, irreducible non-supercuspidal representations are not all obtained by compact induction from an open compact-mod-center subgroup. Rather, one can turn to the theory of types to obtain information, which applies to all representations of $G$.

The theory of types was introduced by Bushnell and Kutzko in \cite{BK:1998}. This theory allows to describe representations of $G$ in terms of representations of certain compact open subgroups. To define what a type is, one needs the Bernstein decomposition \cite{Bernstein:1984} for the category $\mathcal{R}(G)$ of smooth complex representations of $G$. This decomposition allows to write $\mathcal{R}(G)$ as a direct product of subcategories over some index set $\mathfrak{B}$
$$\mathcal{R}(G) = \prod\limits_{\mathfrak{s}\in\mathfrak{B}} \mathcal{R}^\mathfrak{s}(G).$$ Elements of $\mathfrak{B}$ correspond to inertial equivalence classes of pairs $(M,\sigma)$, where $M$ is a Levi subgroup (of a parabolic subgroup) of $G$ and $\sigma$ is an irreducible supercuspidal representation of $M$. The subcategories $\mathcal{R}^\mathfrak{s}(G),\mathfrak{s}\in\mathfrak{B},$ are called Berstein blocks. Types give an explicit description of each Berstein block as per the following definition.

\begin{definition}
Let $\mathfrak{s}\in\mathfrak{B}$, $J$ be a compact open subgroup of $G$ and $\lambda$ be a smooth irreducible representation of $J$. We say that $(J,\lambda)$ is an $\mathfrak{s}$-type if for every smooth irreducible representation $\pi$ of $G$, we have that $\pi \in \mathcal{R}^\mathfrak{s}(G)$ if and only if $\pi|_J$ contains $\lambda$.

We say that an irreducible smooth representation $\pi$ of $G$ contains a type if there exists an $\mathfrak{s}$-type $(J,\lambda)$ for some $\mathfrak{s}\in\mathfrak{B}$ such that $\pi|_J$ contains $\lambda$.
\end{definition}

Thus, we can obtain a description of $\mathcal{R}(G)$ entirely in terms of representations of compact open groups via the construction of an $\mathfrak{s}$-type for each $\mathfrak{s}\in\mathfrak{B}$.

Kim and Yu provided a construction for types, which we shall refer to as \emph{Kim-Yu types}, that is completely analogous to the J.K.~Yu construction in \cite{KimYu}. Furthermore, they showed that every smooth irreducible representation of $G$ contains such a type when $p$ is large enough \cite[Theorem 9.1]{KimYu}. Fintzen refined this exhaustion condition to when $p$ does not divide $|W|$ \cite[Theorem 7.12]{Fintzen:2018}. This exhaustion justifies the importance of studying Kim-Yu types.

To construct a type, one starts from a 5-tuple, which we will refer to as a \emph{type-datum}. This type-datum is a modified version of a $G$-datum and is defined as follows \cite[Section 7.2]{KimYu}.

\begin{definition}
A sequence  $\Sigma = ((\vec{\alg{G}},\alg{M}^0),(y,\{I\}),\vec{r},(M^0_y,\tilde{\rho}),\vec{\phi})$ is a type-datum for $G$ if and only if:
\begin{enumerate}
\item[TD1)] $\vec{\alg{G}}$ is a tamely ramified twisted Levi sequence $\vec{\alg{G}} = (\alg{G}^0,\dots,\alg{G}^d)$ in $\alg{G}$, and $\alg{M}^0$ is a Levi subgroup of $\alg{G}^0$;
\item[TD2)] $y$ is a point in $\mathcal{B}(\alg{M}^0,F)$ and $\{I\}$ is a commutative diagram of embeddings of buildings which is $\vec{s}$-generic relative to $y$ in the sense of \cite[Section 3]{KimYu}, where ${\vec{s} = (0,r_0/2,\dots,r_d/2)}$;
\item[TD3)] $\vec{r} = (r_0,\dots,r_d)$ is a sequence of real numbers satisfying $0 <r_0 <\cdots < r_{d-1} \leq r_d$ if $d>0, 0\leq r_0$ if $d=0$;
\item[TD4)] $M^0_{y,0}$ is a maximal parahoric subgroup of $M^0$, and $\tilde{\rho}$ is an irreducible smooth representation of $M^0_y$ such that $\tilde{\rho}|_{M^0_{y,0}}$ contains a cuspidal representation of $M^0_{y,0:0^+}$;
\item[TD5)] $\vec{\phi} = (\rep{0},\dots,\rep{d})$ is a sequence of quasicharacters, where $\rep{i}$ is a $G^{i+1}$-generic character of $G^i$ of depth $r_i$ relative to $x$ for all $x\in \mathcal{B}(\alg{G}^i,F)$, $0\leq i\leq d-1$. If $r_{d-1}<r_d$, we assume $\rep{d}$ is of depth $r_d$, otherwise $\rep{d}=1$.
\end{enumerate}
\end{definition}





It is clear from the definition of type-datum that we can adapt the statement of Theorem~\ref{th:GderData} to obtain a restriction of type-datum as follows.

\begin{theorem}\label{th:typeData}
Let $\Sigma = ((\vec{\alg{G}},\alg{M}^0),(y,\{I\}),\vec{r},(M^0_y,\tilde{\rho}),\vec{\phi})$ be a type-datum for $G$.
Let ${\alg{M}^0_\alg{H} = \alg{M}^0\cap\alg{H}}$, $\alg{H}^i = \alg{G}^i\cap\alg{H}$ and $\repH{i} = \rep{i} |_{H^i}$ for all $0\leq i\leq d$. Let $\tilde{r}$ denote the depth of $\repH{d}$.
\begin{enumerate}
\item[1)] If $\tilde{r} > r_{d-1}$, set $\vec{\phi_H}=(\repH{0},\dots,\repH{d-1},\repH{d})$ and $\vec{\tilde{r}} = (r_0,\dots,r_{d-1},\tilde{r})$.
\item[2)] If $\tilde{r}\leq r_{d-1}$, set $\vec{\phi_H} = (\repH{0},\cdots,\repH{d-1}\repH{d},1)$ and $\vec{\tilde{r}} = (r_0,\dots, r_{d-1}, r_{d-1})$.
\end{enumerate}
Then, for all irreducible subrepresentation $\tilde{\rho}_\ell$ of $\tilde{\rho} |_{(M^0_H)_y}$,  ${\Sigma_\ell = ((\vec{\alg{H}},\alg{M}^0_\alg{H}),(y,\{I\}),\vec{\tilde{r}},((M^0_H)_y,\tilde{\rho}_\ell),\vec{\phi_H})}$ is a type-datum for $H$, where $\vec{\alg{H}} = (\alg{H}^0,\dots,\alg{H}^d)$.
\end{theorem}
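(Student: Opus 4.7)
The plan is to run the same five-axiom verification used in Theorem~\ref{th:GderData}, adapted to the type-datum setting. Axioms TD3 and TD5 require essentially no new work: TD3 is immediate from the definitions of $\vec{\tilde r}$ in the two cases, and TD5 is proved exactly as in Proposition~\ref{prop:D5}, since that argument only used the generic realization of each $\rep{i}|_{G^i_{y,r_i}}$ by an element of $\zstarri$ and Lemma~\ref{lem:isores}, none of which depended on $\rho$ or on the supercuspidality of the output. In the case $\tilde r\leq r_{d-1}$ one invokes \cite[Lemma 4.9(3)]{Murnaghan:2011} to obtain genericity of the merged character $\repH{d-1}\repH{d}$.

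For TD1, the twisted Levi sequence statement and the $F$-anisotropy of $\quo{Z(\alg{H}^0)}{Z(\alg{H})}$ are exactly Proposition~\ref{prop:D1}, applied to $\vec{\alg G}$ and $\alg{H}$. What is new here is the Levi subgroup $\alg{M}^0_\alg{H}$: since $\alg{H}^0 = \alg{G}^0\cap \alg{H}$ contains $[\alg{G}^0,\alg{G}^0]=[\alg{H}^0,\alg{H}^0]$, one applies Theorem~\ref{th:intersectLevi} with the ambient group $\alg{G}^0$ and subgroup $\alg{H}^0$ to conclude that $\alg{M}^0\cap \alg{H}^0 = \alg{M}^0\cap \alg{H} = \alg{M}^0_\alg{H}$ is a Levi subgroup of $\alg{H}^0$.

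For TD2, the point $y$ lies in $\mathcal{B}(\alg{M}^0,F)$, and via the embedding $\mathcal{B}(\alg{M}^0_\alg{H},F)\hookrightarrow \mathcal{B}(\alg{M}^0,F)$ coming from $X_*(Z(\alg{M}^0_\alg{H}),F)\subset X_*(Z(\alg{M}^0),F)$ (cf.\ Remark~\ref{rem:WLOGy} applied to $\alg{M}^0$ in place of $\alg{G}$), we may assume without loss of generality that $y\in\mathcal{B}(\alg{M}^0_\alg{H},F)$. The commutative diagram of embeddings $\{I\}$ is inherited by restriction, and its $\vec s$-genericity relative to $y$ transfers from $\vec{\alg G}$ to $\vec{\alg H}$: the genericity condition from \cite[Section 3]{KimYu} is phrased in terms of the root subgroups $\alg U_a$, and since Section~\ref{sec:HandG} identifies the root systems of $\alg{G}^i$ and $\alg{H}^i$ with common root subgroups, the condition is automatically inherited.

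For TD4, the maximality of $(M^0_H)_{y,0}$ as a parahoric of $M^0_H$ follows from Lemma~\ref{lem:[y]} applied to $\alg{M}^0_\alg{H}\subset \alg{M}^0$. To handle $\tilde\rho_\ell$, one argues as in Proposition~\ref{prop:D4}: a cuspidal $\sigma$ of $M^0_{y,0:0^+}=\mathcal{M}^0_y(\res)$ embedded in $\tilde\rho|_{M^0_{y,0}}$ restricts, via Lemma~\ref{lem:HxGx} applied to $\alg{M}^0_\alg{H}\subset \alg{M}^0$ and Theorem~\ref{th:intersectLevi}, to a direct sum of cuspidal representations of $\mathcal{M}^0_{\alg{H},y}(\res)$; then Mackey on $\tilde\rho|_{M^0_{y,0}}\subset \Res\Ind\sigma$ and complete reducibility of $\tilde\rho|_{(M^0_H)_{y,0}}$ (a finite-dimensional representation of a compact group) forces some conjugate-cuspidal to appear in $\tilde\rho_\ell|_{(M^0_H)_{y,0}}$. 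The main obstacle is verifying the $\vec s$-genericity of the embedding $\{I\}$ for $\vec{\alg H}$; however, because the genericity condition only involves root data and the relevant filtrations $\alg U_a(F^{\un})_{y,r}$, which are shared between $\alg{G}^i$ and $\alg{H}^i$ by Section~\ref{sec:HandG}, this reduces to a direct inheritance rather than a substantively new computation.
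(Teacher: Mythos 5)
Your proof is correct and follows essentially the same route as the paper: the paper likewise reduces TD1, TD3, TD4, TD5 to the arguments of Propositions~\ref{prop:D1}--\ref{prop:D5} and handles TD2 via Lemma~\ref{lem:[x]}, Remark~\ref{rem:WLOGy} and the inherited embedding of buildings. One small organizational slip: the closing sentence of your TD4 paragraph, about the $\vec s$-genericity of the embedding $\{I\}$, belongs to TD2, not TD4 (which only concerns $(M^0_H)_{y,0}$ being a maximal parahoric and $\tilde\rho_\ell$ containing a cuspidal representation of the reductive quotient); that point you had already dispatched correctly under TD2, so this is just misplaced text, not a logical gap.
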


\begin{proof}
The same arguments that we used for Theorem~\ref{th:GderData} can be used to prove axioms TD1) and TD3)-TD5) for a type-datum for $H$.

For TD2), we can assume without loss of generality that $y$ belongs to $\mathcal{B}(\alg{M}^0_H,F)$ as per Lemma~\ref{lem:[x]} and Remark~\ref{rem:WLOGy}. Furthermore, the natural embedding $\mathcal{B}(\alg{H}^i,F)\hookrightarrow\mathcal{B}(\alg{G}^i,F)$ from Remark~\ref{rem:WLOGy} means that $\{I\}$ induces a commutative diagram of embeddings on $\vec{\alg{H}}$ and $\alg{M}^0_\alg{H}$ which is $\vec{s}$-generic in the sense of \cite[Section 3]{KimYu}.
\end{proof}

Given a type-datum $\Sigma$, let $J_{(\vec{\alg{G}},\alg{M}^0)} = M^0_yG^1_{y,s_0}\cdots G^d_{y,s_{d-1}}$, which is an open compact subgroup of $G$. A process completely analogous to the construction of $\kappa_G(\Psi)$, $\Psi$ a $G$-datum (Figure~\ref{fig:JKYuConstruction}), results in an irreducible representation $\lambda_G(\Sigma)$ of $J_{(\vec{\alg{G}},\alg{M}^0)}$. The pair $(J_{(\vec{\alg{G}},\alg{M}^0)},\lambda_G(\Sigma))$ is an $\mathfrak{s}$-type for some $\mathfrak{s}\in\mathfrak{B}$ \cite[Theorem 7.5]{KimYu}. As mentioned before, we call such a type a Kim-Yu type. 
Furthermore, we can adapt the proofs of Theorem~\ref{th:phiprime}, Proposition~\ref{prop:kappai} and Theorem~\ref{th:kappa}, which allowed us to compute $\kappa_G(\Psi)|_{\KH{d}}$, and obtain the following.

\begin{theorem}\label{th:typeRes}
Let $\Sigma = ((\vec{\alg{G}},\alg{M}^0),(y,\{I\}),\vec{r},(M^0_y,\rho),\vec{\phi})$ be a type-datum for $G$. Assume that the decomposition into components of $\rho|_{(M^0_H)_y}$ is given by ${\rho|_{(M^0_H)_y} = \underset{\ell\in L}{\oplus}\rho_\ell}$ for some index set $L$. For each $\ell\in L$, let $\Sigma_\ell$ be the type-datum for $H$ as per Theorem~\ref{th:typeData},  ${J_{(\vec{\alg{H}},\alg{M}^0_\alg{H})} = (M^0_H)_yH^1_{y,s_0}\cdots H^d_{y,s_{d-1}}}$ and $\lambda_H(\Sigma_\ell)$ be the irreducible representation of $J_{(\vec{\alg{H}},\alg{M}^0_\alg{H})}$ constructed from $\Sigma_\ell$. Then
$$\lambda_G(\Sigma)|_{J_{(\vec{\alg{H}},\alg{M}^0_\alg{H})}} = \underset{\ell\in L}{\oplus}\lambda_H(\Sigma_\ell).$$
\end{theorem}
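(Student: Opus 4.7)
The plan is to follow the exact same template as the proof of Theorem~\ref{th:kappa}, since the construction of $\lambda_G(\Sigma)$ from a type-datum is a step-by-step analog of the construction of $\kappa_G(\Psi)$ from a $G$-datum, with the compact open subgroup $M^0_y$ playing the role of the compact-mod-center subgroup $K^0 = G^0_{[y]}$, and with $J_{(\vec{\alg{G}},\alg{M}^0)}$ playing the role of $K^d$. The key preliminary fact I would establish first is the group-theoretic identity $J_{(\vec{\alg{H}},\alg{M}^0_\alg{H})} = J_{(\vec{\alg{G}},\alg{M}^0)} \cap H$, which is the direct analog of Proposition~\ref{prop:Kintersect}. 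The proof carries over essentially verbatim: one uses $(M^0_H)_y = M^0_y \cap H$ together with the decomposition $G^i_{y,s_{i-1}} = Z^i_{s_{i-1}} \overline{G}^i_{y,s_{i-1}}$ from \cite[Lemma B.7.2]{DeBackerReeder:2009}, combined with the containment $Z^i_{s_{i-1}} \subset M^0_y$ (which replaces $Z^i_{s_{i-1}} \subset K^0$) to collect central factors into the depth-zero piece.

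Next, I would verify that the restriction of the extensions $\prim{i}$ agrees with the extensions $\primH{i}$ built from the restricted type-datum $\Sigma_\ell$. This is exactly the content of Theorem~\ref{th:phiprime}, and the proof proceeds unchanged: the $J^{i+1}, J^{i+1}_+, \JH{i+1}, \JHP{i+1}$ subgroups and the symplectic isomorphism $\beta \colon \WH{i} \to \W{i}$ from Corollary~\ref{cor:beta} depend only on the choice of point $y$ and the sequence $\vec{\alg{G}}$, not on whether the starting piece is $K^0$ or $M^0_y$. Similarly, inflation commutes with restriction as in Proposition~\ref{prop:kappai}, so $\kapH{i} = \kap{i}|_{J_{(\vec{\alg{H}},\alg{M}^0_\alg{H})}}$ for all $0 \leq i \leq d-1$. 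For the depth-zero piece, the decomposition $\rho|_{(M^0_H)_y} = \underset{\ell \in L}{\oplus} \rho_\ell$ inflates term-by-term to give $\kap{-1}|_{J_{(\vec{\alg{H}},\alg{M}^0_\alg{H})}} = \underset{\ell \in L}{\oplus} \kap{-1}_\ell$, again by the same argument as in Proposition~\ref{prop:kappai}.

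Putting these pieces together via the tensor product defining $\lambda_G(\Sigma)$ yields
\[
\lambda_G(\Sigma)|_{J_{(\vec{\alg{H}},\alg{M}^0_\alg{H})}} \simeq \underset{\ell \in L}{\oplus} \left( \kap{-1}_\ell \otimes \underset{0 \leq i \leq d}{\bigotimes} \kapH{i} \right),
\]
and in the case $\tilde{r} > r_{d-1}$ the right-hand side is exactly $\underset{\ell \in L}{\oplus} \lambda_H(\Sigma_\ell)$. The case $\tilde{r} \leq r_{d-1}$ is handled identically to the final step in the proof of Theorem~\ref{th:kappa}: one invokes \cite[Proposition 4.24]{HM:2008} to identify $\kapH{d-1} \otimes \kapH{d}$ with $\kapH{d-1,d} \otimes 1$ via refactorization, noting that the proposition in question holds for Kim-Yu types as well since its proof only uses condition F1) of \cite[Definition 4.19]{HM:2008} and formal properties of the extension/inflation process that are shared by the two constructions.

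The main obstacle I anticipate is not conceptual but bookkeeping: one must check that the decomposition $G^i_{y,r} = Z^i_r \overline{G}^i_{y,r}$ and the argument from Proposition~\ref{prop:Kintersect} adapt cleanly with $M^0_y$ replacing $K^0$, which requires confirming that $Z(\alg{G}^i)(F) \subset M^0_y$ (rather than just $\subset K^0$). This follows from $Z(\alg{G}^i) \subset Z(\alg{M}^0) \subset M^0_y$, using that $\alg{M}^0$ is a Levi of $\alg{G}^0$ containing (a translate of) the torus defining $\vec{\alg{G}}$, together with the fact that $M^0_y$ is the full stabilizer data in the sense of the type-datum axioms. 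Once this bookkeeping is discharged, the rest of the proof is genuinely a mechanical transcription of Sections~\ref{sec:restrictExtInf} and~\ref{sec:restrictKapPi}.
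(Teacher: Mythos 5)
Your proposal follows exactly the strategy the paper indicates: it explicitly says, just before the theorem statement, that one should adapt the proofs of Theorem~\ref{th:phiprime}, Proposition~\ref{prop:kappai} and Theorem~\ref{th:kappa}, and you carry out precisely that adaptation, including the analog of Proposition~\ref{prop:Kintersect} and the refactorization step at the end. The one place you overstate is the claim that $Z(\alg{G}^i)(F) \subset M^0_y$: this cannot hold as written, since $M^0_y$ is a \emph{compact} open subgroup (the point $y$ lies in the full building $\mathcal{B}(\alg{M}^0,F)$, on which the non-compact group $Z(\alg{M}^0)(F)$ acts by translations), whereas in the original Proposition~\ref{prop:Kintersect} the group $K^0 = G^0_{[y]}$ is only compact-mod-center and genuinely does contain the full center. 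What you actually need — and what you correctly identify earlier in the proposal — is only the positive-depth filtration containment $Z^{i}_{s_{i-1}} \subset M^0_y$, which follows from $Z(\alg{G}^{i})^\circ \subset Z(\alg{M}^0)^\circ$ together with compatibility of Moy--Prasad filtrations, giving $Z^{i}_{s_{i-1}} \subset M^0_{y,s_{i-1}} \subset M^0_y$ since $s_{i-1} > 0$. With that correction the argument goes through.
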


Note that we can also  define an extension of types to go from a type-datum for $H$ to type-data for $G$ as per Theorem~\ref{th:backwards}.

\begin{remark}
Given a $G$-datum $\Psi = (\vec{\alg{G}},y,\vec{r},\rho,\vec{\phi})$, one can set $\Sigma = ((\vec{\alg{G}},\alg{G}^0),y,\vec{r},(G^0_y,\tilde{\rho}),\vec{\phi})$, where $\tilde{\rho}$ is a component of $\rho|_{G^0_y}$ to construct a Kim-Yu type $(J_{(\vec{\alg{G}},\alg{G}^0)},\lambda_G(\Sigma))$. Furthermore, $J_{(\vec{\alg{G}},\alg{G}^0)}$ is the maximal compact subgroup of $K^d$ and $\lambda_G(\Sigma)$ is an irreducible subrepresentation of $\kappa_G(\Psi)|_{J_{(\vec{\alg{G}},\alg{G}^0)}}$
\end{remark}

\begin{corollary}
Let $\pi$ be an irreducible representation of $G$ and let $(J_\Sigma,\lambda_G(\Sigma))$ be a Kim-Yu type contained in $\pi$. Let $\{\Sigma_\ell,\ell\in L\}$ be the type-data for $H$ obtained from $\Sigma$ as per Theorem~\ref{th:typeData}. Then, for all $\ell\in L$, $(J_{(\vec{\alg{H}},\alg{M}^0_\alg{H})},\lambda_H(\Sigma_\ell))$ is a Kim-Yu type which is contained in a component of $\pi|_H$.
\end{corollary}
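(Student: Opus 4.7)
My proof proposal breaks the corollary into two claims: first, that each pair $(J_{(\vec{\alg{H}},\alg{M}^0_\alg{H})},\lambda_H(\Sigma_\ell))$ really is a Kim-Yu type for $H$; second, that each such type appears in (at least) one irreducible component of $\pi|_H$. Both claims should follow almost formally from results already assembled in the excerpt, the decisive inputs being Theorem~\ref{th:typeData}, Theorem~\ref{th:typeRes} and the Kim--Yu construction of types \cite[Theorem 7.5]{KimYu}.

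For the first claim I would argue as follows. By Theorem~\ref{th:typeData}, $\Sigma_\ell$ is a genuine type-datum for $H$. The Kim--Yu construction produces from $\Sigma_\ell$ the compact open subgroup $J_{(\vec{\alg{H}},\alg{M}^0_\alg{H})}$ and the irreducible representation $\lambda_H(\Sigma_\ell)$ in exactly the same way as $(J_\Sigma,\lambda_G(\Sigma))$ is produced from $\Sigma$ (this is the analogue for $H$ of the construction described in the remark preceding the corollary). Applying \cite[Theorem 7.5]{KimYu} to $H$ and $\Sigma_\ell$ then yields the existence of some $\mathfrak{s}_\ell$ in the Bernstein index set $\mathfrak{B}_H$ of $H$ for which $(J_{(\vec{\alg{H}},\alg{M}^0_\alg{H})},\lambda_H(\Sigma_\ell))$ is an $\mathfrak{s}_\ell$-type. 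So the first claim is immediate once one checks that the construction of $\lambda_H(\Sigma_\ell)$ really is the Kim--Yu construction applied to $\Sigma_\ell$, which is the content of Theorem~\ref{th:typeRes}'s setup.

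For the second claim I would use Tadic's result \cite[Lemma 2.1]{Tadic:1992}, recalled in the introduction: the restriction $\pi|_H$ is completely reducible with finitely many irreducible components, say $\pi|_H \simeq \bigoplus_{j\in J} \sigma_j$. Restricting further to the compact subgroup $J_{(\vec{\alg{H}},\alg{M}^0_\alg{H})}$ preserves this decomposition, so
\[
\pi|_{J_{(\vec{\alg{H}},\alg{M}^0_\alg{H})}} \simeq \bigoplus_{j\in J}\sigma_j|_{J_{(\vec{\alg{H}},\alg{M}^0_\alg{H})}}.
\]
On the other hand, by hypothesis $\pi|_{J_\Sigma}$ contains $\lambda_G(\Sigma)$, and $J_{(\vec{\alg{H}},\alg{M}^0_\alg{H})}\subset J_\Sigma$, so Theorem~\ref{th:typeRes} gives
\[
\pi|_{J_{(\vec{\alg{H}},\alg{M}^0_\alg{H})}} \supset \lambda_G(\Sigma)|_{J_{(\vec{\alg{H}},\alg{M}^0_\alg{H})}} \simeq \bigoplus_{k\in L}\lambda_H(\Sigma_k).
\]
Combining the two displays and using that each $\lambda_H(\Sigma_\ell)$ is irreducible, for every $\ell\in L$ there exists $j(\ell)\in J$ such that $\sigma_{j(\ell)}|_{J_{(\vec{\alg{H}},\alg{M}^0_\alg{H})}}$ contains $\lambda_H(\Sigma_\ell)$. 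This is exactly the statement that $(J_{(\vec{\alg{H}},\alg{M}^0_\alg{H})},\lambda_H(\Sigma_\ell))$ is contained in the component $\sigma_{j(\ell)}$ of $\pi|_H$.

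The main obstacle, and really the only non-formal point, is ensuring compatibility between the representation produced by the Kim--Yu recipe starting from $\Sigma_\ell$ and the representation $\lambda_H(\Sigma_\ell)$ appearing in the statement of Theorem~\ref{th:typeRes}; once this identification is granted (it is the content of adapting the intertwining/extension/inflation steps from Theorems~\ref{th:phiprime}, \ref{th:kappa} to the type setting, which is already asserted in Theorem~\ref{th:typeRes}), the corollary is a direct combination of the two inclusions above. No further containment argument or intertwining computation is required.
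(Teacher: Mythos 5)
Your proof is correct, and the paper itself does not give an explicit argument for this corollary, treating it as an immediate consequence of Theorems~\ref{th:typeData} and~\ref{th:typeRes}. Your write-up supplies exactly the details the paper leaves implicit: Theorem~\ref{th:typeData} guarantees each $\Sigma_\ell$ is a bona fide type-datum for $H$, so \cite[Theorem~7.5]{KimYu} applied to $H$ makes $(J_{(\vec{\alg{H}},\alg{M}^0_\alg{H})},\lambda_H(\Sigma_\ell))$ a Kim--Yu type; the containment $J_{(\vec{\alg{H}},\alg{M}^0_\alg{H})}\subset J_\Sigma$ together with Theorem~\ref{th:typeRes} shows $\pi|_{J_{(\vec{\alg{H}},\alg{M}^0_\alg{H})}}$ contains $\bigoplus_\ell\lambda_H(\Sigma_\ell)$; and complete reducibility of $\pi|_H$ (finitely many components, via \cite[Lemma~2.1]{Tadic:1992}) together with complete reducibility of each $\sigma_j$ on the compact group $J_{(\vec{\alg{H}},\alg{M}^0_\alg{H})}$ lets you match each $\lambda_H(\Sigma_\ell)$ to a specific component. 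Two cosmetic slips are worth noting: the ``remark preceding the corollary'' you cite actually concerns passing from a supercuspidal $G$-datum to a Kim--Yu type, not the $H$-side construction --- the reference you want is the paragraph immediately before Theorem~\ref{th:typeRes}, where $J_{(\vec{\alg{G}},\alg{M}^0)}$ and $\lambda_G(\Sigma)$ are introduced; and you should explicitly invoke admissibility of $\pi$ (automatic for irreducible smooth representations of $G$) so that Tadi\'c's lemma applies. Neither affects the validity of the argument.
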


In the case where $\pi = \pi_G(\Psi)$ is a supercuspidal representation of $G$, elements of $\Res(\Psi)$ did not exhaust the $H$-data for the components of $\pi|_H$ (Theorem~\ref{th:restrictSupercuspidal}). Therefore, for an arbitrary irreducible representation $\pi$ of $G$, we cannot expect the restriction of types defined above to yield a type for each component of $\pi|_H$. Furthermore, at this stage, we do not know whether we can expect a decomposition formula like that of Theorem~\ref{th:restrictSupercuspidal} for an irreducible representation $\pi$ which is not supercuspidal, but future work could investigate the direct implications of the restriction of Kim-Yu types on the decomposition of $\pi|_H$.

\appendix
\makeatletter
\renewcommand{\@seccntformat}[1]{}
\makeatother

\section{Appendix: Computations on Deligne-Lusztig Virtual Characters}\label{sec:appendix}

Let $\alg{G}$ denote a reductive group over an algebraically closed field of characteristic $p$. Let $\Fr:\alg{G}\rightarrow \alg{G}$ be a Frobenius map (as per \cite[Section 1.17]{Carter:1993}) and $\alg{G}^{\Fr} = \{g\in\alg{G}: \Fr(g)=g \}$. For our purposes, it suffices to know that the map $\Fr$ is a surjective homomorphism for which $\alg{G}^{\Fr}$ is finite. The group $\alg{G}^{\Fr}$ is called a \emph{finite group of Lie type} and Deligne and Lusztig defined a class of generalized (or virtual) characters for $\alg{G}^{\Fr}$ in \cite{DL:1976}. 

\begin{remark}\label{rem:finiteGroupLieType}
If $\alg{G}$ is a reductive group defined over the finite field $\res$, then $\alg{G}(\res)$ is a finite group of Lie type where the Frobenius map corresponds to taking the $\Gal(\resun/\res)$-points of $\alg{G}$. Furthermore, there exists a class of finite groups of Lie type which do not correspond to taking rational points of a reductive group, called the Suzuki and Ree groups \cite[Section 1.19]{Carter:1993}. For this reason, we phrase the results of this appendix in terms of the language of finite groups of Lie type even if we are only interested in $\res$-points of reductive groups in this paper. 
\end{remark}

Given a maximal torus $\alg{T}$ of $\alg{G}$ which is $\Fr$-stable and an irreducible complex character $\theta$ of $\alg{T}^{\Fr}$, we can construct a \emph{virtual character} $R_{\alg{T},\theta}$ which maps $\alg{G}^{\Fr}$ into $\mathbb{C}$.

The original description of $R_{\alg{T},\theta}$ provided in \cite{DL:1976} relies on $l$-adic cohomology groups with compact support of a variety $\tilde{X}$, where $l$ is a prime number different from $p$. These $l$-adic cohomology groups are denoted by $H^i_c(\tilde{X},\overline{\mathbb{Q}}_l)$, $i\geq 0$ and are described in \cite{DL:1976, Carter:1993} .

To define the variety $\tilde{X}$, we choose a Borel group $\alg{B}$ that contains $\alg{T}$. Note that this Borel subgroup need not be $\Fr$-stable. We have a Levi decomposition of $\alg{B}$, $\alg{B} = \alg{U}\alg{T}$, where $\alg{U}\subset\alg{G}_{\der}$ is some unipotent subgroup. Then, we define the variety $\tilde{X}$ as $\tilde{X}\defeq L_{\alg{G}}^{-1}(\alg{U})$, where $L_{\alg{G}}$ is the Lang map of $\alg{G}$ defined by
\begin{align*}
L_\alg{G} : \alg{G} &\rightarrow \alg{G} \\
g &\mapsto g^{-1}\Fr(g).
\end{align*}
Note that the Lang-Steinberg Theorem states that this map is surjective. 

The groups $\alg{G}^{\Fr}$ and $\alg{T}^{\Fr}$ act on $\tilde{X}$ by left multiplication and right multiplication, respectively \cite[Section 7.2]{Carter:1993}. Those actions then induce actions on the cohomology groups $H^i_c(\tilde{X},\overline{\mathbb{Q}}_l)$. The original formula for the virtual character $R_{\alg{T},\theta}$ provided in \cite[Section 1.20]{DL:1976} is based on the action of $\alg{G}^{\Fr}$ and is given as follows:
$$R_{\alg{T},\theta}(g) = \sum\limits_{i\geq 0} (-1)^i\mathrm{trace}(g,H^i_c(\tilde{X},\overline{\mathbb{Q}}_l)_\theta) \text{ for all } g\in \alg{G}^{\Fr},$$
where $H^i_c(\tilde{X},\overline{\mathbb{Q}}_l)_\theta$ is the $\alg{T}^{\Fr}$-submodule of $H^i_c(\tilde{X},\overline{\mathbb{Q}}_l)$ on which $\alg{T}^{\Fr}$ acts by the character $\theta$.

For a unipotent element $u\in\alg{G}^{\Fr}$, we set 
$$Q_{\alg{T}}^{\alg{G}}(u) = R_{\alg{T},1}(u).$$ The map $Q_{\alg{T}}^{\alg{G}}$ from the set of unipotent elements is called a \index{Green function} \emph{Green function}. The Green function $Q_{\alg{T}}^{\alg{G}}(u)$ is integer-valued \cite[Section 4]{DL:1976}. Furthermore, the formula of the virtual character can be expressed in terms of Green functions as follows \cite[Theorem 4.2]{DL:1976}. Given $g\in \alg{G}^{\Fr}$, we let $g=su=us$ be its Jordan decomposition. Then
\begin{align}\label{eq:green}
R_{\alg{T},\theta}(g) = \frac{1}{|(C_\alg{G}(s)^\circ)^{\Fr}|}\sum\limits_{\substack{x\in \alg{G}^{\Fr}\\ x^{-1}sx \in \alg{T}^{\Fr}}}\theta(x^{-1}sx)Q^{C_\alg{G}(s)^\circ}_{x\alg{T}x^{-1}}(u).
\end{align}

\begin{remark}
The formula for $R_{\alg{T},\theta}$ does not depend on the choice of Borel subgroup $\alg{B}$ \cite[Corollary 4.3]{DL:1976},\cite[Proposition 7.3.6]{Carter:1993}. 
\end{remark}

\subsection{Isomorphisms and Virtual Characters}

There is a second alternative formula for the virtual character that is worth noting. By what precedes above, for all $g\in \alg{G}^{\Fr}, t\in \alg{T}^{\Fr}$, we can view $(g,t)$ as an automorphism of $\tilde{X}$, defined by $(g,t)\tilde{x} = g\tilde{x}t$ for all $\tilde{x} \in \tilde{X}$. By \cite[Property 7.1.3]{Carter:1993}, this automorphism induces a non-singular map of $H^i_c(\tilde{X},\overline{\mathbb{Q}}_l)$ into itself. This action allows us to define what we call the \index{Lefschetz number} \emph{Lefschetz number} of $(g,t)$ on $\tilde{X}$,
$$\mathscr{L}_\alg{G}((g,t),\tilde{X}) = \sum\limits_{i\geq 0}(-1)^i\mathrm{trace}((g,t), H^i_c(\tilde{X},\overline{\mathbb{Q}}_l)).$$
The Lefschetz number is an integer that does not depend on the choice of the prime number $l$ ($l\neq p$) \cite[Property 7.1.4]{Carter:1993}.

Now, we have that the Frobenius map commutes with $(g,t)$ as maps of $\tilde{X}$.
Following \cite[Appendix (h)]{Carter:1993}, we then have $$\mathscr{L}_\alg{G}((g,t), \tilde{X}) = \lim\limits_{s\rightarrow\infty} \overline{R}_\alg{G}(s),$$ where $\overline{R}_\alg{G}(s)$ is the rational function with power series expansion $-\sum\limits_{n=1}^\infty \left| \tilde{X}^{\Fr^n\circ (g,t)^{-1}} \right| s^n$. Here $\tilde{X}^{\Fr^n\circ (g,t)^{-1}}$ denotes the elements of $\tilde{X}$ that are fixed by the map $\Fr^n\circ (g,t)^{-1}$. 

The Lefschetz number provides us with a second alternative formula for $R_{\alg{T},\theta}$, one that no longer depends on the $l$-adic cohomology subgroups by what precedes. For all $g\in \alg{G}^{\Fr}$, this formula is given by
\begin{align}\label{eq:Lef}
R_{\alg{T},\theta}(g) = \frac{1}{|\alg{T}^{\Fr}|}\sum\limits_{t\in\alg{T}^{\Fr}} \theta(t^{-1})\mathscr{L}_\alg{G}((g,t),\tilde{X}),
\end{align}
where $\mathscr{L}_\alg{G}((g,t),\tilde{X})$ is the Lefschetz number of $(g,t)$ on $\tilde{X}$ \cite[Proposition 7.2.3]{Carter:1993}.

Using this second alternative formula (\ref{eq:Lef}), we can establish an invariance under isomorphisms for the virtual characters as per the following theorem.

\begin{theorem}\label{th:iso}
Let $\alg{G}$ be a reductive group with Frobenius map $\Fr$ and $\alg{T}$ be a maximal $\Fr$-stable torus of $\alg{G}$. Let $f:\alg{G}\rightarrow \alg{G}'$ be an isomorphism of algebraic groups. Then, for all $g\in\alg{G}^{\Fr}$ we have
$R_{\alg{T},\theta}(g) = R_{\alg{T}',\theta\circ f^{-1}}(f(g)).$
\end{theorem}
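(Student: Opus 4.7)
The plan is to use the Lefschetz-number formula~(\ref{eq:Lef}) for $R_{\alg{T},\theta}$ and transfer all the geometric data along $f$. First I would set $\alg{T}' = f(\alg{T})$ and transport the Frobenius to $\alg{G}'$ by $\Fr' \defeq f \circ \Fr \circ f^{-1}$, so that $f$ intertwines $\Fr$ with $\Fr'$; then $f$ restricts to group isomorphisms $\alg{G}^{\Fr} \xrightarrow{\sim} (\alg{G}')^{\Fr'}$ and $\alg{T}^{\Fr} \xrightarrow{\sim} (\alg{T}')^{\Fr'}$, and $\theta' \defeq \theta \circ f^{-1}$ is an irreducible complex character of $(\alg{T}')^{\Fr'}$. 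Fixing a Borel subgroup $\alg{B} = \alg{U}\alg{T}$ with unipotent radical $\alg{U}$, the image $\alg{B}' \defeq f(\alg{B})$ is a Borel of $\alg{G}'$ containing $\alg{T}'$ whose unipotent radical is $\alg{U}' \defeq f(\alg{U})$; this pair $(\alg{B}',\alg{T}')$ is the one I would use to build the variety $\tilde{X}'$ defining $R_{\alg{T}',\theta'}$.

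Next, I would show that $f$ restricts to an isomorphism of varieties $\tilde{X} \xrightarrow{\sim} \tilde{X}'$ compatible with the relevant actions. Because $f \circ \Fr = \Fr' \circ f$, the Lang maps satisfy $L_{\alg{G}'} \circ f = f \circ L_{\alg{G}}$, so
\[
f(\tilde{X}) = f\bigl(L_{\alg{G}}^{-1}(\alg{U})\bigr) = L_{\alg{G}'}^{-1}(\alg{U}') = \tilde{X}'.
\]
Moreover, for any $(g,t) \in \alg{G}^{\Fr} \times \alg{T}^{\Fr}$ and $\tilde{x} \in \tilde{X}$, one has $f(g\tilde{x}t) = f(g)\,f(\tilde{x})\,f(t)$, so that the automorphism of $\tilde{X}$ induced by $(g,t)$ corresponds under $f$ to the automorphism of $\tilde{X}'$ induced by $(f(g),f(t))$.

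By functoriality of $\ell$-adic cohomology with compact support, the isomorphism of varieties $f \colon \tilde{X} \xrightarrow{\sim} \tilde{X}'$ induces isomorphisms $H^i_c(\tilde{X}',\overline{\mathbb{Q}}_\ell) \xrightarrow{\sim} H^i_c(\tilde{X},\overline{\mathbb{Q}}_\ell)$ that intertwine the action of $(f(g),f(t))$ on the left with that of $(g,t)$ on the right. Taking the alternating trace, the Lefschetz numbers coincide:
\[
\mathscr{L}_{\alg{G}}\bigl((g,t),\tilde{X}\bigr) = \mathscr{L}_{\alg{G}'}\bigl((f(g),f(t)),\tilde{X}'\bigr).
\]
Substituting into~(\ref{eq:Lef}) and reindexing via the bijection $t \mapsto t' \defeq f(t)$ (together with $\theta(t^{-1}) = \theta'((t')^{-1})$ and $|\alg{T}^{\Fr}| = |(\alg{T}')^{\Fr'}|$) gives
\[
R_{\alg{T},\theta}(g) = \frac{1}{|(\alg{T}')^{\Fr'}|}\sum_{t' \in (\alg{T}')^{\Fr'}} \theta'\bigl((t')^{-1}\bigr)\,\mathscr{L}_{\alg{G}'}\bigl((f(g),t'),\tilde{X}'\bigr) = R_{\alg{T}',\theta'}(f(g)),
\]
which is the desired identity.

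The only delicate point is the bookkeeping of the Frobenius structure on $\alg{G}'$ and making sure the identification $f \colon \tilde{X} \to \tilde{X}'$ genuinely induces the required equivariant isomorphism on $\ell$-adic cohomology; this reduces to the standard functoriality of $H^i_c(-,\overline{\mathbb{Q}}_\ell)$ under isomorphisms of varieties, but it must be invoked carefully since it is the only nontrivial ingredient beyond formal manipulation of the Lefschetz formula.
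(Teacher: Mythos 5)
Your proof is correct, and the overall strategy is the same as the paper's: transport the Frobenius to $\alg{G}'$ via $\Fr' = f\circ\Fr\circ f^{-1}$, identify $f(\tilde{X})$ with the Deligne--Lusztig variety $\tilde{X}'$ for $(\alg{T}',\alg{B}')$, show this identification is equivariant for the $(g,t)$-versus-$(f(g),f(t))$ actions, and conclude equality of Lefschetz numbers before substituting into the formula~(\ref{eq:Lef}). The one genuine difference is the ingredient you use to get $\mathscr{L}_{\alg{G}}((g,t),\tilde{X}) = \mathscr{L}_{\alg{G}'}((f(g),f(t)),\tilde{X}')$: you invoke functoriality of $\ell$-adic cohomology with compact support under the variety isomorphism $f$, together with equivariance of the induced maps. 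The paper instead deliberately sidesteps any cohomological functoriality by using the characterization of the Lefschetz number as a limit of the rational function whose power-series coefficients are the fixed-point counts $|\tilde{X}^{\Fr^n\circ(g,t)^{-1}}|$ (Carter, Appendix~(h)); one then only needs the elementary set-theoretic identity $\tilde{X}_f^{\Fr_f^n\circ(f(g),f(t))^{-1}} = f\bigl(\tilde{X}^{\Fr^n\circ(g,t)^{-1}}\bigr)$. Your route is the more standard one and is perfectly sound, but it relies on the harder black box (equivariant functoriality of $H^i_c$), which you correctly flag as the delicate point; the paper's fixed-point-count argument achieves the same conclusion with strictly lighter machinery, which is precisely why the Lefschetz-number-as-power-series formula was introduced earlier in the appendix.
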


\begin{proof}
First note that $\Fr_f \defeq f\circ \Fr \circ f^{-1}$ is a Frobenius map on $\alg{G}'$, $f(\alg{G}^{\Fr}) = {\alg{G}'}^{\Fr_f}$ and $f(\alg{T})$ is a maximal $\Fr_f$-stable maximal torus of $\alg{G}'$. Using (\ref{eq:Lef}), the formula for $R_{f(\alg{T}),\theta\circ f^{-1}}$ can be expressed in terms of the Lefschetz number on $\tilde{X}_f$, where $\tilde{X}_f = L_{\alg{G}'}^{-1}(f(\alg{U}))$ and the Lang map of $\alg{G}'$ is expressed in terms of $\Fr_f$.

One can easily verify from the definitions that $\tilde{X}_f = f(\tilde{X})$ and ${\tilde{X}_f^{\Fr_f^n\circ (f(g),f(t))^{-1}} = f(\tilde{X}^{\Fr^n\circ (g,t)^{-1}})}$ for all $g\in \alg{G}^{\Fr}, t\in \alg{T}^{\Fr}$. It follows that $|\tilde{X}_f^{\Fr_f^n\circ (f(g),f(t))^{-1}}| = |\tilde{X}^{\Fr^n\circ (g,t)^{-1}}|$ and therefore $\mathscr{L}_{\alg{G}}((g,t),\tilde{X}) = \mathscr{L}_{\alg{G}'}((f(g),f(t)),f(\tilde{X}))$ for all $g\in\alg{G}^{\Fr}, t\in \alg{T}^{\Fr}$ when using the power series expansion. The conclusion follows.

\end{proof}

\subsection{Restriction of Virtual Characters}

\begin{theorem}\label{th:restrictRTtheta}
Let $\alg{G}$ be a reductive group defined over an algebraically closed field of characteristic $p$ and $\Fr:\alg{G} \rightarrow \alg{G}$ be a Frobenius map. Let $\alg{T}$ be a maximal $\Fr$-stable torus of $\alg{G}$ and $\alg{H}$ be an $\Fr$-stable subgroup of $\alg{G}$ which is closed, connected and contains $\alg{G}_{\der}$. Then $R_{\alg{T},\theta}|_{\alg{H}^{\Fr}} = R_{\alg{T}_{\alg{H}},\theta_{\alg{H}}}$, where $\alg{T}_{\alg{H}} = \alg{T}\cap\alg{H}$ and $\theta_{\alg{H}} = \theta|_{\alg{T}_\alg{H}^{\Fr}}$.
\end{theorem}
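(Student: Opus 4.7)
My plan is to prove this by using the Lefschetz--number formula \eqref{eq:Lef} for both $R_{\alg{T},\theta}$ and $R_{\alg{T}_\alg{H},\theta_\alg{H}}$ and comparing them via a fibration of the Deligne--Lusztig variety. Fix a Borel subgroup $\alg{B}=\alg{U}\alg{T}$ of $\alg{G}$; since the unipotent radical $\alg{U}$ lies in $\alg{G}_{\der}\subset\alg{H}$, the intersection $\alg{B}\cap\alg{H}=\alg{U}\alg{T}_\alg{H}$ is a Borel of $\alg{H}$ with the same unipotent radical, and a direct computation with the Lang map yields $\tilde{X}_\alg{H}=\tilde{X}\cap\alg{H}$.

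The key construction is the fibration $\pi:\tilde{X}\to(\alg{G}/\alg{H})^{\Fr}$, $\tilde{x}\mapsto\tilde{x}\alg{H}$, which is well-defined because $L_\alg{G}(\tilde{x})\in\alg{U}\subset\alg{H}$ forces $\tilde{x}\alg{H}$ to be $\Fr$-stable. Lang--Steinberg applied to the connected reductive group $\alg{H}$ (so that $H^1(\Fr,\alg{H})=1$) ensures that every class in $(\alg{G}/\alg{H})^{\Fr}$ admits a representative $y_0\in\alg{G}^{\Fr}$, and a short calculation using $L_\alg{G}(y_0)=1$ identifies the fiber above $y_0\alg{H}$ with $y_0\tilde{X}_\alg{H}$. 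Hence $\tilde{X}$ decomposes as a disjoint union of clopen pieces $y_0\tilde{X}_\alg{H}$, one for each class in $\alg{G}^{\Fr}/\alg{H}^{\Fr}$.

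Now fix $g\in\alg{H}^{\Fr}$ and analyze the action of $(g,t)$ for $t\in\alg{T}^{\Fr}$. Since $g\in\alg{H}$ acts trivially on $\alg{G}/\alg{H}$ and $t$ acts by right translation, the induced permutation of the finite set $(\alg{G}/\alg{H})^{\Fr}$ is trivial when $t\in\alg{T}_\alg{H}^{\Fr}$ and has no fixed points otherwise. Decomposing $H^*_c(\tilde{X},\overline{\mathbb{Q}}_l)$ as a direct sum over the clopen pieces $y_0\tilde{X}_\alg{H}$ and tracking how $(g,t)$ permutes these summands yields
\[
\mathscr{L}_\alg{G}\bigl((g,t),\tilde{X}\bigr)=\begin{cases}\displaystyle\sum_{y_0\in\alg{G}^{\Fr}/\alg{H}^{\Fr}}\mathscr{L}_\alg{H}\bigl((y_0^{-1}gy_0,t),\tilde{X}_\alg{H}\bigr), & t\in\alg{T}_\alg{H}^{\Fr},\\[4pt] 0, & t\notin\alg{T}_\alg{H}^{\Fr},\end{cases}
\]
where each $y_0^{-1}gy_0$ lies in $\alg{H}^{\Fr}$ by normality of $\alg{H}$ and the fact that $y_0\in\alg{G}^{\Fr}$.

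The last move is to pick the representatives inside $\alg{T}^{\Fr}$: from $\alg{G}=\alg{T}\alg{H}$ one gets $\alg{T}/\alg{T}_\alg{H}\cong\alg{G}/\alg{H}$, and Lang--Steinberg for the torus $\alg{T}_\alg{H}$ turns $\alg{T}^{\Fr}/\alg{T}_\alg{H}^{\Fr}\to\alg{G}^{\Fr}/\alg{H}^{\Fr}$ into an isomorphism. With $y_0\in\alg{T}^{\Fr}$, the commutativity of $\alg{T}$ gives $y_0\alg{T}_\alg{H}y_0^{-1}=\alg{T}_\alg{H}$ and ${}^{y_0}\!\theta_\alg{H}=\theta_\alg{H}$, so by the conjugation equivariance of Deligne--Lusztig characters $R_{\alg{T}_\alg{H},\theta_\alg{H}}(y_0^{-1}gy_0)=R_{\alg{T}_\alg{H},\theta_\alg{H}}(g)$. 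Substituting the fibered Lefschetz identity into \eqref{eq:Lef}, using $\theta|_{\alg{T}_\alg{H}^{\Fr}}=\theta_\alg{H}$ and the index identity $[\alg{T}^{\Fr}:\alg{T}_\alg{H}^{\Fr}]=[\alg{G}^{\Fr}:\alg{H}^{\Fr}]$, the sum collapses to give exactly $R_{\alg{T}_\alg{H},\theta_\alg{H}}(g)$. The main delicate point is the fibered Lefschetz identity, but because the base $(\alg{G}/\alg{H})^{\Fr}$ is finite and discrete, this reduces to the splitting of $l$-adic cohomology along the clopen pieces and is essentially formal.
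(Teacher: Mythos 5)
Your proof is correct, but it takes a genuinely different route from the paper's. The paper works entirely with the Green-function formula \eqref{eq:green}: starting from $g=su$, it splits the sum over $\alg{G}^{\Fr}$ using coset representatives of $\alg{T}_\alg{H}^{\Fr}\setminus\alg{T}^{\Fr}\simeq\alg{H}^{\Fr}\setminus\alg{H}^{\Fr}\alg{T}^{\Fr}$, absorbs the constant $m=[\alg{T}^{\Fr}:\alg{T}_\alg{H}^{\Fr}]$ via the identity $m=[(C_\alg{G}(s)^\circ)^{\Fr}:(C_\alg{H}(s)^\circ)^{\Fr}]$, and then reduces to proving $Q_{\bar g\alg{T}\bar g^{-1}}^{C_\alg{G}(s)^\circ}(u)=Q_{\bar g\alg{T}_\alg{H}\bar g^{-1}}^{C_\alg{H}(s)^\circ}(u)$, which is obtained by passing to the common adjoint quotient and invoking the isomorphism-invariance of Green functions (Theorem~\ref{th:iso}). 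You instead use the Lefschetz-number formula \eqref{eq:Lef} and decompose the Deligne--Lusztig variety $\tilde X$ along the geometric fibration $\tilde X\to(\alg{G}/\alg{H})^{\Fr}$, $\tilde x\mapsto\tilde x\alg H$; the fiber analysis shows that $\mathscr{L}_\alg{G}((g,t),\tilde X)$ vanishes for $t\notin\alg{T}_\alg{H}^{\Fr}$ and splits as a sum of Lefschetz numbers over copies of $\tilde X_\alg{H}$ otherwise, after which the sums collapse using $[\alg{T}^{\Fr}:\alg{T}_\alg{H}^{\Fr}]=|(\alg{G}/\alg{H})^{\Fr}|$. Your approach works directly at the level of varieties rather than through the Jordan-decomposition casework of the Green-function formula, and it makes the role of the Lang--Steinberg theorem (applied to the connected groups $\alg{H}$ and $\alg{T}_\alg{H}$) completely explicit without any passage to adjoint groups. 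One small point of precision: the final step invoking ``conjugation equivariance'' to pass from $R_{\alg{T}_\alg{H},\theta_\alg{H}}(y_0^{-1}gy_0)$ to $R_{\alg{T}_\alg{H},\theta_\alg{H}}(g)$ is an \emph{outer} conjugation, since $y_0\in\alg{T}^{\Fr}$ need not lie in $\alg{H}^{\Fr}$; the correct citation is Theorem~\ref{th:iso} applied to $\Ad(y_0)\colon\alg{H}\to\alg{H}$ (which commutes with $\Fr$ and fixes $\alg{T}_\alg{H}$ and $\theta_\alg{H}$ pointwise because $\alg{T}$ is abelian), or equivalently the change of variables $\tilde x\mapsto y_0\tilde x y_0^{-1}$ on $\tilde X_\alg{H}$ in the Lefschetz number, which preserves $\tilde X_\alg{H}$ since $y_0$ normalizes $\alg{U}$ and $\alg{H}$.
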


\begin{proof}
Let $g = su$ be the Jordan decomposition of $g\in \alg{H}^{\Fr}$. Since the Jordan decomposition is unique, this must also correspond to the decomposition of $g$ in $\alg{G}^{\Fr}$. Since $\alg{G} = \alg{H}\alg{T}$, we have that $\quo{\alg{G}^{\Fr}}{\alg{H}^{\Fr}} \simeq \quo{\alg{T}^{\Fr}}{\alg{T}_\alg{H}^{\Fr}}$ (or $\alg{G}^{\Fr} = \alg{H}^{\Fr}\alg{T}^{\Fr}$) as a consequence of the Lang-Steinberg Theorem. Furthermore, since $s\in\alg{H}^{\Fr}$ and $\alg{H}^{\Fr}$ is normal in $\alg{G}^{\Fr}$, requiring $x^{-1}sx\in\alg{T}^{\Fr}$ is equivalent to requiring $x^{-1}sx\in\alg{T}_{\alg{H}}^{\Fr}$. Therefore, we can start modifying the formula from equation (\ref{eq:green}) for the restricted virtual character as follows:
\begin{align*}
R_{\alg{T,}\theta}|_{\alg{H}^{\Fr}}(g) &= \frac{1}{|(C_\alg{G}(s)^\circ)^{\Fr}|}\sum\limits_{ \substack{x\in \alg{G}^{\Fr} \\ x^{-1}sx\in \alg{T}^{\Fr}}}\theta(x^{-1}sx)Q^{C_\alg{G}(s)^\circ}_{x\alg{T}x^{-1}}(u) \\
&= \frac{1}{|(C_\alg{G}(s)^\circ)^{\Fr}|}\sum\limits_{\substack{x\in \alg{H}^{\Fr}\alg{T}^{\Fr} \\ x^{-1}sx\in \alg{T}_{\alg{H}}^{\Fr}}}\theta_\alg{H}(x^{-1}sx)Q^{C_\alg{G}(s)^\circ}_{x\alg{T}x^{-1}}(u) .
\end{align*}
Now, let $\{t_1,\dots, t_m\}$ be a set of coset representatives of $\alg{T}_{\alg{H}}^{\Fr} \setminus \alg{T}^{\Fr} \simeq \alg{H}^{\Fr} \setminus \alg{H}^{\Fr}\alg{T}^{\Fr}$. Then $\alg{H}^{\Fr}\alg{T}^{\Fr} = \alg{H}^{\Fr}t_1 \cup \cdots \cup \alg{H}^{\Fr}t_m$. Given $x\in \alg{H}^{\Fr}\alg{T}^{\Fr}$, we have $x= \bar{g}t_i$ for some $1\leq i\leq m$, $\bar{g}\in\alg{H}^{\Fr}$. Note that the right coset notation is necessary to have our elements of $\alg{H}^{\Fr}$ expressed in this particular way to ensure the following simplifications. Since $t_i \in \alg{T}^{\Fr}$, we have that $x\alg{T}x^{-1} = \bar{g}\alg{T}\bar{g}^{-1}$ and $\theta(x^{-1}sx) = \theta(t_i^{-1}\bar{g}^{-1}s\bar{g}t_i) = \theta(\bar{g}^{-1}s\bar{g}).$ This allows us to rewrite the restricted virtual character as follows:
\begin{align*}
R_{\alg{T,}\theta}|_{\alg{H}^{\Fr}}(g) &= \frac{1}{|(C_\alg{G}(s)^\circ)^{\Fr}|}\sum\limits_{\substack{\bar{g}\in\alg{H}^{\Fr}\\ 1\leq i\leq m\\ \bar{g}^{-1}s\bar{g}\in \alg{T}_{\alg{H}}^{\Fr}}}\theta_\alg{H}(\bar{g}^{-1}s\bar{g})Q^{C_\alg{G}(s)^\circ}_{\bar{g}\alg{T}\bar{g}^{-1}}(u)  \\
&= \frac{m}{|(C_\alg{G}(s)^\circ)^{\Fr}|}\sum\limits_{\substack{\bar{g}\in\alg{H}^{\Fr}\\ \bar{g}^{-1}s\bar{g}\in \alg{T}_{\alg{H}}^{\Fr}}}\theta_\alg{H}(\bar{g}^{-1}s\bar{g})Q^{C_\alg{G}(s)^\circ}_{\bar{g}\alg{T}\bar{g}^{-1}}(u).
\end{align*}

We have that $m=[\alg{T}^{\Fr}:\alg{T}_{\alg{H}}^{\Fr}]$, but this is equal to ${[(C_{\alg{G}}(s)^\circ)^{\Fr}:(C_\alg{H}(s)^\circ)^{\Fr}]}$. Indeed, because $s\in ({^g\alg{T}})^{\Fr}$, $({^g\alg{T}})^{\Fr} \subset (C_\alg{G}(s)^\circ)^{\Fr}$ so that ${\quo{\alg{T}^{\Fr}}{\alg{T}_\alg{H}^{\Fr}} \simeq \quo{\alg{G}^{\Fr}}{\alg{H}^{\Fr}} = \quo{(C_\alg{G}(s)^\circ)^{\Fr}\alg{H}^{\Fr}}{\alg{H}^{\Fr}} \simeq \quo{(C_{\alg{G}}(s)^\circ)^{\Fr}}{(C_{\alg{H}}(s)^\circ)^{\Fr}}}$. It follows that ${\frac{m}{|(C_{\alg{G}}(s)^\circ)^{\Fr}|} = \frac{1}{|(C_{\alg{H}}(s)^\circ)^{\Fr}|}}$. Therefore,
$$R_{\alg{T,}\theta}|_{\alg{H}^{\Fr}}(g) = \frac{1}{|(C_\alg{H}(s)^\circ)^{\Fr}|}\sum\limits_{\substack{\bar{g}\in\alg{H}^{\Fr}\\ \bar{g}^{-1}s\bar{g}\in \alg{T}_{\alg{H}}^{\Fr}}}\theta_\alg{H}(\bar{g}^{-1}s\bar{g})Q^{C_\alg{G}(s)^\circ}_{\bar{g}\alg{T}\bar{g}^{-1}}(u).$$ It remains to show that $Q_{\bar{g}\alg{T}\bar{g}^{-1}}^{C_\alg{G}(s)^\circ}(u) = Q_{\bar{g}\alg{T}_{\alg{H}}\bar{g}^{-1}}^{C_{\alg{H}}(s)^\circ}(u)$ for all $\bar{g}\in\alg{H}^{\Fr}$ to complete the proof.

Let $\ad: C_\alg{G}(s)^\circ \rightarrow \quo{C_\alg{G}(s)^\circ}{Z(C_\alg{G}(s)^\circ)}$ be the projection map of $C_\alg{G}(s)^\circ$ into the adjoint group $\ad(C_\alg{G}(s)^\circ) =\quo{C_\alg{G}(s)^\circ}{Z(C_\alg{G}(s)^\circ)}$. Similarly, we let $\ad_{\alg{H}}$ be the projection map of $C_\alg{H}(s)^\circ$ into the adjoint group $\ad_{\alg{H}}(C_\alg{H}(s)^\circ)=\quo{C_\alg{H}(s)^\circ}{Z(C_\alg{H}(s)^\circ)}$. According to \cite[Formula 4.1.1]{DL:1976}, we have that $Q_{\bar{g}\alg{T}\bar{g}^{-1}}^{C_\alg{G}(s)^\circ}(u) = Q_{\ad(\bar{g}\alg{T}\bar{g}^{-1})}^{{\ad(C_\alg{G}(s)^\circ)}}(\ad(u))$ and $Q_{\bar{g}\alg{T}_{\alg{H}}\bar{g}^{-1}}^{C_{\alg{H}}(s)^\circ}(u) = Q_{\ad_{\alg{H}}(\bar{g}\alg{T}_{\alg{H}}\bar{g}^{-1})}^{\ad_{\alg{H}}(C_{\alg{H}}(s)^\circ)}(\ad_{\alg{H}}(u))$. One can verify that the map that sends  $cZ(C_{\alg{H}}(s)^\circ)$ to $cZ(C_{\alg{G}}(s)^\circ)$ for all $c\in C_{\alg{H}}(s)^\circ$ defines an isomorphism $\ad_{\alg{H}}(C_\alg{H}(s)^\circ) \simeq \ad(C_\alg{G}(s)^\circ)$. Furthermore, this isomorphism maps $\ad_{\alg{H}}(\bar{g}\alg{T}_{\alg{H}}\bar{g}^{-1})$ to $\quo{\bar{g}\alg{T}_\alg{H}\bar{g}^{-1}Z(C_\alg{G}(s)^\circ)}{Z(C_\alg{G}(s)^\circ)}$. Given that $\alg{G}=Z(\alg{G})^\circ\alg{H}$, it follows that $\alg{T} = Z(\alg{G})^\circ\alg{T}_\alg{H}$ so that $\quo{\bar{g}\alg{T}_\alg{H}\bar{g}^{-1}Z(C_\alg{G}(s)^\circ)}{Z(C_\alg{G}(s)^\circ)} = \ad(\bar{g}\alg{T}\bar{g}^{-1})$. It then follows from Theorem \ref{th:iso} that $R_{\ad_{\alg{H}}(\bar{g}\alg{T}_{\alg{H}}\bar{g}^{-1}),1}^{\ad_{\alg{H}}(C_{\alg{H}}(s)^\circ)}(\ad_{\alg{H}}(u)) = R_{\ad(\bar{g}\alg{T}\bar{g}^{-1}),1}^{\ad(C_{\alg{G}}(s)^\circ)}(\ad(u))$, that is, ${Q_{\ad_{\alg{H}}(\bar{g}\alg{T}_{\alg{H}}\bar{g}^{-1})}^{\ad_{\alg{H}}(C_{\alg{H}}(s)^\circ)}(\ad_{\alg{H}}(u)) = Q_{\ad(\bar{g}\alg{T}\bar{g}^{-1})}^{\ad(C_{\alg{G}}(s)^\circ)}(\ad(u))}$, and therefore $Q_{\bar{g}\alg{T}\bar{g}^{-1}}^{C_\alg{G}(s)^\circ}(u) = Q_{\bar{g}\alg{T}_{\alg{H}}\bar{g}^{-1}}^{C_{\alg{H}}(s)^\circ}(u)$.
\end{proof}

\section*{Acknowledgments}
\addcontentsline{toc}{section}{Acknowledgments}
I would like to thank my thesis supervisor, Dr. Monica Nevins, for her guidance and support throughout the entirety of my research project, and for providing feedback on this paper. I would also like to thank Dr. Peter Latham for the many discussions on types, and Dr. Jeffrey Adler for the email correspondence regarding his multiplicity results. I would also like to thank the participants of the session "Representation Theory of Groups Defined Over Local Fields" during the 2019 Canadian Mathematical Society Summer Meeting in Regina for the feedback on my research presentation and the various fruitful discussions.


\textsc{Department of Mathematics and Statistics, University of Ottawa,
Ottawa, ON, Canada K1N 6N5}

\textit{E-mail address:} \href{mailto:abour115@uottawa.ca}{abour115@uottawa.ca}

\end{document}